\documentclass[11pt,psamsfonts]{amsart}
\usepackage{amssymb, amsthm, mathrsfs}
\input xy
\xyoption{all}
\usepackage[left=1.0in,right=1.0in,top=1.0in,bottom=1.0in]{geometry}

\newtheorem{theorem}{Theorem}[section]
\newtheorem{lemma}[theorem]{Lemma}
\newtheorem{corollary}[theorem]{Corollary}
\newtheorem{proposition}[theorem]{Proposition}

\newtheorem*{thma}{Theorem A}
\newtheorem*{thmb}{Corollary B}
\newtheorem*{thmc}{Theorem C}
\newtheorem*{thmd}{Theorem D}

\theoremstyle{definition}
\newtheorem{notation}[theorem]{Notation}

\newtheorem{fact}[theorem]{Fact}
\newtheorem{example}[theorem]{Example}
\newtheorem{definition}[theorem]{Definition}
\newtheorem{remark}[theorem]{Remark}
\newtheorem{question}[theorem]{Question}


\newcommand\QED{\qed\medskip}
\newcommand\Tp{{\mathbb T}\hbox{\hskip2pt}}

\newcommand\T{{\mathbb T}}
\newcommand\Z{{\mathbb Z}}
\newcommand\N{{\mathbb N}}
\newcommand\Q{{\mathbb Q}}

\newcommand\Prm{{\mathbb P}}
\newcommand\B{\mathscr{B}}

\newcommand\Top{\mathcal{T}}

\newcommand\TT{{\mathfrak T}}
\newcommand\n{\mathfrak{n}}

\newcommand\Mar{\mathfrak M}
\newcommand\Zar{\mathfrak Z}
\newcommand\EE{{\mathfrak E}}

\newcommand\E{\mathscr{E}}
\newcommand\cont{\mathfrak{c}}

\newcommand\Round[1]{Almost ${#1}$-torsion}
\newcommand\round[1]{almost ${#1}$-torsion}
\newcommand\Zc[1]{${#1}$-atom}
\newcommand\CL{\mathrm{cl}}
\newcommand\fin[1]{{#1}^\cup}

\begin{document}

\title[]{The Markov-Zariski topology of an abelian group}

\author[]{Dikran Dikranjan}
\address[D.Dikranjan]{
Dipartimento di Matematica e Informatica\\
Universit\`{a} di Udine\\
Via delle Scienze  206, 33100 Udine\\
Italy}
\email{dikranja@dimi.uniud.it} 

\author[]
{Dmitri Shakhmatov}
\address[D. Shakhmatov]{\hfill\break
Division of Mathematics, Physics and Earth Sciences\\
Graduate School of Science and Engineering\\
Ehime University\\
Matsuyama 790-8577\\
Japan}
\email{dmitri@dpc.ehime-u.ac.jp}

\thanks{The first named author was partially supported by SRA grants P1-0292-0101 and J1-9643-0101, as well as grant MTM2009-14409-C02-01.}

\thanks{The second named author was partially supported by the Grant-in-Aid for Scientific Research (C) No.~19540092 of the Japan Society for the Promotion of Science (JSPS)}

\dedicatory{Dedicated to Kenneth Kunen on the occasion of his 65th anniversary}

\keywords{abelian group, algebraic closure, Zariski topology, verbal topology, Markov topology, Noetherian space, irreducible set, unconditionally closed set, potentially dense set, precompact group, totally bounded group, combinatorial dimension}

\begin{abstract}
According to Markov \cite{Mar}, a subset of an abelian group $G$ of the form $\{x\in G: nx=a\}$,
for some integer $n$ and some element $a\in G$, is an elementary algebraic 
set; finite unions of elementary algebraic sets are called algebraic sets. We prove that a subset of an abelian group $G$ is  algebraic if and only if  it is closed in every precompact (=totally bounded) Hausdorff group topology on $G$. The family of all algebraic subsets of an abelian group $G$ forms the family of closed subsets of a unique Noetherian $T_1$ topology $\Zar_G$ on $G$ called the {\em Zariski\/}, or {\em verbal\/}, topology of  $G$ \cite{Bryant}.  We  investigate the properties of this topology. In particular, we show that the Zariski topology is always hereditarily separable and 
Fr\'echet-Urysohn.

For a countable family  $\mathscr{F}$ of subsets of an abelian group $G$ of cardinality at most the continuum, we construct a precompact metric group topology  $\mathcal{T}$ on $G$ such that  the  $\mathcal{T}$-closure of each member of $\mathscr{F}$ coincides with its $\Zar_G$-closure.  As an application, we  provide a characterization of the subsets of $G$ that are $\mathcal{T}$-dense in {\em some\/} Hausdorff group topology $\mathcal{T}$ on $G$, and we show that such a topology, if it exists, can always be chosen so that it is precompact and metric. This provides a partial answer to a long-standing problem of Markov \cite{Mar}.  
\end{abstract}

\maketitle

We use $\Prm$ and $\mathbb{N}$ to denote the sets of all prime numbers and  all natural numbers, respectively. In this paper, $0\in\mathbb{N}$. As usual, $\Z$ denotes the  group of integers, and $\Z(n)$ denotes the cyclic group of order  $n$.  We use $\cont$ to denote the cardinality of the continuum.
The symbol $\omega_1$ denotes the first uncountable cardinal.
\section{Introduction}

\subsection{Three topologies on a group}

In 1944, Markov \cite{Mar} introduced four special families of subsets of a group $G$:  

\begin{definition}
\label{Markov's:definition}
{\rm (\cite{Mar})} A subset $X$ of a group $G$ is called:
\begin{itemize}
  \item[(a)] {\em elementary algebraic} if there exist an integer $n>0$, elements $a_1,\ldots, a_n\in G$ and $\varepsilon_1,\ldots,\varepsilon_n\in\{-1,1\}$, such that $X=\{x\in G: x^{\varepsilon_1}a_1x^{\varepsilon_2}a_2\ldots  a_{n-1}x^{\varepsilon_n}a_n=1\}$,
  \item[(b)] {\em algebraic} if $X$ is an intersection of finite unions of  elementary algebraic subsets of $G$,
  \item[(c)] {\em unconditionally closed} if $X$ is closed in {\em every\/} Hausdorff group topology on $G$,
  \item[(d)] {\em potentially dense\/} if $G$ admits {\em some\/} Hausdorff  group topology $\mathcal{T}$ such that $X$ is dense in $(G,\mathcal{T})$.
\end{itemize}
\end{definition}

The family of all unconditionally closed subsets of $G$ coincides with the family of closed sets of a $T_1$ topology ${\mathfrak M}_G$ on $G$, namely the infimum (taken in the lattice of all topologies on $G$) 
of all Hausdorff group topologies on $G$. This topology has been introduced in \cite{DS_OPIT, DS_JGT} as the {\em  Markov topology} of $G$.  

Recall that a Hausdorff group topology $\mathcal{T}$ on a group $G$  is called {\em precompact\/}  (or {\em totally bounded\/}) provided that $(G,\mathcal{T})$ is
 (isomorphic to)  a subgroup of some compact Hausdorff  group or, equivalently, if the completion of $(G,\mathcal{T})$ with respect to the two-sided uniformity is compact. Let ${\mathfrak P}_G$ be the infimum  of all precompact  Hausdorff group topologies on $G$.  Clearly, ${\mathfrak P}_G$ is a $T_1$ topology on $G$, which we  call the {\em precompact Markov topology\/} of $G$ \cite{DS_OPIT}.

One can easily see that the family of all algebraic subsets of $G$ is closed under finite unions and arbitrary intersections, and  contains $G$ and all finite subsets of
$G$; thus, it  can be taken as the family of closed sets of a unique $T_1$ topology ${\mathfrak Z}_G$ on $G$. Markov \cite{Mar1, Mar} defined  the {\em algebraic closure\/} of a subset $X$ of a group $G$ as the intersection of all algebraic subsets of $G$ containing $X$, i.e., the smallest algebraic set that contains $X$.
This definition satisfies the conditions necessary for introducing
a topological closure operator on $G$. Since a topology on a set is uniquely determined by its closure operator, it is fair to say that Markov was the first to
(implicitly) define  the topology ${\mathfrak Z}_G$, though he did not  name it. To the best of our knowledge, the first name for this topology appeared explicitly in print in a 1977 paper by Bryant \cite{Bryant}, who called it a {\em verbal topology\/} of $G$. In a more recent  series of papers  beginning with \cite{BMR}, Baumslag, Myasnikov and Remeslennikov have  developed algebraic geometry over an abstract group $G$.  In an analogy with the celebrated 
Zariski topology from algebraic geometry, they introduced the {\em Zariski topology\/} on the finite powers $G^n$ of a group $G$. In the particular case  when
$n=1$, this topology coincides with the verbal topology of Bryant. For this reason, the topology $\Zar_G$ is also called the {\em Zariski topology\/} of $G$ in \cite{DS_OPIT, DS_JGT}.  

Note that $(G,{\mathfrak Z}_G)$, $(G,{\mathfrak M}_G)$ and  $(G,{\mathfrak P}_G)$ are quasi-topological groups, i.e.,  their  inversion and shifts are continuous; see \cite{AT}.

\begin{fact}
\label{three:topologies}
 ${\mathfrak Z}_G\subseteq {\mathfrak M}_G\subseteq {\mathfrak P}_G$ for every group $G$.
\end{fact}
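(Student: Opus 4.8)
The plan is to verify the two inclusions separately; each reduces to an elementary observation once the relevant definitions are unwound.

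For the inclusion ${\mathfrak Z}_G\subseteq{\mathfrak M}_G$, recall that the closed sets of ${\mathfrak Z}_G$ are exactly the algebraic subsets of $G$, while the closed sets of ${\mathfrak M}_G$ are exactly the unconditionally closed subsets of $G$; thus it suffices to prove that every algebraic set is unconditionally closed. Since, in any fixed topology, the family of closed sets is stable under finite unions and arbitrary intersections, and since by Definition \ref{Markov's:definition} every algebraic set is an intersection of finite unions of elementary algebraic sets, it is enough to check that each elementary algebraic set $X=\{x\in G: x^{\varepsilon_1}a_1x^{\varepsilon_2}a_2\ldots a_{n-1}x^{\varepsilon_n}a_n=1\}$ is closed in an arbitrary Hausdorff group topology $\mathcal T$ on $G$. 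First I would note that the map $f\colon G\to G$ sending $x$ to $x^{\varepsilon_1}a_1x^{\varepsilon_2}a_2\ldots a_{n-1}x^{\varepsilon_n}a_n$ is $\mathcal T$-continuous, being built from the continuous operations (multiplication, inversion, and translations by the fixed elements $a_i$) of the topological group $(G,\mathcal T)$. Since $\mathcal T$ is Hausdorff, $\{1\}$ is $\mathcal T$-closed, and therefore $X=f^{-1}(\{1\})$ is $\mathcal T$-closed as well. As $\mathcal T$ was an arbitrary Hausdorff group topology on $G$, the set $X$ is unconditionally closed, which completes this inclusion.

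For the inclusion ${\mathfrak M}_G\subseteq{\mathfrak P}_G$, I would use that the infimum of a family of topologies on a set, taken in the lattice of all topologies, has as its open sets precisely the intersection of the open-set families of the members; equivalently, a set is closed in the infimum if and only if it is closed in each member of the family. Every precompact Hausdorff group topology on $G$ is, in particular, a Hausdorff group topology on $G$, so the family of precompact Hausdorff group topologies is a subfamily of the family of all Hausdorff group topologies on $G$. Consequently any set that is closed in every Hausdorff group topology (i.e., ${\mathfrak M}_G$-closed) is a fortiori closed in every precompact Hausdorff group topology (i.e., ${\mathfrak P}_G$-closed), which gives ${\mathfrak M}_G\subseteq{\mathfrak P}_G$.

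Neither step presents a genuine obstacle — this is stated as a fact precisely because both inclusions follow directly from the definitions. The only point requiring minor care is the second one: passing to a smaller family of group topologies yields a finer (larger) infimum, so the argument should be phrased in terms of closed sets rather than informal talk of "coarser" and "finer". (In the degenerate case where $G$ carries no precompact Hausdorff group topology the infimum defining ${\mathfrak P}_G$ is the discrete topology and the inclusion is trivial; this case does not occur for abelian $G$, which always admits precompact Hausdorff group topologies.)
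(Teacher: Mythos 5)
Your proof is correct and follows the same route as the paper: the first inclusion comes from observing that elementary algebraic sets are closed in every Hausdorff group topology (you supply the continuity-of-the-word-map details the paper leaves implicit, plus the passage to intersections of finite unions), and the second inclusion is just the observation that the precompact Hausdorff group topologies form a subfamily of all Hausdorff group topologies, so their infimum is finer. The paper's proof is a two-line version of exactly this argument.
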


\begin{proof}
An elementary algebraic subset of $G$ must be closed in every Hausdorff group topology on $G$, which gives the first inclusion. The second inclusion is obvious.
\end{proof}

\subsection{Markov's ``algebraic  versus unconditionally closed'' problem} 

In 1944, Markov \cite{Mar1} (see also \cite{Mar}) posed his celebrated  problem: {\em is  every unconditionally closed subset of
a group algebraic\/}? Using the language of Markov and Zariski topologies, this question can be  naturally reformulated as the problem  of coincidence of these  topologies: {\em does the equality ${\mathfrak Z}_G = {\mathfrak M}_G$ hold for every group $G$\/}?  Markov himself obtained a positive answer in the case when $G$ is countable \cite{Mar}. Moreover, in 
 \cite{Mar}, Markov attributes to Perel'man the fact that ${\mathfrak Z}_G = {\mathfrak M}_G$ for every abelian group $G$.  To the best of our knowledge,  the proof of this fact  never appeared  in print until \cite{DS_JGT}. In the present  manuscript,  we further   strengthen this result from  \cite{DS_JGT} as follows:

\begin{thma} 
${\mathfrak Z}_G={\mathfrak M}_G={\mathfrak P}_G$ for every abelian group $G$.
\end{thma}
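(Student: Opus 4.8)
The plan is to establish the two inclusions $\mathfrak{P}_G\subseteq\mathfrak{Z}_G$ (the reverse of Fact~\ref{three:topologies}) by exhibiting, for each algebraic set $A\subseteq G$ and each point $g\notin A$, a single precompact Hausdorff group topology $\mathcal{T}$ on $G$ in which $A$ is closed and $g\notin\overline{A}^{\mathcal{T}}$; then $g\notin\overline{A}^{\mathfrak{P}_G}$, and since algebraic sets are exactly the $\mathfrak{Z}_G$-closed sets, this gives $\mathfrak{P}_G\subseteq\mathfrak{Z}_G$. Combined with $\mathfrak{Z}_G\subseteq\mathfrak{M}_G\subseteq\mathfrak{P}_G$ this yields equality throughout. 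Actually it is cleaner to prove the stronger ``many sets at once'' statement announced in the abstract: for a countable family $\mathscr{F}$ of subsets of $G$ (here we only need finitely many, in fact one elementary algebraic set and its complement through $g$, but the inductive machinery wants countability) with $|G|\le\cont$ there is a precompact metrizable group topology $\mathcal{T}$ realizing the $\mathfrak{Z}_G$-closure of every member of $\mathscr{F}$; restricting to $\mathscr{F}=\{A,\{g\}\}$ then does the job, modulo the cardinality restriction, which is handled separately by passing to a suitable countable-sized subgroup (or by a direct-limit argument, since algebraicity is a ``local'' property witnessed inside finitely generated subgroups).

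The first technical step is to reduce to elementary algebraic sets: a typical elementary algebraic subset of an abelian group has the form $\{x\in G: nx=a\}$, so $A=\bigcup_{i=1}^k\{x: n_ix=a_i\}$ and $g\notin A$ means $n_ig\ne a_i$ for all $i$. A precompact Hausdorff topology on $G$ is the same thing as the topology induced by a point-separating family of homomorphisms into the circle $\mathbb{T}$, i.e.\ by a subgroup $H$ of the Bohr dual $\mathrm{Hom}(G,\mathbb{T})$ that separates points; it is metrizable precisely when $H$ is countable. So the task becomes: \emph{find a countable point-separating $H\le\mathrm{Hom}(G,\mathbb{T})$ together with a finite subset $F\subseteq H$ such that for each $i$ there is $\chi\in F$ with $\chi(n_ig-a_i)\ne 0$}, because then $g$ is separated from each of the $k$ affine sets by a continuous character, hence from $A$. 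The set $\{x:n_ix=a_i\}$ is automatically closed in any group topology (it is elementary algebraic), so the only real content is the separation of the point $g$ from $A$, i.e.\ producing characters with $\chi(n_ig)\ne\chi(a_i)$.

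The heart of the matter is therefore the construction of such characters, and this is exactly the point where the hypothesis $g\notin A$ (unwound as $n_ig\ne a_i$ in $G$) has to be converted into a non-vanishing statement on $\mathbb{T}$. The obstacle is that $n_ig-a_i$ may be a torsion element, or even $0$ modulo the divisible part, so one must be careful: one works in the quotient by analyzing the cyclic (or finitely generated) subgroup generated by $g$ and the finitely many $a_i$, splits off a direct summand isomorphic to $\mathbb{Z}$ or $\mathbb{Z}(p^k)$ on which the relevant element is nonzero, defines a character there that witnesses $n_ig\ne a_i$, and then extends it to all of $G$ using divisibility of $\mathbb{T}$ (injectivity of $\mathbb{T}$ in the category of abelian groups). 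Running this for each $i=1,\dots,k$ produces the finite set $F$; enlarging $F$ to a countable point-separating family $H$ is routine since $|G|\le\cont=|\mathrm{Hom}(G,\mathbb{T})|$-worth of characters is more than enough, and one only needs countably many to separate points once one observes that it suffices to separate each nonzero element from $0$, which again reduces to the finitely generated case. The main obstacle, then, is purely the torsion bookkeeping in the character-extension step; the precompactness and metrizability come essentially for free from the dual description, and the upgrade from one set to a countable family $\mathscr{F}$ is an inductive repetition of the same separation argument.
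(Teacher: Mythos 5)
There is a genuine gap, and it sits at the very first step: the argument you outline proves only the trivial inclusion $\Zar_G\subseteq{\mathfrak P}_G$, not the reverse one. Exhibiting, for an algebraic set $A$ and a point $g\notin A$, a precompact Hausdorff group topology $\mathcal{T}$ in which $A$ is closed and $g\notin\CL_{\mathcal{T}}(A)$ only shows that algebraic sets are ${\mathfrak P}_G$-closed; and, as you yourself remark, elementary algebraic sets are closed in \emph{every} Hausdorff group topology, so your separation of $g$ from $A$ by characters with $\chi(n_ig-a_i)\ne 0$ adds nothing beyond the existence of some precompact Hausdorff group topology on $G$ (this is Fact \ref{three:topologies} again). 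What ${\mathfrak P}_G\subseteq\Zar_G$ actually demands is the opposite kind of statement: for an \emph{arbitrary} (typically non-algebraic) subset $X$ and every $g\in\CL_{\Zar_G}(X)\setminus X$, one must \emph{construct} a precompact Hausdorff group topology $\mathcal{T}$ with $g\in\CL_{\mathcal{T}}(X)$; since ${\mathfrak P}_G\subseteq\mathcal{T}$, this gives $g\in\CL_{{\mathfrak P}_G}(X)$ and hence $\CL_{\Zar_G}(X)\subseteq\CL_{{\mathfrak P}_G}(X)$. That is a density assertion, not a separation assertion: you must arrange that $X$ clusters at $g$ in the topology you build. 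Your suggested shortcut of applying the realization theorem to $\mathscr{F}=\{A,\{g\}\}$ is vacuous for the same reason ($A$ and $\{g\}$ are already Zariski closed, so their closures are realized in any Hausdorff group topology), and nothing in your technical outline addresses the real issue: how to make a given non-algebraic set $\mathcal{T}$-dense in its Zariski closure. In the paper this is exactly where the work lies: one first computes $\CL_{\Zar_G}(X)$ as $D\cup\bigcup_i\bigl(a_i+G[\n_{S_i}]\bigr)$ with \round{\n_{S_i}} sets $S_i\subseteq X-a_i$ (Theorem \ref{computing:the:closure}, Corollary \ref{technique:for:realizing:Zariski:closure}), and then builds countably many characters whose diagonal maps each $S_i$ onto a dense subset of $\T[\n_{S_i}]^{\N}$ (Lemmas \ref{lemma:Forcing_paper} and \ref{lemma:metrizable:case}), so that $\CL_{\mathcal{T}}(S_i)=G[\n_{S_i}]$; the divisibility of $\T$ enters there to extend a monomorphism of a countable subgroup into $\T^{\N}$, not to separate a point from finitely many cosets.

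A second, smaller gap is the reduction of the general case to $|G|\le\cont$. The remark that ``algebraicity is local'' does not carry the argument: to pass from a countable subgroup $H$ back to $G$ one needs, first, that the Zariski closure of any $F\subseteq G$ is already determined by a countable subset (hereditary separability of $(G,\Zar_G)$, Corollary \ref{hereditarily:separable}, which itself rests on the structure theory of \Zc{\Zar_G}s), and second, that a point in $\CL_{{\mathfrak P}_H}(X)$ remains in $\CL_{{\mathfrak P}_G}(X)$, which requires extending an arbitrary \emph{precompact} Hausdorff group topology from $H$ to $G$ (Theorem \ref{extension:of:precompact:group:topologies} and Corollary \ref{hereditary:precompact:Markov}). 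Extending Hausdorff group topologies from a central subgroup is easy, but preserving precompactness needs the duality argument with characters and the surjectivity of the restriction map $G^*\to H^*$; your sketch omits this step entirely.
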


Reformulating this theorem in Markov's terminology results in the following Corollary B.

\begin{thmb}
A subset of an abelian group $G$ is algebraic if (and only if) it is closed in every precompact Hausdorff group topology on $G$.
\end{thmb}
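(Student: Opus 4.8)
The plan is to derive Corollary~B directly from Theorem~A, since the two are merely restatements of one another in different languages. First I would observe that, by definition, the algebraic subsets of $G$ are exactly the closed sets of the Zariski topology $\Zar_G$, while the subsets that are closed in every precompact Hausdorff group topology on $G$ are by construction exactly the $\mathfrak{P}_G$-closed sets; this is immediate from the definition of $\mathfrak{P}_G$ as the infimum (in the lattice of topologies on $G$) of all precompact Hausdorff group topologies, because a set closed in an infimum topology is precisely a set closed in each of the topologies over which the infimum is taken. Hence the assertion ``every subset of $G$ that is closed in every precompact Hausdorff group topology is algebraic'' is literally the inclusion $\mathfrak{P}_G \subseteq \Zar_G$ at the level of closed sets, i.e.\ $\mathfrak{P}_G \subseteq \Zar_G$ as topologies.

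Next I would supply the trivial reverse inclusion: an algebraic subset of $G$ is a finite union of elementary algebraic sets, and each elementary algebraic set $\{x \in G : nx = a\}$ is the preimage of the closed singleton $\{a\}$ under the continuous map $x \mapsto nx$, hence is closed in every Hausdorff group topology, in particular in every precompact one; therefore every algebraic set is $\mathfrak{P}_G$-closed, which is the inclusion $\Zar_G \subseteq \mathfrak{P}_G$. (This is of course just the case $\mathcal{T} = \mathfrak{P}_G$ of Fact~\ref{three:topologies}.) Combining the two inclusions gives $\Zar_G = \mathfrak{P}_G$, and translating back, a subset of $G$ is algebraic if and only if it is closed in every precompact Hausdorff group topology on $G$. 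The ``and only if'' part is exactly this trivial direction, which is why it is parenthesized in the statement.

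The only non-trivial input is the equality $\Zar_G = \mathfrak{P}_G$, and this is precisely the content of Theorem~A, which I am entitled to assume. So the entire proof of Corollary~B consists of unwinding the two definitions (algebraic set $\leftrightarrow$ $\Zar_G$-closed, and closed in every precompact Hausdorff group topology $\leftrightarrow$ $\mathfrak{P}_G$-closed) and invoking Theorem~A. There is no real obstacle at this stage: all the work has been moved into Theorem~A, whose proof presumably occupies the bulk of the paper. The one point requiring a sentence of care is the passage between ``closed in each topology $\mathcal{T}_i$'' and ``closed in $\inf_i \mathcal{T}_i$,'' which holds because the closed sets of $\inf_i \mathcal{T}_i$ form exactly the intersection $\bigcap_i \{\text{closed sets of }\mathcal{T}_i\}$ — a standard fact about infima of topologies that I would state explicitly but not belabor.
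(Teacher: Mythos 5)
Your proposal is correct and matches the paper exactly: Corollary~B is stated there as a mere reformulation of Theorem~A, obtained by unwinding the definitions (algebraic $\leftrightarrow$ $\Zar_G$-closed, closed in every precompact Hausdorff group topology $\leftrightarrow$ $\mathfrak{P}_G$-closed, the latter because the closed sets of an infimum of topologies are exactly the sets closed in each of them), with the trivial direction being Fact~\ref{three:topologies}. All the substantive work indeed lives in Theorem~A, which you are entitled to invoke.
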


An example of a group $G$ with ${\mathfrak Z}_G\not={\mathfrak M}_G$ was found by Hesse \cite{Hesse}, who apparently was unaware that his results solve Markov's problem in the negative. This problem was later  highlighted as an open problem in the survey \cite{CHR}, and an example of a group $G$ with ${\mathfrak Z}_G\not={\mathfrak M}_G$ under the Continuum Hypothesis CH was   recently  provided in \cite{S1}.

If the group $G$ is infinite, the topology ${\mathfrak M}_G$ is discrete if and only if $G$ is  {\em non-topologizable\/}, i.e., it does not admit a non-discrete Hausdorff group topology.
The existence of non-topologizable groups has been another long-standing open problem of Markov \cite{Mar}; it was later resolved positively  through an example under CH 
by Shelah \cite{Shelah}, an uncountable ZFC example by Hesse \cite{Hesse} and a countable  ZFC example by Ol$'$shanskij \cite{O}. 

Observe that the topology ${\mathfrak P}_G$ is discrete if and only if the group $G$ is not {\em maximally almost periodic\/}, i.e., it does not admit a precompact Hausdorff group topology. A classical example of a group $G$ with discrete ${\mathfrak P}_G$ is the group $SL(2,\mathbb{C})$ of all complex $2\times2$ matrices with determinant $1$ \cite{vNW}.
Since this $G$ is topologizable (by its usual topology), one has  ${\mathfrak M}_G\not={\mathfrak P}_G$.

In view of Fact \ref{three:topologies}, if a group $G$ has discrete $\Zar_G$, then ${\mathfrak Z}_G={\mathfrak M}_G={\mathfrak P}_G$ holds because all three topologies become discrete. Such groups are extremely rare, but a variety of examples have been  constructed by Ol$'$shanskij and his school:

\begin{example}\label{discrete:Zariski} 
{\rm 
 \begin{itemize} 
 \item[(a)] Ol$'$shanskij's example \cite{O} of a countable non-topologizable group has discrete $\Zar_G$.
   \item[(b)] Klyachko and Trofimov \cite{KT} constructed a finitely generated torsion-free group $G$ such that $\Zar_G$ is discrete.
   \item[(c)] Trofimov \cite{T} proved that every group $H$ admits an embedding into a group $G$ with discrete $\Zar_G$. 
   \item[(d)] Morris and Obraztsov \cite{MO} modified Ol$'$shanskij's example from item (a) to build, for any sufficiently large prime $p$, a continuum of pairwise non-isomorphic infinite non-topologizable groups of exponent $p\sp 2$, all of  whose proper subgroups are cyclic. 
 \end{itemize}
}
 \end{example}

\subsection{Realization of the Zariski closure by some Hausdorff group topology} 

Given a topological space $(Y,\mathcal{T})$, we  denote the $\mathcal{T}$-closure of a set $X\subseteq G$  by $\CL_{\mathcal{T}}(X)$. 

 Let $X$ be a subset of a group $G$. Given a Hausdorff group topology $\mathcal{T}$ on $G$,  one has  $\Zar_G\subseteq  \Mar_G\subseteq \mathcal{T}$, and therefore, $\CL_{\mathcal{T}}(X)\subseteq \CL_{\Mar_G}(X)\subseteq \CL_{\Zar_G}(X)$. This chain of  inclusions naturally leads to the following problem: can one always find a Hausdorff group topology $\mathcal{T}$ on $G$  such that $\CL_{\mathcal{T}}(X)= \CL_{\Mar_G}(X)= \CL_{\Zar_G}(X)$? We shall call this question the {\em realization problem for the 
 Zariski closure\/}; see \cite{DS_OPIT}. This problem was first considered  by Markov in \cite{Mar}, who proved  that for every subset $X$ of a countable group  $G$, there exists a metric group topology $\mathcal{T}$ on $G$ such that $\CL_{\mathcal{T}}(X)= \CL_{\Zar_G}(X)$. We make the following contribution to this general problem  in the abelian case:

\begin{thmc} For an abelian group $G$, the following conditions are equivalent:
\begin{itemize}
  \item[(i)] $|G|\le \cont$,
  \item[(ii)] for every subset $X$ of $G$, there exists a precompact metric group topology $\mathcal{T}_{{X}}$ on $G$ such that $\CL_{\mathcal{T}_{{X}}}(X)=\CL_{\Zar_G}(X)$,
  \item[(iii)] for every countable family $\mathscr{X}$ of subsets of $G$, one can find a precompact metric group topology $\mathcal{T}_{\mathscr{X}}$ on $G$ such that $\CL_{\mathcal{T}_{\mathscr{X}}}(X)=\CL_{\Zar_G}(X)$ for every $X\in\mathscr{X}$.
\end{itemize}
\end{thmc}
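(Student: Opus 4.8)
The implication (iii)$\Rightarrow$(ii) is immediate: apply (iii) to the one-element family $\mathscr{X}=\{X\}$. For (ii)$\Rightarrow$(i), apply (ii) with $X=G$; since $\CL_{\Zar_G}(G)=G$, this merely asserts that $G$ admits \emph{some} precompact metric group topology $\mathcal{T}$. A totally bounded metric space is separable — for each $n\in\N$ choose a finite $\tfrac1n$-net and take the union — and a separable metric space has at most $\cont$ points; hence $|G|\le\cont$. Note that (ii) does \emph{not} obviously imply (iii), because passing to a finer topology in order to separate more points can only shrink closures and so might destroy the equality $\CL_{\mathcal{T}}(X)=\CL_{\Zar_G}(X)$ for some member of the family; thus the whole content of the theorem is the implication (i)$\Rightarrow$(iii), which in particular subsumes and strengthens Markov's result for countable groups.

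The plan for (i)$\Rightarrow$(iii) is to build the topology from characters. A precompact metric Hausdorff group topology on $G$ is exactly the initial topology of a countable family $\mathcal{C}\subseteq\mathrm{Hom}(G,\T)$ separating the points of $G$ (the diagonal map $G\to\T^{\,\omega}$ is then an embedding), and such a family exists under (i) because $|G|\le\cont$ makes $G$ embed algebraically into $\T^{\,\omega}$: pass to the divisible hull of $G$ and use that $\T^{\,\omega}$, as an abstract group, contains an isomorphic copy of every divisible abelian group of cardinality at most $\cont$. For a topology $\mathcal{T}$ arising this way, the inclusion $\Zar_G\subseteq\mathcal{T}$ from Fact~\ref{three:topologies} already gives $\CL_{\mathcal{T}}(X)\subseteq\CL_{\Zar_G}(X)$ for all $X$. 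So, writing $\mathscr{X}=\{X_k:k\in\omega\}$ and $Q_k=\CL_{\Zar_G}(X_k)$, it remains to choose a countable point-separating $\mathcal{C}$ so that in addition $Q_k\subseteq\CL_{\mathcal{T}}(X_k)$ for every $k$ — equivalently, so that no finite subfamily of $\mathcal{C}$ ever $\varepsilon$-separates a point of $Q_k$ from $X_k$. I would first unwind both sides concretely: every algebraic subset of an abelian group is an intersection of finite unions of cosets of the subgroups $G[n]=\{x\in G:nx=0\}$, which translates membership in $Q_k$ into combinatorial data about how $X_k$ meets cosets of these subgroups; and $Q_k\subseteq\CL_{\mathcal{T}}(X_k)$ translates into the finitary demands ``for each $q\in Q_k$, each finite $F\subseteq\omega$ and each $\varepsilon>0$ there is $x\in X_k$ with $|\chi(x)-\chi(q)|<\varepsilon$ for all $\chi$ in the subfamily indexed by $F$''.

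The characters $\mathcal{C}=\{\chi_n:n\in\omega\}$ are then constructed by a recursion of length $\omega$ that alternates \emph{separation steps}, adjoining a character distinguishing the next so-far-unseparated pair of points of $G$, with \emph{approximation steps}, adjoining a character securing the next finitary demand. Throughout the recursion one maintains an invariant relating the cosets of $\bigcap_{i<n}\ker\chi_i$ to the sets $X_k$ and their Zariski closures $Q_k$ — strong enough that the approximation steps can be carried out, yet robust enough to survive the adjunction of further characters — which forces the separation steps to use characters whose kernels are suitably ``large'' relative to all the $X_k$ and $Q_k$ simultaneously. The crux is a structural lemma, powered by the coset description of algebraic sets together with a counting of the relevant combinatorial dimension, guaranteeing that at every stage a character with the required separating behaviour \emph{and} a large enough kernel does exist, uniformly in $k$; this is where the hypothesis $|G|\le\cont$ is genuinely used, both to accommodate $G$ inside $\T^{\,\omega}$ and to keep the bookkeeping within $\omega$ steps. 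I expect this simultaneous-satisfiability lemma to be the main obstacle: even for $G=\Z$ and the single set $X=2\Z$, whose $\Zar_\Z$-closure is all of $\Z$, the character $\Z\to\Z(2)\hookrightarrow\T$ makes $2\Z$ clopen and hence not dense, while a character $\Z\to\T$ with irrational image keeps $2\Z$ dense, so already the choice of each $\chi_n$ is delicate and far from canonical. Once the structural lemma is available, verifying that the resulting $\mathcal{T}$ is precompact, metric and Hausdorff and that $\CL_{\mathcal{T}}(X_k)=Q_k$ for every $k$ should be routine.
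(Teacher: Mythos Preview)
Your handling of (iii)$\Rightarrow$(ii) and (ii)$\Rightarrow$(i) is fine and matches the paper. For (i)$\Rightarrow$(iii) you have the right \emph{framework}---a countable separating family of characters $G\to\T$ yielding an embedding into $\T^\omega$, built by a recursion that interleaves separation with approximation---and you correctly locate the difficulty in the ``structural lemma'' that guarantees a good character exists at each stage. But this lemma is exactly where the paper's work lies, and your proposal does not supply it.

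The paper's route differs from yours in one decisive respect: rather than trying to ensure directly that every $q\in Q_k$ is $\mathcal{T}$-approximable by $X_k$, it first \emph{reduces} the problem to a much more uniform one. Using the Noetherian decomposition of $\CL_{\Zar_G}(X_k)$ into irreducible components (Theorem~\ref{computing:the:closure}), each $X_k$ is shown to contain finitely many translates of \round{n} sets $S$ whose closures are precisely the cosets $a+G[n]$ making up $Q_k$; Corollary~\ref{technique:for:realizing:Zariski:closure} then says it suffices to find $\mathcal{T}$ with $\CL_{\mathcal{T}}(S)=G[\n_S]$ for a \emph{countable} family $\mathscr{S}\subseteq\TT(G)$. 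This replaces your amorphous invariant about cosets of $\bigcap_{i<n}\ker\chi_i$ with a concrete target: make each $S\in\mathscr{S}$ map densely into $\T[\n_S]^\N$. The single-character step (Lemma~\ref{lemma:Forcing_paper}, quoted from \cite{DS}) is tailored to \round{n} sets---given $g\neq 0$ and a countable $\mathscr{E}$, it produces one $\chi:G\to\T$ with $\chi(g)\neq 0$ such that every $E\in\mathscr{E}\cap\TT_n(G)$ hits every arc of $\T[n]$ infinitely often---and the recursion (Lemma~\ref{lemma:metrizable:case}) bookkeeps not cosets of partial kernels but rather the family of infinite preimages $\{x\in E:\chi_k(x)\in U\}$, which are themselves \round{n} and so feed back into the same lemma.

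So the gap in your plan is real: without the \round{n}/$\Zar_G$-atom machinery of Sections~\ref{section:5}--\ref{Closure:section}, your ``structural lemma'' is essentially a restatement of the theorem. Your $\Z$, $X=2\Z$ example already hints at the issue---what saves the day there is that $2\Z$ is \round{0}, and the character with irrational image is exactly what Lemma~\ref{lemma:Forcing_paper} produces.
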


It should be noted that item (iii) cannot be pushed further to accommodate families of size $\omega_1$; see Remark \ref{closure:of:many:sets:cannot:be:realized}.

A counterpart to this theorem, with the word "metric" removed from both items (ii) and (iii) and the inequality in item (i) relaxed to $|G|\le 2^\cont$, is proved in our subsequent paper \cite{DS_Kronecker}.

\subsection{Characterization of potentially dense subsets  of abelian groups of size at most $\cont$}

The last section of Markov's paper \cite{Mar} is exclusively dedicated to the following problem: {\em which subsets of a group $G$ are potentially dense in $G$?\/} Markov succeeded in proving that every infinite subset of $\Z$ is potentially dense in $\Z$ \cite{Mar}. This was strengthened in \cite[Lemma 5.2]{DT0} by proving that every infinite subset of $\Z$ is dense in some precompact metric group topology on $\Z$.  (The authors of \cite{Mar} and \cite{DT0} were  apparently unaware that both these results easily follow from  the uniform distribution theorem of Weyl \cite{W}.) Further progress was  made by Tkachenko and Yaschenko \cite{TY}, who proved the following theorem: if an abelian group $G$ of size at most $\cont$  is either almost  torsion-free or has exponent $p$ for some prime $p$, then every infinite subset of $G$ is potentially dense in $G$. (According to \cite{TY}, an abelian group $G$ is {\em almost torsion-free\/} if $r_p(G)$ is finite for every prime $p$.)  In this manuscript, we obtain the following complete characterization of potentially dense subsets of abelian groups of size at most $\cont$:

\begin{thmd} Let $X$ be a subset of an abelian group $G$ such that $|G|\le\cont$. Then the following conditions are equivalent:
\begin{itemize}
 \item[(i)] $X$ is potentially dense in $G$,
 \item[(ii)] $X$ is $\mathcal{T}$-dense in $G$ for some precompact metric group topology on $G$,
 \item[(iii)] $\CL_{\Zar_G}(X)=G$.
\end{itemize}
\end{thmd}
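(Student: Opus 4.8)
The statement is a chain of three equivalent conditions, so the natural plan is to prove the cycle $(ii)\Rightarrow(i)\Rightarrow(iii)\Rightarrow(ii)$; only the last implication will use the cardinality hypothesis $|G|\le\cont$ together with the main results of the paper, while the first two are formal.

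The implication $(ii)\Rightarrow(i)$ is immediate from Definition~\ref{Markov's:definition}(d): a precompact metric group topology is in particular a Hausdorff group topology, so a subset that is dense in such a topology is potentially dense in $G$.

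For $(i)\Rightarrow(iii)$, suppose $X$ is dense in $(G,\mathcal{T})$ for some Hausdorff group topology $\mathcal{T}$ on $G$, that is, $\CL_{\mathcal{T}}(X)=G$. Since $\Mar_G$ is the infimum of all Hausdorff group topologies on $G$, Fact~\ref{three:topologies} yields $\Zar_G\subseteq\Mar_G\subseteq\mathcal{T}$; hence every $\Zar_G$-closed subset of $G$ is $\mathcal{T}$-closed, and therefore $\CL_{\mathcal{T}}(X)\subseteq\CL_{\Zar_G}(X)$. Consequently $G=\CL_{\mathcal{T}}(X)\subseteq\CL_{\Zar_G}(X)\subseteq G$, so $\CL_{\Zar_G}(X)=G$. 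I note that this step needs no restriction on $|G|$.

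Finally, $(iii)\Rightarrow(ii)$ is where the genuine content lies, but it has essentially already been carried out: assuming $|G|\le\cont$, we are in the situation of condition (i) of Theorem~C, so by the implication $(i)\Rightarrow(ii)$ of that theorem, applied to the given set $X$, there is a precompact metric group topology $\mathcal{T}_X$ on $G$ with $\CL_{\mathcal{T}_X}(X)=\CL_{\Zar_G}(X)$. Combining this with the hypothesis $\CL_{\Zar_G}(X)=G$ shows that $\CL_{\mathcal{T}_X}(X)=G$, i.e.\ $X$ is $\mathcal{T}_X$-dense in $G$, which is exactly $(ii)$. Thus the only real difficulty --- constructing a precompact metrizable group topology realizing a prescribed Zariski closure --- is absorbed into the proof of Theorem~C, and the present argument is merely the assembly of the three implications into a cycle; I expect no additional obstacle here.
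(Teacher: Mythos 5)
Your proposal is correct and follows essentially the same route as the paper: the implications (ii)$\Rightarrow$(i) and (i)$\Rightarrow$(iii) are the same formal observations, and for (iii)$\Rightarrow$(ii) the paper invokes Theorem \ref{Main:theorem} directly, which is exactly the content of the implication (i)$\Rightarrow$(ii) of Theorem~C that you cite. No gap.
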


Note that a precompact metric group has size at most $\cont$, so item (ii) of Theorem D implies $|G|\le \cont$. Therefore, this cardinality restriction in Theorem D is necessary. A counterpart of this  theorem, with the word "metric" removed from  item (ii) and  the condition on a group $G$  relaxed to $|G|\le 2^\cont$, is proved in our  forthcoming
paper \cite{DS_Kronecker}.

Item (iii) of Theorem D highlights the importance of characterizing Zariski dense subsets of an abelian group $G$. This is accomplished in Theorem \ref{characterizing:Zariski:dense:sets}. Moreover, we explicitly calculate the Zariski closure of an arbitrary subset of $G$; see Theorem \ref{computing:the:closure} and its corollaries in Section \ref{Closure:section}.
 
\subsection{The structure of the Zariski topology $\Zar_G$ of an abelian group}

Bryant \cite{Bryant} established that $\Zar_G$ is Noetherian for an abelian-by-finite group $G$; for the convenience of the reader, we  provide a self-contained proof of this fact in the abelian  case in Theorem \ref{Zariski.topology.is.Noetherian}. In particular, for an abelian group $G$, the space $(G,\Zar_G)$ has common properties shared by all Noetherian ($T_1$) spaces; see Facts \ref{Noetherian:facts} and  \ref{decomposition:into:irreducible:components}. We prove that, for an abelian group $G$, the space $(G,\Zar_G)$ also has some specific properties not shared by all Noetherian spaces:

\begin{itemize}
\item[($\alpha$)]
Every subspace $X$ of $(G,\Zar_G)$ contains a countable subset $Y$ that is dense  in $X$, i.e., $(G,\Zar_G)$ is {\em hereditarily separable\/},  using topological terminology; see \cite{Eng}.
\item[($\beta$)]
The subset $Y$ of $X$ from item ($\alpha$) can be chosen  so that it has the cofinite topology; it now follows that for every  point $x$ in $\Zar_G$-closure of $X$ one can find a sequence $\{y_n:n\in\mathbb{N}\}$ of points of $Y$ such that the sequence $y_n$ converges to $x$ in  the Zariski topology $\Zar_G$.  This last property implies that $(G,\Zar_G)$ is {\em  Fr\'echet-Urysohn\/},  using topological terminology; see \cite{Eng}.
\item[($\gamma$)]
Irreducible components of an {\em elementary\/} algebraic set are disjoint, but irreducible components of an   algebraic set need not be disjoint.
\end{itemize}

Item ($\alpha$) is proved in Corollary \ref{hereditarily:separable}, item ($\beta$) is proved in Corollary \ref{Frechet-Urysohn}, and item ($\gamma$) follows from Corollary \ref{connected:vs:irreducible}(ii) and Example \ref{non-disjoint:components}.

\subsection{A brief overview of the structure of the paper}

The paper is organized as follows. Since the Zariski topology $\Zar_G$ of an abelian group $G$ is Noetherian, we recall basic properties of Noetherian spaces in Section \ref{section:2}. 
Irreducible sets play a principal role in the geometry of Noetherian spaces, as witnessed by the fact that every subset $X$ of a Noetherian space is a union of a uniquely determined finite family of irreducible sets;  these irreducible sets are the so-called irreducible components
(that is, the maximal irreducible subsets) of $X$. At the end of Section \ref{section:2}, we recall  also  relevant facts about combinatorial dimension $\dim$.
 
Section \ref{section:3} introduces the Zariski topology $\Zar_G$ and establishes its basic properties. For every abelian group $G$, the pair $(G,\Zar_G)$ is a so-called quasi-topological group, that is, a $T_1$-group in the terminology of Kaplansky \cite{K},  but  it is not a topological group unless $G$ is finite; see Corollary \ref{Zariski:topology:never:group}.
Motivated by items ($\alpha$) and ($\beta$) above, we call a countably infinite subset $X$ of an abelian group $G$ a {\em \Zc{\Zar_G}} if $X$ has the cofinite 
topology (that is, the coarsest $T_1$ topology) as a subspace of $(G,\Zar_G)$;  see Definition \ref{definition:of:Zariski:atom}. An important result demonstrating why 
\Zc{\Zar_G}s are so useful is Proposition \ref{general:properties:of:atoms} which says, in particular, that the Zariski closure of a \Zc{\Zar_G}\ is an irreducible (and thus,  an elementary algebraic) set. Furthermore, \Zc{\Zar_G}s are precisely the irreducible one-dimensional countably infinite subsets of $(G,\Zar_G)$; see Fact \ref{FactZcurve}.  

Since the topology of any subset of $(G,\Zar_G)$ is completely determined by its finitely many irreducible components, 
we study in detail Zariski irreducible sets in Section \ref{section:4}.  In Theorem \ref{essential:order:and:components}(ii), we prove that the irreducible component of zero and the connected component of zero of  the space $(G,\Zar_G)$ are both equal to $G[n]$, where $n$  coincides with the so-called essential order $eo(G)$ of the group $G$   (defined 
by Givens and Kunen \cite{GK} in the case of a bounded group $G$; see  Definition \ref{definition:of:essential:order}). 

Section \ref{section:5} provides the ``technical core'' of the  manuscript. A prominent role in our paper is played by the notion of an \round{n} set; see Definition \ref{def:of:almost:n:torsion:sets}. To  assist the reader in better understanding of this notion,   we provide its equivalent forms in Lemma \ref{equivalent:condition:for:round:sets}. The equivalent condition for $n=0$ is particularly clear:  A subset of an abelian group $G$ is \round{0} if and only if its intersection  with every coset of the form $a+G[m]$, where $a\in G$ and $m\in\mathbb{N}\setminus\{0\}$, is finite; see Corollary \ref{0-round:vs:n(-)=0}.
The general case is similar to this special case, except that the finiteness condition is imposed only on integers $m$ that are proper divisors of $n$; see Lemma \ref{equivalent:condition:for:round:sets}(ii). Proposition \ref{round:sets:as:dense:atoms} uncovers a connection between the algebraic notion of an \round{n} set and the topological notion of a \Zc{\Zar_G}. The main result of Section \ref{section:5} is Proposition \ref{finding:n-round:sets} characterizing  sets that contain \round{n} subsets.
An especially simple characterization of sets containing an \round{0} set is  given in Proposition \ref{sets:that:contain:zero-round:sets}. Corollary \ref{description:of:when:there:are:n-round:sets}, Corollary  \ref{groups:that:conatin:0-round:sets} and  Proposition \ref{almost:torsion-free:groups:by:means:of:n-round}
describe abelian groups that do, or do not, contain \round{n} sets, for different integers $n$. 

Section \ref{section:6} reveals the main reason for considering \round{n} sets in this manuscript.  Indeed, in Theorem \ref{corollary:about:atoms:being:translates:of:round:sets} 
we prove that every \Zc{\Zar_G} $X$ is a translate of some  \round{n} set, for a suitable integer $n$ that is {\em uniquely determined\/} by $X$.
This creates an essential bridge between algebra (\round{n} sets) and topology (\Zc{\Zar_G}s).   Another crucial result in this section is Corollary \ref{closure:irreducible:sets} saying that Zariski irreducible sets are precisely those  sets that contain a $\Zar_G$-dense \Zc{\Zar_G}
 (or, equivalently, those sets that contain a $\Zar_G$-dense translate of an \round{n} set, for a suitable integer $n$). In particular, 
Zariski closures of \Zc{\Zar_G}s give all closed irreducible subsets of $(G,\Zar_G)$; see Corollary \ref{closures:of:atoms}. Since every subspace $X$ of $(G,\Zar_G)$ is a finite union of its irreducible components, this shows that \Zc{\Zar_G}s (that is, translates of \round{n} sets) completely determine the topology of $X$.

Since a potentially dense set is Zariski dense,  as a necessary step towards solving  Markov's potential density problem, we study Zariski dense sets in Section \ref{section:7}. Theorem \ref{characterizing:Zariski:dense:sets} completely describes such sets by means of (translates of) \round{n} sets. Zariski dense subsets of unbounded abelian groups are given an especially nice and simple characterization in Theorem \ref{Zariski:dense:sets:in:unbounded:groups}.

Building on results from previous sections, in Section \ref{Closure:section} we  derive a complete description of the Zariski closure of an arbitrary set via \Zc{\Zar_G}s or, equivalently,  translates of \round{n} sets.  

    Theorem C is proven in Section \ref{realizing:closure:section}, while Theorem A is proven in Section \ref{section:10}. Finally, Theorem D is proven in Section \ref{section:11}.  In Section \ref{open:questions} we collect some open questions about
 the Zariski and Markov topologies in the non-abelian case, based on  the results  in the abelian case. 
 
\section{Background on Noetherian spaces, irreducible components and combinatorial dimension $\dim$}
\label{section:2}

Recall that a topological space is said to be {\em Noetherian\/} if it satisfies the ascending chain condition on open sets,  or the equivalent descending chain condition on closed sets. Kaplansky \cite[Chap. IV, p. 26]{K} calls the  Noetherian $T_1$-spaces {\em $Z$-spaces}. We summarize here the key (mostly well-known) properties of Noetherian spaces.

\begin{fact}
\label{Noetherian:facts}
\begin{itemize}
  \item[(1)] A subspace of a Noetherian space is Noetherian.
  \item[(2)] If $f:X\to Z$ is a continuous surjection and $X$ is Noetherian, then so is $Z$.
  \item[(3)] Every (subspace of a) Noetherian space is compact.
  \item[(4)] Every non-empty family of closed subsets of a Noetherian space has a minimal element under  set inclusion.
  \item[(5)] Every infinite  subspace $Y$ of a Noetherian space $X$ contains an infinite subspace $Z$ such that every proper closed subset of  $Z$  is finite.
  \item[(6)] No infinite subspace of a Noetherian space can be Hausdorff; in particular, every Hausdorff Noetherian space is finite.
\end{itemize}
\end{fact}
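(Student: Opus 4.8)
The plan is to prove the six items in the order listed, since the later ones build on the earlier ones; throughout I would use freely the two equivalent definitions of Noetherianity (the ascending chain condition on open sets and the descending chain condition on closed sets) together with the standard order-theoretic fact that a partially ordered set satisfying the ascending (resp.\ descending) chain condition has a maximal (resp.\ minimal) element in every non-empty subfamily.

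For item (1), let $Y$ be a subspace of a Noetherian space $X$ and let $F_1\supseteq F_2\supseteq\cdots$ be a descending chain of closed subsets of $Y$. The only subtlety is that the ambient closures $\CL_X(F_n)$ need not themselves form a descending chain, so instead I would put $G_n=\bigcap_{k\le n}\CL_X(F_k)$: this is a descending chain of closed subsets of $X$, hence stabilizes, and since each $F_k$ is closed in $Y$ one has $\CL_X(F_k)\cap Y=F_k$, whence $G_n\cap Y=\bigcap_{k\le n}F_k=F_n$; thus the original chain stabilizes as well. Item (2) is immediate: a continuous surjection $f:X\to Z$ pulls an ascending chain of open subsets of $Z$ back to an ascending chain of open subsets of $X$, and surjectivity gives $U=f(f^{-1}(U))$, so stabilization upstairs forces stabilization downstairs. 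Item (4) is merely the order-theoretic reformulation of the descending chain condition on closed sets.

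For item (3), by (1) every subspace of a Noetherian space is Noetherian, so it suffices to show that a Noetherian space $X$ is compact; arguing by contradiction, if an open cover of $X$ had no finite subcover, the family of all finite unions of its members would be a non-empty family of open sets with no maximal element (each such union, being a proper subset of $X$, is strictly contained in a larger one), contradicting the ascending chain condition. Item (5) follows from (1) and (4): the subspace $Y$ is infinite and Noetherian, so the non-empty family of its infinite closed subsets has a minimal element $Z$; since $Z$ is closed in $Y$, every proper closed subset of $Z$ is also closed in $Y$ and strictly smaller than $Z$, hence finite by minimality of $Z$.

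The step I expect to require the most care — the ``punchline'' — is item (6). Given an infinite subspace $Y$ of a Noetherian space, item (1) makes $Y$ Noetherian and item (5) produces an infinite subspace $Z\subseteq Y$ all of whose proper closed subsets are finite, so it is enough to see that such a $Z$ cannot be Hausdorff. Choosing distinct $a,b\in Z$ and, under the Hausdorff assumption, disjoint open neighbourhoods $U$ of $a$ and $V$ of $b$ in $Z$, the sets $Z\setminus U$ and $Z\setminus V$ are closed and proper (they miss $a$, resp.\ $b$), hence both finite; but $(Z\setminus U)\cup(Z\setminus V)=Z\setminus(U\cap V)=Z$, forcing $Z$ to be finite, a contradiction. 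The ``in particular'' clause is then the case $Y=X$. I do not anticipate a genuine obstacle anywhere; the only points needing attention are the subspace-topology bookkeeping in (1) and the observation that in (6) it is the combination of (5) with Hausdorff separation — not compactness on its own — that yields the desired finiteness.
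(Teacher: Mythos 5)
Your proposal is correct and follows essentially the same route as the paper, which dismisses (1), (2), (4) as straightforward and proves (3), (5), (6) exactly by your scheme: reduce to subspaces via (1), extract a minimal infinite closed set for (5), and derive (6) from the set $Z$ produced by (5). One tiny quibble in (1): since closure is monotone, the sets $\CL_X(F_k)$ already form a descending chain, so your intersection trick, while harmless, is unnecessary.
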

\begin{proof} (1) and (2) are straightforward. 

In view of (1), to show (3)  it suffices to prove that every Noetherian space is compact. This immediately follows from the fact that every descending chain of closed sets 
stabilizes.

(4) follows easily from the fact that the Noetherian spaces satisfy the descending chain condition on closed sets. 

To prove (5), it suffices  to consider the case when $Y=X$, since according to (1), a subspace of a Noetherian space is Noetherian.  Let  $\mathcal{F}$ be the family of all infinite closed subsets of $X$. Since $X\in\mathcal{F}\not=\emptyset$, we can use item (4) to find a minimal element $Z$ of $\mathcal{F}$. Clearly, every proper closed subset of $Z$ is finite.

(6) Let $X$ be an infinite Noetherian space. Apply item (5) to $Y=X$ to get an infinite subspace $Z$ of $Y=X$ as in the conclusion  of this item. Since $Z$ is an infinite space without proper infinite closed subsets, $Z$ cannot be Hausdorff. Since $Z$ is a subspace of $X$, we conclude that $X$ is not Hausdorff either. 
\end{proof}

Bryant \cite{Bryant} discovered a useful technique for building Noetherian spaces: 

\begin{fact}
\label{building:Noetherian:spaces}
{\rm (\cite{Bryant})}
Let $\mathscr{E}$ be a family of subsets of a set $X$ closed under finite intersections and satisfying the descending chain condition. Assume also that $X\in \mathscr{E}$. Then the family $\fin{\mathscr{E}}$ consisting of finite unions of the members of $\mathscr{E}$ forms the family of closed sets of a unique topology $\mathcal{T}_{\mathscr{E}}$ on 
$X$ such that the space $(X, \mathcal{T}_{\mathscr{E}})$ is a Noetherian space.
\end{fact}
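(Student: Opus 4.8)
The plan is to verify the closed-set axioms for the family $\fin{\E}$ and then read off the Noetherian property from the descending chain condition on $\E$. First I would check the axioms for a family of closed sets. The family $\fin{\E}$ contains $X$ because $X\in\E\subseteq\fin{\E}$, and it contains $\emptyset$ as the empty union of members of $\E$ (or one can simply require this, or observe $\emptyset$ is the union over an empty index set). Closure under finite unions is immediate from the definition: a finite union of finite unions of members of $\E$ is again a finite union of members of $\E$. The only axiom requiring work is closure under arbitrary intersections. For this I would first treat the binary case: given $A=E_1\cup\cdots\cup E_k$ and $B=F_1\cup\cdots\cup F_m$ with all $E_i,F_j\in\E$, distributivity gives $A\cap B=\bigcup_{i,j}(E_i\cap F_j)$, and each $E_i\cap F_j$ lies in $\E$ since $\E$ is closed under finite intersections; hence $A\cap B\in\fin{\E}$. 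By induction, $\fin{\E}$ is closed under \emph{finite} intersections.

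The passage from finite to arbitrary intersections is where the descending chain condition on $\E$ does the real work, and this is the step I expect to be the main (though modest) obstacle. Let $\{A_\lambda:\lambda\in\Lambda\}$ be an arbitrary family in $\fin{\E}$ and set $A=\bigcap_{\lambda\in\Lambda}A_\lambda$. Consider the family $\mathscr{G}$ of all sets of the form $A_{\lambda_1}\cap\cdots\cap A_{\lambda_r}$ for finite $\{\lambda_1,\ldots,\lambda_r\}\subseteq\Lambda$; by the previous paragraph every member of $\mathscr{G}$ lies in $\fin{\E}$. Now each such finite intersection, after distributing, is a finite union of members of $\E$; I would argue that the collection of all the $\E$-sets appearing across all members of $\mathscr{G}$, being closed under the operation of intersecting finitely many of them and satisfying the descending chain condition inherited from $\E$, has minimal elements, and use this to show that the infimum $A=\bigcap\mathscr{G}$ is actually \emph{attained} by some finite subintersection $A_{\lambda_1}\cap\cdots\cap A_{\lambda_r}$. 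Concretely: the family $\mathscr{G}$ itself, ordered by inclusion, satisfies the descending chain condition (since each strictly descending chain in $\mathscr{G}$ would, by the distributed form, force a strictly descending chain among the finitely-generated sub-collection of $\E$-members, contradicting DCC on $\E$); hence $\mathscr{G}$ has a minimal element $M=A_{\lambda_1}\cap\cdots\cap A_{\lambda_r}$. For any $\mu\in\Lambda$, $M\cap A_\mu\in\mathscr{G}$ and $M\cap A_\mu\subseteq M$, so by minimality $M\cap A_\mu=M$, i.e. $M\subseteq A_\mu$. Therefore $M\subseteq A\subseteq M$, so $A=M\in\fin{\E}$.

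It follows that $\fin{\E}$ is the family of closed sets of a (necessarily unique) topology $\mathcal{T}_{\E}$ on $X$. Finally I would verify that $(X,\mathcal{T}_{\E})$ is Noetherian, i.e. satisfies the descending chain condition on closed sets. Given a descending chain $C_1\supseteq C_2\supseteq\cdots$ with each $C_i\in\fin{\E}$, I want to show it stabilizes. Writing each $C_i$ as a finite union of $\E$-members and passing to the $\E$-sets that actually occur, one reduces to the observation that a descending chain of finite unions from $\E$ must stabilize because $\E$ satisfies DCC: if it did not stabilize, one could extract (e.g. by a bookkeeping/König-type argument on the finitely many summands at each stage, or directly via the fact that $\fin{\E}$ inherits DCC as shown implicitly above for $\mathscr{G}$) a strictly descending chain in $\E$, a contradiction. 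Hence the chain $\{C_i\}$ stabilizes and $(X,\mathcal{T}_{\E})$ is Noetherian, completing the proof.
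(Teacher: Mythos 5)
Your overall architecture is right, and your reduction of arbitrary intersections to the existence of a minimal element of $\mathscr{G}$ is correct as far as it goes. The genuine gap is that the one claim carrying all the weight --- that $\fin{\mathscr{E}}$ (equivalently your $\mathscr{G}$, and again the chain $C_1\supseteq C_2\supseteq\cdots$ in your last paragraph) inherits the descending chain condition from $\mathscr{E}$ --- is asserted rather than proved. The parenthetical justification ``a strictly descending chain in $\mathscr{G}$ would, by the distributed form, force a strictly descending chain among the $\mathscr{E}$-members'' is not a valid one-step inference: the distributed form of a strictly descending sequence of finite unions does not by itself produce any descending sequence of constituents. Indeed, the implication is simply false if one drops closure under intersections, which shows that a correct argument must use that hypothesis in an essential, structured way: take $X=\N\times\{0,1\}$, $A_n=(\{n\}\times\{0\})\cup([n,\infty)\times\{1\})$, $B_n=(\{n\}\times\{1\})\cup([n,\infty)\times\{0\})$, and $\mathscr{E}=\{X\}\cup\{A_n:n\in\N\}\cup\{B_n:n\in\N\}$; this family is an antichain together with $X$, so it satisfies the descending chain condition trivially, yet $A_n\cup B_n=[n,\infty)\times\{0,1\}$ is an infinite strictly descending chain of finite unions. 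Your appeal in the final paragraph to ``a bookkeeping/K\"onig-type argument'' gestures at the missing proof but never carries it out; since this inheritance of the chain condition is exactly the mathematical content of the Fact (the paper itself gives no proof, citing Bryant), leaving it at that level leaves the proof incomplete.

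The standard repair is a minimal-counterexample induction inside $\mathscr{E}$. Suppose $\fin{\mathscr{E}}$ admits an infinite strictly descending chain; since $X\in\mathscr{E}$, some member of $\mathscr{E}$ contains such a chain, and by the chain condition on $\mathscr{E}$ we may choose a minimal such member $E_0$, witnessed by $C_1\supsetneq C_2\supsetneq\cdots$ with all $C_i\subseteq E_0$. Writing $C_i=\bigcup_{l}E_{i,l}$ with $E_{i,l}\in\mathscr{E}$ and replacing $E_{i,l}$ by $E_{i,l}\cap E_0\in\mathscr{E}$, we may assume every $E_{i,l}\subseteq E_0$. If for some $i_0$ all $E_{i_0,l}$ are proper subsets of $E_0$, then for each $l$ the trace $E_{i_0,l}\cap C_m=\bigcup_j(E_{i_0,l}\cap E_{m,j})$, $m\ge i_0$, is a descending chain of members of $\fin{\mathscr{E}}$ contained in $E_{i_0,l}\subsetneq E_0$, hence stabilizes by the minimality of $E_0$; as $C_m=\bigcup_l(E_{i_0,l}\cap C_m)$ for $m\ge i_0$, the chain $(C_m)$ stabilizes, a contradiction. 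Otherwise every $C_i$ has $E_0$ among its constituents, so $C_i=E_0$ for all $i$, again contradicting strict descent. This establishes the descending chain condition on $\fin{\mathscr{E}}$, from which your own minimal-element argument for arbitrary intersections and the Noetherian property of $(X,\mathcal{T}_{\mathscr{E}})$ both follow immediately.
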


From this fact, we can easily show that finite products of Noetherian spaces are Noetherian.

Recall that a topological space $X$ is called {\em irreducible\/}  ({\em connected\/})  provided that for every partition  (respectively, disjoint partition) $X=F_0\cup F_1$ of $X$ into closed sets $F_0$ and $F_1$, either $F_0=X$ or $F_1=X$ holds. Note that  an irreducible space is connected. For an example of a connected $T_1$-space that is not irreducible, take the reals or any connected infinite Hausdorff space. 

An easy  induction establishes the following fact.

\begin{fact}
\label{fact:on:finite:unions}
If $X\not=\emptyset$ is an irreducible subset of a space $Y$ and $\mathcal{F}$ is a finite family of closed subsets of $Y$ such that $X\subseteq \bigcup\mathcal{F}$, then $X\subseteq F$ for some $F\in\mathcal{F}$.
\end{fact}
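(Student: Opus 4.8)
The plan is to prove Fact \ref{fact:on:finite:unions} by induction on $|\mathcal{F}|$, using only the definition of irreducibility. The base case $|\mathcal{F}|=1$ is immediate: if $X\subseteq\bigcup\mathcal{F}=F$ for the single member $F\in\mathcal{F}$, there is nothing to prove. (The case $\mathcal{F}=\emptyset$ cannot occur since $X\neq\emptyset$ and $X\subseteq\bigcup\emptyset=\emptyset$ is impossible, so we may assume $\mathcal{F}$ is non-empty throughout.)

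For the inductive step, suppose the statement holds for all families of size at most $k$, and let $\mathcal{F}=\{F_1,\ldots,F_{k+1}\}$ be a family of closed subsets of $Y$ with $X\subseteq\bigcup\mathcal{F}$. Set $F=F_{k+1}$ and $F'=F_1\cup\cdots\cup F_k$; both are closed in $Y$, hence $F\cap X$ and $F'\cap X$ are closed in the subspace $X$, and their union is all of $X$. Since $X$ is irreducible, either $F\cap X=X$ — in which case $X\subseteq F=F_{k+1}\in\mathcal{F}$ and we are done — or $F'\cap X=X$, i.e., $X\subseteq F'=F_1\cup\cdots\cup F_k$. In the latter case we apply the inductive hypothesis to the family $\{F_1,\ldots,F_k\}$ of size $k$ to obtain some $F_i\in\mathcal{F}$ with $X\subseteq F_i$, which completes the induction.

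There is essentially no obstacle here; the only point requiring a moment's care is that irreducibility, as defined in the excerpt, refers to partitions of the space $X$ itself into \emph{closed} subsets of $X$, so one must pass from the closed sets $F$, $F'$ of the ambient space $Y$ to their traces $F\cap X$, $F'\cap X$, which are closed in the subspace topology on $X$. Once this is noted, the argument is the routine "two-set case plus induction" and the write-up below records it.

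\begin{proof}
We argue by induction on the number $k$ of members of $\mathcal{F}$. Since $X\not=\emptyset$, we must have $\mathcal{F}\not=\emptyset$, so $k\ge 1$. If $k=1$, say $\mathcal{F}=\{F\}$, then $X\subseteq\bigcup\mathcal{F}=F$, as desired.

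Assume now that the conclusion holds whenever $\mathcal{F}$ has at most $k$ members, and let $\mathcal{F}=\{F_1,\ldots,F_{k+1}\}$ be a family of closed subsets of $Y$ with $X\subseteq\bigcup\mathcal{F}$. Put $F=F_{k+1}$ and $F'=F_1\cup\cdots\cup F_k$. Then $F\cap X$ and $F'\cap X$ are closed subsets of the subspace $X$, and $(F\cap X)\cup(F'\cap X)=\left(\bigcup\mathcal{F}\right)\cap X=X$. Since $X$ is irreducible, either $F\cap X=X$ or $F'\cap X=X$. In the first case $X\subseteq F=F_{k+1}\in\mathcal{F}$, and we are done. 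In the second case $X\subseteq F'=F_1\cup\cdots\cup F_k$, so the inductive hypothesis applied to the family $\{F_1,\ldots,F_k\}$ yields some $i\in\{1,\ldots,k\}$ with $X\subseteq F_i$; since $F_i\in\mathcal{F}$, this completes the inductive step.
\end{proof}
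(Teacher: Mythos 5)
Your proof is correct and matches the paper's approach: the paper simply remarks that ``an easy induction establishes the following fact,'' and your induction on $|\mathcal{F}|$, applying irreducibility to the traces $F\cap X$ and $F'\cap X$ in the subspace $X$, is exactly that routine argument.
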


We omit the easy proof of the following fact. 

\begin{fact}\label{Fact:irreducible}
\begin{itemize}
\item[(i)] A space $X$ is irreducible if and only if every non-empty open subset of $X$ is dense in $X$.
\item[(ii)] Every continuous map from an irreducible space to a Hausdorff space  is constant. In particular, the only irreducible Hausdorff spaces are the singletons.
\item[(iii)] Every space with a dense irreducible subspace is irreducible. 
\item[(iv)] A dense subset $Y$ of a topological space $X$ is irreducible if and only if $X$ is irreducible. 
\end{itemize}
 \end{fact}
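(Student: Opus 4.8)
The plan is to prove (i) first and then bootstrap (ii), (iii) and (iv) from it, since (i) trades the ``partition into closed sets'' definition of irreducibility for the more flexible statement that every non-empty open subset is dense. For the forward implication of (i), given a non-empty open $U\subseteq X$ I would write $X=\CL(U)\cup(X\setminus U)$ as a union of two closed sets; since $U\neq\emptyset$ the piece $X\setminus U$ is a proper closed subset, so irreducibility forces $\CL(U)=X$. For the converse, if $X=F_0\cup F_1$ with $F_0,F_1$ closed and $F_0\neq X$, then $U:=X\setminus F_0$ is non-empty, open, and contained in $F_1$; density of $U$ gives $X=\CL(U)\subseteq F_1$, hence $F_1=X$, so $X$ is irreducible.

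For (ii), suppose $f\colon X\to Z$ is continuous with $Z$ Hausdorff but $f$ is not constant, say $f(x)\neq f(y)$. I would choose disjoint open sets $V\ni f(x)$ and $W\ni f(y)$ in $Z$; then $f^{-1}(V)$ and $f^{-1}(W)$ are non-empty, open, and disjoint in $X$, which contradicts (i), since a dense open set must meet every non-empty open set. The ``in particular'' clause then follows by applying this to the identity map of an irreducible Hausdorff space (keeping the usual convention, implicit throughout the paper, cf.\ Fact~\ref{fact:on:finite:unions}, that irreducible spaces are non-empty, so that the ``singletons'' conclusion is literally correct).

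Items (iii) and (iv) reduce to short closure computations built on (i); note that the forward direction of (iv) is exactly the statement of (iii), so only two directions need work. For (iii): if $Y$ is dense and irreducible in $X$ and $U\subseteq X$ is non-empty open, then $U\cap Y$ is non-empty open in $Y$, hence dense in $Y$ by (i), so $Y=\CL_Y(U\cap Y)\subseteq\CL_X(U\cap Y)\subseteq\CL_X(U)$; consequently $\CL_X(U)\supseteq\CL_X(Y)=X$, and (i) yields that $X$ is irreducible. For the remaining direction of (iv): assume $X$ irreducible and $Y$ dense in $X$, take a non-empty open $V\subseteq Y$, and write $V=U\cap Y$ with $U$ open (and necessarily non-empty) in $X$; then $U$ is dense in $X$ by (i), and using the identity $\CL_X(U\cap Y)=\CL_X(U)$ one gets $\CL_Y(V)=\CL_X(U\cap Y)\cap Y=\CL_X(U)\cap Y=Y$, so $Y$ is irreducible by (i) again.

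There is essentially no genuine obstacle here — everything comes down to (i) and elementary manipulations of closures. The one auxiliary point worth isolating explicitly is the identity $\CL_X(U\cap Y)=\CL_X(U)$ whenever $U$ is open and $Y$ is dense in $X$: the inclusion $\CL_X(U\cap Y)\subseteq\CL_X(U)$ is immediate, and for the reverse one observes that any open neighbourhood $O$ of a point $p\in\CL_X(U)$ meets $U$, so $O\cap U$ is a non-empty open set and hence meets the dense set $Y$, which places $p$ in $\CL_X(U\cap Y)$.
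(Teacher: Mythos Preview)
Your proof is correct in all four parts. The paper actually omits the proof entirely (``We omit the easy proof of the following fact''), so there is nothing to compare against; your argument via (i) and routine closure manipulations is exactly the kind of verification the authors had in mind, and the auxiliary identity $\CL_X(U\cap Y)=\CL_X(U)$ for open $U$ and dense $Y$ is the right lemma to isolate for (iv).
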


If $X$ is a space and $x\in X$, then a maximal element of the family of all connected (irreducible) subsets of $X$ containing $x$,  ordered by set-inclusion, is called the {\em connected component\/}  (respectively, an {\em irreducible component\/}) of $x$. Note that the connected component of a point is uniquely determined, while the irreducible ones need not be unique. Connected and irreducible components are always closed. 

\begin{fact}
\label{decomposition:into:irreducible:components}
{\rm (\cite{Ha})}
 Let $Y$ be a Noetherian space. Then every  subset $X$ of $Y$ admits a unique decomposition  $X=X_0\cup\dots\cup X_n$ into a finite union of  irreducible, relatively closed
  (in the subspace topology of $X$) subsets $X_i$ such that  $X_i\setminus X_j\not=\emptyset$  for $i\not = j$. Moreover, each such $X_i$ is an irreducible component of $X$. 
\end{fact}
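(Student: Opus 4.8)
The plan is to prove Fact \ref{decomposition:into:irreducible:components} by combining the existence of such a decomposition with its uniqueness, exploiting the Noetherian hypothesis at both stages. By Fact \ref{Noetherian:facts}(1), the subspace $X$ is itself Noetherian, so it suffices to treat the case $Y = X$; that is, I will show that a Noetherian space $X$ decomposes uniquely as a finite irredundant union of closed irreducible subsets, and that these are precisely its irreducible components.

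For \emph{existence}, I would run the standard Noetherian induction. Let $\mathcal{F}$ be the family of all closed subsets of $X$ that do \emph{not} admit a finite decomposition into closed irreducible pieces. If $\mathcal{F} \neq \emptyset$, then by Fact \ref{Noetherian:facts}(4) it has a minimal element $F$. Such an $F$ is nonempty and not irreducible (a singleton or, more generally, an irreducible closed set is its own decomposition), so $F = F_0 \cup F_1$ with $F_0, F_1$ proper closed subsets of $F$. By minimality, $F_0 \notin \mathcal{F}$ and $F_1 \notin \mathcal{F}$, so each $F_i$ has a finite decomposition into closed irreducibles; concatenating these gives one for $F$, contradicting $F \in \mathcal{F}$. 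Hence $\mathcal{F} = \emptyset$ and in particular $X = Y_0 \cup \dots \cup Y_m$ with each $Y_i$ closed and irreducible. Finally, discard any $Y_i$ contained in $\bigcup_{j \neq i} Y_j$: since each $Y_i$ is irreducible and contained in the finite union of the closed sets $\{Y_j : j \neq i\}$, Fact \ref{fact:on:finite:unions} would force $Y_i \subseteq Y_j$ for some $j \neq i$, so after finitely many deletions we reach an irredundant subfamily $X = X_0 \cup \dots \cup X_n$ with no $X_i$ contained in another, which is equivalent to the stated condition $X_i \setminus X_j \neq \emptyset$ for $i \neq j$.

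For \emph{uniqueness} and the identification with irreducible components, suppose $X = X_0 \cup \dots \cup X_n = Z_0 \cup \dots \cup Z_k$ are two irredundant decompositions into closed irreducibles. Fix $i$. Since $X_i$ is irreducible and $X_i \subseteq \bigcup_l Z_l$, Fact \ref{fact:on:finite:unions} gives $X_i \subseteq Z_{l}$ for some $l$; applying the same argument to $Z_l \subseteq \bigcup_j X_j$ gives $Z_l \subseteq X_j$ for some $j$, so $X_i \subseteq Z_l \subseteq X_j$. Irredundancy of the $X$-decomposition forces $i = j$, hence $X_i = Z_l$. Thus every $X_i$ appears among the $Z_l$ and, symmetrically, every $Z_l$ appears among the $X_i$, so the two families coincide. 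To see each $X_i$ is an irreducible component (a maximal irreducible subset): if $W \supseteq X_i$ is irreducible in $X$, then $W \subseteq \bigcup_j X_j$, so $W \subseteq X_j$ for some $j$ by Fact \ref{fact:on:finite:unions}, whence $X_i \subseteq X_j$ and irredundancy gives $j = i$, so $W \subseteq X_i$ and $W = X_i$. Conversely, any irreducible component of $X$ is a maximal irreducible subset, hence (being contained in some $X_j$, which is irreducible) equals that $X_j$; so the $X_i$ are exactly the irreducible components.

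I do not expect a serious obstacle here: the whole argument is the textbook Noetherian decomposition, and every ingredient is already available (Fact \ref{Noetherian:facts}(4) supplies the minimal element for the induction, Fact \ref{fact:on:finite:unions} does all the work of pinning irreducible sets inside a single member of a finite closed cover). The only point requiring a little care is bookkeeping the passage from the subspace $X$ of $Y$ to "$Y = X$": one must remember that "closed" throughout means closed \emph{in the subspace topology of $X$}, exactly as the statement says, and that irreducibility is an intrinsic property of the subspace, so no confusion between closure in $X$ and closure in $Y$ can arise.
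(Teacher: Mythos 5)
Your proposal is correct and follows essentially the same route as the paper's own proof: Noetherian induction on a minimal "bad" closed set for existence, Fact \ref{fact:on:finite:unions} to pin each irreducible piece inside a member of the other decomposition for uniqueness, and the same trapping argument for maximality. The only cosmetic differences are your explicit reduction to $Y=X$ via Fact \ref{Noetherian:facts}(1) and your explicit citation of Fact \ref{Noetherian:facts}(4), both of which the paper uses implicitly.
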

\begin{proof}
Assume that $X$ is not a finite union of closed irreducible subspaces. Then the family $\mathcal Z$ of closed subspaces $Z$ of $X$ that are not a finite union of closed irreducible subspaces is non-empty. Choose a minimal element $Z$ of  $\mathcal Z$.  Then $Z$ cannot be irreducible, so $Z=F_0 \cup F_1$, where $F_0$ and $F_1$ are proper closed subspaces of $Z$. Then, by  the minimality of $Z$, both $F_0$ and $F_1$ are finite unions of closed irreducible subspaces of $X$, which yields $Z\not \in \mathcal Z$, giving a contradiction.  This proves the existence of  a decomposition $X=X_0\cup\dots\cup X_n$ of $X$  into a finite union of closed irreducible subsets. By deleting some 
$X_i$, we can assume, without loss of generality, that   $X_i\setminus X_j\not=\emptyset$ for $i\not = j$.

To prove the uniqueness of the decomposition, suppose that $X=X'_0\cup\dots\cup X'_m$ is another decomposition of $X$ into a finite union of closed irreducible subsets such that $X'_i\setminus X'_j\not=\emptyset$ for $i\not = j$.  Fix $i=1,2,\ldots, n$. From $X_i\subseteq X'_0\cup\dots\cup X'_m$ and  Fact \ref{fact:on:finite:unions}, we deduce that $X_i\subseteq X'_j$ for some $j$. Analogously,  from $X'_j \subseteq X_0\cup\dots\cup X_m$ and  Fact \ref{fact:on:finite:unions}, we must have $X'_j \subseteq X_k$ for some $k$. Now $X_i \subseteq X_k$ yields $k=i$, and hence  $X_i =X'_j$. This shows that $m=n$, and thus, $X_k=X'_{f(k)}$ for all $k=0,1,\dots,n$, where $f$ is an appropriate permutation of $n$. 

It remains only to be shown that each $X_i$ is a maximal irreducible subset of $X$. Let $T$ be an irreducible subset of $X$ such that $X_i\subseteq T$. Then $T\subseteq X= X_0\cup\dots\cup X_n$, and Fact \ref{fact:on:finite:unions} yields that $X_i\subseteq T\subseteq X_j$ for some $j$. This implies $i=j$ and $T=X_i$.
\end{proof}

\begin{fact}\label{remark:irreducible:components} Let $X$ be a subset of a Noetherian space $Y$.
\begin{itemize}
\item[(i)] $X$ has finitely many connected components.  Every connected component $C_k$ of $X$ is a clopen subset of $X$, and the finite family of connected components of $X$ forms a (disjoint) partition $X=\bigcup_{k=1}^m C_k$. 
\item[(ii)]
Every irreducible component $X_i$ of $X$ is contained in some connected component $C_k$. 
\end{itemize}
Moreover, if $Y$ is a $T_1$-space, then the following also holds:
\begin{itemize}
\item[(iii)] The set $D$ of isolated points of $X$ is finite,  and $\{d\}$ is a connected component of $X$ for every $d\in D$.
\item[(iv)]
If $X$ is infinite, then the set $X\setminus  D$ has no isolated points, its irreducible components are infinite and coincide with the infinite irreducible components of $X$. 
\end{itemize}
\end{fact}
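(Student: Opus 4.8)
The plan is to reduce everything to the decomposition of $X$ into its finitely many irreducible components furnished by Fact~\ref{decomposition:into:irreducible:components}; this applies because $X$, being a subspace of the Noetherian space $Y$, is Noetherian by Fact~\ref{Noetherian:facts}(1). So write $X=X_0\cup\dots\cup X_n$ with the $X_i$ the irreducible components, so that $X_i\setminus X_j\neq\emptyset$ for $i\neq j$. For (i) and (ii): each $X_i$ is irreducible, hence connected, hence contained in a single connected component of $X$; conversely, a connected component $C$ that meets some $X_i$ must, by maximality of $C$ and connectedness of $X_i$, contain all of $X_i$, so $C$ is the union of the finitely many $X_i$ it meets. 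Thus there are only finitely many connected components $C_1,\dots,C_m$; they partition $X$ and are closed (connected components always are), so, being finitely many closed sets partitioning $X$, each is also open, i.e.\ clopen. This gives (i), and (ii) is the first assertion of this paragraph.

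For (iii), assume $Y$, hence $X$, is $T_1$. The set $D$ of isolated points of $X$ is discrete in the subspace topology; an infinite discrete space fails the descending chain condition on closed sets, contradicting that $D$ is Noetherian as a subspace of $X$ (Fact~\ref{Noetherian:facts}(1)); hence $D$ is finite. For $d\in D$ the singleton $\{d\}$ is open (isolatedness) and closed ($T_1$), so it is clopen and therefore equals the connected component of $d$, since a connected component is contained in every clopen set containing the point.

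For (iv), keep $T_1$ and assume $X$ infinite, so that $X\setminus D$ is infinite. Since $D$ is finite it is closed, hence $X\setminus D$ is clopen in $X$, and so a point isolated in $X\setminus D$ would be isolated in $X$; as such points lie in $D$, the subspace $X\setminus D$ has no isolated points. Because $D$ is a finite clopen discrete subset, $X$ is the \emph{topological sum} of $D$ and $X\setminus D$, whence any irreducible subset of $X$ meeting $D$ is forced to be a single singleton $\{d\}$, and the irreducible components of $X$ are exactly these singletons together with the irreducible components of $X\setminus D$. Finally, each irreducible component $V$ of $X\setminus D$ is infinite: were it finite it would, being $T_1$, be discrete, hence a singleton $\{x\}$ by irreducibility; applying Fact~\ref{decomposition:into:irreducible:components} to $X\setminus D$ then forces the complement there of the union of the remaining components to be precisely $\{x\}$ (it lies in $V$ and contains $x$ since $V\setminus V'\neq\emptyset$ for every other component $V'$), making $\{x\}$ open in $X\setminus D$, which is impossible; and if there were no other component at all, then $X\setminus D=\{x\}$ would not be infinite, again a contradiction. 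Hence the infinite irreducible components of $X$ are exactly the irreducible components of $X\setminus D$.

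I expect the last step to be the only real obstacle: correctly matching the irreducible components of $X$ with those of $X\setminus D$ and excluding singleton irreducible components of $X\setminus D$. This is where the uniqueness refinement $X_i\setminus X_j\neq\emptyset$ in Fact~\ref{decomposition:into:irreducible:components}, together with $T_1$-ness and the absence of isolated points, is genuinely used; everything else is routine bookkeeping with the definitions of connected and irreducible components.
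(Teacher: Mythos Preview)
Your proof is correct. The overall structure parallels the paper's, but you take a somewhat different route in parts (i) and (iii), and you are more explicit in (iv).

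For (i), the paper argues by contradiction: assuming infinitely many connected components, it picks a point from each, applies Fact~\ref{Noetherian:facts}(5) to extract an infinite subspace whose only proper closed subsets are finite, and observes that such a subspace is connected yet meets infinitely many components. You instead invoke the irreducible decomposition from Fact~\ref{decomposition:into:irreducible:components} up front and observe that each connected component is a union of some of the finitely many $X_i$; this is arguably more direct, since it reuses machinery already in hand. For (iii), the paper first shows each $\{d\}$ is a connected component and then invokes (i) to bound $|D|$; you instead argue that an infinite discrete subspace would violate the Noetherian condition. Both arguments are short and essentially equivalent in strength. For (iv), the paper simply asserts that $X=\bigcup_i X_i\cup\bigcup_{d\in D}\{d\}$ is the irreducible decomposition of $X$ and leaves the verification that each $X_i$ is infinite to the reader; you spell out why a singleton irreducible component of $X\setminus D$ would force an isolated point in $X\setminus D$, which is the genuine content hidden in the paper's ``one can easily see''.
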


\begin{proof} 
(i)
Assume that $X$ has infinitely many connected components $\{C_n:n\in\mathbb{N}\}$. Choose a point $y_n\in C_n$ for every $n\in\mathbb{N}$.  
By Fact \ref{Noetherian:facts}(5), there exists an infinite subspace $Z$ of the set $Y=\{y_n:n\in\mathbb{N}\}$ such that all proper closed subsets of $Z$ are finite.
Clearly, $Z$ is connected. Since $Z$  intersects with (infinitely many) distinct connected components of $Y$, we get a contradiction. Therefore, $X$ has finitely many distinct connected components $C_1,\dots,C_m$.

(ii)
Let $X_i$ be an irreducible component of $X$. Since $X_i\subseteq X=\bigcup_{k=1}^m C_k$, from Fact \ref{fact:on:finite:unions} we conclude that $X_i\subseteq C_k$ for some $k=1,\dots,m$.

(iii) Let $d$ be an isolated point of $X$. Since $Y$ is a $T_1$-space, the set $\{d\}$ is both open and closed in $X$. It follows that $\{d\}$ is a connected component of $X$.
Applying (i), we conclude that $D$ is finite.

(iv) Suppose now that $X$ is infinite. Since $Y$ is a $T_1$-space, and $D$ is finite by (iii), $X\setminus D$ is a non-empty open subset of $X$. Therefore, an isolated point of $X\setminus D$ is an isolated point of $X$ as well,  contradicting the definition of $D$. Let $X\setminus D=\bigcup_{i=1}^n X_i$ be the decomposition of $X\setminus D$ into irreducible components of $X\setminus D$. Now one can easily see that  $X=\bigcup_{i=1}^n X_i\cup\bigcup_{d\in D} \{d\}$ is the decomposition of $X$ into irreducible components of $X$.
\end{proof}

A space in which all closed irreducible subsets have a dense singleton is known as a {\em sober} space. Recall that the spectrum of every commutative ring is a sober space. 

\begin{fact}
\label{sober:fact}
A sober Noetherian $T_1$-space must be finite.
\end{fact}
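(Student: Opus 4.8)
The plan is to combine the decomposition of a Noetherian space into finitely many irreducible closed components (Fact \ref{decomposition:into:irreducible:components}) with the elementary observation that, in a $T_1$-space, a dense singleton inside a closed set forces that closed set to \emph{be} the singleton.

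First I would apply Fact \ref{decomposition:into:irreducible:components} to the space $X$ regarded as a subset of itself: this produces a finite decomposition $X = X_0 \cup \dots \cup X_n$ into irreducible subsets $X_i$, each of which is closed in $X$. Next, fix $i \in \{0,1,\dots,n\}$. Since $X_i$ is a closed irreducible subset of $X$ and $X$ is sober, there is a point $x_i \in X_i$ whose closure in $X$ coincides with $X_i$. But $X$ is $T_1$, so $\{x_i\}$ is closed in $X$; hence the closure of $\{x_i\}$ equals $\{x_i\}$, and therefore $X_i = \{x_i\}$ is a singleton. Consequently $X = \{x_0,x_1,\dots,x_n\}$ is finite, as required.

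I do not anticipate any real obstacle: the statement is essentially a formal consequence of the two ingredients above. The only point needing mild care is the correct use of Fact \ref{decomposition:into:irreducible:components} with $X$ itself playing the role of the subset, so that the pieces $X_i$ are genuinely closed \emph{in} $X$ and thus qualify as closed irreducible subsets to which the soberness hypothesis may be applied. As an alternative route that sidesteps the decomposition, one could argue by contradiction: were $X$ infinite, Fact \ref{Noetherian:facts}(5) would yield an infinite subspace $Z$ all of whose proper closed subsets are finite, whence $Z$ is irreducible; its closure in $X$ would be an infinite closed irreducible set, which by soberness together with the $T_1$ axiom would be forced to be a singleton, contradicting the infinitude of $Z$.
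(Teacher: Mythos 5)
Your proposal is correct and follows essentially the same argument as the paper: decompose $X$ into its finitely many closed irreducible components via Fact \ref{decomposition:into:irreducible:components}, use soberness to get a dense point in each, and use the $T_1$ axiom to conclude each component is a singleton. The alternative route you sketch via Fact \ref{Noetherian:facts}(5) also works, but it is not needed.
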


\begin{proof}
Let $X$ be a sober Noetherian $T_1$-space, and let $X=X_0\cup \dots\cup X_n$ be the decomposition of $X$ into its irreducible components; see Fact \ref{decomposition:into:irreducible:components}. Since $X$ is sober, each $X_i$ contains an element $x_i$ such that $\{x_i\}$ is dense in $X_i$. Since $X$ is a $T_1$-space, 
the set $\{x_i\}$ is closed in $X$, which implies $X_i=\{x_i\}$. Therefore, $X=\{x_1,\dots,x_n\}$.
\end{proof}

Given a topological space $X$ and a natural number $n$, we write $\dim X\geq n$ if there exists a strictly increasing chain 
\begin{equation}
\label{eq.no.1}
F_0\subseteq  F_1\subseteq  \ldots \subseteq  F_n 
\end{equation} 
of non-empty irreducible closed subsets of $X$. The {\em combinatorial dimension\/} $\dim X$ of a space $X$ is the smallest  number $n\in\mathbb{N}$ satisfying $\dim X\le n$, if such a number exists, or $\infty$ otherwise. Clearly, every Hausdorff space (as well as every anti-discrete space) has combinatorial dimension 0.  Recall that the Krull dimension of a commutative ring coincides with the combinatorial dimension of its spectrum. 

\begin{fact}\label{FactDim}
Let $X$ be a Noetherian $T_1$-space. 
\begin{itemize}
  \item[(a)] $\dim X>0$ if and only if $X$ is infinite.
  \item[(b)] If  $Y\subseteq X$, then $\dim Y \leq \dim X$.  
  \item[(c)] If $S=\bigcup_{j=1}^k S_j$ is a decomposition of a subset $S$ of $X$ into irreducible components of $S$, then $\dim S=\max_{1\le j\le k}\dim S_j$.
  \item[(d)]  If $Y,Z\subseteq X$, then $\dim (Y\cup Z)=\max \{\dim Y, \dim Z\}$.  
\end{itemize}
\end{fact}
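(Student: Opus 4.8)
The plan is to derive all four items from a few general facts about Noetherian $T_1$-spaces --- chiefly Fact \ref{Noetherian:facts}(5), Fact \ref{Fact:irreducible}, Fact \ref{fact:on:finite:unions} and Fact \ref{decomposition:into:irreducible:components} --- together with the elementary observation that in a $T_1$-space every singleton is closed, so a finite irreducible subspace must be a single point. For item (a), if $X$ is finite then any irreducible closed $F\subseteq X$ with $|F|\ge 2$ splits as $F=\{x\}\cup(F\setminus\{x\})$ into two proper closed subsets (both closed, being finite), contradicting irreducibility; hence every nonempty irreducible closed subset of $X$ is a singleton, no strictly increasing chain of length $1$ of such sets exists, and $\dim X=0$. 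Conversely, if $X$ is infinite, I would apply Fact \ref{Noetherian:facts}(5) with $Y=X$ to obtain an infinite subspace $Z\subseteq X$ all of whose proper closed subsets are finite; then $Z$ is irreducible, so by Fact \ref{Fact:irreducible}(iii) its closure $\CL_X(Z)$ is an (infinite) irreducible closed subset of $X$, and for $z\in Z$ the chain $\{z\}\subsetneq \CL_X(Z)$ witnesses $\dim X\ge 1$.

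For item (b): given a strictly increasing chain $F_0\subsetneq\dots\subsetneq F_n$ of nonempty irreducible closed subsets of $Y$, pass to the closures $\CL_X(F_i)$ in $X$. Each $\CL_X(F_i)$ is irreducible by Fact \ref{Fact:irreducible}(iii) and closed in $X$; since $F_i$ is closed in $Y$ one has $\CL_X(F_i)\cap Y=F_i$, so the inclusions $\CL_X(F_i)\subseteq\CL_X(F_{i+1})$ remain strict. Hence $\dim X\ge n$, and as $n$ was arbitrary, $\dim Y\le\dim X$. (This argument uses neither the Noetherian property nor $T_1$.) Item (c) follows from (b): "$\le$" is (b) applied to each inclusion $S_j\subseteq S$; for "$\ge$", take $m\le\dim S$ and a chain $F_0\subsetneq\dots\subsetneq F_m$ of nonempty irreducible closed subsets of $S$. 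The top element $F_m$ is irreducible and contained in $S=\bigcup_{j=1}^k S_j$ with the $S_j$ relatively closed in $S$, so Fact \ref{fact:on:finite:unions} gives $F_m\subseteq S_j$ for some $j$; then every $F_i\subseteq F_m\subseteq S_j$ is closed in $S_j$ and irreducible, so $\dim S_j\ge m$. Letting $m$ range over all values $\le\dim S$ (and using finiteness of the index set when $\dim S=\infty$) yields $\max_j\dim S_j\ge\dim S$.

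For item (d): the inequality $\max\{\dim Y,\dim Z\}\le\dim(Y\cup Z)$ is immediate from (b). For the reverse inequality I would decompose $Y\cup Z=W_1\cup\dots\cup W_r$ into its irreducible components (Fact \ref{decomposition:into:irreducible:components}), so that $\dim(Y\cup Z)=\max_i\dim W_i$ by (c), and then bound each $\dim W_i$. Since $W_i$ is irreducible and $W_i=(W_i\cap Y)\cup(W_i\cap Z)$, taking closures inside $W_i$ gives $W_i=\CL_{W_i}(W_i\cap Y)\cup\CL_{W_i}(W_i\cap Z)$, a union of two closed subsets of $W_i$, so by irreducibility one of them equals $W_i$; say $W_i=\CL_{W_i}(W_i\cap Y)$, whence $W_i\cap Y$ is a dense --- hence, by Fact \ref{Fact:irreducible}(iv), irreducible --- subset of $W_i$ contained in $Y$. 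The remaining point is to conclude $\dim W_i\le\dim(W_i\cap Y)$, i.e. that here one may replace a dense subset by the whole space without raising the combinatorial dimension; I expect \emph{this} to be the main obstacle, and the place where the Noetherian hypothesis is genuinely essential (in an arbitrary $T_1$-space a closure can raise the dimension). Granting it, $\dim W_i\le\dim(W_i\cap Y)\le\dim Y$ by (b), and symmetrically in the other case, so $\dim(Y\cup Z)=\max_i\dim W_i\le\max\{\dim Y,\dim Z\}$. Should this density step resist a proof for arbitrary subsets, it does go through when $Y$ and $Z$ are closed in $X$, since then each $W_i\subseteq Y\cup Z$ is already contained in $Y$ or in $Z$ directly by Fact \ref{fact:on:finite:unions}.
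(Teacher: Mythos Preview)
Your arguments for (a), (b) and (c) are correct and agree with the paper's in substance; your treatment of (b) is actually more direct than the paper's, which splits somewhat artificially into the cases ``$Y$ closed in $X$'' and ``$Y$ dense in $X$'' before combining them via $\dim Y\le\dim\overline{Y}\le\dim X$.

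For (d) your route differs from the paper's. The paper decomposes $Y$ and $Z$ separately into their irreducible components $C_i$ and $K_j$, asserts that after deleting redundant members the surviving $C_i$, $K_j$ are precisely the irreducible components of $Y\cup Z$, and then invokes (c). You instead decompose $Y\cup Z$ into components $W_i$ and try to bound each $\dim W_i$ by a density argument. Your hesitation about the density step is entirely justified: in a general Noetherian $T_1$-space it is \emph{false} that a dense subset $D$ of an irreducible space $W$ must satisfy $\dim D\ge\dim W$, and in fact item (d) itself fails as stated. A counterexample: let $X=A\sqcup B\sqcup\{c\}$ with $A,B$ infinite, and take as closed sets all finite subsets of $X$, all sets $A\cup F$ with $F\subseteq X$ finite, the set $A\cup B$, and $X$ (this is the Noetherian topology produced by Fact~\ref{building:Noetherian:spaces} from $\mathscr{E}=\{\emptyset,X,A,A\cup B\}\cup\{\{x\}:x\in X\}$). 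The chain $\{a\}\subsetneq A\subsetneq A\cup B$ of irreducible closed sets gives $\dim X=2$, while for $Y=A$ and $Z=B\cup\{c\}$ one checks $\dim Y=\dim Z=1$ and $Y\cup Z=X$. The paper's argument breaks at its key assertion: here the irreducible components of $Y\cup Z=X$ are $A\cup B$ and $\{c\}$, and $A\cup B$ is neither a component of $Y$ (namely $A$) nor a component of $Z$ (namely $B$ and $\{c\}$); the point is that the $C_i$, $K_j$ need not be closed in $Y\cup Z$, so Fact~\ref{fact:on:finite:unions} does not apply to them. Your fallback observation that (d) holds when $Y$ and $Z$ are closed in $X$ is correct and is proved exactly as you indicate, via Fact~\ref{fact:on:finite:unions} applied to the top of a chain.
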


\begin{proof} (a) follows directly from the definition.

(b) If $Y$ is closed, then every chain (\ref{eq.no.1}) witnessing $\dim Y \geq n$ will witness $\dim X \geq n$ as well. If $Y$ is dense in $X$, then 
every chain (\ref{eq.no.1}) witnessing $\dim Y \geq n$ will give rise to a chain $\overline{F_0}\subseteq  \overline{F_1}\subseteq  \ldots \subseteq  \overline{F_n}$ witnessing $\dim X \geq n$.  In the general case, we have $\dim Y \leq \dim \overline{Y} \leq \dim X$ by the preceding two cases. 

(c) Indeed, the inequality $\dim S\geq \max_{1\le j \le k}\dim S_j$ follows from (b). Let $n = \dim S $, and assume that
(\ref{eq.no.1}) is a chain of closed irreducible subsets of $S$ witnessing $\dim S\ge n$. Since the irreducible set $F_n$ is contained in the finite union $\bigcup_{j=1}^k S_j=S$ of closed sets, we conclude that $F_n \subseteq S_i$ for some $i = 1,2,\ldots k$. In other words, the whole chain
(\ref{eq.no.1}) is contained in $S_i$, implying $\dim S_i\geq n$.  Hence  $\dim S=n\le \dim S_i\le \max_{1\le j\le k}\dim S_j$. This proves (c).
 
(d)
 Let $Y=\bigcup_{i=1}^n C_i$ and $Z=\bigcup_{j=1}^m K_j$  be decompositions of $Y$ and $Z$ into irreducible components;
see Fact \ref{decomposition:into:irreducible:components}.  Note that $Y \cup Z= \bigcup_{i=1}^n C_i\cup \bigcup_{j=1}^m K_j$ need not be the representation of $Y \cup Z$ as a union of irreducible components, since $C_i\cup K_j=C_i$ or $C_i\cup K_j= K_j$ (i.e., $K_j \subseteq C_i$ or $C_i \subseteq K_j$) may occur.  Nevertheless, by removing the redundant members  in the union $\bigcup_{j=1}^m K_j$, one obtains the   decomposition $Y \cup Z=\bigcup _\nu L_\nu$  of $Y\cup Z$ into irreducible components $L_\nu$, and from  (c) we get
$$
\dim (Y\cup Z)=\max\{\dim L_\nu\}\le \max\left\{\max_{1\le i\le n}\dim C_i, \max_{1\le j\le m}\dim K_j\right\}=\max \{\dim Y, \dim Z\}.
$$
The inverse inequality $\dim (Y\cup Z)\ge \max \{\dim Y, \dim Z\}$ follows from  (b).
\end{proof}

\begin{fact}\label{FactZcurve} 
For a subspace $X$ of a Noetherian space $Y$, the following conditions are equivalent:
\begin{itemize} 
  \item[(a)] $X$ is an irreducible $T_1$-space of combinatorial dimension 1,
  \item[(b)] $X$ is infinite and carries  the cofinite topology $\{X\setminus F: F$ is a finite subset of $X\}\cup \{\emptyset\}$,
  \item[(c)] $X$ is an infinite space with the coarsest $T_1$-topology.
\end{itemize}
\end{fact}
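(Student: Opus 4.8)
The plan is to establish $(b)\Leftrightarrow(c)$ directly and then to close the cycle $(b)\Rightarrow(a)\Rightarrow(b)$.

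First I would dispose of $(b)\Leftrightarrow(c)$. On an infinite set $X$ the cofinite topology is $T_1$ (the complement of a point is cofinite, hence open), and conversely, in \emph{any} $T_1$ topology on $X$ every finite subset is closed, so every cofinite subset is open. Thus the cofinite topology is contained in every $T_1$ topology on $X$, i.e.\ it is the coarsest $T_1$ topology on $X$; this yields $(b)\Leftrightarrow(c)$ at once.

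Next I would prove $(b)\Rightarrow(a)$. If $X$ is infinite and carries the cofinite topology, then its proper closed subsets are precisely its finite subsets, so a partition $X=F_0\cup F_1$ into closed sets with both $F_i$ proper would force $X$ to be finite; hence $X$ is irreducible, and it is visibly $T_1$. To compute $\dim X$, observe that a nonempty closed subset of $X$ is either finite — in which case its subspace topology is discrete, and it is irreducible only when it is a singleton — or else equal to $X$. Therefore a strictly increasing chain of nonempty irreducible closed subsets of $X$ has length at most $\{x\}\subsetneq X$, which gives $\dim X=1$ (the chain $\{x\}\subsetneq X$ exists because $X$ is infinite, so $\dim X\ge 1$).

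Finally, $(a)\Rightarrow(b)$. Since $X$ is a subspace of a Noetherian space, it is itself Noetherian by Fact~\ref{Noetherian:facts}(1); as $\dim X=1>0$, Fact~\ref{FactDim}(a) shows $X$ is infinite, and since $X$ is $T_1$ every finite subset is closed, so it remains only to check that every proper closed subset $F\subsetneq X$ is finite. Assuming not, I would decompose the infinite closed set $F$ into its finitely many irreducible components (Fact~\ref{decomposition:into:irreducible:components}) and select an infinite one, say $C$. As an irreducible component, $C$ is closed in $F$, hence closed in $X$, and it is irreducible; being infinite, it properly contains some singleton $\{x\}$. Combined with $C\subseteq F\subsetneq X$ and the irreducibility of $X$, the chain $\{x\}\subsetneq C\subsetneq X$ of nonempty irreducible closed subsets of $X$ witnesses $\dim X\ge 2$, contradicting $(a)$. (Alternatively, one may bypass irreducible components via Fact~\ref{Noetherian:facts}(5): the infinite set $F$ contains an infinite subspace all of whose proper closed subsets are finite, hence an infinite irreducible subspace, whose closure in $X$ plays the role of $C$.) The one step demanding genuine care is exactly this construction of a length-two chain: one must keep track of the distinction between "closed/irreducible in $F$" and "closed/irreducible in $X$", and verify that the chosen component is a \emph{proper} subset of $X$. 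Everything else is routine bookkeeping about the cofinite topology and the definition of $\dim$.
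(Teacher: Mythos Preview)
Your proof is correct and follows essentially the same approach as the paper. Both arguments establish $(b)\Leftrightarrow(c)$ and $(b)\Rightarrow(a)$ by direct inspection of the cofinite topology, and both prove $(a)\Rightarrow(b)$ by decomposing a closed subset $F$ into irreducible components and observing that the chain $\{x\}\subseteq C\subseteq X$ forces each component $C$ to be either a singleton or all of $X$; the only cosmetic difference is that the paper argues this directly for every component, while you argue it by contradiction on a single infinite component.
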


\begin{proof} It is easy to see that (b) and (c) are equivalent and imply (a), even without the assumption that $Y$ is Noetherian.

(a)~$\to$~(b) Since $\dim X=1>0$, our $X$ must be infinite by Fact \ref{FactDim}(a). Let $F$ be a non-empty closed subset of $X$, and let $F=F_0\cup\dots\cup F_n$ be the decomposition of $F$ into its irreducible components $F_i$; see Fact \ref{decomposition:into:irreducible:components}.
Let $i\le n$ be a non-negative integer. Choose $x_i\in F_i$. Since all three sets in the chain $\{x_i\}\subseteq F_i\subseteq X$ are irreducible and $\dim X=1$, at least one of the inclusions cannot be proper; that is, either  $F_i=X$ or $F_i=\{x_i\}$. From $F=F_0\cup\dots\cup F_n$, we conclude that either $F=X$ or $F$ is finite. 
\end{proof}

Easy examples show that the implication (a)~$\to$~(b) fails if $X$ is not Noetherian. 

\section{Properties of  algebraic sets and the Zariski topology of an abelian group} 
\label{section:3}

For an abelian group $G$ and an integer $n$, we set $G[n]=\{x\in G: nx=0\}$. Clearly, $G[n]$ is a subgroup of $G$, with $G[0]=G$ and  $G[1]=\{0\}$. 

As usual, for integers $m$ and $n$, $n|m$ means that $n$ is a divisor of $m$, and  $(m,n)$ denotes the greatest common divisor of $m$ and $n$, in case at least one of these integers is non-zero. For the sake of convenience, we set $(0,0) =0$. 

\begin{lemma}
\label{comparing:elementary:algebraic:sets}
\label{intersection:of:two:elementary:algebraic:sets}
 Suppose $G$ is an abelian group, $a,b\in G$ and $n,m \in \mathbb{N}$. Then:
\begin{itemize}
  \item[(i)] $a+G[n]\subseteq b+G[m]$ if and only if $G[n]\subseteq G[m]$ and $a-b\in G[m]$,
  \item[(ii)] $a+G[n]=b+G[m]$ if and only if $G[n]= G[m]$ and $a-b\in G[m]=G[n]$,
  \item[(iii)]  $G[n]\cap  G[m]=G[d]$ for $d=(m,n)$; in particular, $G[n]=G[d]$ provided that $G[n]\subseteq G[m]$,
  \item[(iv)] if $z_0\in  (a+G[n])\cap(b+G[m])$, then $(a+G[n])\cap(b+G[m])=z_0+G[d]$, where $d=(n,m)$.
  \end{itemize}
\end{lemma}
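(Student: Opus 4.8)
The plan is to prove the four items in the order stated, deriving each later one from the earlier ones, and to reduce everything to routine manipulations with subgroups and cosets of $G[n]$. The single standing fact I will use repeatedly is that if an element $z_0$ lies in a coset $a+G[n]$, then $a+G[n]=z_0+G[n]$.

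For (i) I would check both implications by hand. If $a+G[n]\subseteq b+G[m]$, then taking $y=0\in G[n]$ gives $a\in b+G[m]$, i.e.\ $a-b\in G[m]$; and for an arbitrary $y\in G[n]$ the hypothesis gives $(a-b)+y\in G[m]$, so subtracting $a-b\in G[m]$ yields $y\in G[m]$ and hence $G[n]\subseteq G[m]$. Conversely, if $G[n]\subseteq G[m]$ and $a-b\in G[m]$, then for each $y\in G[n]$ we have $a+y=b+((a-b)+y)$ with $(a-b)+y\in G[m]$, since $G[m]$ is a subgroup containing both summands; thus $a+y\in b+G[m]$. Item (ii) is then immediate from (i) applied to the two inclusions $a+G[n]\subseteq b+G[m]$ and $b+G[m]\subseteq a+G[n]$, which together give $G[n]\subseteq G[m]$, $G[m]\subseteq G[n]$ and $a-b\in G[m]$, i.e.\ $G[n]=G[m]$ and $a-b\in G[m]=G[n]$; the reverse direction of (ii) is the same computation read backwards.

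For (iii) I would split into the two inclusions. The inclusion $G[d]\subseteq G[n]\cap G[m]$ uses only that $d$ divides both $n$ and $m$: writing $n=dk$, any $x$ with $dx=0$ satisfies $nx=k(dx)=0$, and likewise $mx=0$. For the reverse inclusion I invoke B\'ezout: when $(n,m)\ne 0$ one can write $d=un+vm$ with $u,v\in\Z$, so $x\in G[n]\cap G[m]$ forces $dx=u(nx)+v(mx)=0$ and hence $x\in G[d]$; the remaining cases are immediate from the conventions $G[0]=G$ and $(0,0)=0$ (for instance $G[0]\cap G[m]=G[m]=G[(0,m)]$ when $m\ne 0$). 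The ``in particular'' clause follows because $G[n]\subseteq G[m]$ gives $G[n]=G[n]\cap G[m]=G[d]$. Finally, (iv) is a corollary of (iii): from $z_0\in(a+G[n])\cap(b+G[m])$ we get $a+G[n]=z_0+G[n]$ and $b+G[m]=z_0+G[m]$ by the coset fact above, whence the intersection equals $(z_0+G[n])\cap(z_0+G[m])=z_0+(G[n]\cap G[m])=z_0+G[d]$.

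The whole argument is elementary and I anticipate no real obstacle; the only points that need a little care are the degenerate cases $0\in\{n,m\}$ in (iii), which must be reconciled with the convention $(0,0)=0$, and ensuring that the B\'ezout identity is invoked only in the range where it is available.
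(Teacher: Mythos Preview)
Your argument is correct and follows essentially the same route as the paper: item (i) is checked directly, (ii) is derived from (i), (iii) is the standard B\'ezout computation (which the paper simply declares ``straightforward''), and (iv) is deduced from (iii) by rewriting both cosets with base point $z_0$. You supply more detail than the paper does---in particular the converse of (i) and the degenerate cases $0\in\{n,m\}$ in (iii)---but there is no substantive difference in strategy.
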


\begin{proof} (i) From $a+G[n]\subseteq b+G[m]$, we deduce $G[n]\subseteq b-a+G[m]$, so $b-a\in G[m]$ and $G[n]\subseteq G[m]$. 

(ii) follows from (i), and  the proof of (iii) is  straightforward.

(iv) By our hypothesis,  $z_0-a\in G[n]$ and $z_0-b\in G[m]$, so  we conclude that $z_0+G[n]=a+G[n]$ and $z_0+G[m]=b+G[m]$.  Since $G[n]\cap G[m]= G[d]$ by (iii), we get
$$
(a+G[n])\cap(b+G[m])=(z_0+G[n])\cap (z_0+G[m])=z_0+(G[n]\cap G[m]) = z_0+G[d].
$$
\end{proof}

\begin{notation}\label{Def:elem:alg:set} 
{\rm 
Let $G$ be an abelian group. Then $\EE_G$  denotes the family of all elementary algebraic sets of $G$, and  ${\mathfrak A}_G$  denotes the family of all finite unions of elementary algebraic sets of $G$. For convenience, we define $\EE_{\{0\}}=\{\emptyset, \{0\}\}$, even though $\emptyset$ is not an elementary algebraic subset of the trivial group $G=\{0\}$.
}
\end{notation}

The next lemma collects some basic properties of $\EE_G$.

\begin{lemma}\label{lemma:elementary:algebraic:sets}
For every abelian group $G$, the family $\EE_G$ has the following properties:
\begin{itemize}
\item[(i)] if $E\not=\emptyset$, then $E\in \EE_G$ if and only if $E=a+G[n]$ for some $a\in G$ and $n\in \mathbb{N}$, 
\item[(ii)] $\emptyset\in \EE_G$ and $G\in \EE_G$,
\item[(iii)] $\EE_G$ is closed  under the operation $x\mapsto -x$,
\item[(iv)] $\EE_G$ is closed under taking translations,
\item[(v)]  $E_1, \ldots, E_n\in \EE_G$ implies $E_1+\ldots + E_n\in \EE_G$,
\item[(vi)] $\EE_G$ is closed with respect to taking finite intersections,
\item[(vii)] every descending chain in ${\mathfrak E}_G$ stabilizes, so  $\EE_G$ is closed with respect to  taking arbitrary intersections,
\item[(viii)] if $k\in\Z$ and $E\in\EE_G$, then $\{x\in G: kx\in E\}\in\EE_G$.
\end{itemize}
\end{lemma}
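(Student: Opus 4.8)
The plan is to first put Markov's general definition of an elementary algebraic set into a convenient normal form valid in the abelian case, to read off item~(i) from it, and then to obtain the remaining items either formally from the normal form or from Lemma~\ref{comparing:elementary:algebraic:sets}, which describes how two such sets intersect and compare.

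Since $G$ is abelian, the defining expression $x^{\varepsilon_1}a_1x^{\varepsilon_2}a_2\cdots x^{\varepsilon_t}a_t$ of Definition~\ref{Markov's:definition}(a) evaluates to $\ell x+c$ with $\ell=\varepsilon_1+\dots+\varepsilon_t\in\Z$ and $c=a_1+\dots+a_t\in G$; as $t\ge1$ is arbitrary, $\ell$ ranges over all of $\Z$ (being a signed sum of units of suitable length) and $c$ over all of $G$. Hence, modulo the convention $\EE_{\{0\}}=\{\emptyset,\{0\}\}$ of Notation~\ref{Def:elem:alg:set}, a subset $E\subseteq G$ is elementary algebraic if and only if $E=\{x\in G:\ell x=b\}$ for some $\ell\in\Z$ and $b\in G$. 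If such an $E$ is non-empty then either $\ell=0$, forcing $b=0$ and $E=G=0+G[0]$, or $\ell\ne0$ and, picking a solution $x_0$, $E=x_0+G[\ell]=x_0+G[|\ell|]$; conversely $a+G[n]=\{x:nx=na\}$ for $n\ge1$ while $G=\{x:0x=0\}$. This is exactly (i), and it simultaneously yields (ii), since $G=\{x:x-x=0\}$ and, when $G\ne\{0\}$, $\emptyset=\{x:x-x=b\}$ for any $b\ne0$.

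Items (iii), (iv) and (viii) then follow formally from the normal form: writing $E=\{x:\ell x=b\}$, we have $-E=\{x:\ell x=-b\}$, the translate $c+E=\{x:\ell x=b+\ell c\}$, and $\{x\in G:kx\in E\}=\{x:(\ell k)x=b\}$, each again of the normal form (the trivial group being harmless). For (vi) and (v), an easy induction reduces each to the case of two sets $E_1,E_2$, and if either is $\emptyset$ or $G$ there is nothing to prove, so write $E_i=a_i+G[n_i]$ with $n_i\ge1$ via (i). Then Lemma~\ref{comparing:elementary:algebraic:sets}(iv) gives that $E_1\cap E_2$ is empty or a coset of $G[(n_1,n_2)]$, hence lies in $\EE_G$ by (i); this is (vi). For (v), $E_1+E_2=(a_1+a_2)+(G[n_1]+G[n_2])$, so it suffices to prove the purely group-theoretic identity $G[n_1]+G[n_2]=G[\ell]$, where $\ell$ is the least common multiple of $n_1$ and $n_2$: the inclusion $\subseteq$ is immediate from $n_i\mid\ell$, while for the reverse one observes that $\ell/n_1$ and $\ell/n_2$ are coprime, picks integers $u,v$ with $(\ell/n_1)u+(\ell/n_2)v=1$, and writes any $z\in G[\ell]$ as $(\ell/n_1)uz+(\ell/n_2)vz\in G[n_1]+G[n_2]$.

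The one item demanding genuine work is (vii), which I expect to be the main obstacle. Given a descending chain $E_0\supseteq E_1\supseteq\cdots$ in $\EE_G$, discarding members equal to $\emptyset$ or $G$ we may assume $E_k=a_k+G[n_k]$ with $n_k\ge1$ for all $k$ by (i); then Lemma~\ref{comparing:elementary:algebraic:sets}(i) produces the descending chain of subgroups $G[n_0]\supseteq G[n_1]\supseteq\cdots$. Put $m_k=(n_0,n_1,\ldots,n_k)$; iterating Lemma~\ref{comparing:elementary:algebraic:sets}(iii) gives $G[m_k]=G[n_0]\cap\cdots\cap G[n_k]=G[n_k]$, and $m_0\ge m_1\ge\cdots\ge1$ is a non-increasing sequence of positive integers, hence eventually constant. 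Therefore $G[n_k]$ is eventually constant, and since, again by Lemma~\ref{comparing:elementary:algebraic:sets}(i), one coset of a subgroup cannot be a proper subset of another coset of the same subgroup, the chain $(E_k)$ is eventually constant. The crux is precisely this passage to the chain of the \emph{specific} subgroups $G[n]$ together with the observation that, through greatest common divisors, such a chain is controlled by a non-increasing sequence of positive integers; the rest is bookkeeping. Closure under arbitrary intersections is then the usual corollary: for any subfamily of $\EE_G$, the collection of its finite sub-intersections lies in $\EE_G$ by (vi) and, by the descending chain condition just proved, has a minimal element, which must coincide with the intersection of the entire subfamily.
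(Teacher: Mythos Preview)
Your proof is correct and follows the same overall strategy as the paper: reduce Markov's definition to the abelian normal form $\{x\in G:\ell x=b\}$, derive the coset description~(i), and then handle the remaining items either directly or via Lemma~\ref{comparing:elementary:algebraic:sets}.

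Where you differ from the paper is mainly in execution. By keeping the equational form $\{x:\ell x=b\}$ available throughout, you dispatch (iii), (iv), and especially (viii) in one line each; the paper instead works with the coset form and, for (viii), verifies the two inclusions $E'=x_0+G[kn]$ by hand. For (v) you supply the B\'ezout argument for $G[n_1]+G[n_2]=G[\mathrm{lcm}(n_1,n_2)]$, which the paper simply asserts. For (vii) the paper observes that a proper inclusion $a+G[n]\subsetneq b+G[m]$ with $m\ge1$ forces $G[n]=G[d]$ for some positive divisor $d$ of $m$, so any descending chain can visit at most finitely many distinct subgroups; your argument via the decreasing gcd sequence $m_k=(n_0,\ldots,n_k)$ achieves the same conclusion by a slightly different bookkeeping device. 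Both routes are short and rest on the same underlying fact, namely Lemma~\ref{comparing:elementary:algebraic:sets}(iii). One small point of phrasing: ``discarding members equal to $\emptyset$ or $G$'' in (vii) should be read as passing to the tail after the last occurrence of $G$ and noting the chain is eventually $\emptyset$ if $\emptyset$ ever appears; you might make that explicit.
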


\begin{proof} 
(i) Replacing the multiplicative notation from Definition \ref{Markov's:definition}(i)  with the additive notation and using commutativity, one immediately gets
that $E\in \EE_G$ if and only if $E=\{x\in G: nx+b=0\}$ for a  suitable integer $n$ and an element $b\in G$. Replacing $b$ with $-b$, if necessary, we may assume that $n\in \N$. Choose $a\in E$. Then $na+b=0$, and so $E=\{x\in G: n(x-a)=0\}=\{x\in G: x-a\in G[n]\}=a+G[n]$.

(ii) Note that $G=G[0]\in\EE_G$ by (i). If $G=\{0\}$, then $\emptyset\in \EE_G$ according to  Notation \ref{Def:elem:alg:set}. If $G\not=\{0\}$,  choose $g\in G$ with $g\not=0$ and observe that $\emptyset=\{x\in G: 0x+g=0\}\in\EE_G$. 

(iii) and (iv) follow easily from (i). 

(v) Without loss of generality, we can assume that $n=2$ and  that both $E_1$ and $E_2$ are non-empty.  Applying (i), we conclude that $E_i = a_i + G[m_i]$  for some $a_i \in G$ and $m_i \in \N$ $(i=1,2)$. Let $m$ be the  least common multiple of  $m_1$ and $m_2$. Then $G[m] = G[m_1] + G[m_2]$, and (i) now yields
$$
E_1+E_2=a_1+G[m_1]+a_2+G[m_2]=(a_1+a_2) + G[m_1] + G[m_2]= (a_1+a_2) +G[m]\in  \EE_G.
$$

(vi) follows from (i) and Lemma \ref{intersection:of:two:elementary:algebraic:sets}(iv).

(vii) Assume that $m, n\in \N$, $a,b\in G$, $m\ge 1$, $a+G[n]$ is a proper subset of $b+G[m]$. Applying items (i) and (iii) of Lemma \ref{comparing:elementary:algebraic:sets}, we conclude that $G[n]=G[d]$ for a positive divisor $d$ of $m$. Since $m\ge 1$ has finitely many divisors, every descending chain of elementary algebraic sets stabilizes. Now the stability of  $\EE_G$  under arbitrary intersections follows from (vi).

(viii) The assertion is true if $G=\{0\}$, since  $\EE_G$  then coincides with the power set of $G$ according to Notation \ref{Def:elem:alg:set}.  Suppose now that $G\not=\{0\}$,
 and let $E'=\{x\in G: kx\in E\}$. If $E'= \emptyset$, then $E' \in \EE_G$ by  (ii). If $E'\ne \emptyset$, choose $x_0\in E'$. Then $kx_0\in E\not=\emptyset$, so $E=a+G[n]$ for some $a\in G$ and  $n\in \mathbb{N}$; see (i). Hence, $kx_0=a+t$ for some  $t\in G$ with $nt=0$. Let us check that $E'=x_0+G[kn]\in \EE_G$.  Assume $x\in E'$. Then $kx\in E$, and so $kx=a+s$ for some $s\in G[n]$. As $ kx_0=a+t$ with $t\in G[n]$, we conclude that $k(x-x_0)=s-t \in G[n]$, and so $ x-x_0\in G[kn]$. This proves that $x\in x_0+G[kn]$. In the opposite direction, if $ x\in x_0+G[kn]$, then $kn(x-x_0)=0$, so that $k(x-x_0)\in G[n]$,  and thus $kx-kx_0\in G[n]$. As $kx_0-a \in G[n]$, we conclude that $kx-a\in G[n]$ as well,  and so  $kx\in a+G[n]=E$. 
\end{proof}

\begin{lemma}\label{properties:of:algebraic:sets}
For every abelian group $G$, the family ${\mathfrak A}_G$ of all algebraic subsets of  $G$ has the following properties:
\begin{itemize}
\item[(i)] ${\mathfrak A}_G$ is closed under the operation $x\mapsto -x$,
\item[(ii)] ${\mathfrak A}_G$ is closed under taking translations,
\item[(iii)] $A_1, \ldots, A_n\in {\mathfrak A}_G$ implies $A_1+\ldots + A_n\in {\mathfrak A}_G$,
\item[(iv)] if $k\in\Z$ and $A\in{\mathfrak A}_G$, then  $\{x\in G: kx\in A\}\in{\mathfrak A}_G$.
\end{itemize}
\end{lemma}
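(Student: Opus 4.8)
The plan is to deduce everything from Lemma \ref{lemma:elementary:algebraic:sets} by passing from elementary algebraic sets to their finite unions, using that each property in question is preserved under finite unions. Write an arbitrary $A\in{\mathfrak A}_G$ as $A=E_1\cup\dots\cup E_k$ with each $E_j\in\EE_G$.

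For (i), note that $-A=(-E_1)\cup\dots\cup(-E_k)$, and each $-E_j\in\EE_G$ by Lemma \ref{lemma:elementary:algebraic:sets}(iii); hence $-A\in{\mathfrak A}_G$. For (ii), $a+A=(a+E_1)\cup\dots\cup(a+E_k)$, and each $a+E_j\in\EE_G$ by Lemma \ref{lemma:elementary:algebraic:sets}(iv). For (iv), $\{x\in G:kx\in A\}=\bigcup_{j=1}^k\{x\in G:kx\in E_j\}$, and each set in this union lies in $\EE_G$ by Lemma \ref{lemma:elementary:algebraic:sets}(viii); so the union lies in ${\mathfrak A}_G$.

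The one item that needs a little more care is (iii), the closure under sumsets, because a sum of unions is a union of sums but the number of summands grows: $A_1+\dots+A_n$, with $A_i=\bigcup_{j}E_{i,j}$, equals $\bigcup E_{1,j_1}+\dots+E_{n,j_n}$ over all choices of indices $j_1,\dots,j_n$. This is still a finite union, and each summand $E_{1,j_1}+\dots+E_{n,j_n}$ is a finite sum of elementary algebraic sets, hence lies in $\EE_G$ by Lemma \ref{lemma:elementary:algebraic:sets}(v). (One should first dispose of the trivial cases where some $A_i=\emptyset$ or $G=\{0\}$, or simply invoke the convention for $\EE_{\{0\}}$; and by associativity it suffices to treat $n=2$.) Therefore $A_1+\dots+A_n\in{\mathfrak A}_G$.

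I do not expect a genuine obstacle here: every assertion reduces, via the distributive behaviour of the relevant operation over finite unions, to the corresponding assertion for $\EE_G$ already established in Lemma \ref{lemma:elementary:algebraic:sets}. The only mildly delicate point is bookkeeping the index sets in (iii), which is why I would phrase that argument most cleanly by reducing to $n=2$ first.
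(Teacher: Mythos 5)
Your proof is correct and follows the same route as the paper, which simply cites items (iii), (iv), (v), (viii) of Lemma \ref{lemma:elementary:algebraic:sets} and leaves the distribution-over-finite-unions bookkeeping (which you spell out, including the $n=2$ reduction in (iii)) implicit. Only a cosmetic remark: in (iv) you reuse the letter $k$ both for the integer multiplier and for the number of elementary sets in the decomposition of $A$; choose distinct symbols there.
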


\begin{proof} (i) follows from Lemma \ref{lemma:elementary:algebraic:sets}(iii);
(ii) follows from Lemma \ref{lemma:elementary:algebraic:sets}(iv);
(iii) follows from Lemma \ref{lemma:elementary:algebraic:sets}(v);
and (iv) follows from Lemma \ref{lemma:elementary:algebraic:sets}(viii).
\end{proof}

\begin{theorem} {\rm (\cite{Bryant})} \label{Zariski.topology.is.Noetherian} Let $G$ be an abelian group. Then:
\begin{itemize}
  \item[(i)] $(G, \Zar_G)$ is a Noetherian space, 
  \item[(ii)] ${\mathfrak A}_G$ coincides with the family of all $\Zar_G$-closed sets.
\end{itemize}
\end{theorem}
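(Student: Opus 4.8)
The plan is to deduce the theorem directly from Bryant's machinery, namely Fact \ref{building:Noetherian:spaces}, applied to the family $\EE_G$ of elementary algebraic sets. First I would observe that $\EE_G$ satisfies all three hypotheses of that fact: it is closed under finite intersections by Lemma \ref{lemma:elementary:algebraic:sets}(vi), it satisfies the descending chain condition by Lemma \ref{lemma:elementary:algebraic:sets}(vii), and $G\in\EE_G$ by Lemma \ref{lemma:elementary:algebraic:sets}(ii). Hence Fact \ref{building:Noetherian:spaces} produces a unique topology on $G$ whose closed sets are exactly the finite unions of members of $\EE_G$, and the resulting space is Noetherian.

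The second task is to identify this topology with $\Zar_G$, i.e., to check that the family of finite unions of elementary algebraic sets, $\fin{\EE_G}={\mathfrak A}_G$, really is the family of all algebraic subsets of $G$ in the sense of Definition \ref{Markov's:definition}(b). A priori, an algebraic set is an \emph{arbitrary} intersection of finite unions of elementary algebraic sets, so one must argue that such intersections collapse to finite unions. This is where the descending chain condition does the real work: since $\EE_G$ has the DCC, so does $\fin{\EE_G}$ (a standard consequence, and in fact part of the content of Fact \ref{building:Noetherian:spaces}, since the closed sets of a Noetherian space satisfy the DCC), and therefore any intersection of members of $\fin{\EE_G}$ equals the intersection of some finite subfamily, hence again lies in $\fin{\EE_G}$. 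Consequently ${\mathfrak A}_G=\fin{\EE_G}$, and since $\Zar_G$ was defined to have ${\mathfrak A}_G$ as its closed sets, $\Zar_G$ coincides with the topology $\mathcal{T}_{\EE_G}$ furnished by Fact \ref{building:Noetherian:spaces}. This simultaneously gives (ii) and, via Fact \ref{building:Noetherian:spaces}, gives (i).

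It remains to note that $\Zar_G$ is a $T_1$ topology, which is needed only implicitly but is worth recording: every singleton $\{a\}=a+G[1]$ is elementary algebraic by Lemma \ref{lemma:elementary:algebraic:sets}(i), hence closed; combined with Noetherianness this makes $(G,\Zar_G)$ one of Kaplansky's $Z$-spaces. The main obstacle, such as it is, is purely a matter of bookkeeping: making precise that the ``arbitrary intersections'' in Markov's definition of algebraic set reduce to finite unions of elementary algebraic sets, i.e., that ${\mathfrak A}_G$ is genuinely closed under arbitrary intersections. Everything else is an immediate citation of Lemma \ref{lemma:elementary:algebraic:sets} and Fact \ref{building:Noetherian:spaces}, so I expect the proof to be short.
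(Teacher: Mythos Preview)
Your proposal is correct and follows essentially the same route as the paper: verify the hypotheses of Fact~\ref{building:Noetherian:spaces} via Lemma~\ref{lemma:elementary:algebraic:sets}(ii),(vi),(vii), conclude that $\fin{\EE_G}={\mathfrak A}_G$ is the closed-set lattice of a Noetherian topology, and identify that topology with $\Zar_G$. You are somewhat more explicit than the paper about the one nontrivial bookkeeping point---that Markov's algebraic sets, defined as arbitrary intersections of members of $\fin{\EE_G}$, collapse back into $\fin{\EE_G}$ via the DCC---whereas the paper absorbs this into the phrase ``one can easily see that this topology coincides with $\Zar_G$''; your added $T_1$ remark is correct but not needed for the stated theorem.
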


\begin{proof} In view of 
(ii), (vi), (vii) of Lemma \ref{lemma:elementary:algebraic:sets}, we can apply Fact  \ref{building:Noetherian:spaces} to conclude that ${\mathfrak A}_G=\fin{\EE_G}$ is the family of all closed sets of a unique Noetherian topology on $G$, and one can easily see that this topology coincides with $\Zar_G$.
\end{proof}

It follows from Theorem \ref{Zariski.topology.is.Noetherian}(i) that, for an abelian group $G$, the space $(G,\Zar_G)$ has all the basic properties of Noetherian spaces described in Facts \ref{Noetherian:facts} and \ref{decomposition:into:irreducible:components}. Clearly, infinite
groups $G$ with the discrete Zariski topology $\Zar_G$ (see Example \ref{discrete:Zariski}) are not Noetherian, and for them, all the properties listed in Facts \ref{Noetherian:facts} and  \ref{decomposition:into:irreducible:components} fail.

It follows from Theorem \ref{Zariski.topology.is.Noetherian}(ii) that, for a countable abelian group $G$, the Zariski topology $\Zar_G$ is countable as well. In the non-abelian case, one has a completely different situation. Indeed, let $G$ be a countable group $G$ such that $\Zar_G$ is discrete; see items (a) and (b) of Example \ref{discrete:Zariski}. 
Then $\Zar_G$ has cardinality $\cont$. In particular, every subset of $G$ is algebraic, so $G$ has $\cont$-many algebraic sets. Since $G$ has only countably many elementary algebraic sets,  not every algebraic set is a finite union of elementary algebraic sets (compare this with Theorem \ref{Zariski.topology.is.Noetherian}(ii)).

\begin{corollary}
\label{Zariski:topology:never:group}
For an abelian group $G$, the following conditions are equivalent:
\begin{itemize}
\item[(i)] $\Zar_G$ is Hausdorff,
\item[(ii)] $\Zar_G$ is sober,
\item[(iii)]  $G$ is finite,
\item[(iv)] $(G,\Zar_G)$ is a topological group.
\end{itemize}
\end{corollary}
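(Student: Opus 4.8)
The plan is to establish the cycle of implications $(iii)\Rightarrow(iv)\Rightarrow(i)\Rightarrow(iii)$ together with $(i)\Leftrightarrow(ii)$, where the implication $(i)\Rightarrow(iii)$ carries essentially all of the content while the others are either trivial or already available from the results recalled above. First, $(iii)\Rightarrow(iv)$: if $G$ is finite then every subset of $G$ is algebraic, so $\Zar_G$ is the discrete topology on the finite group $G$, which is manifestly a topological group. The implication $(iv)\Rightarrow(i)$ would be addressed by noting that a topological group that is also a $T_1$ quasi-topological group is $T_2$ (a $T_1$ topological group is automatically Hausdorff, since the diagonal is closed: $\{e\}$ closed together with continuity of $(x,y)\mapsto xy^{-1}$ gives closedness of the diagonal). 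Actually, since $(G,\Zar_G)$ is always $T_1$ by construction and $T_1 + \text{topological group} \Rightarrow T_2$, this step is immediate.

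For the main implication $(i)\Rightarrow(iii)$ I would invoke Fact \ref{Noetherian:facts}(6): since $(G,\Zar_G)$ is Noetherian by Theorem \ref{Zariski.topology.is.Noetherian}(i), an infinite Noetherian space cannot be Hausdorff; hence if $\Zar_G$ is Hausdorff then $G$ is finite. This is the cleanest route and avoids any direct computation with elementary algebraic sets. Combined with the trivial implications above, this closes the loop $(i)\Leftrightarrow(iii)\Leftrightarrow(iv)$.

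It remains to splice $(ii)$ into the equivalence. For $(iii)\Rightarrow(ii)$: a finite $T_1$-space is discrete, and in a discrete space the only irreducible subsets are singletons, which trivially have a dense point, so the space is sober. For the converse direction $(ii)\Rightarrow(iii)$, I would apply Fact \ref{sober:fact}, which states precisely that a sober Noetherian $T_1$-space must be finite; since $(G,\Zar_G)$ is Noetherian and $T_1$, soberness forces $G$ to be finite. This completes all the required implications.

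I do not anticipate any genuine obstacle here: the entire corollary is a packaging of Fact \ref{Noetherian:facts}(6), Fact \ref{sober:fact}, Theorem \ref{Zariski.topology.is.Noetherian}(i), and the elementary fact that $T_1$ plus ``topological group'' implies Hausdorff. The only point requiring the slightest care is making sure the direction $(iv)\Rightarrow(i)$ really uses nothing more than $T_1$-ness of $\Zar_G$ (guaranteed by the definition via algebraic sets) and the standard separation axiom for topological groups; no appeal to the quasi-topological group structure beyond $T_1$ is needed.
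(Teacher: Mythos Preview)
Your proposal is correct and uses essentially the same ingredients as the paper: Theorem \ref{Zariski.topology.is.Noetherian}(i), Fact \ref{sober:fact}, and the standard fact that a $T_1$ topological group is Hausdorff. The only organizational difference is that the paper runs the single cycle $(i)\to(ii)\to(iii)\to(iv)\to(i)$, using the trivial implication ``Hausdorff $\Rightarrow$ sober'' for $(i)\to(ii)$ and then Fact \ref{sober:fact} for $(ii)\to(iii)$, whereas you prove $(i)\Rightarrow(iii)$ directly via Fact \ref{Noetherian:facts}(6) and handle $(ii)$ separately; this makes your argument marginally more redundant (you prove $(iii)\Rightarrow(ii)$ explicitly, which the paper avoids) but is otherwise equivalent.
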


\begin{proof} 
The proof of the implication
(i)~$\to$~(ii) is trivial. Since $(G,\Zar_G)$ is a Noetherian space by Theorem \ref{Zariski.topology.is.Noetherian}(i), the implication (ii)~$\to$~(iii) follows from Fact \ref{sober:fact}.
 
To prove the implication
(iii)~$\to$~(iv), note that the topology $\Zar_G$ is $T_1$, and since $G$ is finite, we conclude that $(G,\Zar_G)$ is discrete. In particular, $(G,\Zar_G)$ is a topological group.

Finally, the implication
(iv)~$\to$~(i) 
holds, since $(G,\Zar_G)$ is a $T_1$-space, and a $T_1$ topological group is Hausdorff.
\end{proof}

Corollary \ref{Zariski:topology:never:group} is typical for the abelian case
but fails for non-commutative groups.  Indeed, any infinite group $G$ with discrete $\Zar_G$ (see Example \ref{discrete:Zariski}) provides an example 
in which $(G,\Zar_G)$ is a Hausdorff (and thus  a sober) topological group.

\begin{corollary}
\label{continuity:of:some:operations:in:the:Zariski:topology}
Let $G$ be an abelian group. Then:
\begin{itemize}
   \item[(i)] the inverse operation $x\mapsto -x$ is $\Zar_G$-continuous,
   \item[(ii)] for every $a\in G$,
 the translation by $a$, $x\mapsto x+a$, is a homeomorphism of $(G,\Zar_G)$ onto itself,
   \item[(iii)] for every $k\in \Z$,
 the map $f_k:(G,\Zar_G)\to (G,\Zar_G)$, defined by $f_k(x)= kx$ for $x\in G$, is continuous.
\end{itemize}
\end{corollary}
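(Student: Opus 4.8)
The plan is to reduce everything to the behavior of the three maps on closed sets, using the fact (Theorem \ref{Zariski.topology.is.Noetherian}(ii)) that the $\Zar_G$-closed sets are exactly the members of ${\mathfrak A}_G$, the finite unions of elementary algebraic sets. A map between topological spaces is continuous if and only if the preimage of every closed set is closed, so in each case it suffices to check that the relevant preimages land back in ${\mathfrak A}_G$; this is precisely what Lemma \ref{properties:of:algebraic:sets} was set up to provide.

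For (i), let $A\in{\mathfrak A}_G$ be $\Zar_G$-closed. The preimage of $A$ under $x\mapsto -x$ is $-A=\{-a:a\in A\}$, which belongs to ${\mathfrak A}_G$ by Lemma \ref{properties:of:algebraic:sets}(i); hence $x\mapsto -x$ is continuous. For (ii), fix $a\in G$ and let $\tau_a$ denote the translation $x\mapsto x+a$. For $A\in{\mathfrak A}_G$ the preimage $\tau_a^{-1}(A)=\{x\in G:x+a\in A\}=(-a)+A$ is again in ${\mathfrak A}_G$ by Lemma \ref{properties:of:algebraic:sets}(ii), so $\tau_a$ is continuous; since $\tau_{-a}$ is its two-sided inverse and is continuous by the same argument, $\tau_a$ is a homeomorphism of $(G,\Zar_G)$ onto itself. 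For (iii), fix $k\in\Z$ and let $A\in{\mathfrak A}_G$. Then $f_k^{-1}(A)=\{x\in G:kx\in A\}$, which is a member of ${\mathfrak A}_G$ by Lemma \ref{properties:of:algebraic:sets}(iv); hence $f_k$ is continuous.

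There is no real obstacle here: the corollary is essentially a bookkeeping consequence of the closure properties of ${\mathfrak A}_G$ already established in Lemma \ref{properties:of:algebraic:sets}, which in turn rest on the concrete description of elementary algebraic sets as cosets $a+G[n]$ (Lemma \ref{lemma:elementary:algebraic:sets}(i)) together with the arithmetic of the subgroups $G[n]$ in Lemma \ref{comparing:elementary:algebraic:sets}. The only point requiring a word of care is that $f_k$ need not be surjective or injective, so unlike the translations it is asserted only to be continuous, not a homeomorphism; but continuity is exactly the preimage condition, which goes through unchanged. One could also note in passing that (i) and (ii) together say that $(G,\Zar_G)$ is a quasi-topological group, recovering the remark made after Fact \ref{three:topologies}.
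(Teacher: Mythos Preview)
Your proof is correct and follows essentially the same approach as the paper: invoking Theorem \ref{Zariski.topology.is.Noetherian}(ii) to identify the $\Zar_G$-closed sets with ${\mathfrak A}_G$, and then appealing to parts (i), (ii), and (iv) of Lemma \ref{properties:of:algebraic:sets} for the three items respectively. The paper's proof is just a terser version of what you wrote.
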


\begin{proof} In view of Theorem \ref{Zariski.topology.is.Noetherian}(ii), 
(i)  follows from Lemma \ref{properties:of:algebraic:sets}(i);
(ii) follows from Lemma \ref{properties:of:algebraic:sets}(ii);
and (iii) follows from Lemma \ref{properties:of:algebraic:sets}(iv).
\end{proof}

Let $\mathcal{T}$ be a $T_1$-topology on an abelian group $G$.  If the ``inverse'' operation $x\mapsto -x$ is $\mathcal{T}$-continuous, and the addition operation $(x,y)\mapsto x+y$ is separately $\mathcal{T}$-continuous, then Kaplansky calls the pair $(G,\mathcal{T})$ a {\em $T_1$-group\/}; see \cite[Chap. IV, p. 27]{K}. The same pair is often called a {\em quasi-topological group\/}; see \cite{AT}. 

\begin{corollary}
\label{quasi-group}
If a group $G$ is abelian, then $(G, \Zar_G)$ is a $T_1$-group  (that is, a quasi-topological group).
\end{corollary}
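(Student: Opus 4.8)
The plan is to verify that $(G,\Zar_G)$ satisfies the three requirements in Kaplansky's definition of a $T_1$-group: the topology is $T_1$, the inversion $x\mapsto -x$ is continuous, and the addition is separately continuous. All three follow immediately from results already established in this section, so the proof is essentially a matter of assembling citations.

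First I would note that $\Zar_G$ is a $T_1$ topology by construction: the family ${\mathfrak A}_G$ of algebraic sets contains all finite subsets of $G$, since a singleton $\{a\}=a+G[1]$ is elementary algebraic by Lemma~\ref{lemma:elementary:algebraic:sets}(i), and finite sets are finite unions of singletons, hence algebraic; thus all singletons are $\Zar_G$-closed. Next, continuity of $x\mapsto -x$ is exactly Corollary~\ref{continuity:of:some:operations:in:the:Zariski:topology}(i). For separate continuity of addition, fix $a\in G$; then the map $x\mapsto x+a$ is a homeomorphism of $(G,\Zar_G)$ onto itself by Corollary~\ref{continuity:of:some:operations:in:the:Zariski:topology}(ii), in particular it is continuous, and since $G$ is abelian the same covers the map $x\mapsto a+x$. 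This gives separate continuity of the addition $(x,y)\mapsto x+y$ in each variable.

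There is no real obstacle here — this corollary is a packaging statement. The only thing to be careful about is matching the hypotheses of Kaplansky's definition precisely (a $T_1$-group requires continuous inversion and only \emph{separately} continuous multiplication, not joint continuity), and observing that commutativity lets the single statement in Corollary~\ref{continuity:of:some:operations:in:the:Zariski:topology}(ii) about translations handle both orders of addition. I would keep the proof to two or three sentences citing Theorem~\ref{Zariski.topology.is.Noetherian}(ii) (to know the closed sets are exactly ${\mathfrak A}_G$, hence $T_1$) and parts (i) and (ii) of Corollary~\ref{continuity:of:some:operations:in:the:Zariski:topology}.

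\begin{proof}
By Theorem~\ref{Zariski.topology.is.Noetherian}(ii), the $\Zar_G$-closed sets are precisely the algebraic subsets of $G$; since every singleton $\{a\}=a+G[1]$ lies in $\EE_G$ by Lemma~\ref{lemma:elementary:algebraic:sets}(i), the topology $\Zar_G$ is $T_1$. The inversion $x\mapsto -x$ is $\Zar_G$-continuous by Corollary~\ref{continuity:of:some:operations:in:the:Zariski:topology}(i). Finally, for each fixed $a\in G$ the translation $x\mapsto x+a$ is a $\Zar_G$-homeomorphism of $G$ onto itself by Corollary~\ref{continuity:of:some:operations:in:the:Zariski:topology}(ii); in particular it is continuous, and since $G$ is abelian this also gives continuity of $x\mapsto a+x$. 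Hence the addition is separately $\Zar_G$-continuous, and $(G,\Zar_G)$ is a $T_1$-group (a quasi-topological group) in the sense of Kaplansky.
\end{proof}
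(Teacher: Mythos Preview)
Your proof is correct and follows exactly the approach the paper intends: the corollary is stated without proof in the paper because it is an immediate consequence of Corollary~\ref{continuity:of:some:operations:in:the:Zariski:topology}(i),(ii) together with the fact that $\Zar_G$ is $T_1$, and you have simply made this explicit.
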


According to Kaplansky, a {\em $Z$-group\/} is a $T_1$-group with a Noetherian topology, and a {\em $C$-group\/} is a $T_1$-group such that, for any fixed $a\in G$, the map $x\mapsto  x^{-1}ax$  is continuous; see \cite[Chap. IV, p. 28]{K}. According to Bryant and Yen \cite{BY}, a {\em $CZ$-group\/} is a $C$-group that is also a $Z$-group.
From Theorem \ref{Zariski.topology.is.Noetherian}(i) and Corollary \ref{quasi-group}, it follows that $(G, \Zar_G)$ is a $CZ$-group.

 The Zariski topology $\Zar_G$ is always $T_1$. The next fact  completely describes the abelian groups $G$ for which $\Zar_G$  is the coarsest $T_1$ topology, i.e., the cofinite topology. 

\begin{fact}\label{necessity:almost_torsion-free} 
{\rm (\cite[Theorem 5.1]{TY})}. For an abelian group $G$, the following conditions are equivalent:
\begin{itemize}
 \item[(i)] every proper algebraic subset of $G$ is finite; that is, $\Zar_G$ coincides with the  cofinite topology of $G$,
 \item[(ii)] every proper elementary algebraic subset of $G$ is finite,  
 \item[(iii)] either $G$ is almost torsion-free, or $G$ has exponent $p$ for some prime $p$.
\end{itemize}
\end{fact}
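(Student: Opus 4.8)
The plan is to establish the cycle of implications (i) $\to$ (ii) $\to$ (iii) $\to$ (i); the first is immediate since elementary algebraic sets are algebraic, so the real work is in (ii) $\to$ (iii) and (iii) $\to$ (i). Throughout I would use Lemma \ref{lemma:elementary:algebraic:sets}(i), which tells us that a non-empty elementary algebraic set is exactly a coset $a+G[n]$; since translations preserve cardinality, condition (ii) is equivalent to the purely group-theoretic statement that $G[n]$ is either finite or all of $G$, for every $n\in\mathbb{N}$.

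For (ii) $\to$ (iii): assume $G$ is not almost torsion-free and does not have prime exponent, and produce an $n$ with $G[n]$ infinite and proper. If some prime $p$ has $r_p(G)$ infinite, then $G[p]$ is an infinite subgroup; it is proper unless $G$ has exponent $p$, which we excluded, so $n=p$ works — unless $G[p]=G$, i.e. $G$ has exponent $p$, again excluded. The remaining case is that $G$ is almost torsion-free but fails (iii), which by definition of ``almost torsion-free'' cannot happen the way I phrased it; more carefully, I should split on whether $G$ is bounded. If $G$ is unbounded, pick $n$ large enough (a common multiple of orders of suitably many torsion elements, or exploiting that an unbounded group that is not almost torsion-free has some $G[p]$ infinite) so that $G[n]$ is infinite but $G[n]\neq G$. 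If $G$ is bounded of exponent $e$, write $e=p_1^{k_1}\cdots p_r^{k_r}$; since $G$ does not have prime exponent, either $r\ge 2$ or some $k_i\ge 2$, and in either case one can choose a proper divisor $d$ of $e$ with $G[d]$ infinite — for instance $d=e/p_i$ for an index $i$ with $G[e/p_i]$ infinite, which must exist because otherwise each $G[e/p_i]$ is finite, forcing $G$ itself (a subgroup of $\bigoplus$ of its $p_i$-components, controlled by the $G[e/p_i]$) to be finite, contradicting $G$ infinite when it fails (iii). Thus in all cases we contradict (ii).

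For (iii) $\to$ (i): it suffices to show every elementary algebraic set is finite or equal to $G$, since then every algebraic set, being a finite union of such, is finite or equal to $G$, which is exactly the cofinite topology. By Lemma \ref{lemma:elementary:algebraic:sets}(i) a non-empty proper elementary algebraic set is a coset of $G[n]$ for some $n\ge 1$ with $G[n]\neq G$, so it is enough to see $G[n]$ is finite whenever $n\ge 1$ and $G[n]\neq G$. If $G$ has exponent $p$, then for $n\ge 1$ we have $G[n]=G$ if $p\mid n$ and $G[n]=\{0\}$ otherwise, so every proper $G[n]$ is trivial. If $G$ is almost torsion-free, then $r_p(G)=\dim_{\mathbb{F}_p} G[p]$ is finite for every prime $p$; for $n=p_1^{k_1}\cdots p_r^{k_r}\ge 1$, the subgroup $G[n]$ is the torsion part supported on $\{p_1,\dots,p_r\}$ with bounded exponent, and one shows by induction on $n$ (peeling off one prime power at a time, using that $G[p^{k}]/G[p^{k-1}]$ embeds in $G[p]$, hence is finite) that $G[n]$ is finite. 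Hence (i) holds.

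The main obstacle is the bounded case of (ii) $\to$ (iii): one must argue that a bounded abelian group that is not of prime exponent necessarily has an infinite proper subgroup of the form $G[d]$. I would handle this by decomposing $G=\bigoplus_p G_p$ into $p$-components, noting that a bounded group fails (iii) precisely when either it is infinite of exponent $p^k$ with $k\ge 2$ (take $d=p^{k-1}$; $G[d]$ is infinite since $G/G[d]\hookrightarrow G[p]$ need not be finite here — rather, $G[d]$ infinite follows because if $G[p^{k-1}]$ were finite then $G$ would be finite), or it has at least two prime divisors (take $d=e/p$ for a prime $p\mid e$ chosen so that $G[e/p]$ is infinite, which exists by the counting argument above). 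Assembling these subcases cleanly is the only delicate point; everything else is bookkeeping with Lemma \ref{comparing:elementary:algebraic:sets} and Lemma \ref{lemma:elementary:algebraic:sets}.
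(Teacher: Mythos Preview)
Your proof is correct in its essentials, and the core idea matches the paper's, but you have significantly overcomplicated the implication (ii)~$\to$~(iii). The paper proves this implication directly in two lines: assume (ii) and that $G$ is not almost torsion-free; then by definition some $r_p(G)$ is infinite, so $G[p]$ is infinite, and (ii) forces $G[p]=G$, i.e.\ $G$ has exponent $p$. You do the contrapositive, and your first sentence already contains the complete argument: ``not almost torsion-free'' \emph{means} some $r_p(G)$ is infinite, and ``not of prime exponent'' then gives $G[p]\neq G$, so $n=p$ works. There is no ``remaining case'' --- you seem to realize this (``which by definition of `almost torsion-free' cannot happen''), yet you then launch into a bounded/unbounded case split and a discussion of $p$-component decompositions that is entirely unnecessary. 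All of that material, including the last paragraph about the ``main obstacle'', should be deleted.

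Your (iii)~$\to$~(i) is fine and is a reasonable spelling-out of what the paper simply declares obvious (as (iii)~$\to$~(ii)). The argument that $G[n]$ is finite when $G$ is almost torsion-free, via the filtration $G[p^k]/G[p^{k-1}]\hookrightarrow G[p]$, is correct and is the standard way to justify this step.
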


\begin{proof}  
The proofs of
(i)~$\leftrightarrow$~(ii) and (iii)~$\to$~(ii) are obvious. Let us prove (ii)~$\to$~(iii). Assume that (ii) holds, and $G$ is not almost torsion-free. Then there exists a prime $p$ such that $r_p(G)$ is  infinite. Then  the subgroup $G[p]$ of $G$ is infinite. Since $G[p]$ is an elementary algebraic subset of $G$, (ii) implies that $G=G[p]$. 
\end{proof}

As usual, for a subset $Y$ of a topological space $(X,\mathcal{T})$, we denote by  $\mathcal{T}\restriction_Y$ the subspace topology $\{Y\cap U:U\in\mathcal{T}\}$ generated by $\mathcal{T}$.

The next lemma shows that the Zariski topology behaves well under taking subgroups. This is a typical property in the abelian case (for counter-examples in the  non-abelian case, see \cite{DS_JGT}). Although this lemma can be deduced from \cite[Lemma 2.2(a), Lemma 3.7 and Corollary 5.7]{DS_JGT}, we prefer to give a direct, short and transparent  proof here for the reader's convenience. 

\begin{lemma}\label{lemma:hereditary:Zariski} For every subgroup $H$ of an abelian group $G$, one has $\Zar_G\restriction_H=\Zar_H$.
\end{lemma}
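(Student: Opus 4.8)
The plan is to prove the two inclusions $\Zar_G\restriction_H\subseteq\Zar_H$ and $\Zar_H\subseteq\Zar_G\restriction_H$ separately, working at the level of the closed sets in each topology, i.e., with the families $\mathfrak A_G$ and $\mathfrak A_H$ of algebraic sets. By Theorem \ref{Zariski.topology.is.Noetherian}(ii), the closed sets of $\Zar_G\restriction_H$ are exactly the sets of the form $A\cap H$ with $A\in\mathfrak A_G$, and the closed sets of $\Zar_H$ are exactly the members of $\mathfrak A_H$; so it suffices to show that these two families of subsets of $H$ coincide. Since every algebraic set is a finite union of elementary algebraic sets, and both intersection-with-$H$ and the passage to finite unions are compatible with this reduction, it is enough to handle elementary algebraic sets.

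For the inclusion $\Zar_G\restriction_H\subseteq\Zar_H$: take a nonempty elementary algebraic subset $E=a+G[n]$ of $G$ with $a\in G$, $n\in\mathbb N$ (Lemma \ref{lemma:elementary:algebraic:sets}(i)), and consider $E\cap H$. If this intersection is empty we are done by Lemma \ref{lemma:elementary:algebraic:sets}(ii) applied inside $H$. Otherwise pick $h_0\in E\cap H$; then $h_0+G[n]=E$, and intersecting with $H$ gives $E\cap H=(h_0+G[n])\cap H=h_0+(G[n]\cap H)=h_0+H[n]$, because $h_0\in H$. Since $H[n]=\{x\in H: nx=0\}$ and $h_0\in H$, this is an elementary algebraic subset of $H$, hence lies in $\mathfrak A_H$. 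Taking finite unions handles a general algebraic $A$, so $A\cap H\in\mathfrak A_H$.

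For the reverse inclusion $\Zar_H\subseteq\Zar_G\restriction_H$: take a nonempty elementary algebraic subset $E'=h_0+H[n]$ of $H$ with $h_0\in H$, $n\in\mathbb N$. Then $H[n]=G[n]\cap H$, so $E'=h_0+(G[n]\cap H)=(h_0+G[n])\cap H$, and $h_0+G[n]$ is an elementary algebraic subset of $G$; thus $E'$ is the trace on $H$ of a $\Zar_G$-closed set, i.e., $E'$ is $\Zar_G\restriction_H$-closed. Again finite unions give the general case. Combining the two inclusions yields $\Zar_G\restriction_H=\Zar_H$. I do not expect a genuine obstacle here: the only point requiring a little care is the empty-intersection case in the first inclusion (handled by Notation \ref{Def:elem:alg:set} and Lemma \ref{lemma:elementary:algebraic:sets}(ii), including the degenerate possibility $H=\{0\}$), and the observation that a coset $a+G[n]$ meeting $H$ can be re-based at a point of $H$ so that its trace on $H$ is literally $H[n]$ translated.
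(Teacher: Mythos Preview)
Your proof is correct and follows essentially the same route as the paper's: both reduce to the identity $h+H[n]=(h+G[n])\cap H$ for $h\in H$ and $n\in\mathbb N$, handle the empty trace trivially, and re-base a coset $a+G[n]$ meeting $H$ at a point $h_0\in H$ to identify its trace as $h_0+H[n]$. The paper packages this as the single equality $\EE_H=\{H\cap A:A\in\EE_G\}$, but the argument is the same.
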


\begin{proof}  In view of Theorem \ref{Zariski.topology.is.Noetherian}(ii) and the formulae $\fin{\EE_G}=\mathfrak{A}_G$,  $\fin{\EE_H}=\mathfrak{A}_H$, it suffices to check that
$\EE_H=\{H\cap A: A\in {\EE_G}\}$. Note that 
\begin{equation}
\label{eq:A:B} h+H[n] = H\cap (h+G[n])\mbox{ for all } n\in\N \mbox{ and } h\in H.
\end{equation}

Assume that $\emptyset\not=B\in\EE_H$. Then $B=h+H[n]$ for some $h\in H$ and $n\in\N$; see Lemma \ref{lemma:elementary:algebraic:sets}(i).
Since $A=h+G[n]\in\EE_G$ by Lemma \ref{lemma:elementary:algebraic:sets}(i), and $B=H\cap A$ by (\ref{eq:A:B}), we conclude that  $\EE_H\subseteq\{H\cap A: A\in \EE_G\}$.

Let $A\in \EE_G$.  Assuming that $H \cap A\ne\emptyset$,  choose $h\in H\cap A$. From $\emptyset\not=A\in\EE_G$ and Lemma \ref{lemma:elementary:algebraic:sets}(i), it follows that $A=a+G[n]$ for some $a\in G$ and $n\in\N$. According to Lemma \ref{comparing:elementary:algebraic:sets}(ii), one has $A=a+G[n]=h+G[n]$, and so $H\cap A=h+H[n]\in \EE_H$ by (\ref{eq:A:B}) and Lemma \ref{lemma:elementary:algebraic:sets}(i). This  proves that $\{H\cap A: A\in \EE_G\}\subseteq \EE_H$.
\end{proof}

\begin{remark}
Lemma \ref{lemma:hereditary:Zariski} on inclusions $H\hookrightarrow G$  cannot be extended to arbitrary homomorphisms $f:H\to G$.  Indeed, define $H=\Q$,  $G=\Q/\Z$, and let $f:H\to G$ be the  canonical  homomorphism. We claim that $f$  is not continuous when both groups have Zariski topologies. To see this, note that  the subset $F=\{0\}$
of  $G$ is $\Zar_G$-closed, while $\Z=f^{-1}(F)$ is not $\Zar_H$-closed,
since $\Zar_H$ is the cofinite topology of  $H$ by Fact \ref{necessity:almost_torsion-free}.
Therefore, $\Zar_G$ is not a functorial topology in Charles sense; see \cite[\S 7]{Fuchs}. 
\end{remark}

Our next result establishes two fundamental properties of the Zariski closure.

\begin{theorem}
\label{translation:of:closures}
Let $G$ be an abelian group. Then:
\begin{itemize}
\item[(i)] $\CL_{\Zar_G}(A+B)=\CL_{\Zar_G}(A)+\CL_{\Zar_G}(B)$ whenever $A\subseteq G$ and $B\subseteq G$,
\item[(ii)] $\CL_{\Zar_G}(a+S)=a+\CL_{\Zar_G}(S)$ for each $a\in G$ and every subset $S$ of $G$.
\end{itemize}
\end{theorem}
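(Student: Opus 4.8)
Both statements express that the Zariski closure operator is compatible with the group operation, and the natural strategy is to reduce everything to the description of closed sets as finite unions of elementary algebraic sets (Theorem \ref{Zariski.topology.is.Noetherian}(ii)), together with the closure properties of $\mathfrak{A}_G$ recorded in Lemma \ref{properties:of:algebraic:sets}. Observe first that (ii) is the special case $A=\{a\}$ of (i), once one notes that $\CL_{\Zar_G}(\{a\})=\{a\}$ because $\Zar_G$ is $T_1$; so it suffices to prove (i), and I will occasionally use (ii) as a convenience after it has been derived. Throughout I write $\overline{X}$ for $\CL_{\Zar_G}(X)$.

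\textbf{The inclusion $\overline{A+B}\subseteq\overline{A}+\overline{B}$.} By Lemma \ref{properties:of:algebraic:sets}(iii), the set $\overline{A}+\overline{B}$ is algebraic, hence $\Zar_G$-closed. Since it clearly contains $A+B$, it contains $\overline{A+B}$. This direction is immediate and requires no work beyond citing the lemma.

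\textbf{The inclusion $\overline{A}+\overline{B}\subseteq\overline{A+B}$.} This is the substantive direction. I would argue in two steps, fixing one summand at a time. First fix $b\in B$; then $A+b\subseteq A+B$, so $\overline{A+b}\subseteq\overline{A+B}$, and since translation by $b$ is a $\Zar_G$-homeomorphism (Corollary \ref{continuity:of:some:operations:in:the:Zariski:topology}(ii)) we have $\overline{A+b}=\overline{A}+b$; hence $\overline{A}+b\subseteq\overline{A+B}$ for every $b\in B$, i.e. $\overline{A}+B\subseteq\overline{A+B}$. Now fix $a'\in\overline{A}$; the displayed inclusion gives $a'+B\subseteq\overline{A+B}$, so $\overline{a'+B}\subseteq\overline{A+B}$, and again by homeomorphy of translation $\overline{a'+B}=a'+\overline{B}$; thus $a'+\overline{B}\subseteq\overline{A+B}$ for every $a'\in\overline{A}$, which is exactly $\overline{A}+\overline{B}\subseteq\overline{A+B}$. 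Combining the two inclusions proves (i), and setting $A=\{a\}$ yields (ii) (whose right-hand side is $a+\overline{S}=\{a\}+\overline{S}=\overline{\{a\}}+\overline{S}$).

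\textbf{Remark on the main obstacle.} There is essentially no obstacle here once the preparatory results are in place: the only genuinely needed inputs are that finite sums of algebraic sets are algebraic (Lemma \ref{properties:of:algebraic:sets}(iii)) and that single-element translations are Zariski homeomorphisms (Corollary \ref{continuity:of:some:operations:in:the:Zariski:topology}(ii)), and the rest is the standard ``fix one coordinate, close, then fix the other, close'' bookkeeping that works for any closure operator invariant under the relevant shifts. The one point worth stating carefully is why $\overline{X+c}=\overline{X}+c$ for a single element $c$ — this is precisely because $x\mapsto x+c$ is a self-homeomorphism of $(G,\Zar_G)$, so it commutes with the closure operator — and everything else follows by iterating this over the elements of the two sets.
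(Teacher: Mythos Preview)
Your proof is correct and follows essentially the same approach as the paper: one inclusion uses Lemma \ref{properties:of:algebraic:sets}(iii) to see that $\overline{A}+\overline{B}$ is closed, and the reverse inclusion is obtained by the two-step ``fix one coordinate, translate, close, then do the other'' argument using that translations are $\Zar_G$-homeomorphisms (Corollary \ref{continuity:of:some:operations:in:the:Zariski:topology}(ii)). The paper packages the second step slightly differently, recording once and for all that $\overline{C}+D\subseteq\overline{C+D}$ for arbitrary $C,D$ and then applying this twice, but this is exactly your element-by-element argument stated uniformly; the derivation of (ii) as the special case $A=\{a\}$ is also identical.
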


\begin{proof}
(i) According to Lemma \ref{properties:of:algebraic:sets}(iii), the set $\CL_{\Zar_G}(A)+\CL_{\Zar_G}(B)$ is $\Zar_G$-closed and contains $A+B$, which implies that
\begin{equation}
\label{eq.2}
\CL_{\Zar_G}(A+B)\subseteq \CL_{\Zar_G}(A)+\CL_{\Zar_G}(B).
\end{equation}
Let us prove the inverse inclusion. All translations $x\mapsto x+a$ ($a\in G$) are $\Zar_G$-continuous by Corollary \ref{continuity:of:some:operations:in:the:Zariski:topology}(ii), and so 
\begin{equation}
\label{eq.3}
\CL_{\Zar_G}(C)+D\subseteq \CL_{\Zar_G}(C+D)\mbox{ for every pair of subsets } C,D \mbox{ of } G.
\end{equation}
We claim that
\begin{equation}
\label{equation:AB}
\CL_{\Zar_G}(A)+\CL_{\Zar_G}(B)\subseteq \CL_{\Zar_G}(A+\CL_{\Zar_G}(B))\subseteq \CL_{\Zar_G}(\CL_{\Zar_G}(A+B))=\CL_{\Zar_G}(A+B).
\end{equation}
Indeed, the first inclusion in (\ref{equation:AB}) is obtained by  applying
(\ref{eq.3}) to $C=A$ and $D=\CL_{\Zar_G}(B)$, and the second inclusion in (\ref{equation:AB}) is obtained by applying
(\ref{eq.3}) 
to $C=B$ and $D=A$.  Combining (\ref{eq.2}) and (\ref{equation:AB}), we get (i).

(ii) Since $(G,\Zar_G)$ is a $T_1$ space, 
(ii) is a particular case of 
(i) applied to $A=\{a\}$ and $B=S$. Note that
(ii) also  follows directly from Corollary \ref{continuity:of:some:operations:in:the:Zariski:topology}(ii).
\end{proof}

According to Theorem \ref{translation:of:closures}, the addition function $\mu: G \times G \to G$ defined by $\mu(x,y) = x + y$  
maps the $\Zar_G \times\Zar_G$-closure of a {\em rectangular\/} set $A \times B$ to the $\Zar_G$-closure of the image $\mu(A\times B)$. This fact 
is significant,  since this does not hold for {\em arbitrary\/} subsets of $G\times G$, as the following remark demonstrates.
\begin{remark}
{\em For an infinite group $G$, there always exists a set $X\subseteq G \times G$, such that $\mu$ 
maps the $\Zar_G \times \Zar_G$-closure of $X$ outside of the $\Zar_G$ closure of $\mu(X)$.\/} Indeed, assume that $\mu$ maps
the $\Zar_G \times \Zar_G$-closure of $X$ in the $\Zar_G$ closure of $\mu(X)$  for {\em every\/} subset $X$ of $G \times G$. Then the map $\mu: (G \times G, \Zar_G \times \Zar_G) \to (G, \Zar_G) $ must be continuous. That is, the addition $(x,y)\mapsto x+y$ becomes $\Zar_G$-continuous. Since the inverse operation $x\mapsto -x$ is always $\Zar_G$-continuous by Corollary \ref{continuity:of:some:operations:in:the:Zariski:topology}(i), it follows that $(G, \Zar_G)$ is a topological group. Thus, $G$ must be finite by 
Corollary \ref{Zariski:topology:never:group},  which is a contradiction.
\end{remark}

One can easily find an $X$ as in the above example in the case $G=\Z$, or any almost torsion-free infinite group. Indeed,  let $Y$ be a countably infinite subset of $G$.
Since $G$ carries a cofinite topology by Fact \ref{necessity:almost_torsion-free}, one can easily check that the set $X=\{(y,-y): y\in Y\}\subseteq G \times G$ is  $\Zar_G \times\Zar_G$-dense, and so  $G\times G=\CL_{\Zar_G\times \Zar_G} (X)$. On the other hand, $\mu(X)=\{0\}$, and
so $\CL_{\Zar_G}(\mu(X))=\{0\}$. This shows that $\mu(\CL_{\Zar_G\times \Zar_G} (X))\setminus \CL_{\Zar_G}(\mu(X))= \mu(G\times G)\setminus\{0\}=G\setminus\{0\}\not=\emptyset$.

\begin{definition}
\label{definition:of:Zariski:atom}
A countably infinite subset $X$ of an abelian group $G$ will be called a {\em \Zc{\Zar_G}\/}  provided that 
$\Zar_G\restriction_X=\{X\setminus F:$ is a finite subset of $X\}\bigcup \{\emptyset\}$ is the cofinite topology of $X$.
\end{definition}
 
We refer the reader to Fact \ref{FactZcurve} for other conditions equivalent to the condition from Definition \ref{definition:of:Zariski:atom}.

Obviously, an infinite subset of a \Zc{\Zar_G} is still a \Zc{\Zar_G}. 

The Zariski closures of \Zc{\Zar_G}s are precisely the {\em irreducible\/}  (and hence, elementary) algebraic sets; one of the  implications  is proved in item (i) of the next proposition, while the other one  is given in Corollaries \ref{closure:irreducible:sets} and \ref{closures:of:atoms}. 

\begin{proposition}
\label{general:properties:of:atoms}
Let $X$ be a \Zc{\Zar_G} of an abelian group $G$. Then:
\begin{itemize}
\item[(i)] $\CL_{\Zar_G} (X)$ is irreducible,
\item[(ii)] there exists $a\in G$ and $n\in\mathbb{N}$ such that $\CL_{\Zar_G} (X)=a+G[n]$,
\item[(iii)] if $F$ is a $\Zar_G$-closed subset of $G$, then either $F\cap X$ is finite or $X\subseteq F$,
\item[(iv)] any faithfully enumerated sequence $\{x_k:k\in\mathbb{N}\}$ of points of $X$ converges to every point $x\in \CL_{\Zar_G} (X)$.
\end{itemize}
\end{proposition}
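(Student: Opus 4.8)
The plan is to establish the four items more or less in the order listed, since (ii) will feed into (iii) and (iv). First I would handle (i). By Fact~\ref{Fact:irreducible}(iii), it suffices to show that $X$ itself is irreducible as a subspace of $(G,\Zar_G)$, because $X$ is dense in $\CL_{\Zar_G}(X)$. But by hypothesis $X$ carries the cofinite topology, which is irreducible: any two nonempty open subsets of an infinite cofinite space meet (their complements are finite, hence cannot cover $X$). So $X$ is irreducible, and therefore so is its closure.

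Next, item (ii). By Theorem~\ref{Zariski.topology.is.Noetherian}(ii), $\CL_{\Zar_G}(X)$ is an algebraic set, i.e.\ a finite union $E_1\cup\dots\cup E_k$ of elementary algebraic sets $E_i\in\EE_G$. Since $\CL_{\Zar_G}(X)$ is irreducible by (i) and each $E_i$ is $\Zar_G$-closed, Fact~\ref{fact:on:finite:unions} forces $\CL_{\Zar_G}(X)\subseteq E_i$ for some $i$; combined with $E_i\subseteq\CL_{\Zar_G}(X)$ this gives $\CL_{\Zar_G}(X)=E_i\in\EE_G$. Since $X$ is infinite, $\CL_{\Zar_G}(X)\neq\emptyset$, so by Lemma~\ref{lemma:elementary:algebraic:sets}(i) we may write $\CL_{\Zar_G}(X)=a+G[n]$ for suitable $a\in G$, $n\in\mathbb{N}$.

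For item (iii), let $F$ be $\Zar_G$-closed and suppose $X\not\subseteq F$. Then $F\cap X$ is a $\Zar_G\restriction_X$-closed proper subset of $X$; since $\Zar_G\restriction_X$ is the cofinite topology of $X$, every proper closed subset of $X$ is finite, so $F\cap X$ is finite. Finally, item (iv): fix $x\in\CL_{\Zar_G}(X)$ and a faithfully enumerated sequence $\{x_k:k\in\mathbb{N}\}$ of points of $X$. Let $U$ be a $\Zar_G$-open neighborhood of $x$; its complement $F=G\setminus U$ is $\Zar_G$-closed and does not contain $x$, hence does not contain $\CL_{\Zar_G}(X)$, hence does not contain $X$. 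By (iii), $F\cap X$ is finite, so $x_k\in F$ for only finitely many $k$ (the enumeration being faithful), i.e.\ $x_k\in U$ for all but finitely many $k$. Thus $x_k\to x$.

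I do not expect any genuine obstacle here; the one point requiring a little care is item (iv), where one must use that the enumeration is \emph{faithful} (injective) to convert ``$F\cap X$ finite'' into ``$x_k\in F$ for finitely many $k$''—without injectivity a constant subsequence could sit inside $F$. Everything else is a direct unwinding of the definition of a \Zc{\Zar_G} together with the Noetherian machinery (Theorem~\ref{Zariski.topology.is.Noetherian}, Fact~\ref{fact:on:finite:unions}, Lemma~\ref{lemma:elementary:algebraic:sets}) already in hand.
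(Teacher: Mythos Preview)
Your proposal is correct and follows essentially the same route as the paper: (i) via irreducibility of the cofinite topology and Fact~\ref{Fact:irreducible}(iii); (ii) via Theorem~\ref{Zariski.topology.is.Noetherian}(ii), Fact~\ref{fact:on:finite:unions}, and Lemma~\ref{lemma:elementary:algebraic:sets}(i); (iii) directly from the cofinite subspace topology; and (iv) by observing that the complement of any open neighborhood meets $X$ in a finite set. The only cosmetic difference is that in (iv) you invoke (iii) on $F=G\setminus U$, whereas the paper argues directly that $U\cap X$ is cofinite in $X$; also, despite your opening remark, (ii) is never actually used in your proofs of (iii) and (iv).
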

\begin{proof}
(i) Clearly, $X$ is irreducible. Therefore, $\CL_{\Zar_G} (X)$ must be irreducible by Fact \ref{Fact:irreducible}(iii).

(ii) Note that $\CL_{\Zar_G} (X)\in {\mathfrak A}_G$ by Theorem \ref{Zariski.topology.is.Noetherian}(ii), so $\CL_{\Zar_G} (X)=\bigcup\mathcal{F}$ for some finite family  $\mathcal{F}\subseteq \EE_G\subseteq \Zar_G$. Since $\CL_{\Zar_G} (X)$ is irreducible by item (i), applying Fact \ref{fact:on:finite:unions},
 we conclude that $\CL_{\Zar_G} (X)=F$ for some $F\in\mathcal{F}$.  Now (ii)  follows from Lemma \ref{lemma:elementary:algebraic:sets}(i).

(iii) Let $F$ be a $\Zar_G$-closed subset of $G$. Then $F\cap X$ is a closed subset of $(X,\Zar_G\restriction_X)$. 
Since $X$ is a \Zc{\Zar_G}, from  Fact \ref{FactZcurve}(b) one concludes that either   $F\cap X$ is finite or $F\cap X=X$ (and hence, $X\subseteq F$).

(iv) Let $\{x_k:k\in\mathbb{N}\}\subseteq X$ be a faithfully indexed sequence. Fix an arbitrary point $x\in \CL_{\Zar_G} (X)$. Let $U$ be a $\Zar_G$-open subset of $G$ containing $x$. Then $U\cap X$ is a non-empty $\Zar_G\restriction_X$-open subset of $X$. Since $X$ is a \Zc{\Zar_G}, from  Fact \ref{FactZcurve}(b) one concludes that 
$U\cap X=X\setminus T$ for some finite subset $T$ of $X$. In particular, $X\setminus T\subseteq U$.  It follows that $U$ contains all but finitely many elements of the sequence $\{x_k:k\in\mathbb{N}\}$. Since $U$ was taken arbitrarily,  we conclude that the sequence $\{x_k:k\in\mathbb{N}\}$ converges to $x$.
\end{proof}

\section{Essential order and Zariski irreducible subsets}
\label{section:4}

For a subset  $X$ of an abelian group $G$ and a natural number $n$, we define $nX=\{nx:x\in X\}$. The group $G$ is {\em bounded\/} if  $nG=\{0\}$ for some $n\in \mathbb{N}\setminus\{0\}$, and $G$ is {\em unbounded\/} otherwise.

We say that $d\in\mathbb{N}$ is a {\em proper divisor of $n\in\mathbb{N}$\/} provided that $d\not\in\{0,n\}$ and $dm=n$ for some $m\in \mathbb{N}$. 
Note that, according to our definition, each $d\in\mathbb{N}\setminus \{0\}$ is a proper divisor of $0$.

\begin{definition} 
\label{order:of:a:subset}
Let $n\in\mathbb{N}$. An abelian group $G$ is said to be {\em of exponent\/} $n$ if $nG=\{0\}$ but $dG\ne \{0\}$ for every proper divisor $d$ of $n$.  In this case,
 we call $n$ the {\em exponent of $G$\/}. 
\end{definition}

In particular, $G[n]$ has exponent $n$  precisely when $G[n]\ne G[d]$ for every proper divisor $d$ of $n$.  Note that the only  abelian group of exponent $1$ is $\{0\}$, while a group $G$ is of exponent $0$ precisely when $G$ is unbounded (i.e., $nG\ne \{0\}$ for every positive integer $n$). 

\begin{lemma}
\label{exponent:of:G[n]:can:be:assumed:equal:to:n}
If $G$ is an abelian group and $m\in\N$, then there exists $n\in\N$ such that $G[m]=G[n]$ and $G[n]$ has exponent $n$.
\end{lemma}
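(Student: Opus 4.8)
The plan is to track how the subgroups $G[k]$ behave as $k$ ranges over divisors of $m$, and to locate among them one whose index $n$ is actually its exponent. First I would dispose of the trivial case $m=0$, where $G[0]=G$: either $G$ is unbounded, in which case $G$ itself has exponent $0$ and we take $n=0$, or $G$ is bounded, in which case there is a least positive integer $n_0$ with $n_0G=\{0\}$, and one checks directly from Definition \ref{order:of:a:subset} that this $n_0$ is the exponent of $G=G[0]$, so we take $n=n_0$. (In fact this case is subsumed by the general argument below once one is careful about proper divisors of $0$, but separating it keeps the bookkeeping clean.)

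For $m\geq 1$, the key observation is that $G[d]=G[(d,m)]$ whenever we only care about subgroups of the form $G[d]$ with $d\mid m$; more to the point, by Lemma \ref{comparing:elementary:algebraic:sets}(iii) we have $G[k]\cap G[\ell]=G[(k,\ell)]$, so the collection $\{G[d]: d\mid m,\ d\geq 1\}$ is a finite family of subgroups closed under intersection, all containing $\{0\}$ and all contained in $G[m]$. The plan is to let $n$ be the \emph{smallest} positive divisor of $m$ with $G[n]=G[m]$; such an $n$ exists because $m$ itself works and the set of positive divisors of $m$ is finite and nonempty. By construction $G[m]=G[n]$, so it remains only to verify that $G[n]$ has exponent $n$, i.e., that $G[n]\ne G[d]$ for every proper divisor $d$ of $n$.

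To see this, suppose toward a contradiction that $G[n]=G[d]$ for some proper divisor $d$ of $n$, so $d\notin\{0,n\}$ and $d\mid n$. Then $d$ is a positive divisor of $n$, hence of $m$, with $G[d]=G[n]=G[m]$, and $d<n$ since $d\mid n$ and $d\ne n$ forces $d<n$; this contradicts the minimality of $n$. Hence no such $d$ exists, and $G[n]$ has exponent $n$ by Definition \ref{order:of:a:subset}.

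I do not expect any serious obstacle here — the argument is essentially a minimal-counterexample (well-ordering) argument on the finite set of divisors of $m$. The only point requiring a little care is the edge case $m=0$, where "proper divisor of $0$" means any positive integer by the convention fixed just before Definition \ref{order:of:a:subset}, so that "exponent $0$" is correctly synonymous with "unbounded"; once that convention is respected, the bounded subcase again reduces to choosing the least $n_0$ with $n_0G=\{0\}$, and the verification that $n_0$ is the exponent is immediate. A second minor point is to remember that $n=1$ is allowed (it occurs exactly when $G[m]=\{0\}$), and that $n=1$ trivially has exponent $1$ since $\{0\}$ is the group of exponent $1$ and $1$ has no proper divisors.
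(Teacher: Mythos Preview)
Your argument is correct and is essentially the paper's approach viewed from the other end: the paper takes $n$ to be the exponent of $G[m]$ (when $G[m]$ is bounded) and then observes that $G[m]=G[n]$ since $n\mid m$, whereas you take $n$ to be the least positive divisor of $m$ with $G[n]=G[m]$ and then verify that $n$ is the exponent. These produce the same $n$, so the two proofs differ only in which characterization of $n$ is taken as the definition and which is derived.
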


\begin{proof}
If $G[m]$ is unbounded, then $m=0$ and $G[m]$ has exponent $0$, so $n=m$ works. If $G[m]$ is bounded, then $G[m]$ has exponent $n$ for some divisor $n$ of $m$. Clearly, $G[m]=G[n]$ in this case.
\end{proof}

\begin{definition} Let $G$ be an abelian group.
\label{definition:of:essential:order}
\begin{itemize}
\item[(i)]  If $G$ is bounded, then the {\em essential order\/} $eo(G)$  of  $G$ is the smallest positive integer $n$ such that $nG$ is finite.
\item[(ii)] If $G$ is unbounded, we define $eo(G)=0$. 
\end{itemize}
\end{definition}

The notion of the essential order of a bounded abelian  group $G$, as well as the notation $eo(G)$, are due to Givens and Kunen \cite{GK},  although the definition in \cite{GK} is different (but equivalent) to ours.

\begin{lemma}
\label{lemma:4.2}
If $G$ be a bounded abelian group, $n=eo(G)$ and $mG$ is finite for some $m\in\mathbb{N}\setminus\{0\}$, then $n$ divides $m$.
In particular, the essential order $eo(G)$ of a bounded group $G$ divides its exponent. 
\end{lemma}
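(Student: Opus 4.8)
```latex
\begin{proof}[Proof proposal]
The plan is to reduce the statement to an elementary divisibility argument about the family of natural numbers $m$ for which $mG$ is finite. First I would set $n = eo(G)$ and let $m \in \mathbb{N}\setminus\{0\}$ be such that $mG$ is finite. Write $d = (m,n)$ for the greatest common divisor; since $G$ is bounded, both $m$ and $n$ are positive, so $d$ is a genuine positive integer, and $d \mid m$. The goal is to show $d = n$, for then $n = d \mid m$ as desired. By minimality of $n = eo(G)$ it suffices to prove that $dG$ is finite, since $d \le n$ and $n$ is the \emph{smallest} positive integer with $nG$ finite; finiteness of $dG$ would force $d \ge n$, hence $d = n$.

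The heart of the matter is therefore: if $mG$ and $nG$ are both finite, then $dG$ is finite for $d = (m,n)$. I would prove this using a Bézout-type relation. Pick integers $u, v$ with $um + vn = d$. For any $x \in G$ we have $dx = u(mx) + v(nx)$, so the map $x \mapsto dx$ factors through the map $x \mapsto (mx, nx) \in mG \times nG$ in the sense that $dx$ is determined by the pair $(mx, nx)$ via $(p,q) \mapsto up + vq$. Concretely, if $mx = mx'$ and $nx = nx'$ then $dx = dx'$. Hence $|dG| \le |mG \times nG| = |mG|\cdot|nG| < \infty$, which is exactly what we need. Thus $dG$ is finite, $d = n$, and $n \mid m$.

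The final sentence (the essential order divides the exponent) is then immediate: if $G$ has exponent $e$, then $eG = \{0\}$ is finite, so applying the first part with $m = e$ gives $n = eo(G) \mid e$. If instead $G$ is bounded but we only know $eG = \{0\}$ for \emph{some} positive $e$ (not necessarily the exponent), the same conclusion $eo(G) \mid e$ still holds and passing to the least such $e$ recovers the claim about the exponent.

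The main (minor) obstacle is purely bookkeeping: making sure that the boundedness hypothesis is used to guarantee $n = eo(G)$ is a positive integer (so that $(m,n)$ is well-defined and positive, rather than falling into the degenerate $eo(G) = 0$ case), and handling the trivial group $G = \{0\}$ separately if one insists — there $eo(G)$ would have to be read off the convention, but since $mG = \{0\}$ is finite for every $m$, the statement is vacuous or trivial there. No deep input is needed beyond the definition of $eo$ and Bézout's identity.
\end{proof}
```
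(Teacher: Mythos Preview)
Your argument is correct. The paper's proof is a close cousin of yours but uses the division algorithm rather than B\'ezout: it writes $m=qn+r$ with $0<r<n$ (assuming $n\nmid m$) and observes $rG\subseteq mG+qnG$ is finite, contradicting the minimality of $n$. Your version instead shows directly that the set $\{m\ge 1:mG\text{ is finite}\}$ is closed under $\gcd$, which is a slightly more structural way to phrase the same idea; either way the key observation is that a $\mathbb{Z}$-linear combination of $m$ and $n$ applied to $G$ lands in a finite set. Both arguments are one line once you see it; your counting via the map into $mG\times nG$ works but could be shortened by noting $dG\subseteq u(mG)+v(nG)\subseteq mG+nG$ as a sum of finite subgroups.
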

\begin{proof}
Clearly, $m\ge n$ by Definition \ref{definition:of:essential:order}(i). Assume that $n$ does not divide $m$. Then there exists  unique integers $q\in\mathbb{N}$ and $r$ satisfying $m = qn + r$ and $0< r < n$. Since $r = m - qn$, the group $rG  \subseteq mG + qnG$ is finite,  which contradicts $n=eo(G)$.
\end{proof}

\begin{lemma}
\label{lemma:4.2a}
Let $G$ be an abelian group and $n=eo(G)$. Then:
\begin{itemize}
  \item[(i)] $G[n]$ has finite index in $G$,
  \item[(ii)]  $eo(G[n])=eo(G)=n$.
\end{itemize}
\end{lemma}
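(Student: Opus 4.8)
The plan is to separate the unbounded and bounded cases. If $G$ is unbounded, then $n=eo(G)=0$ by Definition \ref{definition:of:essential:order}(ii) and $G[n]=G[0]=G$; hence (i) holds with index $1$, and (ii) reduces to the trivial equality $eo(G)=eo(G)=0$. For the remainder I assume $G$ is bounded, so that $n=eo(G)$ is a positive integer and $nG$ is finite.

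For (i), I would look at the group homomorphism $\mu_n\colon G\to G$ given by $\mu_n(x)=nx$. Its kernel is precisely $G[n]$ and its image is $nG$, so the first isomorphism theorem gives $G/G[n]\cong nG$. Since $nG$ is finite by the definition of $eo(G)$, the index $[G:G[n]]$ is finite.

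For (ii), first note that $G[n]$ is bounded because $n\cdot G[n]=\{0\}$, so $m:=eo(G[n])$ is a well-defined positive integer; moreover $m\le n$, since $n\cdot G[n]=\{0\}$ is finite. The crux is the reverse inequality $m\ge n$, and for it I would isolate the following elementary observation: if $H$ is a subgroup of finite index in an abelian group $G$ and $kH$ is finite for some $k\in\mathbb{N}$, then $kG$ is finite. Indeed, $\mu_k\colon G\to G$, $x\mapsto kx$, carries $G$ onto $kG$ and $H$ onto $kH$, and therefore descends to a surjection $G/H\twoheadrightarrow kG/kH$ (well-defined because $\mu_k(H)=kH$); thus $|kG/kH|\le [G:H]<\infty$, and together with the finiteness of $kH$ this forces $kG$ to be finite. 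Applying this with $H=G[n]$ (of finite index in $G$ by part (i)) and $k=m=eo(G[n])$ (for which $mG[n]$ is finite by the definition of $eo$), I obtain that $mG$ is finite; since $m$ is a positive integer, the minimality of $n=eo(G)$ gives $n\le m$. Combining the two inequalities yields $eo(G[n])=m=n=eo(G)$.

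The whole argument is routine; the only step needing a moment's care is the passage ``$kH$ finite and $[G:H]$ finite $\Rightarrow$ $kG$ finite'', which is exactly the auxiliary observation above. (Alternatively, one can derive $m=n$ directly from Lemma \ref{lemma:4.2}: applied to $G[n]$ it gives $eo(G[n])\mid n$ via $n\cdot G[n]=\{0\}$, and applied to $G$ — after using the auxiliary observation to see that $eo(G[n])\cdot G$ is finite — it gives $n\mid eo(G[n])$.)
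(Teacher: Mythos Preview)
Your proof is correct and follows essentially the same approach as the paper. Both handle the unbounded case trivially, prove (i) via the isomorphism $G/G[n]\cong nG$, and for (ii) argue that $mG$ is finite (where $m=eo(G[n])$) to conclude $n\le m$; the only cosmetic difference is that the paper reaches the finiteness of $mG$ by first invoking Lemma~\ref{lemma:4.2} to get $m\mid n$, then noting $G[m]\subseteq G[n]$ has finite index in $G[n]$ and hence in $G$, whereas you bypass Lemma~\ref{lemma:4.2} with the clean auxiliary observation that $[G:H]<\infty$ and $|kH|<\infty$ force $|kG|<\infty$.
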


\begin{proof} If $n=0$, then $G[n]=G[0]=G$,  and both items (i) and (ii) trivially hold. So we now assume that  $n\ge 1$. Then $G$ is a bounded torsion group.

(i) By Definition 
\ref{definition:of:essential:order}(i), $nG\cong G/G[n]$ must finite. Hence,  $G[n]$  is a finite index subgroup of $G$.

(ii) Let $m=eo(G[n])$. Since $G[n]$ is bounded, $m\ge 1$. By Lemma \ref{lemma:4.2}, $m$ divides $n$,  and so $G[m]\subseteq G[n]$ and $m\le n$. From 
(i) we conclude that $G[m]$ has  a finite index in $G[n]$. Since $G[n]$ has  a finite index in $G$, it follows that $G[m]$ has  a
finite index in $G$. Hence $mG\cong G/G[m]$ is finite, which implies $n=eo(G)\le m$. This proves that $m=n$.
\end{proof}

The next theorem describes the connected component and the irreducible component of the identity of an abelian group in the Zariski topology.

\begin{theorem}
\label{essential:order:and:components}
Let $G$ be an infinite abelian group and $n=eo(G)$. Then:
\begin{itemize}
\item[(i)] $G[n]$ is a $\Zar_G$-clopen subgroup of $G$,
\item[(ii)] $G[n]$ is both the connected component and the irreducible component of the identity of the space $(G,\Zar_G)$.  
\end{itemize}
\end{theorem}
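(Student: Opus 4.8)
The plan is to prove (i) first, then deduce (ii) from (i) together with basic facts about Noetherian $T_1$-spaces and the structure of algebraic sets. For (i), I would start from the definition $n = eo(G)$: since $G$ is infinite and $n = eo(G)$, either $n = 0$ (and then $G[n] = G$ is trivially clopen), or $n \geq 1$, in which case $nG$ is finite. In the latter case write $nG = \{b_1, \ldots, b_k\}$ as a finite set. The key observation is that the map $f_n : (G, \Zar_G) \to (G, \Zar_G)$, $x \mapsto nx$, is continuous by Corollary \ref{continuity:of:some:operations:in:the:Zariski:topology}(iii). Since $G$ is a $T_1$-space, each singleton $\{b_i\}$ is $\Zar_G$-closed, so $G[n] = f_n^{-1}(\{0\})$ is $\Zar_G$-closed; moreover $G[n]$ is an elementary algebraic set, hence $\Zar_G$-closed directly. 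To see $G[n]$ is also $\Zar_G$-\emph{open}: by Lemma \ref{lemma:4.2a}(i), $G[n]$ has finite index in $G$, so its complement $G \setminus G[n]$ is a finite union of cosets $a + G[n]$ with $a \notin G[n]$, each of which is an elementary algebraic set and hence $\Zar_G$-closed; a finite union of closed sets is closed, so $G \setminus G[n]$ is $\Zar_G$-closed and $G[n]$ is $\Zar_G$-open. This proves (i).

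For (ii), I would argue that $G[n]$ is both connected and irreducible, and then that it is maximal with these properties. Since $G[n]$ is clopen by (i), the connected component $C$ of $0$ is contained in $G[n]$; for the reverse inclusion it suffices to show $G[n]$ itself is connected (indeed irreducible). Here I would use the equivalent descriptions of $eo(G)$: by Lemma \ref{lemma:4.2a}(ii), $eo(G[n]) = n$, and by Lemma \ref{exponent:of:G[n]:can:be:assumed:equal:to:n} we may assume $G[n]$ has exponent $n$, i.e. $G[n] \neq G[d]$ for every proper divisor $d$ of $n$. The claim is then that a group $H$ of exponent $n$ with $eo(H) = n$ — equivalently, $H$ with $nH = \{0\}$ and $dH$ infinite for every proper divisor $d$ of $n$ — has irreducible $(H, \Zar_H)$, i.e. every proper algebraic subset of $H$ is "small" in the sense that $H$ is not a union of two proper closed sets. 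A proper elementary algebraic subset of $H$ has the form $a + H[m]$ where $H[m]$ is a proper subgroup; since $nH = \{0\}$ we have $H[m] = H[(m,n)]$, and $(m,n)$ is a proper divisor $d$ of $n$ (or $H[m] = H$, excluded), so the coset $a + H[d]$ has infinite index... no — rather, one shows $H[d]$ itself is a proper subgroup whose cosets cannot cover $H$ in a way violating irreducibility. The cleanest route: show that for any two proper algebraic subsets $A_1, A_2$ of $H$, $A_1 \cup A_2 \neq H$, by reducing each $A_i$ to a finite union of cosets of proper subgroups $H[d_i]$ with $d_i \mid n$, $d_i \neq n$, and using that $H/H[d]$ is infinite (since $dH \cong H/H[d]$ is infinite by $eo(H) = n$) so that finitely many cosets of $H[d_1]$ and $H[d_2]$ cannot cover $H$. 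This establishes that $G[n]$ is irreducible, hence connected, hence equals $C$.

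Finally, for maximality among irreducible subsets: let $T$ be any irreducible subset of $(G, \Zar_G)$ containing $0$. Since $G[n]$ is clopen, $T = (T \cap G[n]) \cup (T \cap (G \setminus G[n]))$ is a partition of $T$ into two relatively closed sets, and $T \cap G[n] \ni 0$ is nonempty, so irreducibility of $T$ forces $T \subseteq G[n]$. Combined with the above this shows $G[n]$ is the (unique) irreducible component of $0$, completing (ii). The main obstacle I anticipate is the combinatorial core of the irreducibility argument for $G[n]$ — specifically, verifying carefully that a group $H$ with $eo(H) = n$ and exponent $n$ cannot be covered by finitely many cosets of the subgroups $H[d]$ for proper divisors $d$ of $n$; this uses B. H. Neumann-type covering arguments together with the infinitude of the quotients $H/H[d]$, and one must be careful about the interaction of several different subgroups $H[d]$ when the algebraic sets $A_i$ are genuine finite unions rather than single cosets.
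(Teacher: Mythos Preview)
Your argument for (i) is essentially identical to the paper's: finite index of $G[n]$ (Lemma \ref{lemma:4.2a}(i)) makes the finitely many cosets a closed partition, hence each is clopen.

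For (ii) you take a genuinely different route. You prove \emph{directly} that $H=G[n]$ is irreducible: every proper elementary algebraic subset of $H$ is a coset $a+H[d]$ with $d$ a proper divisor of $n$, each such $H[d]$ has infinite index in $H$ (since $dH\cong H/H[d]$ is infinite by $eo(H)=n$), and hence by B.~H.~Neumann's covering lemma finitely many such cosets cannot cover $H$; clopenness then gives maximality. This is correct, and your flagged ``obstacle'' is exactly Neumann's lemma, which handles the interaction of different subgroups $H[d]$ without further work. (A small quibble: your appeal to Lemma \ref{exponent:of:G[n]:can:be:assumed:equal:to:n} is unnecessary --- once $eo(G[n])=n$, the exponent of $G[n]$ is automatically $n$, since a smaller exponent $m$ would give $mG[n]=\{0\}$ finite with $m<n$.)

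The paper instead argues \emph{indirectly}: it lets $E$ be the irreducible component of $0$, observes $E=G[m]$ for some $m$, uses Noetherianness to get finitely many irreducible components, hence $G[m]$ has finite index, hence $mG$ is finite, hence $n=eo(G)$ divides $m$ by Lemma \ref{lemma:4.2}, giving $G[n]\subseteq G[m]$; then $G[n]$ clopen and $G[m]$ connected forces equality. The paper's approach is shorter and entirely self-contained within the Noetherian framework already set up, avoiding any external covering lemma. Your approach is more constructive (it exhibits irreducibility rather than deducing it from maximality) and makes the role of the condition $eo(G[n])=n$ more transparent, at the cost of importing Neumann's result.
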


\begin{proof}
(i)  By Lemma \ref{lemma:4.2a}(i), $G[n]$ has  a finite index in $G$. Since each of the finitely many pairwise disjoint cosets of $G[n]$ are $\Zar_G$-closed, all of them must also be $\Zar_G$-open. Therefore, $G[n]$ is $\Zar_G$-clopen.
 
(ii)
The irreducible component $E$ of $0$ must be an elementary algebraic set,  and
so $E=a+G[m]$ for suitable $a\in G$ and $m\in\mathbb{N}$; see Lemma \ref{lemma:elementary:algebraic:sets}(i). Since $0\in E$, we conclude that $E=G[m]$. From Corollary \ref{continuity:of:some:operations:in:the:Zariski:topology}, we obtain that $a+G[m]$ is the irreducible component of $a$ for every $a\in G$. Since $(G,\Zar_G)$ is Noetherian by Theorem \ref{Zariski.topology.is.Noetherian}(i), the number of irreducible components of $(G,\Zar_G)$ must be finite 
according to 
\ref{decomposition:into:irreducible:components}.
Therefore, $G=G[m]+F$ for some finite subset $F$ of $G$.  Since the set $mG=mG[m]+mF=mF$ is finite, $n=eo(G)$ divides $m$ 
by Lemma \ref{lemma:4.2}. Therefore, $G[n]\subseteq G[m]$. By (i), $G[n]$ is a clopen subgroup of $G$. Since $G[m]$ is connected, we must have $G[m]=G[n]$.  
\end{proof}

\begin{corollary}
\label{when:G:is:connected:and/or:irreducible}
For an infinite abelian group $G$, the following conditions are equivalent:
\begin{itemize}
    \item[(i)] $(G,\Zar_G)$ is connected,
    \item[(ii)] $(G,\Zar_G)$ is irreducible,
    \item[(iii)] $G=G[n]$, where $n=eo(G)$.
\end{itemize}
\end{corollary}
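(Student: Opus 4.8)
The plan is to establish the cycle of implications (i)~$\to$~(ii)~$\to$~(iii)~$\to$~(i), relying heavily on Theorem~\ref{essential:order:and:components}. The implication (ii)~$\to$~(i) is immediate, since every irreducible space is connected (as noted right after Fact~\ref{building:Noetherian:spaces}); thus it suffices to prove (i)~$\to$~(iii) and (iii)~$\to$~(ii).

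For (i)~$\to$~(iii), suppose $(G,\Zar_G)$ is connected. By Theorem~\ref{essential:order:and:components}(i), the subgroup $G[n]$ with $n=eo(G)$ is $\Zar_G$-clopen. Since the whole space is connected and $G[n]$ is a non-empty clopen subset, we must have $G[n]=G$. Alternatively, one can invoke Theorem~\ref{essential:order:and:components}(ii) directly: the connected component of $0$ equals $G[n]$, and connectedness of $(G,\Zar_G)$ forces this component to be all of $G$, hence $G=G[n]$.

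For (iii)~$\to$~(ii), assume $G=G[n]$ where $n=eo(G)$. Again by Theorem~\ref{essential:order:and:components}(ii), the irreducible component of $0$ in $(G,\Zar_G)$ is exactly $G[n]=G$; in other words, $G$ itself is an irreducible subset of $(G,\Zar_G)$, which is precisely the statement that $(G,\Zar_G)$ is an irreducible space.

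There is no real obstacle here: the corollary is essentially a restatement of Theorem~\ref{essential:order:and:components}(ii) packaged as an equivalence, together with the trivial fact that irreducibility implies connectedness. The only point requiring a sentence of care is making sure we are allowed to apply Theorem~\ref{essential:order:and:components}, which is stated for \emph{infinite} abelian groups — but that hypothesis is exactly the one imposed in Corollary~\ref{when:G:is:connected:and/or:irreducible}, so nothing extra is needed. (For finite $G$ the space $(G,\Zar_G)$ is discrete and the equivalence would fail, which is why the infiniteness assumption is present.)
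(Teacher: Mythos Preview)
Your proof is correct and matches the paper's approach: the corollary is stated without proof, as it follows immediately from Theorem~\ref{essential:order:and:components}(ii) together with the trivial fact that irreducible spaces are connected. Your cycle (i)~$\to$~(iii)~$\to$~(ii)~$\to$~(i) spells out exactly the reasoning the paper intends.
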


\begin{corollary}
\label{connected:vs:irreducible} 
Let $E$ be an elementary algebraic set of an infinite abelian group $H$. Then:
\begin{itemize}
    \item[(i)] if $E$ is connected, then $E$ is irreducible,
   \item[(ii)] the irreducible components of $E$  are disjoint.
\end{itemize}
\end{corollary}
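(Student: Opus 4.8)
The plan is to reduce Corollary \ref{connected:vs:irreducible} to Theorem \ref{essential:order:and:components} by using the fact, already established in Corollary \ref{continuity:of:some:operations:in:the:Zariski:topology}(ii), that translations are homeomorphisms of $(G,\Zar_G)$.

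First I would dispose of the case $E=\emptyset$, which is vacuous. If $E\neq\emptyset$, then by Lemma \ref{lemma:elementary:algebraic:sets}(i) we may write $E=a+H[m]$ for some $a\in H$ and $m\in\mathbb{N}$. By Lemma \ref{exponent:of:G[n]:can:be:assumed:equal:to:n} we may assume that $H[m]$ has exponent $m$, i.e.\ replace $m$ by the true exponent of the subgroup $H[m]$ without changing the set. The key observation is that the subspace $(H[m],\Zar_H\restriction_{H[m]})$ coincides with $(H[m],\Zar_{H[m]})$ by Lemma \ref{lemma:hereditary:Zariski}, and since translation by $a$ is a $\Zar_H$-homeomorphism, $E=a+H[m]$ is homeomorphic to $H[m]$ with its own Zariski topology. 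Thus topological properties of $E$ — being connected, being irreducible, and the disjointness of irreducible components — transfer verbatim from the group $G:=H[m]$.

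Now I would apply the machinery to $G=H[m]$. For item (i): if $E$ is connected, then $G=H[m]$ is connected; I must split into the cases $G$ finite and $G$ infinite. If $G$ is infinite, Corollary \ref{when:G:is:connected:and/or:irreducible} gives $G=G[eo(G)]$, hence $(G,\Zar_G)$ is irreducible, so $E$ is irreducible. If $G$ is finite, then $\Zar_G$ is discrete and $T_1$, so connectedness forces $|G|=1$, whence $E$ is a singleton and trivially irreducible. For item (ii): again pass to $G=H[m]$ and look at its decomposition into irreducible components. If $G$ is finite this is trivial (each component is a point, and distinct points are disjoint). If $G$ is infinite, Theorem \ref{essential:order:and:components}(ii) says $G[eo(G)]$ is the (unique) irreducible component of $0$, and by Corollary \ref{continuity:of:some:operations:in:the:Zariski:topology}(ii) its translates $g+G[eo(G)]$ are exactly the irreducible components of the points $g\in G$; since $G[eo(G)]$ is a subgroup, two such cosets are either equal or disjoint, so all irreducible components of $G$ are pairwise disjoint. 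Transporting back through the homeomorphism $E\cong H[m]$ yields (ii) for $E$.

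The main obstacle I anticipate is purely bookkeeping rather than conceptual: one must be careful that "infinite abelian group" hypotheses in Theorem \ref{essential:order:and:components} and Corollary \ref{when:G:is:connected:and/or:irreducible} are met by $H[m]$, and handle separately the degenerate possibility that $H[m]$ is finite (which can happen even though $H$ is infinite) — in that case $\Zar_{H[m]}$ is discrete and the statements are immediate. A secondary point requiring care is the identification $\Zar_H\restriction_{H[m]}=\Zar_{H[m]}$, but this is exactly Lemma \ref{lemma:hereditary:Zariski}, so no real work is needed there.
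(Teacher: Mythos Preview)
Your proposal is correct and follows essentially the same route as the paper: translate $E=a+H[m]$ to $H[m]$ via Corollary \ref{continuity:of:some:operations:in:the:Zariski:topology}(ii), identify the subspace topology with $\Zar_{H[m]}$ via Lemma \ref{lemma:hereditary:Zariski}, and then invoke Theorem \ref{essential:order:and:components} (and its Corollary \ref{when:G:is:connected:and/or:irreducible}) on the group $G=H[m]$ to see that the cosets of $G[eo(G)]$ are precisely the irreducible components. Your version is in fact slightly more careful than the paper's own proof, since you explicitly separate out the degenerate case where $H[m]$ is finite (so that the hypothesis ``$G$ infinite'' in Theorem \ref{essential:order:and:components} is genuinely met when you apply it), whereas the paper tacitly applies that theorem without checking this.
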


\begin{proof} Assume that $E\not=\emptyset$. According to Lemma \ref{lemma:elementary:algebraic:sets}(i), $E=a+H[m]$ for some $a\in H$ and $m\in \N$.  By Corollary \ref{continuity:of:some:operations:in:the:Zariski:topology}(ii),  all topological properties involved in 
(i) and (ii) are translation  invariant, so we can assume without loss of generality
that $E=H[m]$. Let $G=E$ and $n=eo(G)$. Lemma \ref{lemma:hereditary:Zariski} implies that $\Zar_G=\Zar_H\restriction_G$. Therefore, without loss of generality, we may also assume that $H=G$. Applying Lemma \ref{lemma:4.2a}(i) and Theorem \ref{essential:order:and:components}  to $G$, 
we conclude that $G[n]$ is an irreducible $\Zar_G$-clopen subgroup of $G$ of finite index, and $G[n]$ coincides with the connected component of the identity of $(G,\Zar_G)$. 

(i) Since $G=E$ is connected, we deduce that $E= G[n]$,  and so $E$ is irreducible. 

(ii) Let $g_1+G[n],\dots, g_k+G[n]$ be pairwise disjoint cosets of $G[n]$ such that $G=\bigcup_{1\le i\le k} g_i+G[n]$. Since $G[n]$ is irreducible, each $g_i+G[n]$ is also irreducible by Corollary \ref{continuity:of:some:operations:in:the:Zariski:topology}(ii).
It now follows  that $g_1+G[n],\dots, g_k+G[n]$ are precisely the irreducible components of $G=E$.
\end{proof}

\begin{example}
\label{non-disjoint:components}
Let $G=\Z(6)^{(\omega)}$.  Applying Theorem \ref{essential:order:and:components} to the  groups $E_1=G[2]$ and $E_2=G[3]$, we conclude that $E_1$ and $E_2$  are irreducible (and hence, connected) $\Zar_G$-closed  subsets of $G$. Since $0\in E_1\cap E_2\ne \emptyset$,  the algebraic set $F=E_1\cup E_2$ is a connected non-irreducible set in $G$. Furthermore, $E_1$ and $E_2$ are irreducible components of $F$. This shows that both items of Corollary \ref{connected:vs:irreducible} fail for (non-elementary) algebraic sets.
\end{example}

\begin{proposition}
\label{essential:order=irreducible+order}
For an abelian group $G$ and an integer $n\in\mathbb{N}$ the following conditions are equivalent:
\begin{itemize}
\item[(i)] $eo(G[n])=n$,
\item[(ii)] $G[n]$ is irreducible and has exponent $n$.
\end{itemize}
\end{proposition}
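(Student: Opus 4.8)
The plan is to transfer everything to the group $H := G[n]$, using Lemma \ref{lemma:hereditary:Zariski} (so that $\Zar_H = \Zar_G\restriction_H$ and the word ``irreducible'' is unambiguous) and the trivial identity $H[n] = G[n] = H$. I would first clear away the case $n = 0$: there condition (i) reads $eo(G) = 0$, which by Definition \ref{definition:of:essential:order} means $G$ is unbounded, while condition (ii) reads ``$G$ is irreducible and of exponent $0$'', and ``exponent $0$'' again means unbounded; since an unbounded $G$ is infinite and satisfies $G = G[0] = G[eo(G)]$, Corollary \ref{when:G:is:connected:and/or:irreducible} makes it irreducible, so both conditions are equivalent to ``$G$ unbounded''. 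From now on $n \ge 1$, so $H = G[n]$ is bounded.

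For (i)~$\Rightarrow$~(ii), assume $eo(H) = n$. If $n = 1$ then $H = G[1] = \{0\}$, which is a singleton (hence irreducible) and of exponent $1 = n$. If $n \ge 2$, then $H$ cannot be finite (a finite group has essential order $1$), so $H$ is infinite; since $H[eo(H)] = H[n] = H$, Corollary \ref{when:G:is:connected:and/or:irreducible} applied to $H$ gives that $H$ is irreducible. To see that $H$ has exponent $n$: $nH = \{0\}$ holds by definition of $G[n]$, and if $dH = \{0\}$ for some proper divisor $d$ of $n$, then $dH$ is finite, so Lemma \ref{lemma:4.2} (applied to the bounded group $H$ with $eo(H) = n$) forces $n \mid d$, which is impossible as $0 < d < n$.

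For (ii)~$\Rightarrow$~(i), assume $H$ is irreducible and of exponent $n$. If $H$ is finite, then it is discrete (a finite $T_1$ space), so irreducibility forces $H = \{0\}$; then $n = 1$ (the only abelian group of exponent $1$) and $eo(H) = 1 = n$. If $H$ is infinite, put $m := eo(H) \ge 1$. Since $nH = \{0\}$ is finite, Lemma \ref{lemma:4.2} gives $m \mid n$. On the other hand, irreducibility of $H$ together with Corollary \ref{when:G:is:connected:and/or:irreducible} yields $H = H[eo(H)] = H[m]$, so $mH = \{0\}$; were $m < n$, then $m$ would be a proper divisor of $n$ with $mH = \{0\}$, contradicting that $H$ has exponent $n$. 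Hence $m = n$, i.e.\ $eo(G[n]) = n$.

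I do not expect a real obstacle here: the substantive content is already packaged in Corollary \ref{when:G:is:connected:and/or:irreducible} (through Theorem \ref{essential:order:and:components}), which converts the topological statement ``$G[n]$ irreducible'' into the purely algebraic statement ``$G[n] = (G[n])[eo(G[n])]$'', and in Lemma \ref{lemma:4.2}, which controls divisibility of essential orders. The only thing to be careful about is the bookkeeping in the degenerate cases ($n = 0$, $H$ finite, $H = \{0\}$), each of which collapses once one recalls that a finite $T_1$ space is discrete and that the only abelian group of exponent $1$ is $\{0\}$.
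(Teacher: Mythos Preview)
Your proof is correct and follows essentially the same approach as the paper's: both transfer to $H=G[n]$ via Lemma~\ref{lemma:hereditary:Zariski}, and both rely on the fact (Theorem~\ref{essential:order:and:components}/Corollary~\ref{when:G:is:connected:and/or:irreducible}) that the irreducible component of $0$ in $H$ is $H[eo(H)]$. The only cosmetic difference is that in (ii)$\Rightarrow$(i) the paper argues directly that any $m<n$ with $mH$ finite would make $H[m]$ a clopen proper subgroup (contradicting irreducibility), whereas you invoke Corollary~\ref{when:G:is:connected:and/or:irreducible} to get $H=H[eo(H)]$ and then use Lemma~\ref{lemma:4.2}; you also spell out the degenerate cases $n=0,1$ and $H$ finite more explicitly than the paper does.
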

\begin{proof} 
Let $H=G[n]$.  By Lemma \ref{lemma:hereditary:Zariski}, $H$ is an irreducible subset of $(G,\Zar_G)$ if and only if $(H,\Zar_H)$  is irreducible.  Hence, we will argue with the group $H=H[n]$ instead of $G$. 

(i)~$\to$~(ii) As $eo(H)=n$,  Theorem \ref{essential:order:and:components}  implies that $(H,\Zar_H)$ is irreducible. Since $eo(H)=n$ and $nH= 0$, it follows that $H$ has exponent $n$. 

(ii)~$\to$~(i) If $n=0$, then $H$  is unbounded, as a group of exponent $0$. Hence, $eo(H)=0$  by Definition   \ref{definition:of:essential:order}(ii).  If $n=1$, then $H = \{0\}$, and so $eo(H)=1$. Assume now that  $n> 1$.  Suppose $m\in\mathbb{N}$, $1\le m<n$ and $mH$ is finite. 
As $mH\cong H/H[m]$ is finite, the closed subgroup $H[m]$ of $H$ has finite index, so it is a clopen subgroup of $H$. As $H$ is irreducible, this yields $H[m]= H$. Thus,
$mH=\{0\}$. Since $m<n$, this contradicts the fact that  the group $H$ has exponent $n$.  This contradiction proves that $mH$ is infinite for every $m\in\mathbb{N}$ satisfying $1\le m<n$. Since $nH=\{0\}$,  we get $eo(H)=n$.
\end{proof}

Let us give an example illustrating the usefulness of  Proposition \ref{essential:order=irreducible+order}.

\begin{example}
\label{divisible:example:of:irreducible:subgroups}
{\em Let $G$ be a divisible abelian group. If $p$ is a prime number, $n\in\mathbb{N}\setminus\{0\}$ and $G[p^n]$ is infinite, then $G[p^n]$ is an irreducible subgroup of $G$.\/} Indeed, let $B=\{x_i: i\in I\}$ be a minimal set of generators of the subgroup  $G[p]$; that is, $B$ is a base of $G[p]$ considered as a linear space over the field $\Z / p\Z$. Since $G$ is divisible, for every $i\in I$, there exists  $y_i\in G$ such that $p^{n-1}y_i = x_i$. Then $y_i\in  G[p^n]$ for every $i\in I$, the subgroup of $G$ generated by $\{y_i: i\in I\}$ coincides with $G[p^n]$ and $G[p^n] \cong \bigoplus_I \Z(p^n)$. Since $G[p^n]$ is infinite, $I$ must be infinite as well. This yields $eo(G[p^n]) = p^n$, and the implication (i)~$\to$~(ii)  from Proposition \ref{essential:order=irreducible+order}  shows that $G[p^n]$ is irreducible.
\end{example}

According to Pr\" ufer's theorem \cite[Theorem 17.2]{Fuchs}, every bounded abelian group $G$ is a direct sum of  cyclic groups, so  that
$$
G=\bigoplus_{p\in \Prm}\bigoplus_{s\in \mathbb{N}}\Z({p^s})^{(\kappa_{p,s})},
$$ 
where only {\em finitely many\/} of the cardinals $\kappa_{p,s}$ are nonzero; these cardinals are known as {\em  Ulm-Kaplansky invariants\/} of $G$.  For every $p\in\Prm$, the $\kappa_{p,s}>0$ with maximal $s$  is referred to as  the {\em leading Ulm-Kaplansky invariant of $G$ relative to $p$\/}. 

If $G$ is a bounded torsion abelian group and $n=eo(G)$, then it easily follows from Pr\" ufer's theorem that  there exists a decomposition $G=G_1\oplus F$, where $F$ is finite, $eo(G_1)=n$, and $G_1$ has exponent $n$. 

We will now  characterize the condition $eo(G[n])=n$ (for $n>1$) that appears in Proposition \ref{essential:order=irreducible+order} in terms of the Ulm-Kaplansky invariants of $G$. 

\begin{proposition}
\label{U-K:invar} 
Given an integer $n>1$ and an infinite abelian group $G$, the following conditions are equivalent: 
\begin{itemize}
    \item[(i)] $eo(G[n])=n$,
    \item[(ii)] all leading Ulm-Kaplansky invariants of $G[n]$ are infinite,
    \item[(iii)] there exists a monomorphism $\Z(n)^{(\omega)}\hookrightarrow G[n]$.
\end{itemize}
\end{proposition}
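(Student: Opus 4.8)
The plan is to reduce the whole statement to the algebraic structure of the bounded group $G[n]$. Write $n=\prod_{p\mid n}p^{a_p}$ for the prime factorisation of $n$, so that every $a_p\ge 1$. Since $nG[n]=0$, the group $G[n]$ is bounded of exponent dividing $n$, its $p$-primary component is $G[p^{a_p}]$, and Pr\"ufer's theorem (in the form recalled before the proposition) gives
\[
G[n]=\bigoplus_{p\mid n}\,\bigoplus_{s=1}^{a_p}\Z(p^s)^{(\kappa_{p,s})},
\]
with only finitely many $\kappa_{p,s}$ nonzero. I will show that each of (i), (ii), (iii) is equivalent to the condition
\[
(\star)\qquad \kappa_{p,a_p}\ \text{is infinite for every prime }p\mid n.
\]
Note that $(\star)$ forces $\kappa_{p,a_p}>0$ for all $p\mid n$, so that $G[n]$ has exponent exactly $n$ and $\kappa_{p,a_p}$ is precisely the leading Ulm--Kaplansky invariant of $G[n]$ relative to $p$; hence, read against the displayed decomposition, condition (ii) is merely a restatement of $(\star)$.

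For (iii)$\,\Leftrightarrow\,(\star)$ I would combine the Chinese Remainder isomorphism $\Z(n)^{(\omega)}\cong\bigoplus_{p\mid n}\Z(p^{a_p})^{(\omega)}$ with the elementary observation that a homomorphism of torsion abelian groups carries $p$-primary components into $p$-primary components: thus $\Z(n)^{(\omega)}$ embeds in $G[n]$ if and only if, for each $p\mid n$, $\Z(p^{a_p})^{(\omega)}$ embeds in $G[p^{a_p}]$, and the latter happens exactly when $\kappa_{p,a_p}\ge\omega$. The nontrivial implication is obtained by applying multiplication by $p^{a_p-1}$, which annihilates every summand $\Z(p^s)$ with $s<a_p$ and maps $\Z(p^{a_p})^{(\kappa_{p,a_p})}$ onto $\Z(p)^{(\kappa_{p,a_p})}$; so an embedded copy of $\Z(p^{a_p})^{(\omega)}$ inside $G[p^{a_p}]$ yields an embedded copy of $\Z(p)^{(\omega)}$ inside $\Z(p)^{(\kappa_{p,a_p})}$, forcing $\kappa_{p,a_p}\ge\omega$.

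The heart of the matter is (i)$\,\Leftrightarrow\,(\star)$. For $(\star)\Rightarrow$(i): by $(\star)$ and the decomposition, $\Z(n)^{(\omega)}\cong\bigoplus_{p\mid n}\Z(p^{a_p})^{(\omega)}$ embeds into the sub-sum $\bigoplus_{p\mid n}\Z(p^{a_p})^{(\kappa_{p,a_p})}$ of $G[n]$; hence for every integer $m\ge 1$ with $mG[n]$ finite, the subgroup $m\cdot\Z(n)^{(\omega)}\cong\Z\big(n/(m,n)\big)^{(\omega)}$ is finite, which is possible only when $n\mid m$. Since $nG[n]=0$ is finite, this gives $eo(G[n])=n$. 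For (i)$\Rightarrow(\star)$, argue contrapositively: assume $\kappa_{p_0,a_{p_0}}$ is finite for some prime $p_0\mid n$, set $a=a_{p_0}$, and take $m=n/p_0$, so that $0<m<n$. From the decomposition, $mG[n]=\bigoplus_{p\mid n}\bigoplus_{s=1}^{a_p}\big(m\,\Z(p^s)\big)^{(\kappa_{p,s})}$; for $p\ne p_0$ the full power $p^{a_p}$ still divides $m=n/p_0$, so $p^s\mid m$ and $m\,\Z(p^s)=0$ for all $s\le a_p$, whereas for $p=p_0$, writing $m=p_0^{a-1}u$ with $p_0\nmid u$, we get $m\,\Z(p_0^s)=p_0^{a-1}\Z(p_0^s)$, which is $0$ for $s<a$ and $\cong\Z(p_0)$ for $s=a$. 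Therefore $mG[n]\cong\Z(p_0)^{(\kappa_{p_0,a})}$ is finite, so $eo(G[n])\le m<n$, contradicting (i). This proves $(\star)$; note that Lemma \ref{lemma:4.2} independently gives the (weaker) fact that (i) already forces $G[n]$ to have exponent $n$.

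The only step that is not wholly routine is the bookkeeping in the last paragraph — tracking, through the primary decomposition, exactly which cyclic summands of $G[n]$ survive multiplication by $m$ — and it rests only on the trivial identities $p^t\Z(p^s)=0$ for $t\ge s$ and $q\,\Z(p^s)=\Z(p^s)$ whenever $(q,p)=1$. Everything else is a direct consequence of Pr\"ufer's theorem and elementary divisibility; in particular the Zariski topology plays no role here, the proposition being a purely algebraic statement about $G[n]$.
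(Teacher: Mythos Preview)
Your proof is correct and uses the same ingredients as the paper's: Pr\"ufer's decomposition of $G[n]$ and the key computation that $(n/p)G[n]\cong\Z(p)^{(\kappa_{p,a_p})}$. The only difference is organizational --- you introduce the explicit hub condition $(\star)$ and prove each of (i), (ii), (iii) equivalent to it, whereas the paper argues the cycle (i)$\to$(ii)$\to$(iii)$\to$(i); in particular your direct argument for (iii)$\Rightarrow(\star)$ via multiplication by $p^{a_p-1}$ is a small extra step the paper bypasses by routing through (i). One remark: your identification of (ii) with $(\star)$ is only argued in the direction $(\star)\Rightarrow$(ii), but the paper's own proof of (ii)$\to$(iii) reads ``leading Ulm--Kaplansky invariant of $G[n]$ relative to $p$'' as $\kappa_{p,a_p}$, so your reading matches the intended meaning of the statement.
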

\begin{proof} 
(i)~$\to$~(ii) If $eo(G[n])=n$ and $p$ is a prime dividing $n$, then the leading Ulm-Kaplansky  invariant relative to $p$ is  determined by the $p$-rank of the subgroup $(n/p)G[n]$ of  exponent $p$. Now, by our hypothesis, this group is infinite, and hence,  the leading Ulm-Kaplansky  invariant relative to $p$ is infinite as well. 

(ii)~$\to$~(iii) 
Let $P$ be the finite set of primes $p$ dividing $n$. For every $p\in P$,  let $p^{k_p}$ be the  highest power of $p$  dividing $n$. By (ii), the leading Ulm-Kaplansky  invariant $\kappa_{p,k_p}$ of $G[n]$ is infinite. Hence,  the $p$-primary component of $G[n]$ has a direct summand of the form $\Z({p^{k_p}})^{(\kappa_{p,k_p})}$. Therefore,  there exists a  monomorphism $\Z(p^{k_p})^{(\omega)}\hookrightarrow G[n]$.  Since $\bigoplus_{p\in P}\Z(p^{k_p})^{(\omega)}\cong \Z(n)^{(\omega)}$, this implies (iii).

(iii)~$\to$~(i) Suppose that $m\in\mathbb{N}$ and $1\le m<n$.  Then the group $mG[n]$ contains the infinite subgroup $m\left(\Z(n)^{(\omega)}\right)\cong \Z(n/d)^{(\omega)}$, where $d=(m,n)$ is the greatest common divisor of $m$ and $n$.  Since $nG[n]=\{0\}$, this proves that $eo(G[n])=n$.
\end{proof}

\section{\Round{n} sets as building blocks for the Zariski topology}
\label{section:5}

\begin{definition}\label{def:of:almost:n:torsion:sets}
\begin{itemize}
\item[(i)]
For a given  $n\in \mathbb{N}$, we say that  a countably infinite subset $S$ of an abelian group $G$ is {\em \round{n} in $G$\/} if 
$S\subseteq G[n]$ and  the set $\{x\in S: dx=g\}$ is finite for  each $g\in G$ and every proper divisor $d$ of $n$; see \cite{DS}.
\item[(ii)] For $n\in \mathbb{N}$, let $\TT_n(G)$ denote the family of all  \round{n} sets in $G$.
\item[(iii)] Define  $\TT(G)=\bigcup\{\TT_n(G):n\in\N\}$.
\end{itemize}
\end{definition}

In order to clarify Definition \ref{def:of:almost:n:torsion:sets} and to facilitate future references, we collect basic properties of  \round{n} sets in our next remark.

\begin{remark}\label{Background} 
Let $G$ be an abelian group. 
\begin{itemize}
\item[(i)] $\TT_1(G)=\emptyset$; that is, there are no \round{0} sets.
\item[(ii)] $ \TT_n(G)\cap \TT_m(G)=\emptyset$ for distinct $m, n\in\N$.
\item[(iii)] Each family $\TT_n(G)$ is closed under taking infinite subsets, and so $\TT(G)$ has the same property.
\item[(iv)] If $H$ is a subgroup of $G$, then $\TT_n(H)= \{S\in\TT_n(G): S\subseteq H\}$ for every $n\in\N$; see \cite[Lemma 4.4]{DS}. In particular,  whether a set $S$ is \round{n} in $G$ depends only on the subgroup of $G$ generated by $S$. 
\end{itemize}
\end{remark}

\begin{notation} For an abelian group $G$ and  $S\in \TT(G)$, we use $\n_S$ to denote the unique integer $n\in\N$  such that $S\in \TT_n(G)$. 
(The uniqueness of such $n$ follows from Remark \ref{Background}(ii).)
\end{notation}

 The notion of \round{n} set was introduced first in  \cite[Definition 3.3]{DT} under a different name and has been even split into two cases;
see \cite[Remark 4.2]{DS} for an extended comparison between this terminology and the one proposed in  \cite{DT}.

The  \round{n} sets were  used in \cite{DS,DT} to build countably compact group topologies on abelian groups,  while they were used in \cite{TY} to construct independent group topologies on abelian groups. But only in the context of  the  Zariski topology  can one fully realize the true power of these remarkable sets due to their close relation to \Zc{\Zar_G}s; see Theorem \ref{corollary:about:atoms:being:translates:of:round:sets}. In fact, this relation  permits us to describe the Zariski topology of the abelian groups in full detail; see Sections 6 and 8. 

\begin{definition}
\label{definition:M(X)}
For a subset $X$ of an abelian group $G$, we define
\begin{equation}
\label{formula:for:M(X)}
M(X)=\{n\in\N\setminus\{0\}: X\cap (a+G[n]) \mbox{ is infinite for some } a\in G\}.
\end{equation}
and
\begin{equation}
\label{eq:for:mathfrak:n}
\mathfrak{m}(X)=
\left\{ \begin{array}{ll}
\min M(X), & \mbox{ if } M(X)\not=\emptyset  \\
0, & \mbox{ if } M(X)=\emptyset.
\end{array} \right.
\end{equation}
\end{definition}

We shall see in Corollary \ref{extension:of:function:n:to:all:subsets} below that the function
$X\mapsto \mathfrak{m}(X)$ is an extension of the function $S\mapsto \n_S$ from the family 
$\TT(G)$ to the family of {\em all\/} subsets of $G$. 

From Definition \ref{definition:M(X)},
 it immediately follows that $\mathfrak{m}(X)=\mathfrak{m}(g+X)$ for every $g\in G$ and each $X\subseteq G$. Our next lemma shows that in 
(\ref{formula:for:M(X)}),
 it suffices to consider only those $a\in G$ that are  elements of $X$.

\begin{lemma}
For every subset $X$ of an abelian group $G$, one has
\begin{equation}
\label{formula:for:M(X):new}
M(X)=\{n\in\N\setminus\{0\}: X\cap (x+G[n]) \mbox{ is infinite for some } x\in X\}.
\end{equation}
\end{lemma}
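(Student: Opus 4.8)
The plan is to prove the two inclusions between the set on the right-hand side of (\ref{formula:for:M(X):new}) and $M(X)$ as defined in (\ref{formula:for:M(X)}). The nontrivial direction is showing that if $X\cap(a+G[n])$ is infinite for some $a\in G$, then in fact $X\cap(x+G[n])$ is infinite for some $x\in X$; the reverse inclusion is immediate, since every set of the form $x+G[n]$ with $x\in X$ is in particular of the form $a+G[n]$ with $a=x\in G$.

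For the nontrivial inclusion, suppose $n\in\N\setminus\{0\}$ and $X\cap(a+G[n])$ is infinite for some $a\in G$. Since this intersection is infinite, it is in particular non-empty, so I can pick a point $x\in X\cap(a+G[n])$. The key observation is that $a+G[n]$ is a coset of the subgroup $G[n]$, and $x$ is a point of this coset, so by the standard fact about cosets (cf.\ Lemma \ref{comparing:elementary:algebraic:sets}(ii), using that $x-a\in G[n]$) we have $x+G[n]=a+G[n]$. Therefore $X\cap(x+G[n])=X\cap(a+G[n])$ is infinite, and $x\in X$, which shows that $n$ belongs to the right-hand side of (\ref{formula:for:M(X):new}).

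Combining the two inclusions yields the claimed equality. There is essentially no obstacle here: the whole content is the elementary fact that two cosets of the same subgroup that share a common point must coincide, so I can replace the arbitrary representative $a$ by the representative $x$ drawn from $X$ itself. The only point to be a little careful about is to extract $x$ from the \emph{intersection} $X\cap(a+G[n])$ rather than just from $X$ or just from the coset — this is exactly what guarantees simultaneously that $x\in X$ (as required for membership in the right-hand side) and that $x+G[n]=a+G[n]$ (so that the infiniteness transfers).
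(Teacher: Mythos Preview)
Your proof is correct and follows essentially the same approach as the paper: both pick an element $x_0$ (your $x$) from the infinite intersection $X\cap(a+G[n])$ and use the coset equality $x_0+G[n]=a+G[n]$ to transfer the infiniteness to a coset with representative in $X$.
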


\begin{proof}
Assume that $X\cap (a+ G[n])$ is infinite for some $a\in G$ and $n\in\N$. Choose $x_0\in X\cap (a+ G[n])$. Since $x_0+ G[n]=a+G[n]$, the intersection $X\cap (x_0+ G[n])=X\cap (a+ G[n])$ must be infinite as well. This proves the non-trivial inclusion in (\ref{formula:for:M(X):new}). 
\end{proof}

The following lemma provides two reformulations of the notion of an \round{n} set.
\begin{lemma}
\label{equivalent:condition:for:round:sets}
Let $G$ be an abelian group and $n\in\mathbb{N}$. For a countably infinite set $S\subseteq G[n]$, the following conditions are equivalent:
\begin{itemize}
\item[(i)] $S$ is \round{n},
\item[(ii)] $S\cap (a+G[d])$ is finite whenever  $a\in G$  and $d$ is a proper divisor of $n$,
\item[(iii)] $n=\mathfrak{m}(S)$.
\end{itemize}
\end{lemma}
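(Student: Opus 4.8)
The plan is to establish the cycle of implications (i)~$\to$~(ii)~$\to$~(iii)~$\to$~(i), extracting the content from the definitions of \round{n} sets, of $M(S)$, and of $\mathfrak{m}(S)$. The only genuinely new ingredient is the passage between the ``value equation'' formulation in Definition \ref{def:of:almost:n:torsion:sets}(i) (finiteness of $\{x\in S: dx=g\}$ for proper divisors $d$ of $n$) and the ``coset'' formulation in (ii) (finiteness of $S\cap(a+G[d])$); everything else is bookkeeping with Lemma \ref{comparing:elementary:algebraic:sets}.

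First I would prove (i)~$\to$~(ii). Fix a proper divisor $d$ of $n$ and $a\in G$, and suppose $S\cap(a+G[d])$ is infinite. If this intersection is nonempty, pick $z_0$ in it; then $a+G[d]=z_0+G[d]$, and for any $x$ in the intersection we have $x-z_0\in G[d]$, i.e. $d(x-z_0)=0$, i.e. $dx=dz_0$. Thus the infinitely many such $x$ all lie in $\{x\in S: dx=g\}$ with $g=dz_0$, contradicting (i). (When $n=1$ this is vacuous since there are no proper divisors; when $n=0$ every $d\geq 1$ is a proper divisor, and $S\subseteq G[0]=G$ is automatic, so the two conditions genuinely say the same thing.) The reverse direction (ii)~$\to$~(i) is the same computation run backwards: if $\{x\in S: dx=g\}$ is infinite for some proper divisor $d$ and some $g$, pick $x_0$ in it; then for every other $x$ in this set $d(x-x_0)=0$, so $x\in x_0+G[d]$, making $S\cap(x_0+G[d])$ infinite and contradicting (ii). So (i)~$\leftrightarrow$~(ii) with essentially no effort.

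Next, (ii)~$\to$~(iii). By hypothesis $S$ is countably infinite and $S\subseteq G[n]$, so taking $a=0$ we see $S\cap(0+G[n])=S$ is infinite, hence $n\in M(S)$ whenever $n\neq 0$; when $n=0$ we instead want $M(S)=\emptyset$. Condition (ii) says precisely that no proper divisor $d$ of $n$ belongs to $M(S)$: indeed a positive integer $m$ lies in $M(S)$ iff $S\cap(a+G[m])$ is infinite for some $a$, and (ii) rules this out exactly for $m$ a proper divisor of $n$. In the case $n\geq 1$: every positive $m<n$ that could lie in $M(S)$ — more precisely, by Lemma \ref{comparing:elementary:algebraic:sets}(iii) the coset $a+G[m]$ equals $a+G[(m,n)]$ on the part meeting $G[n]$, so if $m\in M(S)$ then $(m,n)\in M(S)$, and $(m,n)$ is either $n$ or a proper divisor of $n$; the latter is excluded by (ii). Hence the smallest element of $M(S)$ is $n$ itself, giving $\mathfrak{m}(S)=n$. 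In the case $n=0$: a proper divisor of $0$ is any $d\geq 1$, so (ii) says $S\cap(a+G[d])$ is finite for all $a$ and all $d\geq 1$, i.e. $M(S)=\emptyset$, i.e. $\mathfrak{m}(S)=0=n$. Either way (iii) holds.

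Finally, (iii)~$\to$~(ii). Assume $n=\mathfrak{m}(S)$. If $n=0$ then $M(S)=\emptyset$ by (\ref{eq:for:mathfrak:n}), so $S\cap(a+G[d])$ is finite for every $a\in G$ and every $d\in\mathbb{N}\setminus\{0\}$, which is exactly (ii) since the proper divisors of $0$ are precisely the positive integers. If $n\geq 1$ then $n=\min M(S)$, so no positive integer strictly below $n$ lies in $M(S)$; in particular no proper divisor $d$ of $n$ (which satisfies $1\leq d<n$) lies in $M(S)$, so $S\cap(a+G[d])$ is finite for all such $d$ and all $a$, giving (ii). The main obstacle — if there is one — is keeping the $n=0$ and $n=1$ edge cases straight, since the phrase ``proper divisor of $n$'' behaves unusually there; but once one records that proper divisors of $0$ are all positive integers and that $1$ has none, every implication reduces to the single coset-versus-value computation above.
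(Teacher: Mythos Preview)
Your argument is correct and follows essentially the same route as the paper: both establish (i)~$\leftrightarrow$~(ii) and then (ii)~$\leftrightarrow$~(iii), using the key observation that if $S\cap(a+G[m])$ is infinite then, since $S\subseteq G[n]$, the intersection lies in a coset of $G[(m,n)]$, forcing $(m,n)\in M(S)$. The only difference is that the paper delegates (i)~$\leftrightarrow$~(ii) to an external reference, whereas you supply the short direct coset-versus-value computation; your treatment of the $n=0$ and $n\ge 1$ cases also matches the paper's case split.
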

\begin{proof} The equivalence of  (i) and (ii) is  proven in \cite[Remark 3.4]{DT}. Let us  now  prove that (ii) and (iii) are also equivalent. We consider two cases.

{\sl Case 1\/}. $n=0$. Since every integer $n\in\mathbb{N}\setminus\{0\}$ is a proper divisor of $0$, condition (ii) is equivalent to $M(S)=\emptyset$ by (\ref{formula:for:M(X)}),
and the latter condition is equivalent to $\mathfrak{m}(S)=0$ by (\ref{eq:for:mathfrak:n}).

{\sl Case 2\/}. $n\in\mathbb{N}\setminus \{0\}$. 
Since $S\cap G[n]=S$ is infinite, $n\in M(S)\not=\emptyset$  by (\ref{formula:for:M(X)}).

Let us prove that
(ii)~$\to$~(iii). 
Let $m=\mathfrak{m}(S)$. Since $M(S)\not=\emptyset$, $m\ge 1$ by (\ref{eq:for:mathfrak:n}). Since $m\in M(S)$,  (\ref{formula:for:M(X)}) implies that the set $S'=S\cap (a+G[m])$ 
must be infinite for some $a\in G$. Since $S'\subseteq S\subseteq G[n]$, we have $S'\subseteq G[n]\cap (a+G[m])=z_0+G[d]$, 
where $z_0\in G$ and $d=(n,m)$; see Lemma \ref{intersection:of:two:elementary:algebraic:sets}(iv). 
Since $m\ge 1$ and $n\ge 1$, we have $d\ge 1$. Since   $S\cap (z_0+G[d])$ contains the infinite set $S'$, $d\in M(S)$ by (\ref{formula:for:M(X)}). Since $m=\min M(S)$, 
we conclude that $m\le d$. Since $d$ divides $m$, we must have  $d=m$. Since $d=m\in M(S)$, 
from (ii) it follows that $d$ cannot be a proper divisor of $n$. Hence, $n=d=m=\mathfrak{m}(S)$. 
This establishes (iii).

The reverse implication (iii)~$\to$~(ii) is obvious.
\end{proof}

\begin{corollary}
\label{extension:of:function:n:to:all:subsets}
If $G$ is an abelian group and $S\in\TT(G)$, then $\mathfrak{m}(S)=\n_S$.
\end{corollary}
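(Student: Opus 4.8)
The plan is to deduce Corollary~\ref{extension:of:function:n:to:all:subsets} directly from the equivalence of conditions (i) and (iii) in Lemma~\ref{equivalent:condition:for:round:sets}, which is the only ingredient needed. Let $G$ be an abelian group and let $S\in\TT(G)$. By the definition of $\n_S$ and of the families $\TT_n(G)$, the integer $n=\n_S$ is the unique element of $\N$ for which $S\in\TT_n(G)$; in particular $S$ is \round{n} and, as part of the definition of being \round{n}, we have $S\subseteq G[n]$, so that $S$ is a countably infinite subset of $G[n]$. This means Lemma~\ref{equivalent:condition:for:round:sets} applies to $S$ with this value of $n$.

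Now apply the implication (i)~$\to$~(iii) of Lemma~\ref{equivalent:condition:for:round:sets}: since $S$ is \round{n}, we conclude that $n=\mathfrak{m}(S)$. Combining this with $n=\n_S$ gives $\mathfrak{m}(S)=\n_S$, which is exactly the assertion of the corollary. There is essentially no obstacle here: the corollary is a one-line packaging of the equivalence already established in the lemma, rephrased in terms of the notation $\n_S$ (whose well-definedness rests on Remark~\ref{Background}(ii), already invoked when $\n_S$ was introduced). The only thing to be careful about is to note explicitly that membership in $\TT_n(G)$ supplies both hypotheses needed to invoke the lemma, namely $S\subseteq G[n]$ and $S$ countably infinite, so that the lemma is genuinely applicable rather than merely formally cited.
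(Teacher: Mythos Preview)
Your proof is correct and follows exactly the approach the paper intends: the corollary is stated immediately after Lemma~\ref{equivalent:condition:for:round:sets} without a separate proof, precisely because it is the one-line application of (i)~$\to$~(iii) you describe. Your added care in verifying that $S\in\TT_n(G)$ supplies the hypotheses $S\subseteq G[n]$ and $S$ countably infinite is appropriate and makes the deduction fully explicit.
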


\begin{corollary}\label{0-round:vs:n(-)=0}
A countably infinite subset $X$ of an abelian group $G$ is  \round{0} if and only if $\mathfrak{m}(X)=0$. 
\end{corollary}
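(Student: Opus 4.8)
The statement to prove is Corollary \ref{0-round:vs:n(-)=0}: a countably infinite subset $X$ of an abelian group $G$ is \round{0} if and only if $\mathfrak{m}(X)=0$.

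The plan is to derive this directly from Lemma \ref{equivalent:condition:for:round:sets} by specializing to the case $n=0$. First I would recall that a subset $S$ is \round{0} exactly when $S\subseteq G[0]=G$ (which is automatic) and $\{x\in S:dx=g\}$ is finite for every $g\in G$ and every proper divisor $d$ of $0$; by the convention fixed just before Definition \ref{order:of:a:subset}, the proper divisors of $0$ are precisely the positive integers. So being \round{0} is a genuine condition only on countably infinite subsets of $G$, and it never needs the containment hypothesis. Since $G[0]=G$, the hypothesis ``$S\subseteq G[n]$'' in Lemma \ref{equivalent:condition:for:round:sets} is vacuous when $n=0$, so that lemma applies to an arbitrary countably infinite $X\subseteq G$ with $n=0$.

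Then I would simply invoke the equivalence of conditions (i) and (iii) in Lemma \ref{equivalent:condition:for:round:sets} with $n=0$: condition (i) says $X$ is \round{0}, and condition (iii) says $0=\mathfrak{m}(X)$. This is exactly the claimed biconditional. Alternatively, one can read it off from the equivalence of (i) and (ii) together with the definitions \eqref{formula:for:M(X)} and \eqref{eq:for:mathfrak:n}: condition (ii) for $n=0$ states that $X\cap(a+G[d])$ is finite for all $a\in G$ and all proper divisors $d$ of $0$, i.e. for all $a\in G$ and all $d\in\mathbb{N}\setminus\{0\}$, which by \eqref{formula:for:M(X)} means precisely $M(X)=\emptyset$, and by \eqref{eq:for:mathfrak:n} this is the same as $\mathfrak{m}(X)=0$.

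There is essentially no obstacle here: the corollary is a direct restatement of the $n=0$ instance of Lemma \ref{equivalent:condition:for:round:sets}, and the only point requiring a word of care is the bookkeeping that every positive integer counts as a proper divisor of $0$ (so that the finiteness condition in Definition \ref{def:of:almost:n:torsion:sets} is imposed for all $d\geq 1$) and that $G[0]=G$ makes the subset hypothesis trivial. I would write the proof in one or two sentences, citing Lemma \ref{equivalent:condition:for:round:sets}.

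\begin{proof}
Since $G[0]=G$, every countably infinite subset $X$ of $G$ satisfies $X\subseteq G[0]$, so Lemma \ref{equivalent:condition:for:round:sets} applies to $X$ with $n=0$. By the equivalence of conditions (i) and (iii) in that lemma, $X$ is \round{0} if and only if $\mathfrak{m}(X)=0$.
\end{proof}
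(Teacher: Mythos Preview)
Your proof is correct and essentially identical to the paper's own: both note that $X\subseteq G=G[0]$ makes the hypothesis of Lemma~\ref{equivalent:condition:for:round:sets} automatic and then invoke the equivalence (i)~$\leftrightarrow$~(iii) of that lemma with $n=0$.
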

\begin{proof} 
Since $X\subseteq G=G[0]$, the conclusion follows from the equivalence (iii)~$\leftrightarrow$~(i) of  Lemma \ref{equivalent:condition:for:round:sets}. \end{proof}

\begin{lemma}
\label{lemma:4.6}
Suppose that $m,n\in\mathbb{N}$, $S$ is a \round{n} subset of an abelian group $G$, $a\in G$ and $S\cap (a+G[m])$ is infinite. Then $G[n]\subseteq a+G[m]$.
\end{lemma}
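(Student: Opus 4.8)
The plan is to combine the coset-intersection calculus of Lemma~\ref{intersection:of:two:elementary:algebraic:sets} with the defining finiteness property of \round{n} sets. Since $S\cap(a+G[m])$ is infinite, it is in particular non-empty, so I would fix a point $z_0\in S\cap(a+G[m])$. Because $S\subseteq G[n]$ (by the definition of an \round{n} set), we get $z_0\in G[n]\cap(a+G[m])$; applying Lemma~\ref{intersection:of:two:elementary:algebraic:sets}(iv) to the two elementary algebraic sets $G[n]=0+G[n]$ and $a+G[m]$ yields
\[
G[n]\cap(a+G[m])=z_0+G[d],\qquad\mbox{where }d=(n,m).
\]

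Next I would use $S\subseteq G[n]$ again to rewrite the hypothesis: $S\cap(z_0+G[d])=S\cap G[n]\cap(a+G[m])=S\cap(a+G[m])$ is infinite. On the other hand, for $x\in S$ one has $x\in z_0+G[d]$ exactly when $d(x-z_0)=0$, i.e. $dx=dz_0$, so $S\cap(z_0+G[d])=\{x\in S:dx=dz_0\}$. Now $d=(n,m)$ divides $n$, and so either $d=n$ or $d$ is a proper divisor of $n$ (the degenerate case $n=0$ being covered by the convention that every positive integer is a proper divisor of $0$, while $d=0$ forces $n=m=0=d$). Were $d$ a proper divisor of $n$, then Definition~\ref{def:of:almost:n:torsion:sets}(i) applied with $g=dz_0$ would force $\{x\in S:dx=dz_0\}$ to be finite, contradicting the previous line. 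Hence $d=n$.

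Finally, with $d=n$ the displayed identity becomes $z_0+G[n]=G[n]\cap(a+G[m])$; since $z_0\in G[n]$ we have $z_0+G[n]=G[n]$, and therefore $G[n]=G[n]\cap(a+G[m])\subseteq a+G[m]$, which is the assertion. I do not foresee a genuine obstacle: the argument is short, and the only point demanding care is the bookkeeping around the alternative ``$d=n$ or $d$ is a proper divisor of $n$'' --- in particular the boundary case $n=0$ --- together with the correct handling of the translation identities among the cosets $z_0+G[d]$.
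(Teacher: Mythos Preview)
Your proof is correct and follows essentially the same route as the paper's: both fix a point in the infinite intersection, invoke Lemma~\ref{intersection:of:two:elementary:algebraic:sets}(iv) to write $G[n]\cap(a+G[m])=z_0+G[d]$ with $d=(n,m)$, argue that $d$ cannot be a proper divisor of $n$, and conclude. The only cosmetic difference is that where you unwrap $S\cap(z_0+G[d])=\{x\in S:dx=dz_0\}$ and appeal directly to Definition~\ref{def:of:almost:n:torsion:sets}(i), the paper instead cites the reformulation (i)$\to$(ii) of Lemma~\ref{equivalent:condition:for:round:sets}; your explicit treatment of the boundary case $n=0$ is a welcome clarification.
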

\begin{proof}
By Lemma \ref{intersection:of:two:elementary:algebraic:sets}(iv), $S\subseteq G[n]\cap (a+G[m])=z_0+G[d]$ for some $z_0\in S$, where  $d=(n,m)$ is the greatest common divisor of $n$ and $m$. Since $S$ is infinite, from the implication (i)~$\to$~(ii) of Lemma \ref{equivalent:condition:for:round:sets},
 it follows that $d$ cannot be a proper divisor of $n$. Hence, $d=n$. Since $z_0\in S\subseteq G[n]$, we have  $z_0+G[d]=z_0+G[n]=G[n]$. We have 
thus proven that $G[n]\cap (a+G[m])=G[n]$, which yields $G[n]\subseteq a+G[m]$.
\end{proof}

\begin{proposition}
\label{round:sets:as:dense:atoms}
Given $n\in\mathbb{N}$ and a countably infinite subset $S$ of an abelian group $G$, the following conditions are equivalent:
\begin{itemize}
\item[(i)] $S$ is \round{n},
\item[(ii)] $G[n]$ has exponent $n$, $\CL_{\Zar_G} (S)=G[n]$ and  $S$ is a \Zc{\Zar_G}.
\end{itemize}
\end{proposition}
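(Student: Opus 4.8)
The plan is to prove the two implications separately, using the machinery already developed. For the implication (i)~$\to$~(ii), assume $S$ is \round{n}. First I would show $\CL_{\Zar_G}(S) = G[n]$. Since $S \subseteq G[n]$ and $G[n]$ is $\Zar_G$-closed (it is elementary algebraic), we have $\CL_{\Zar_G}(S) \subseteq G[n]$. For the reverse inclusion, use Lemma \ref{lemma:elementary:algebraic:sets}(i) and Theorem \ref{Zariski.topology.is.Noetherian}(ii): $\CL_{\Zar_G}(S)$ is a finite union of elementary algebraic sets, and since $S$ is infinite, at least one member $a+G[m]$ of that union meets $S$ in an infinite set. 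Then Lemma \ref{lemma:4.6} gives $G[n] \subseteq a+G[m]$, so $G[n] \subseteq \CL_{\Zar_G}(S) \subseteq G[n]$, forcing equality. Next I would show $S$ is a \Zc{\Zar_G}, i.e.\ that $S$ carries the cofinite topology as a subspace: by Fact \ref{FactZcurve} it suffices to check that every proper $\Zar_G\restriction_S$-closed subset of $S$ is finite. Such a set has the form $S \cap F$ with $F$ a $\Zar_G$-closed proper subset of $\CL_{\Zar_G}(S)=G[n]$; writing $F$ as a finite union of elementary algebraic sets $a_i + G[m_i]$ and invoking Lemma \ref{lemma:4.6} again, if any $a_i+G[m_i]$ met $S$ infinitely we would get $G[n] \subseteq a_i+G[m_i]$, hence (by irreducibility or directly, since $a_i+G[m_i]$ is closed and contained in $G[n]=\CL_{\Zar_G}(S)$) $a_i + G[m_i] = G[n]$, contradicting $F \subsetneq G[n]$; therefore each $S\cap(a_i+G[m_i])$ is finite and so is $S\cap F$. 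Finally, to see $G[n]$ has exponent $n$: if $d$ were a proper divisor of $n$ with $G[d]=G[n]$, then $S \cap G[d] = S$ would be infinite, contradicting Lemma \ref{equivalent:condition:for:round:sets}(ii); and $nS = \{0\}$ since $S \subseteq G[n]$, so $nG[n]=\{0\}$.

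For the implication (ii)~$\to$~(i), assume $G[n]$ has exponent $n$, $\CL_{\Zar_G}(S)=G[n]$, and $S$ is a \Zc{\Zar_G}. Since $S \subseteq \CL_{\Zar_G}(S)=G[n]$, the containment $S \subseteq G[n]$ holds. To verify the \round{n} condition I would use the characterization in Lemma \ref{equivalent:condition:for:round:sets}(ii): let $a \in G$ and let $d$ be a proper divisor of $n$; I must show $S \cap (a+G[d])$ is finite. The set $a + G[d]$ is $\Zar_G$-closed, so $S \cap (a+G[d])$ is a $\Zar_G\restriction_S$-closed subset of $S$; since $S$ is a \Zc{\Zar_G}, by Fact \ref{FactZcurve}(b) it is either finite or all of $S$. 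If it equals $S$, then $S \subseteq a + G[d]$, whence $G[n] = \CL_{\Zar_G}(S) \subseteq \CL_{\Zar_G}(a+G[d]) = a+G[d]$; but then by Lemma \ref{comparing:elementary:algebraic:sets}(i) we get $G[n] \subseteq G[d]$, and combined with the trivial inclusion $G[d] \subseteq G[n]$ (as $d \mid n$) this gives $G[d] = G[n]$, contradicting the assumption that $G[n]$ has exponent $n$ (a proper divisor $d$ of $n$ cannot satisfy $G[d]=G[n]$). Hence $S \cap (a+G[d])$ is finite, and $S$ is \round{n}.

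The main obstacle is bookkeeping around the special cases $n = 0$ and $n = 1$: when $n = 0$, $G[n] = G$, every positive integer is a proper divisor, and "exponent $0$" means $G$ is unbounded — one must make sure Lemma \ref{lemma:4.6} and the cofinite-subspace argument still apply; when $n = 1$, $G[1] = \{0\}$ cannot contain a countably infinite set, so the hypotheses are vacuous and nothing needs checking. Apart from these, the argument is a clean interplay between the Noetherian structure (finite unions of elementary algebraic sets), Lemma \ref{lemma:4.6}, the cofinite-topology characterization in Fact \ref{FactZcurve}, and the definition of exponent; the key recurring move is that an elementary algebraic subset of $G[n]$ that meets an \round{n} set (or a \Zc{\Zar_G} with closure $G[n]$) infinitely must be all of $G[n]$.
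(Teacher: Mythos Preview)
Your proposal is correct and follows essentially the same approach as the paper: both directions hinge on Lemma~\ref{lemma:4.6}, the cofinite-topology characterization of a \Zc{\Zar_G}, and Lemma~\ref{equivalent:condition:for:round:sets}(ii). The only differences are organizational: the paper proves the exponent claim first and then handles the \Zc{\Zar_G} property and the equality $\CL_{\Zar_G}(S)=G[n]$ in a single stroke (by showing that any $\Zar_G$-closed $F$ with $S\cap F$ infinite must contain $G[n]$), whereas you separate these into three steps; similarly, in (ii)~$\to$~(i) the paper phrases the contradiction via ``$a+G[d]$ is a proper closed subset of $G[n]$'' rather than passing through closures, but the content is the same.
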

\begin{proof}
(i)~$\to$~(ii) 
Suppose  that   $d(G[n]) =0$ (i.e., $G[n]=G[d]$) for some proper divisor $d$ of $n$.    Then  $\{x\in S: dx=0\}=\{x\in S: nx=0\}=S$, because  $S \subseteq G[n]$. Since $S$ is infinite, $S$ is not \round{n}, which contradicts (i). This proves that $G[n]$ has exponent $n$.

Let $F$ be a $\Zar_G$-closed set. According to Theorem \ref{Zariski.topology.is.Noetherian}(ii),  there exist  $k\in\mathbb{N}\setminus\{0\}$, $a_1,...,a_k\in G$ and positive integers $n_1,...,n_k$ such that  $F=(a_1+G[n_1])\cup  (a_2+G[n_2])\cup \ldots \cup (a_k+G[n_k])$. If $S\cap F$ is infinite, then $S\cap (a_i+G[n_i])$ must be infinite for some $i\le k$.  Now Lemma \ref{lemma:4.6}  implies that
$S\subseteq G[n]\subseteq a_i+G[n_i]\subseteq F$. This shows that $S$ is a \Zc{\Zar_G} satisfying $G[n]\subseteq \CL_{\Zar_G} (S)$. Since $S\subseteq G[n]$ and $G[n]$ is $\Zar_G$-closed, we also have  the reverse inclusion,
namely,  $\CL_{\Zar_G} (S)\subseteq G[n]$.

(ii)~$\to$~(i)
It suffices to prove that $S$ satisfies the condition (ii) of Lemma \ref{equivalent:condition:for:round:sets}. Let $a\in G$  and $d$ be a proper divisor of $n$. Since $G[n]$ has exponent $n$, the set $E=a+G[d]$ is either disjoint from $G[n]$ or a proper closed subset of $G[n]$.  In the first case,
 $S \cap E= \emptyset$.  In the second case, since
$E$ is a proper closed subset of $G[n]$ and $\CL_{\Zar_G} (S)=G[n]$, $S\cap E$ must be a proper closed subset of $S$. Since $S$ is a \Zc{\Zar_G},  it follows that  $S \cap E$ must be finite. 
\end{proof}

\begin{lemma}
\label{set-theoretic:lemma}
Suppose that $Y$ is a subset of an abelian group $G$ and $\mathcal{H}\not=\emptyset$ is a countable family of subgroups of $G$ such that $Y\setminus \bigcup_{i\le k} (a_i+H_i)\not=\emptyset$ whenever $k\in\mathbb{N}$, $a_0,\dots,a_k\in G$ and $H_0,\dots,H_k\in\mathcal{H}$. Then there exists an infinite set $S\subseteq Y$ such that $S\cap (a + H)$ is finite whenever $a\in G$ and $H\in\mathcal{H}$. 
\end{lemma}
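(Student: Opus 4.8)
The plan is to construct the required set $S=\{s_n:n\in\mathbb{N}\}$ by a straightforward recursion, choosing one point at a time and invoking the hypothesis at each step to guarantee that a suitable next point exists. First I would fix an enumeration $\mathcal{H}=\{H_n:n\in\mathbb{N}\}$ of the countable family $\mathcal{H}$, allowing repetitions in case $\mathcal{H}$ is finite (this is legitimate since $\mathcal{H}\neq\emptyset$). The guiding idea is that, in order to force $S\cap(a+H)$ to be finite for every coset $a+H$ of every $H\in\mathcal{H}$, it is enough to arrange at stage $n$ that the new point $s_n$ avoids all the ``old'' cosets $s_m+H_j$ with $m<n$ and $j\le n$.

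Concretely, having already chosen pairwise distinct $s_0,\dots,s_{n-1}\in Y$, I would pick
\[
s_n\in Y\setminus\bigcup\{\,s_m+H_j:\ m<n,\ j\le n\,\}.
\]
For $n=0$ the subtracted union is empty and one needs only $Y\neq\emptyset$, which follows from the hypothesis applied with $k=0$; for $n\ge 1$ the subtracted set is a nonempty finite union of cosets of members of $\mathcal{H}$, so the hypothesis says precisely that the difference is nonempty and the choice of $s_n$ is possible. Since $0\in H_0$, the condition $s_n\notin s_m+H_0$ for all $m<n$ shows that the $s_n$ are pairwise distinct, so $S=\{s_n:n\in\mathbb{N}\}$ is an infinite subset of $Y$.

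It then remains to check the finiteness condition. Fix $a\in G$ and $H\in\mathcal{H}$, and choose $j$ with $H=H_j$. If there were indices $j\le m<n$ with $s_m,s_n\in a+H_j$, then $s_n\in s_m+H_j$ with $m<n$ and $j\le n$, contradicting the choice of $s_n$. Hence at most one index $n\ge j$ can satisfy $s_n\in a+H_j$, so $S\cap(a+H)\subseteq\{s_0,\dots,s_{j-1}\}\cup\{s_n\}$ is finite, as required.

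There is no real obstacle here: the entire content is the bookkeeping that makes ``avoid the finitely many cosets seen so far'' strong enough to yield the global finiteness statement, together with the observation that the hypothesis is tailored exactly so that each required choice can be made. The only points demanding a little care are verifying that the indices line up (when a coset is hit at stages $m<n$ one indeed has $j\le n$, so the exclusion imposed at stage $n$ applies) and dealing with the base case $n=0$, where only $Y\neq\emptyset$ is needed.
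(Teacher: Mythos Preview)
Your proof is correct and follows essentially the same recursive construction as the paper: enumerate $\mathcal{H}$, at each stage choose a point of $Y$ avoiding the finitely many cosets determined by the earlier points and the first few subgroups, and then verify infiniteness via $0\in H_0$ and the finiteness condition by showing the exclusion kicks in once both indices are large enough. The only differences from the paper's argument are cosmetic (you exclude $H_j$ for $j\le n$ at stage $n$ rather than $j\le n-1$, and your finiteness bookkeeping is organized slightly differently), but the content is identical.
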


\begin{proof} Let $\mathcal{H}=\{H_n:n\in\mathbb{N}\}$ be an enumeration of $\mathcal{H}$. For a finite set $F\subseteq G$ and $k\in\mathbb{N}$,
 define $E_{F,k}=\bigcup_{i\le k}F+H_i$,
 and note that $Y\setminus E_{F,k}\not=\emptyset$ by the assumption of our lemma. Therefore, by induction on $k$, we can choose a sequence $\{y_k:k\in\mathbb{N}\}$ such that $y_{k+1}\in Y\setminus E_{F_k,k}$ for every $k\in\mathbb{N}$, where  $F_k=\{y_0,\dots,y_k\}$. Indeed, $Y\not=\emptyset$ allows one to choose $y_0\in Y$. Assuming that $y_0,\dots,y_k$ have already been chosen, select $y_{k+1}\in Y\setminus E_{F_k,k}\not=\emptyset$.

We claim that $S=\{y_k:k\in\mathbb{N}\}$ is as required. Clearly, $S\subseteq Y$. Since $\mathcal{H}\not=\emptyset$, from our choice of $y_{k+1}$,
 it follows that $y_{k+1}\not\in F_k + H_0=\{y_0,\dots,y_k\}+ H_0$. Since $0\in H_0$, we conclude that $y_{k+1}\not\in\{y_0,\dots,y_k\}$. Therefore, $S$ is infinite. Assume now that $a\in G$ and $H\in\mathcal{H}$. Then there exists $n\in\mathbb{N}$ such that $H=H_n$. We  must show that the set $S\cap (a+H)=S\cap (a+H_n)$ is finite. If this set is empty, then the proof is complete. Suppose now that $y_m\in a+H_n$ for some $m\in\mathbb{N}$. Define $k=\max(m,n)$. Let $j>k$ be an arbitrary integer. Suppose that $y_j\in a+H_n$. Then  $y_j-y_m\in H_n$, and so 
$$
y_j\in y_m+H_n\subseteq F_k+H_n\subseteq F_{j-1}+H_n\subseteq E_{F_{j-1}, j-1},
$$
which contradicts the choice of $y_j$. This proves that $S\cap (a+H_n)\subseteq \{y_0,\dots,y_k\}$.
\end{proof}

Our next proposition is the main result of this section. It provides an important characterization of  sets containing an \round{n} set that will be used frequently in subsequent sections.

\begin{proposition}
\label{finding:n-round:sets}
Suppose that $n\in\mathbb{N}$ and $G$ is an abelian group. Then for every infinite set $Y\subseteq G[n]$, the following conditions are equivalent:
\begin{itemize}
     \item[(i)] $Y$ contains an \round{n} set,
     \item[(ii)]  $G[n]$ is an irreducible subgroup of $G$ of exponent $n$, and $\CL_{\Zar_G}(Y)= G[n]$,
     \item[(iii)] $eo(G[n])=n$ and $\CL_{\Zar_G}(Y)= G[n]$.
\end{itemize}
\end{proposition}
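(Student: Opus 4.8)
The plan is to prove the cycle (i)~$\to$~(ii)~$\to$~(iii)~$\to$~(i), using the results already established. The equivalence (ii)~$\leftrightarrow$~(iii) is essentially Proposition \ref{essential:order=irreducible+order} (via Lemma \ref{lemma:hereditary:Zariski}, which gives $\Zar_{G[n]}=\Zar_G\restriction_{G[n]}$, so ``$G[n]$ is irreducible as a subspace of $(G,\Zar_G)$'' is the same as ``$(G[n],\Zar_{G[n]})$ is irreducible''): there $eo(G[n])=n$ is shown equivalent to ``$G[n]$ irreducible and of exponent $n$''. So once I have the cycle closed, (ii) and (iii) are interchangeable for free. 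I will therefore concentrate on (i)~$\to$~(ii) and (iii)~$\to$~(i).

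For (i)~$\to$~(ii): suppose $Y$ contains an \round{n} set $S$. By Proposition \ref{round:sets:as:dense:atoms}, $G[n]$ has exponent $n$, $S$ is a \Zc{\Zar_G}, and $\CL_{\Zar_G}(S)=G[n]$. Since $S\subseteq Y\subseteq G[n]$ and $G[n]$ is $\Zar_G$-closed, monotonicity of closure gives $G[n]=\CL_{\Zar_G}(S)\subseteq\CL_{\Zar_G}(Y)\subseteq G[n]$, so $\CL_{\Zar_G}(Y)=G[n]$. Irreducibility of $G[n]$ also follows from Proposition \ref{round:sets:as:dense:atoms}: $\CL_{\Zar_G}(S)=G[n]$ is the Zariski closure of a \Zc{\Zar_G}, hence irreducible by Proposition \ref{general:properties:of:atoms}(i). (Alternatively, invoke Proposition \ref{essential:order=irreducible+order} after first getting $eo(G[n])=n$, but the direct route is cleaner.) This gives (ii), and then (iii) via Proposition \ref{essential:order=irreducible+order}.

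The substantive direction, and the main obstacle, is (iii)~$\to$~(i): given $eo(G[n])=n$ and $\CL_{\Zar_G}(Y)=G[n]$, I must \emph{build} an \round{n} subset of $Y$. This is where Lemma \ref{set-theoretic:lemma} and Lemma \ref{equivalent:condition:for:round:sets}(ii) come in. I would replace $G$ by $H:=G[n]$ (legitimate by Remark \ref{Background}(iv) and Lemma \ref{lemma:hereditary:Zariski}), so $eo(H)=n$ and $Y$ is $\Zar_H$-dense in $H$. I want to apply Lemma \ref{set-theoretic:lemma} with the countable family $\mathcal{H}=\{H[d]:d$ a proper divisor of $n\}$ (this is finite when $n>0$, and countable when $n=0$ since then the proper divisors of $0$ are all of $\mathbb{N}\setminus\{0\}$); the set $S$ it produces will then satisfy $S\cap(a+H[d])$ finite for every $a$ and every proper divisor $d$, i.e.\ $S$ is \round{n} by Lemma \ref{equivalent:condition:for:round:sets}(ii). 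The hypothesis of Lemma \ref{set-theoretic:lemma} to verify is: for any finite list $a_0,\dots,a_k\in H$ and proper divisors $d_0,\dots,d_k$ of $n$, the set $Y\setminus\bigcup_{i\le k}(a_i+H[d_i])$ is nonempty. Each $a_i+H[d_i]$ is an elementary algebraic set, so $F:=\bigcup_{i\le k}(a_i+H[d_i])$ is $\Zar_H$-closed; since $\CL_{\Zar_H}(Y)=H$ and $Y\not\subseteq F$ would force $F=H$, it suffices to show $F\ne H$. If $F=H$ then, intersecting with the irreducible set $H$ and using Fact \ref{fact:on:finite:unions}, we'd get $H=a_i+H[d_i]$ for some $i$, whence $H[d_i]=H$ by Lemma \ref{comparing:elementary:algebraic:sets}(ii), i.e.\ $d_iH=\{0\}$ with $d_i$ a proper divisor of $n$ — but $eo(H)=n$ rules this out (for $n>1$ by Proposition \ref{essential:order=irreducible+order}'s exponent conclusion; for $n\in\{0,1\}$ directly, noting $n=1$ forces $H=\{0\}$ which has no infinite $Y$, and for $n=0$, $d_iH=\{0\}$ contradicts $H$ unbounded). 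Thus $F\ne H$, Lemma \ref{set-theoretic:lemma} applies, and the resulting $S\subseteq Y$ is the desired \round{n} set. The one wrinkle to handle carefully is that $S$ from Lemma \ref{set-theoretic:lemma} is merely infinite, while an \round{n} set must be countably \emph{infinite}; but Lemma \ref{set-theoretic:lemma} constructs $S$ as $\{y_k:k\in\mathbb{N}\}$, which is countable, so this is automatic.
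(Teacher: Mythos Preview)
Your proof is correct and follows essentially the same approach as the paper's: both directions (i)~$\to$~(ii) and the construction of an \round{n} subset via Lemma \ref{set-theoretic:lemma} with $\mathcal{H}=\{G[d]:d$ a proper divisor of $n\}$ are handled the same way, and the equivalence (ii)~$\leftrightarrow$~(iii) via Proposition \ref{essential:order=irreducible+order} is identical. The only cosmetic differences are that the paper disposes of $n=1$ at the outset (so that $1\in D$ guarantees $\mathcal{H}\ne\emptyset$) and proves (ii)~$\to$~(i) directly in $G$ rather than first passing to $H=G[n]$; your reduction is harmless since $H[d]=G[d]$ for divisors $d$ of $n$.

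One small slip in wording: you write ``$Y\not\subseteq F$ would force $F=H$'', but you mean the contrapositive --- if $Y\subseteq F$ then $H=\CL_{\Zar_H}(Y)\subseteq F$, so $F=H$; hence $F\ne H$ implies $Y\not\subseteq F$. The logic you then carry out is correct.
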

\begin{proof} 
Since $G[1]=\{0\}$ contains no infinite subsets $Y$, our proposition trivially holds for $n=1$. Therefore, for the rest of the proof, we  assume that  $n\in\mathbb{N}\setminus\{1\}$. 

(i)~$\to$~(ii)  Let $S$ be an \round{n} subset of $Y$. Applying the implication (i)~$\to$~(ii) of Proposition \ref{round:sets:as:dense:atoms}, we conclude that $S$ is \Zc{\Zar_G},
$G[n]$ has exponent $n$,
 and $G[n]=\CL_{\Zar_G}(S)\subseteq \CL_{\Zar_G}(Y)\subseteq  G[n]$.
In particular, $\CL_{\Zar_G}(Y)=  G[n]$. Since $S$ is a \Zc{\Zar_G}, $G[n]=\CL_{\Zar_G}(S)$ is irreducible by Proposition \ref{general:properties:of:atoms}(i).

(ii)~$\to$~(i)  
Define $\mathcal{H}=\{G[d]:d\in D\}$, where  $D$ is the set of all proper divisors of $n$.  We claim that $G$, $Y$ and $\mathcal{H}$ satisfy the assumptions of Lemma \ref{set-theoretic:lemma}.  Indeed,  observe that $\{0\}=G[1]\in\mathcal{H}\not=\emptyset$, since
$1$ is a proper divisor of every  $n\in\mathbb{N}\setminus\{1\}$. Furthermore, suppose that $k\in\mathbb{N}$, $a_0,\dots,a_k\in G$ and $H_0,\dots,H_k\in\mathcal{H}$. Assume that $Y\subseteq E=\bigcup_{i\le k} (a_i+H_i)$. Since $E$ is $\Zar_G$-closed,  $G[n]=\CL_{\Zar_G}(Y)\subseteq E$.  Since $G[n]$ is irreducible, $G[n]\subseteq a_i+H_i$ for some $i\le k$ by Fact \ref {fact:on:finite:unions}.  As $H_i=G[d_i]$ for some $d_i\in D$,  Lemma   \ref{comparing:elementary:algebraic:sets}(i)  yields $G[n]\subseteq G[d_i]$. From $d_i\in D$, it follows that  $d_i$ is a proper divisor of $n$,  and  so  $G[d_i]\subseteq G[n]$. Thus, $G[d_i]=G[n]$, which contradicts our assumption that $G[n]$ has exponent $n$. This  shows that $Y\setminus \bigcup_{i\le k} (a_i+H_i)\not=\emptyset$.

Let $S$ be  a set from the conclusion of Lemma  \ref{set-theoretic:lemma}. Then $S$ is \round{n} by implication (ii)~$\to$~(i) of Lemma \ref{equivalent:condition:for:round:sets}.

(ii)~$\leftrightarrow$~(iii) 
This follows from  Proposition \ref{essential:order=irreducible+order}.
\end{proof}

\begin{corollary}
\label{description:of:when:there:are:n-round:sets}
For a given integer $n\in\mathbb{N}$, an abelian group $G$ contains an \round{n} subset if and only if $eo(G[n])=n$. 
\end{corollary}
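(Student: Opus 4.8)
The plan is to derive this statement as an immediate consequence of Proposition \ref{finding:n-round:sets} by applying it to the largest possible candidate set, namely $Y = G[n]$ itself. Indeed, the corollary is precisely the equivalence (i)~$\leftrightarrow$~(iii) of that proposition, read off for the special choice $Y = G[n]$, once one observes that for this choice the condition ``$\CL_{\Zar_G}(Y) = G[n]$'' is automatic and hence drops out.

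More concretely, here is how I would carry it out. First I would dispose of a degenerate case: if $G[n]$ is finite, then $G[n]$ contains no countably infinite set, so $G$ contains no \round{n} subset; on the other hand $eo(G[n])$ is then positive only if $G[n]$ is infinite (recall $eo$ of a finite group would force $mG[n]$ finite for $m=1$, giving $eo(G[n])$ undefined in the intended sense, or more cleanly: by Proposition \ref{essential:order=irreducible+order} the equality $eo(G[n])=n$ forces $G[n]$ irreducible of exponent $n$, and an irreducible $T_1$-space is either a singleton or infinite, so $eo(G[n])=n>1$ can't hold for finite nontrivial $G[n]$, while $n=1$ gives $G[1]=\{0\}$ which contains no \round{1} set either, consistent with $\TT_1(G)=\emptyset$ from Remark \ref{Background}(i)). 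So in the degenerate case both sides of the claimed equivalence fail, and we may assume $G[n]$ is infinite.

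Now assume $G[n]$ is infinite and put $Y = G[n]$. Since $G[n]$ is a subgroup, hence $\Zar_G$-closed, we have $\CL_{\Zar_G}(Y) = G[n]$ trivially. Applying Proposition \ref{finding:n-round:sets} with this $Y$, condition (i) there says exactly that $G[n] = Y$ contains an \round{n} set, i.e.\ that $G$ contains an \round{n} subset (using Remark \ref{Background}(iii), an \round{n} subset of $G$ contained in $G[n]$ is the same thing as an \round{n} subset of $G$ at all, since by definition every \round{n} set lies in $G[n]$). Condition (iii) there says $eo(G[n]) = n$ together with the now-redundant $\CL_{\Zar_G}(Y) = G[n]$. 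Hence (i)~$\leftrightarrow$~(iii) of the proposition becomes precisely the statement of the corollary.

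I do not expect any genuine obstacle here: the corollary is a straightforward specialization, and the only point requiring a word of care is the reduction showing that ``$G$ contains an \round{n} set'' is equivalent to ``$G[n]$ contains an \round{n} set'' — but this is immediate from the definition (Definition \ref{def:of:almost:n:torsion:sets}(i) requires $S \subseteq G[n]$) — together with handling the finite/trivial cases so that the infinite hypothesis needed to invoke Proposition \ref{finding:n-round:sets} is in force.
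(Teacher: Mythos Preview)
Your proposal is correct and follows exactly the paper's approach: the paper's entire proof is the single sentence ``Apply Proposition \ref{finding:n-round:sets} to $Y=G[n]$.'' Your additional discussion of the degenerate case where $G[n]$ is finite (so that the infiniteness hypothesis on $Y$ in Proposition \ref{finding:n-round:sets} is not met) is more careful than the paper, which leaves this implicit; note, however, that your handling of $n=1$ is slightly off, since $eo(G[1])=eo(\{0\})=1$ actually \emph{holds}, so the corollary as stated is vacuous or trivially problematic at $n=1$ in the same way the proposition is.
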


\begin{proof}
Apply Proposition \ref{finding:n-round:sets} to $Y=G[n]$. 
\end{proof}

Our next proposition provides a  simple characterization of  the sets that contain an \round{0} set. It will be used in Theorem \ref{Zariski:dense:sets:in:unbounded:groups}  to describe the Zariski dense subsets of an unbounded abelian group $G$.

\begin{proposition}
\label{sets:that:contain:zero-round:sets}
For a subset $Y$ of an abelian group $G$, the following conditions are equivalent:
\begin{itemize}
\item[(i)] $Y$ contains an \round{0} set,
\item[(ii)] $mY$ is infinite for every  integer $m\in\mathbb{N}\setminus\{0\}$,
\item[(iii)] $Y\setminus (G[m]+F)\not=\emptyset$ for every integer $m\in\mathbb{N}\setminus\{0\}$ and each finite subset $F$ of $G$.
\end{itemize} 
\end{proposition}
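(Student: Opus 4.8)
The plan is to prove the cycle of implications (i)~$\to$~(ii)~$\to$~(iii)~$\to$~(i). The implications (i)~$\to$~(ii) and (iii)~$\to$~(i) should be essentially routine, while the genuine work is in (ii)~$\to$~(iii), or rather in setting up (iii) so that Lemma~\ref{set-theoretic:lemma} can be applied cleanly in the last step. For (i)~$\to$~(ii), suppose $S\subseteq Y$ is \round{0}. Fix $m\in\mathbb{N}\setminus\{0\}$; since $m$ is a proper divisor of $0$, Definition~\ref{def:of:almost:n:torsion:sets}(i) tells us that $\{x\in S: mx=g\}$ is finite for every $g\in G$. Hence the restriction of the multiplication-by-$m$ map to $S$ has finite fibres, so $mS$ is infinite; since $mS\subseteq mY$, the set $mY$ is infinite. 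For (iii)~$\to$~(i), I would apply Lemma~\ref{set-theoretic:lemma} with the family $\mathcal{H}=\{G[m]:m\in\mathbb{N}\setminus\{0\}\}$, which is a nonempty countable family of subgroups of $G$. Condition (iii) is not literally the hypothesis of that lemma (which speaks of finite unions $\bigcup_{i\le k}(a_i+H_i)$ with possibly distinct $H_i$), so the first task is to reduce a finite union $\bigcup_{i\le k}(a_i+G[m_i])$ to the single-modulus form: if $m$ is the least common multiple of $m_1,\dots,m_k$, then $G[m_i]\subseteq G[m]$ for each $i$, so $\bigcup_{i\le k}(a_i+G[m_i])\subseteq \{a_0,\dots,a_k\}+G[m]=G[m]+F$ with $F=\{a_0,\dots,a_k\}$ finite. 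Thus (iii) guarantees $Y\setminus\bigcup_{i\le k}(a_i+G[m_i])\neq\emptyset$, Lemma~\ref{set-theoretic:lemma} produces an infinite $S\subseteq Y$ with $S\cap(a+G[m])$ finite for all $a\in G$ and all $m\in\mathbb{N}\setminus\{0\}$, and condition~(ii) of Lemma~\ref{equivalent:condition:for:round:sets} (with $n=0$, noting every $m\ge 1$ is a proper divisor of $0$) shows $S$ is \round{0} after replacing $S$ by a countably infinite subset if necessary.

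The remaining implication (ii)~$\to$~(iii) is the one I expect to require the most care. I would argue by contraposition: suppose (iii) fails, so there exist $m\in\mathbb{N}\setminus\{0\}$ and a finite $F\subseteq G$ with $Y\subseteq G[m]+F$. The goal is to produce a positive integer $k$ with $kY$ finite. Write $F=\{b_1,\dots,b_r\}$; then $Y=\bigcup_{j=1}^r\big(Y\cap(G[m]+b_j)\big)$, and for each $j$ we have $m\big(Y\cap(G[m]+b_j)\big)\subseteq mG[m]+mb_j=\{mb_j\}$. Hence $mY\subseteq\{mb_1,\dots,mb_r\}$ is finite, contradicting (ii) with the choice of integer $m$. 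So in fact $k=m$ works directly, and this implication is short once the contrapositive is set up.

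The main obstacle, such as it is, is bookkeeping rather than mathematics: one must be careful that the hypothesis of Lemma~\ref{set-theoretic:lemma} is correctly matched (it allows an arbitrary finite list of cosets of members of $\mathcal{H}$, not just translates by a single finite set), and that the passage between ``finitely many moduli'' and ``a single modulus via the least common multiple'' is stated so that no coset is lost. One should also note the degenerate case $Y$ finite (or even empty): if $Y$ is finite then all three conditions fail, since a \round{0} set is by definition countably infinite, $mY$ is finite for every $m$, and taking $m=1$, $F=Y$ gives $Y\subseteq G[1]+F=\{0\}+Y=Y$ so (iii) fails too; thus the equivalence holds vacuously and we may assume $Y$ infinite throughout, exactly as in the statement's use in later sections. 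With these points addressed, the three implications close the cycle.
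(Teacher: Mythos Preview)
Your proof is correct and follows essentially the same route as the paper: the same cycle (i)~$\to$~(ii)~$\to$~(iii)~$\to$~(i), the same contrapositive for (ii)~$\to$~(iii) via $mY\subseteq mF$, and the same reduction to a single modulus before invoking Lemma~\ref{set-theoretic:lemma} and Lemma~\ref{equivalent:condition:for:round:sets} for (iii)~$\to$~(i). The only cosmetic differences are that the paper uses the product $m_0m_1\cdots m_k$ rather than the lcm, and the set $S$ produced by Lemma~\ref{set-theoretic:lemma} is already countably infinite, so your hedge ``after replacing $S$ by a countably infinite subset if necessary'' is not needed.
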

\begin{proof}
(i)~$\to$~(ii)
Let $S$ be an \round{0} subset of $Y$. Assume that $mS$ is finite. Then we can find  $s_0\in S$ such that $ms=ms_0$ for infinitely many $s\in S$. Since $m$ is a proper divisor of $0$, this contradicts Definition \ref{def:of:almost:n:torsion:sets}. This shows that $mS$ (and then $mY$ as well) must be infinite.

(ii)~$\to$~(iii) Assume $Y\subseteq G[m]+F$ for some integer $m\in\mathbb{N}\setminus\{0\}$ and some finite subset $F$ of $G$. Then $mY \subseteq mF$ is finite, 
which contradicts 
(ii).

(iii)~$\to$~(i)  We claim that $G$, $Y$ and 
$\mathcal{H}=\{G[n]:n\in \mathbb{N}\setminus\{0\}\}$ satisfy the assumptions of Lemma \ref{set-theoretic:lemma}.  Suppose that $k\in\mathbb{N}$, $a_0,\dots,a_k\in G$ and $H_0,\dots,H_k\in\mathcal{H}$. Then there exist $m_0,\dots,m_k\in\mathbb{N}\setminus\{0\}$ such that $H_i=G[m_i]$ for $i\le k$.  Let $m=m_0 m_1\dots m_k$. Then 
$$
\bigcup_{i\le k}(a_i+H_i)=\bigcup_{i\le k}(a_i+G[m_i])\subseteq \bigcup_{i\le k}(a_i+G[m])=\{a_0,\dots,a_k\}+G[m],
$$
and 
(iii)
implies that $Y\setminus \bigcup_{i\le k}(a_i+H_i)\not=\emptyset$.

Let $S$ be a set from the conclusion of Lemma  \ref{set-theoretic:lemma}.  Since (iii) obviously implies that $G$ is unbounded, $G$ has exponent $0$.
Since $\mathbb{N}\setminus\{0\}$ is the set of all proper divisors of $0$ and $S$ satisfies 
Lemma  \ref{set-theoretic:lemma}, $S$ is \round{0} by the implication (ii)~$\to$~(i) of Lemma \ref{equivalent:condition:for:round:sets}.
\end{proof}

\begin{corollary}
\label{groups:that:conatin:0-round:sets}
An abelian group $G$ contains an \round{0} subset if and only if $G$ is unbounded.
\end{corollary}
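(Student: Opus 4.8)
The plan is to derive Corollary~\ref{groups:that:conatin:0-round:sets} directly from Proposition~\ref{sets:that:contain:zero-round:sets} by taking $Y=G$. Indeed, $G$ contains an \round{0} subset if and only if the set $Y=G$ contains an \round{0} subset, which by the equivalence (i)~$\leftrightarrow$~(ii) of Proposition~\ref{sets:that:contain:zero-round:sets} is equivalent to the condition that $mG$ is infinite for every integer $m\in\mathbb{N}\setminus\{0\}$.

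First I would observe that this last condition is, almost by definition, the statement that $G$ is unbounded. Recall that $G$ is bounded precisely when $mG=\{0\}$ for some $m\in\mathbb{N}\setminus\{0\}$, and unbounded otherwise. So if $G$ is bounded, then $mG=\{0\}$ is finite for the witnessing $m$, and hence condition (ii) of Proposition~\ref{sets:that:contain:zero-round:sets} fails; conversely, if condition (ii) fails, then $mG$ is finite for some $m\geq 1$. To pass from ``$mG$ finite'' to ``$G$ bounded,'' I would note that if $mG$ is finite then $mG$ is itself a bounded group, so there is $k\in\mathbb{N}\setminus\{0\}$ with $k(mG)=\{0\}$, i.e.\ $(km)G=\{0\}$, so $G$ is bounded. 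Thus condition (ii) of Proposition~\ref{sets:that:contain:zero-round:sets} holds if and only if $G$ is unbounded, and combining this with the cited equivalence completes the proof.

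There is no serious obstacle here: the only mild point requiring care is the implication ``$mG$ finite for some $m>0$ $\Rightarrow$ $G$ bounded,'' which uses the trivial fact that a finite abelian group is bounded. Everything else is an immediate instantiation of the preceding proposition at $Y=G$.

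\begin{proof}
Apply Proposition~\ref{sets:that:contain:zero-round:sets} to $Y=G$. By the equivalence (i)~$\leftrightarrow$~(ii) there, $G$ contains an \round{0} subset if and only if $mG$ is infinite for every integer $m\in\mathbb{N}\setminus\{0\}$. If $G$ is bounded, then $mG=\{0\}$ for some $m\in\mathbb{N}\setminus\{0\}$, so this condition fails and $G$ contains no \round{0} subset. Conversely, assume $mG$ is finite for some $m\in\mathbb{N}\setminus\{0\}$. Being a finite abelian group, $mG$ is bounded, so $k(mG)=\{0\}$ for some $k\in\mathbb{N}\setminus\{0\}$; hence $(km)G=\{0\}$ with $km\in\mathbb{N}\setminus\{0\}$, so $G$ is bounded. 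Therefore the condition ``$mG$ is infinite for every $m\in\mathbb{N}\setminus\{0\}$'' holds precisely when $G$ is unbounded, and the conclusion follows.
\end{proof}
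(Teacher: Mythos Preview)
Your proof is correct and follows exactly the paper's approach: the paper's proof is the single line ``This follows from Proposition~\ref{sets:that:contain:zero-round:sets} when $Y=G$,'' and you have simply filled in the small detail that the condition ``$mG$ is infinite for every $m\geq 1$'' is equivalent to $G$ being unbounded. The paper also notes an alternative route via Corollary~\ref{description:of:when:there:are:n-round:sets} with $n=0$, but your argument matches the primary proof.
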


\begin{proof}
This follows from Proposition \ref{sets:that:contain:zero-round:sets}  when $Y=G$. Alternatively, 
this also follows from Corollary \ref{description:of:when:there:are:n-round:sets} 
when $n=0$ and Definition \ref{definition:of:essential:order}(ii).
\end{proof}

There is another, direct proof of  this corollary. Clearly, bounded groups have no \round{0} subsets.  Now assume that $G$ is   unbounded, and choose for every $n>0$ an element $s_n\in G$ such that $n!s_n\ne 0$. If $G$ is non-torsion, then any  infinite cyclic subgroup of $G$ is an \round{0} subset, so we shall assume that $G$ is torsion.
In this case, $S=\{s_n:n\in \N\}$ is an  \round{0} subset of $G$.  Indeed,  consider a non-zero $d\in \N$ and $h\in G$. It suffices to prove that the equation $ds_n=h$  holds only for finitely many $n\in\N$. Since $G$ is torsion,  $ n_0h=0$  for some $n_0>0$.  Suppose that $n\in\N$ and $n\geq n_0+d$. Then  $n_0 d | n!$, and  $n!s_n\ne 0$  
yields $n_0 ds_n\ne 0$.   Since $n_0h=0$, it follows that $ds_n \ne h$.

\begin{proposition}
\label{almost:torsion-free:groups:by:means:of:n-round}
An infinite abelian group $G$ is almost torsion-free if and only if $G$ has no \round{n} sets for any integer $n>0$.
\end{proposition}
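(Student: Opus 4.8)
The plan is to reduce the whole statement to a single clean observation about prime exponents, bypassing the heavier machinery of the previous sections. Recall that $G$ is almost torsion-free exactly when $r_p(G)=\dim_{\Z/p\Z}G[p]$ is finite for every prime $p$, equivalently when $G[p]$ is \emph{finite} for every prime $p$ (a finite-dimensional vector space over a finite field is finite; compare the proof of Fact \ref{necessity:almost_torsion-free}). The key remark is that for a \emph{prime} $p$ the only proper divisor of $p$ is $1$; hence, for a countably infinite $S\subseteq G[p]$, the defining requirement of an \round{p} set --- that $\{x\in S:dx=g\}$ be finite for every $g\in G$ and every proper divisor $d$ of $p$ --- reduces to the requirement that $\{x\in S:x=g\}$ be finite, which is automatic. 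Thus $\TT_p(G)$ is precisely the family of all countably infinite subsets of $G[p]$, so that $\TT_p(G)\neq\emptyset$ if and only if $G[p]$ is infinite. Consequently, $G$ is almost torsion-free if and only if $\TT_p(G)=\emptyset$ for every prime $p$.

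It therefore remains to show that $\TT_p(G)=\emptyset$ for all primes $p$ if and only if $\TT_n(G)=\emptyset$ for all integers $n>0$. One implication is trivial. For the other I would argue by contraposition: suppose $S\in\TT_n(G)$ for some integer $n>0$. By Remark \ref{Background}(i) there are no \round{1} sets, so $n\geq 2$; fix a prime $p$ dividing $n$ and set $d=n/p$. Then $d$ is a proper divisor of $n$ in the sense of the excerpt, since $dp=n$ and $d\notin\{0,n\}$. Because $S$ is \round{n}, each set $\{x\in S:dx=g\}$ with $g\in G$ is finite, and since $S$ is infinite its image $dS$ must therefore be infinite. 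On the other hand, $p(dx)=nx=0$ for every $x\in S\subseteq G[n]$, so $dS\subseteq G[p]$; hence $G[p]$ is infinite and $\TT_p(G)\neq\emptyset$. Combining the two parts gives the equivalence.

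The proof has no real obstacle; the only things to watch are the degenerate case $n=1$ (dispatched by $\TT_1(G)=\emptyset$) and the bookkeeping around the phrase ``proper divisor'', in particular the fact that the proper divisors of a prime $p$ are exactly $\{1\}$. For readers who prefer to see the result inside the framework already built, one can instead invoke Corollary \ref{description:of:when:there:are:n-round:sets} (so that $G$ has an \round{n} set for some $n\geq 2$ iff $eo(G[n])=n$ for some $n\geq 2$) together with Proposition \ref{U-K:invar} (which turns $eo(G[n])=n$ into an embedding $\Z(n)^{(\omega)}\hookrightarrow G[n]$, hence $\Z(p)^{(\omega)}\hookrightarrow G[p]$ for each prime $p\mid n$, i.e. $r_p(G)$ infinite); but the direct route above is shorter and self-contained.
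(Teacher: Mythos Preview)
Your proof is correct. The backward implication (no \round{n} sets for $n>0$ implies almost torsion-free) is handled the same way in both proofs, via the observation that any countably infinite subset of $G[p]$ is automatically \round{p}.

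The forward implication is where the two arguments differ. The paper proceeds directly: from $r_p(G)<\infty$ one gets that each $G[p^m]$ is finite (by the short induction $|G[p^m]|\le |G[p]|\cdot|G[p^{m-1}]|$, using the multiplication-by-$p$ map), and then $G[n]$ is finite for every $n\ge 1$ as a sum of finitely many such subgroups; hence no infinite subset of $G[n]$ exists at all. Your route is to stay entirely at the level of primes: given $S\in\TT_n(G)$ with $n\ge 2$, the map $x\mapsto dx$ with $d=n/p$ has finite fibers on $S$ (this is precisely the \round{n} condition for the proper divisor $d$), so $dS$ is an infinite subset of $G[p]$, producing an \round{p} set. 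This is a genuinely different maneuver: it trades the structural fact ``$G[n]$ is finite'' for a direct reduction $\TT_n(G)\neq\emptyset\Rightarrow\TT_p(G)\neq\emptyset$. Your argument is slightly more self-contained (it uses nothing beyond Definition~\ref{def:of:almost:n:torsion:sets} and Remark~\ref{Background}(i)), while the paper's argument yields the stronger intermediate statement that $G[n]$ itself is finite for every $n\ge 1$ when $G$ is almost torsion-free.
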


\begin{proof} 
Assume that $G$ is almost torsion-free. For every prime $p$, the $p$-rank $r_p(G)$ of $G$ is defined as the dimension of the subgroup $G[p]$ over the field $\Z/p\Z$.  Hence,
 the subgroup $G[p^m]$ is finite for every $m\in \N$. Now fix a positive $n\in \N$. The subgroup $G[n]$ of $G$ is a sum of subgroups of the form $G[p^m]$, 
where the prime powers $p^m$ run over all divisors of the form $p^m$ in the prime factorization of $n$; hence, $G[n]$ is finite. This proves that $G$ has no \round{n} sets. 

On the other hand, if $p$ is an arbitrary prime and $G$ has no \round{p} sets $p$, then $r_p(G)<\infty$.  Therefore, $G$ is almost torsion-free. 
\end{proof}

\section{Irreducible sets and \Zc{\Zar_G}s}
\label{section:6}

\begin{lemma}\label{L_translates:of:n-round}
Assume that $n\in \N$, $S$ is an \round{n} subset of an abelian group $G$ and $a\in G$.  Then $a+S$ is \round{n} if and only if $a\in G[n]$.
\end{lemma}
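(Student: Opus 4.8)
The plan is to verify the defining clauses of Definition \ref{def:of:almost:n:torsion:sets}(i) for $a+S$ directly, using only that $G[n]$ is a subgroup of $G$ and that multiplication by a fixed integer is additive.

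For the ``only if'' direction I would use nothing but the containment clause. If $a+S$ is \round{n}, then $a+S\subseteq G[n]$; choosing any $s\in S$ (possible since $S$ is countably infinite, hence nonempty), we get $a+s\in G[n]$, while $s\in S\subseteq G[n]$, so $a=(a+s)-s\in G[n]$.

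For the converse I would assume $a\in G[n]$ and check the three clauses for $a+S$: it is countably infinite, because translation by $a$ is a bijection of $G$; it satisfies $a+S\subseteq G[n]+G[n]=G[n]$, since $S\subseteq G[n]$; and for a proper divisor $d$ of $n$ and any $g\in G$, writing a general element of $a+S$ as $a+x$ with $x\in S$, the equation $d(a+x)=g$ rearranges to $dx=g-da$, whence $\{y\in a+S:dy=g\}=a+\{x\in S:dx=g-da\}$, which is finite because $S$ is \round{n} (apply the finiteness clause to the element $g-da$ and the proper divisor $d$). This shows $a+S$ is \round{n}.

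I do not expect a genuine obstacle here: the statement is a bookkeeping check, the only points needing a moment's care being the nonemptiness of $S$ used in the ``only if'' part and the substitution $g\mapsto g-da$ in the ``if'' part. Alternatively, one could deduce the lemma from Proposition \ref{round:sets:as:dense:atoms} together with Corollary \ref{continuity:of:some:operations:in:the:Zariski:topology}(ii) and Theorem \ref{translation:of:closures}(ii): translation by $a$ is a $\Zar_G$-homeomorphism sending the \round{n} set $S$, whose Zariski closure is $G[n]$, to a \Zc{\Zar_G} $a+S$ with $\CL_{\Zar_G}(a+S)=a+\CL_{\Zar_G}(S)=a+G[n]$, and this closure coincides with $G[n]$ (the condition isolated in Proposition \ref{round:sets:as:dense:atoms}(ii), the remaining conditions there being translation-invariant) precisely when $a\in G[n]$. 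I would nonetheless present the direct argument, since it is shorter and independent of the machinery of Section \ref{section:5}.
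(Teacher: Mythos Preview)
Your proof is correct and follows essentially the same approach as the paper's own argument: both directions are handled by direct verification of Definition \ref{def:of:almost:n:torsion:sets}(i), with the key substitution $g\mapsto g-da$ in the finiteness clause. Your version is slightly more explicit (you single out an element $s\in S$ in the ``only if'' part and record that translation preserves countable infiniteness), but the content is the same.
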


\begin{proof} Assume
that  $a+S$ is \round{n}. Then $a+S \subseteq G[n]$. Since $S\subseteq G[n]$, we conclude that $a\in G[n]$. 

Assume now that $a\in G[n]$.  Since $S\subseteq G[n]$, we have $a+S\subseteq G[n]$. Let $g\in G$,
 and let $d$ be a proper divisor of $n$. Since $S$ is  \round{n}, the set $\{s\in S: ds=g-da \}=\{x\in a + S: dx=g\}$ must be finite. Thus, $a+S$ is \round{n} as well.
\end{proof}

Our first theorem in this section shows that  \Zc{\Zar_G}s  $X$  are  precisely the translates of  \round{\mathfrak{m}(X)} sets. (See Definition \ref{definition:M(X)} for the number $\mathfrak{m}(X)$.)

\begin{theorem}
\label{corollary:about:atoms:being:translates:of:round:sets}
For a countably infinite subset $X$ of an abelian group $G$, the following conditions are equivalent:
\begin{itemize}
\item[(i)] $X$ is a \Zc{\Zar_G},
\item[(ii)] there exists $a\in G$ such that $X-a\in\TT(G)$,
\item[(iii)] for every $x\in X$, the set $X-x$ is \round{\mathfrak{m}(X)}.
\end{itemize}
\end{theorem}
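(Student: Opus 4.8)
The plan is to prove the cycle of implications (iii) $\to$ (ii) $\to$ (i) $\to$ (iii), with Proposition \ref{round:sets:as:dense:atoms} serving as the bridge between the algebraic language of \round{n} sets and the topological language of \Zc{\Zar_G}s, and with the translation-invariance (via Corollary \ref{continuity:of:some:operations:in:the:Zariski:topology}(ii)) of both notions and of the function $\mathfrak{m}$ handling the bookkeeping. The implication (iii) $\to$ (ii) is immediate: $X$ is non-empty, so fixing any $x\in X$ and setting $a=x$ makes $X-a$ \round{\mathfrak{m}(X)}, hence a member of $\TT(G)$. For (ii) $\to$ (i), assume $X-a\in\TT_n(G)$ for some $a\in G$ and $n\in\mathbb{N}$; by the implication (i) $\to$ (ii) of Proposition \ref{round:sets:as:dense:atoms}, $X-a$ is a \Zc{\Zar_G}, and since the translation $y\mapsto y+a$ is a $\Zar_G$-homeomorphism of $G$ by Corollary \ref{continuity:of:some:operations:in:the:Zariski:topology}(ii), it carries $X-a$, with its cofinite subspace topology, onto $X=(X-a)+a$; thus $X$ is a countably infinite subspace of $(G,\Zar_G)$ carrying the cofinite topology, i.e., a \Zc{\Zar_G}.

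The substance lies in (i) $\to$ (iii). Fix an arbitrary $x\in X$. By Corollary \ref{continuity:of:some:operations:in:the:Zariski:topology}(ii), $X-x$ is again a \Zc{\Zar_G}, and it contains $0$. By Proposition \ref{general:properties:of:atoms}(ii) there are $b\in G$ and $m\in\mathbb{N}$ with $\CL_{\Zar_G}(X-x)=b+G[m]$; since $0\in X-x\subseteq b+G[m]$ and $G[m]$ is a subgroup, $b\in G[m]$, whence $\CL_{\Zar_G}(X-x)=G[m]$. Applying Lemma \ref{exponent:of:G[n]:can:be:assumed:equal:to:n}, pick $n\in\mathbb{N}$ with $G[m]=G[n]$ and $G[n]$ of exponent $n$; then $\CL_{\Zar_G}(X-x)=G[n]$, the group $G[n]$ has exponent $n$, $X-x\subseteq G[n]$, and $X-x$ is a \Zc{\Zar_G}, so the implication (ii) $\to$ (i) of Proposition \ref{round:sets:as:dense:atoms} gives that $X-x$ is \round{n}. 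Consequently $X-x\in\TT_n(G)\subseteq\TT(G)$, so $\mathfrak{m}(X-x)=\n_{X-x}=n$ by Corollary \ref{extension:of:function:n:to:all:subsets}, while $\mathfrak{m}(X-x)=\mathfrak{m}(X)$ by the translation-invariance of $\mathfrak{m}$ recorded after Definition \ref{definition:M(X)}. Hence $n=\mathfrak{m}(X)$ and $X-x$ is \round{\mathfrak{m}(X)}; as $x\in X$ was arbitrary, (iii) follows.

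I expect the only genuinely delicate point to be the normalization step in (i) $\to$ (iii): Proposition \ref{general:properties:of:atoms}(ii) delivers $\CL_{\Zar_G}(X-x)=G[m]$, but $G[m]$ need not have exponent $m$, which is precisely the hypothesis Proposition \ref{round:sets:as:dense:atoms} requires; Lemma \ref{exponent:of:G[n]:can:be:assumed:equal:to:n} resolves this by replacing $m$ with an integer $n$ for which $G[n]=G[m]$ has genuine exponent $n$, after which everything, including the identification $n=\mathfrak{m}(X)$ through $\mathfrak{m}(X-x)=\mathfrak{m}(X)$, is routine.
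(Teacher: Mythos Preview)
Your proof is correct and uses the same toolkit as the paper: Proposition \ref{round:sets:as:dense:atoms} as the bridge between \round{n} sets and \Zc{\Zar_G}s, the translation homeomorphisms of Corollary \ref{continuity:of:some:operations:in:the:Zariski:topology}(ii), the normalization Lemma \ref{exponent:of:G[n]:can:be:assumed:equal:to:n}, and the translation invariance of $\mathfrak{m}$ together with Corollary \ref{extension:of:function:n:to:all:subsets}.

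The only difference worth noting is the order of the cycle. The paper proves (i)~$\to$~(ii)~$\to$~(iii)~$\to$~(i): in (i)~$\to$~(ii) it translates by the $a$ coming from the closure formula $\CL_{\Zar_G}(X)=a+G[n]$, and then in (ii)~$\to$~(iii) it must pass from the single translate $X-a$ to the translate $X-x$ for an arbitrary $x\in X$, which requires Lemma \ref{L_translates:of:n-round}. You instead prove (i)~$\to$~(iii) directly by translating from the outset by an arbitrary $x\in X$ (so that $0$ lies in the translate and the closure is automatically of the form $G[m]$), which lets you bypass Lemma \ref{L_translates:of:n-round} entirely. This is a modest but genuine streamlining; the paper's ordering, on the other hand, isolates (ii) as a natural intermediate statement and makes the role of Lemma \ref{L_translates:of:n-round} explicit.
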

\begin{proof}
(i)~$\to$~(ii) 
By Proposition \ref{general:properties:of:atoms}(ii),  $\CL_{\Zar_G} (X)=a+G[n]$ for some $a\in G$ and $n\in\mathbb{N}$. By Lemma \ref{exponent:of:G[n]:can:be:assumed:equal:to:n},  we may assume that $G[n]$ has exponent $n$. Let $S=X-a$. 
By Theorem \ref{translation:of:closures}(ii), $\CL_{\Zar_G} (S)=\CL_{\Zar_G} (X-a)=\CL_{\Zar_G} (X)-a=G[n]$.  Furthermore, since $X$ is a \Zc{\Zar_G}, from  Corollary \ref{continuity:of:some:operations:in:the:Zariski:topology}(ii) we conclude that $S$ must be \Zc{\Zar_G} as well. Applying the implication (ii)~$\to$~(i)  of Proposition  \ref{round:sets:as:dense:atoms}, we obtain that $S$ must be \round{n}. That is, $S\in\TT_n(G)\subseteq \TT(G)$.

(ii)~$\to$~(iii) 
Let $S=X-a$ and $n=\n_S$. By Corollary \ref{extension:of:function:n:to:all:subsets},
$\mathfrak{m}(S)=n$. Therefore, $\mathfrak{m}(X)=\mathfrak{m}(X-a)=\mathfrak{m}(S)=n$ since the function $\mathfrak{m}(-)$ is translation invariant. Let $x\in X$. Since $x-a\in G[n]$, we have $a-x\in G[n]$, and so $a-x+S=X-x$  is  \round{n}  by Lemma \ref{L_translates:of:n-round}.

(iii)~$\to$~(i) Let $x\in X$. Since $S=X-x$ is \round{\mathfrak{m}(X)}, it is a \Zc{\Zar_G}  
by the implication (i)~$\to$~(ii) of Proposition \ref{round:sets:as:dense:atoms}.
Then $X=x+S$ is a \Zc{\Zar_G} by Corollary \ref{continuity:of:some:operations:in:the:Zariski:topology}(ii).
\end{proof}

Corollary \ref{0-round:vs:n(-)=0} shows that  the translates in the above theorem are not necessary when $\mathfrak{m}(X)=0$.

We now characterize  {\em all\/} elements $b\in G$ such that $X-b$ is an \round{\mathfrak{m}(X)} set. 

\begin{proposition}\label{translate:of:n_round}
Let  $X$ be  a \Zc{\Zar_G} of an abelian group $G$,
 and let $n=\mathfrak{m}(X)$. Given $b\in G$, the set $X - b$ is  \round{n}  if and only if $b\in X + G[n]$. 
\end{proposition}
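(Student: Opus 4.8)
The plan is to reduce the statement to Lemma \ref{L_translates:of:n-round} by first pinning down one distinguished translate of $X$ that is $\round{n}$, and then translating the question into membership in the subgroup $G[n]$. By Theorem \ref{corollary:about:atoms:being:translates:of:round:sets}, since $X$ is a \Zc{\Zar_G}, for every $x \in X$ the set $S = X - x$ is $\round{\mathfrak{m}(X)} = \round{n}$; fix one such $x$ and the corresponding $S \in \TT_n(G)$, so that $X = x + S$ with $S \subseteq G[n]$. Then for any $b \in G$ we have $X - b = (x - b) + S$, and Lemma \ref{L_translates:of:n-round} tells us that $(x-b) + S$ is $\round{n}$ if and only if $x - b \in G[n]$, i.e. $b - x \in G[n]$, i.e. $b \in x + G[n]$.

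The remaining task is therefore purely algebraic: to show that $x + G[n] = X + G[n]$. The inclusion $x + G[n] \subseteq X + G[n]$ is immediate since $x \in X$. For the reverse inclusion, take any $x' \in X$; since $S = X - x$ and $X - x'$ are both $\round{n}$ (the latter again by Theorem \ref{corollary:about:atoms:being:translates:of:round:sets}(iii)), both $X - x$ and $X - x'$ are contained in $G[n]$, so $x' - x = (x' - x'') - (x - x'')$ for any common base point, or more directly: $x' - x \in G[n]$ because $x' - x = (x' - x) $ and picking any $s \in S$, $x' - x = (x' - x - s) + s$ where $s \in G[n]$ and $x' - x - s \in (X - x) - x' + x - \dots$; cleaner is to note $x - x \in G[n]$ trivially and $x' - x \in G[n]$ since for a fixed $s_0 \in S \subseteq G[n]$ we have $x + s_0 \in X$, hence $x' - (x + s_0) \in (X - x') $-translate which lies in $G[n]$ by $\round{n}$-ness of $X - x'$, giving $x' - x = (x' - x - s_0) + s_0 \in G[n] + G[n] = G[n]$. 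Thus every element of $X$ lies in $x + G[n]$, so $X \subseteq x + G[n]$, whence $X + G[n] \subseteq x + G[n] + G[n] = x + G[n]$, establishing the equality.

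Combining the two parts: $X - b$ is $\round{n}$ $\iff$ $b \in x + G[n] = X + G[n]$, which is exactly the claim. The only mildly delicate point — and the one worth stating carefully rather than the routine subgroup arithmetic — is the verification that all translates $X - x'$ ($x' \in X$) land in the same coset $G[n]$, i.e. that $X$ itself is contained in a single coset of $G[n]$; this is what makes the phrase "$b \in X + G[n]$" coincide with "$b \in x + G[n]$" and hence lets us apply Lemma \ref{L_translates:of:n-round} uniformly. This follows from the fact, already recorded in the proof of Theorem \ref{corollary:about:atoms:being:translates:of:round:sets}, that $\CL_{\Zar_G}(X) = a + G[n]$ for a suitable $a$, so in fact $X \subseteq a + G[n] = x + G[n]$ for any $x \in X$, which we may also invoke directly in place of the hands-on argument above.
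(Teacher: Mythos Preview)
Your proof is correct and follows essentially the same route as the paper: fix $x\in X$, use Theorem \ref{corollary:about:atoms:being:translates:of:round:sets} to get that $S=X-x$ is \round{n}, and then apply Lemma \ref{L_translates:of:n-round} to the translate $X-b=(x-b)+S$. The only organizational difference is that the paper, in the forward direction, picks $x\in X$ \emph{depending on} $b$ (namely with $b-x\in G[n]$), thereby sidestepping the need to prove $x+G[n]=X+G[n]$; your hands-on argument for that equality in the second paragraph is unnecessarily convoluted (it suffices to note $x-x'\in X-x'\subseteq G[n]$ directly), but your closing appeal to $X\subseteq\CL_{\Zar_G}(X)=x+G[n]$ is clean and does the job.
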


\begin{proof} Assume that $b\in X+ G[n]$. Then  there exists $x\in X$ such that $b-x \in G[n]$.  By the implication (i)~$\to$~(iii) of Theorem 
\ref{corollary:about:atoms:being:translates:of:round:sets}, $S_x=X - x$ is an \round{n} set. Since $x-b\in G[n]$, Lemma \ref{L_translates:of:n-round} implies that  $X-b= x - b + S_x $  is an \round{n} set.

Now assume that $X - b$ is an \round{n} set.   By implication (ii)~$\to$~(i) of Theorem \ref{corollary:about:atoms:being:translates:of:round:sets}, $X$ is a \Zc{\Zar_G}.
Choose any $x\in X$. Then  $S_x=X - x$ is an \round{n} set by the implication (i)~$\to$~(iii) of Theorem  \ref{corollary:about:atoms:being:translates:of:round:sets}.
Since $X-b= x-b+S_x$ is an \round{n} set, Lemma \ref{L_translates:of:n-round} implies that $b-x\in G[n]$. Therefore, $b\in x + G[n]\in X + G[n]$. 
\end{proof}

Now we characterize the non-trivial irreducible sets in the Zariski topology.
 
\begin{theorem}
\label{characterization:of:irreducible:sets}
Let $G$ be an abelian group and $X$ an infinite subset of $G$. Then the following conditions are equivalent:
\begin{itemize}
\item[(i)] $X$ is an irreducible subset of $(G,\Zar_G)$,
\item[(ii)] there exist $a\in G$ and $S\in\TT(G)$ such that $a+S$ is $\Zar_G$-dense in $X$,
\item[(iii)] there exist $n\in\mathbb{N}$ and $a\in X$ such that $X-a$ contains a $\Zar_G$-dense \round{n} subset of $G$.
\end{itemize}
\end{theorem}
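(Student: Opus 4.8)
The plan is to establish the cycle of implications (i)~$\to$~(iii)~$\to$~(ii)~$\to$~(i), leaning throughout on the facts that translations are $\Zar_G$-homeomorphisms (Corollary \ref{continuity:of:some:operations:in:the:Zariski:topology}(ii)) and commute with $\Zar_G$-closure (Theorem \ref{translation:of:closures}(ii)), and on the dictionary between \round{n} sets and the Zariski topology furnished by Propositions \ref{round:sets:as:dense:atoms} and \ref{finding:n-round:sets}. The two short implications can be dispatched at once. For (iii)~$\to$~(ii): if $S\subseteq X-a$ is a $\Zar_G$-dense \round{n} set, then $S\in\TT_n(G)\subseteq\TT(G)$, while $a+S\subseteq X$ is $\Zar_G$-dense in $a+(X-a)=X$, so (ii) holds with the same $a$ and this $S$. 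For (ii)~$\to$~(i): writing $n=\n_S$, the set $S$ is \round{n}, hence a \Zc{\Zar_G} by the implication (i)~$\to$~(ii) of Proposition \ref{round:sets:as:dense:atoms}; therefore $a+S$ is a \Zc{\Zar_G} as well, so it is an irreducible subspace of $(G,\Zar_G)$ by Fact \ref{FactZcurve} that is moreover $\Zar_G$-dense in $X$, whence $X$ is irreducible by Fact \ref{Fact:irreducible}(iii).

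The substantive direction is (i)~$\to$~(iii). Assume $X$ is irreducible. Since $X$ is dense in $\CL_{\Zar_G}(X)$, the latter is a nonempty irreducible set by Fact \ref{Fact:irreducible}(iii); being $\Zar_G$-closed it is algebraic (Theorem \ref{Zariski.topology.is.Noetherian}(ii)), hence a finite union of elementary algebraic sets, so Fact \ref{fact:on:finite:unions} together with Lemma \ref{lemma:elementary:algebraic:sets}(i) forces $\CL_{\Zar_G}(X)=c+G[n]$ for some $c\in G$ and $n\in\mathbb{N}$. By Lemma \ref{exponent:of:G[n]:can:be:assumed:equal:to:n} we may arrange that $G[n]$ has exponent $n$, and since $G[n]$ is a translate of the irreducible set $c+G[n]$, it is itself irreducible. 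Now choose any $a\in X$ (possible as $X$ is infinite); then $a-c\in G[n]$, so $X-a\subseteq \CL_{\Zar_G}(X)-a=(c-a)+G[n]=G[n]$ and $\CL_{\Zar_G}(X-a)=\CL_{\Zar_G}(X)-a=G[n]$. Thus $X-a$ is an infinite subset of the irreducible subgroup $G[n]$ of exponent $n$ with $\CL_{\Zar_G}(X-a)=G[n]$, so the implication (ii)~$\to$~(i) of Proposition \ref{finding:n-round:sets} produces an \round{n} set $S\subseteq X-a$. Finally $\CL_{\Zar_G}(S)=G[n]\supseteq X-a$ by Proposition \ref{round:sets:as:dense:atoms}, so $S$ is $\Zar_G$-dense in $X-a$, which is precisely what (iii) asserts.

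I expect the main difficulty to lie in this last direction, specifically in pinning down $\CL_{\Zar_G}(X)$ as a genuine coset $c+G[n]$ with $G[n]$ of exponent $n$ and then verifying, against the precise hypotheses of Proposition \ref{finding:n-round:sets}, that $G[n]$ is an irreducible subgroup of exponent $n$ with $\CL_{\Zar_G}(X-a)=G[n]$; once these are in place the existence of the desired \round{n} subset is immediate. Everything else is routine translation bookkeeping and invocation of the already-established equivalences.
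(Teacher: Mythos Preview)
Your proof is correct and follows essentially the same route as the paper's: the same cycle (i)$\to$(iii)$\to$(ii)$\to$(i), the same use of Proposition~\ref{finding:n-round:sets} to extract the \round{n} set after identifying $\CL_{\Zar_G}(X)$ as a coset $c+G[n]$ with $G[n]$ of exponent~$n$, and the same appeal to Proposition~\ref{round:sets:as:dense:atoms} and Fact~\ref{Fact:irreducible}(iii) for the easy implications. The only cosmetic differences are that you make the choice $a\in X$ explicit (first finding $c\in G$, then replacing it by any $a\in X$), and that for (ii)$\to$(i) you invoke Proposition~\ref{round:sets:as:dense:atoms} directly rather than quoting Theorem~\ref{corollary:about:atoms:being:translates:of:round:sets}.
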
 

\begin{proof} 
(i)~$\to$~(iii) Since $X$ is irreducible, by Fact \ref{Fact:irreducible}(iii), $\CL_{\Zar_G}(X)$ is a $\Zar_G$-closed 
irreducible set. Therefore, $\CL_{\Zar_G}(X)=a+G[n]$ for some $a\in X$ and $n\in\mathbb{N}$. 
By Lemma \ref{exponent:of:G[n]:can:be:assumed:equal:to:n}, we may assume that $G[n]$ has exponent
$n$. Since translates are $\Zar_G$-homeomorphisms by Corollary \ref{continuity:of:some:operations:in:the:Zariski:topology}(ii),
$Y=X-a$ is an infinite $\Zar_G$-dense subset of $G[n]$, and $G[n]$ is irreducible.  From the implication (ii)~$\to$~(i) of Proposition \ref{finding:n-round:sets}, it 
follows that $Y$ contains an \round{n}  subset $S$. Since $G[n]=\CL_{\Zar_G} (S)\subseteq \CL_{\Zar_G}(Y)\subseteq G[n]$ by Proposition  \ref{round:sets:as:dense:atoms}, $S$ is $\Zar_G$-dense in $Y$. 

(iii)~$\to$~(ii) Let $S\subseteq X-a$ be an  \round{n} set that is  $\Zar_G$-dense in $X-a$.  Once again  applying
the fact that translates are $\Zar_G$-homeomorphisms, we conclude that $a+S$ must be $\Zar_G$-dense in $a+(X-a)=X$. 

(ii)~$\to$~(i) By Theorem \ref{corollary:about:atoms:being:translates:of:round:sets}, $a+S$  is a \Zc{\Zar_G}, 
and so it is irreducible. Since $a+S$ is $\Zar_G$-dense in $X$,  the latter set must be irreducible as well 
by Fact \ref{Fact:irreducible}(iii).
\end{proof}

\begin{corollary}\label{closure:irreducible:sets}
A subset of an abelian group is irreducible if and only if it contains a  $\Zar_G$-dense \Zc{\Zar_G}. 
\end{corollary}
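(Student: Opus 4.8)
The plan is to obtain this corollary by stitching together two results already proved in this section, after first disposing of a degenerate case. Since $(G,\Zar_G)$ is $T_1$, a finite irreducible subspace is necessarily a singleton, which contains no countably infinite set; thus the assertion has content only for infinite subsets, and I would read (or state) the corollary accordingly — this is the one point in the argument that requires a word of care, namely that ``\Zc{\Zar_G}'' is by definition countably infinite, so the equivalence is understood for infinite $X$, and that ``$\Zar_G$-dense in $X$'' is read as ``a dense \emph{subset} of $X$'', so that the containment of the witnessing set in $X$ is automatic.

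For the forward implication, suppose $X\subseteq G$ is an infinite irreducible subset of $(G,\Zar_G)$. Applying the implication (i)$\to$(ii) of Theorem \ref{characterization:of:irreducible:sets}, I obtain $a\in G$ and $S\in\TT(G)$ such that $a+S$ is $\Zar_G$-dense in $X$; in particular $a+S\subseteq X$. Now invoke the implication (ii)$\to$(i) of Theorem \ref{corollary:about:atoms:being:translates:of:round:sets}: since $a+S$ is the translate of a member of $\TT(G)$, it is a \Zc{\Zar_G}. Hence $X$ contains the $\Zar_G$-dense \Zc{\Zar_G}\ $a+S$, as required.

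For the reverse implication, suppose $X$ contains a $\Zar_G$-dense \Zc{\Zar_G}\ $Y$. By Fact \ref{FactZcurve} the space $Y$, carrying the cofinite topology, is irreducible (alternatively, $\CL_{\Zar_G}(Y)$ is irreducible by Proposition \ref{general:properties:of:atoms}(i), whence so is its dense subspace $Y$ by Fact \ref{Fact:irreducible}(iv)). Since $Y$ is dense in $X$, Fact \ref{Fact:irreducible}(iii) (equivalently (iv)) yields that $X$ is irreducible, completing the proof.

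I do not expect a genuine obstacle here: the real work has been front-loaded into Theorems \ref{characterization:of:irreducible:sets} and \ref{corollary:about:atoms:being:translates:of:round:sets}, and the corollary is essentially a transliteration of their combination. The only thing to watch is the bookkeeping around the finite/singleton case and the precise reading of ``$\Zar_G$-dense in $X$''.
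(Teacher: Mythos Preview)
Your proof is correct and follows essentially the same route as the paper: both directions are read off Theorem~\ref{characterization:of:irreducible:sets}, and the only cosmetic difference is that the paper verifies that $a+S$ is a \Zc{\Zar_G}\ by citing Proposition~\ref{round:sets:as:dense:atoms} for $S$ and then translating via Corollary~\ref{continuity:of:some:operations:in:the:Zariski:topology}(ii), whereas you invoke Theorem~\ref{corollary:about:atoms:being:translates:of:round:sets} directly. Your explicit treatment of the backward direction and of the finite/singleton degeneracy is in fact more careful than the paper's own terse argument.
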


\begin{proof}
From Theorem \ref{characterization:of:irreducible:sets}, we conclude that  a subset $X$ of an abelian group $G$ is irreducible if 
there exist 
$n\in\mathbb{N}$, 
$a\in G$ and an 
\round{n} set $S$ 
such that $a+S$ is $\Zar_G$-dense in $X$. By Proposition \ref{round:sets:as:dense:atoms}, $S$ is a \Zc{\Zar_G}.
Furthermore, $a+S$ is a \Zc{\Zar_G} by Corollary \ref{continuity:of:some:operations:in:the:Zariski:topology}(ii). 
\end{proof}

\begin{corollary}
\label{closures:of:atoms}
For a subset $F$ of an abelian group $G$, the following conditions are equivalent:
\begin{itemize}
  \item[(i)] $F$ is a closed, irreducible subset of $(G,\Zar_G)$,
  \item[(ii)] $F$ is a $\Zar_G$-irreducible elementary algebraic set,
  \item[(iii)] $F$ is a $\Zar_G$-connected elementary algebraic set,
  \item[(iv)] $F$ coincides with the $\Zar_G$-closure of some \Zc{\Zar_G}.
\end{itemize}
\end{corollary}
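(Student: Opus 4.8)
The plan is to run the cyclic chain (i)$\Rightarrow$(ii)$\Rightarrow$(iii)$\Rightarrow$(i) and then close the loop with (i)$\Leftrightarrow$(iv); all the machinery needed has already been established, so the argument is essentially bookkeeping. Since a \Zc{\Zar_G} is countably infinite, Proposition~\ref{general:properties:of:atoms}(ii) shows that (iv) already forces $F$ to be infinite, and the characterizations of irreducible sets quoted below are likewise stated for infinite sets; accordingly I take $F$ (and hence the ambient group $G$) to be infinite throughout.

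For \emph{(i)$\Rightarrow$(ii)}: if $F$ is $\Zar_G$-closed then $F\in\mathfrak{A}_G$ by Theorem~\ref{Zariski.topology.is.Noetherian}(ii), so $F=\bigcup\mathcal{F}$ for some finite family $\mathcal{F}\subseteq\EE_G$; since each member of $\mathcal{F}$ is $\Zar_G$-closed and $F$ is irreducible, Fact~\ref{fact:on:finite:unions} gives $F=E$ for some $E\in\mathcal{F}$, i.e.\ $F$ is a $\Zar_G$-irreducible elementary algebraic set (this is exactly the argument of Proposition~\ref{general:properties:of:atoms}(ii)). The implication \emph{(ii)$\Rightarrow$(iii)} is immediate, since an irreducible space is connected. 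For \emph{(iii)$\Rightarrow$(i)}: an elementary algebraic set lies in $\EE_G$ and is therefore $\Zar_G$-closed, while a $\Zar_G$-connected elementary algebraic set is $\Zar_G$-irreducible by Corollary~\ref{connected:vs:irreducible}(i); combining the two yields (i).

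It remains to prove \emph{(i)$\Leftrightarrow$(iv)}. If $F=\CL_{\Zar_G}(X)$ for some \Zc{\Zar_G} $X$, then $F$ is $\Zar_G$-closed, being a closure, and it is irreducible by Proposition~\ref{general:properties:of:atoms}(i), so (i) holds. Conversely, assume (i). Since $F$ is $\Zar_G$-irreducible, Corollary~\ref{closure:irreducible:sets} supplies a \Zc{\Zar_G} $X\subseteq F$ that is $\Zar_G$-dense in $F$; as $F$ is $\Zar_G$-closed we get $F\subseteq\CL_{\Zar_G}(X)\subseteq\CL_{\Zar_G}(F)=F$, hence $F=\CL_{\Zar_G}(X)$, which is (iv). There is no real obstacle: the corollary merely assembles Theorem~\ref{Zariski.topology.is.Noetherian}, Proposition~\ref{general:properties:of:atoms}, Corollary~\ref{connected:vs:irreducible} and Corollary~\ref{closure:irreducible:sets}. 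The only point to keep straight is the routing of the implications — the substantive input is that $\Zar_G$-irreducibility of a closed set produces a dense \Zc{\Zar_G} (Corollary~\ref{closure:irreducible:sets}, itself powered by Theorem~\ref{characterization:of:irreducible:sets} and the dictionary between \Zc{\Zar_G}s and \round{n} sets), whereas in the other direction $\CL_{\Zar_G}$ of a \Zc{\Zar_G} is automatically irreducible, and the equivalences with ``elementary algebraic $+$ irreducible/connected'' reduce to a short Noetherian argument.
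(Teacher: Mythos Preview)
Your proof is correct and follows essentially the same approach as the paper: the paper routes the implications as (i)$\to$(ii) (called ``trivial'', i.e.\ your Noetherian/Fact~\ref{fact:on:finite:unions} argument), (ii)$\leftrightarrow$(iii) via Corollary~\ref{connected:vs:irreducible}, and (ii)$\to$(iv)$\to$(i) via Corollary~\ref{closure:irreducible:sets}, which is the same circle of ideas you use with a slightly different ordering. Your explicit remark that $F$ must be taken infinite is a welcome clarification, since Corollary~\ref{closure:irreducible:sets} (and hence the equivalence with (iv)) is only meaningful for infinite sets.
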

\begin{proof} The implication (i)~$\to$~(ii) is trivial.
The equivalence 
(ii)~$\leftrightarrow$~(iii)
follows from Corollary \ref{connected:vs:irreducible}. 
Finally, the implications (ii)~$\to$~(iv) and (iv)~$\to$~(i) follow from Corollary \ref{closure:irreducible:sets}.
\end{proof}

\section{Description of Zariski dense sets}
\label{section:7}

\begin{theorem}
\label{characterizing:Zariski:dense:sets}
Let $G$ be an infinite abelian group and $n=eo(G)$. Then for every subset $X$ of $G$,
 the following conditions are equivalent:
\begin{itemize}
  \item[(i)] $X$ is Zariski dense in $G$,
  \item[(ii)] $a+X$ contains an \round{n} set for every $a\in G$,
  \item[(iii)] there exist a finite set $F\subseteq G$ and a family $\{S_x: x\in F\}$ of  \round{n} sets such that $F+G[n]=G$ and $x+S_x\subseteq X$ for each $x\in F$.
\end{itemize}
\end{theorem}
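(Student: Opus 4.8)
The plan is to prove the cyclic chain of implications (i)~$\to$~(ii)~$\to$~(iii)~$\to$~(i). Throughout, the structural facts I will use are that, for an infinite $G$ with $n=eo(G)$, the subgroup $G[n]$ is $\Zar_G$-clopen (Theorem~\ref{essential:order:and:components}(i)), has finite index in $G$, and satisfies $eo(G[n])=n$ (Lemma~\ref{lemma:4.2a}); in particular $G[n]$ is infinite. At the outset I fix a finite set $\{g_1,\dots,g_k\}\subseteq G$ of representatives of the cosets of $G[n]$, so that $G=\bigcup_{i\le k}(g_i+G[n])$ is a partition into $\Zar_G$-clopen pieces.

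For (i)~$\to$~(ii), fix $a\in G$. By translation invariance of the Zariski closure (Theorem~\ref{translation:of:closures}(ii)), $a+X$ is again $\Zar_G$-dense in $G$. Since $G[n]$ is $\Zar_G$-clopen, the set $Y=(a+X)\cap G[n]$ is dense in the subspace $G[n]$, and because $G[n]$ is $\Zar_G$-closed this yields $\CL_{\Zar_G}(Y)=G[n]$; as $\Zar_G$ is $T_1$ and $G[n]$ is infinite, $Y$ is infinite. Since $eo(G[n])=n$, the implication (iii)~$\to$~(i) of Proposition~\ref{finding:n-round:sets}, applied to $Y\subseteq G[n]$, delivers an \round{n} subset of $Y$, hence of $a+X$.

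For (ii)~$\to$~(iii), I take $F=\{g_1,\dots,g_k\}$, which satisfies $F+G[n]=G$ by the choice of representatives; for each $i$ I apply (ii) with $a=-g_i$ to obtain an \round{n} set $S_{g_i}\subseteq -g_i+X$, so that $g_i+S_{g_i}\subseteq X$, and the family $\{S_x:x\in F\}$ is as required. For (iii)~$\to$~(i), each \round{n} set $S_x$ has $\CL_{\Zar_G}(S_x)=G[n]$ by the implication (i)~$\to$~(ii) of Proposition~\ref{round:sets:as:dense:atoms}, so Theorem~\ref{translation:of:closures}(ii) gives $x+G[n]=\CL_{\Zar_G}(x+S_x)\subseteq\CL_{\Zar_G}(X)$ for every $x\in F$; taking the union over $x\in F$ and using $F+G[n]=G$ forces $\CL_{\Zar_G}(X)=G$, i.e.\ that $X$ is Zariski dense.

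I expect the only step requiring any care to be the passage to the clopen component in (i)~$\to$~(ii): one must check that $(a+X)\cap G[n]$ is still infinite and has Zariski closure exactly $G[n]$, which is precisely where one uses that $G[n]$ is $\Zar_G$-clopen (not merely closed) and of finite index. The remaining implications are bookkeeping with coset representatives and the translation formula for closures, combined with the two descriptions of \round{n} subsets via Zariski closures supplied by Propositions~\ref{round:sets:as:dense:atoms} and~\ref{finding:n-round:sets}.
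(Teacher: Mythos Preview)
Your proof is correct and follows essentially the same route as the paper: both use that $G[n]$ is $\Zar_G$-clopen of finite index with $eo(G[n])=n$, invoke Proposition~\ref{finding:n-round:sets} to produce \round{n} sets inside $(a+X)\cap G[n]$ for (i)~$\to$~(ii), use coset representatives for (ii)~$\to$~(iii), and translate $\CL_{\Zar_G}(S_x)=G[n]$ by Proposition~\ref{round:sets:as:dense:atoms} to cover $G$ in (iii)~$\to$~(i). Your argument for (iii)~$\to$~(i) is in fact slightly more direct than the paper's, which takes a small detour through the clopen intersections $X_a=(a+X)\cap G[n]$ rather than simply computing $\CL_{\Zar_G}(x+S_x)=x+G[n]$ and taking the union.
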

\begin{proof} 
According to Lemma \ref{lemma:4.2a}(i) and Theorem \ref{essential:order:and:components}, $G[n]$ is an irreducible, $\Zar_G$-clopen subgroup of $G$ 
with a  finite index. 

(i)~$\to$~(ii). Fix $a\in G$. From 
(i) and Corollary \ref{continuity:of:some:operations:in:the:Zariski:topology}(ii), we conclude that $a+X$ is $\Zar_G$-dense in $G$.  Since $G[n]$ is $\Zar_G$-open,  $Y=(a+X)\cap G[n]$ must be $\Zar_G$-dense in $G[n]$. Since $G[n]$ is irreducible, we can apply Proposition \ref{finding:n-round:sets} to conclude that $Y$ must contain an \round{n} set.

(ii)~$\to$~(iii) 
This implication holds because $G[n]$ has  a finite index in $G$.

(iii)~$\to$~(i). Let $a\in F$. Since $a+X$ contains an \round{n} set, $G[n]\subseteq \CL_{\Zar_G}(a+X)$ by Proposition \ref{round:sets:as:dense:atoms}. Since $G[n]$ is $\Zar_G$-open in $G$, $X_a=(a+X)\cap G[n]$ is $\Zar_G$-dense in $G[n]$. Hence $-a+X_a$ is $\Zar_G$-dense in $-a+G[n]$ by Corollary \ref{continuity:of:some:operations:in:the:Zariski:topology}(ii). Since $-a+X_a\subseteq X$, it follows that $X\cap (-a+G[n])$ is $\Zar_G$-dense in $-a+G[n]$.

Note that $-F+G[n]=-(F+G[n])=G$, and so $\bigcup\{-a+G[n]:a\in F\}=G$.  Since each $X\cap (-a+G[n])$ is $\Zar_G$-dense in $-a+G[n]$, we conclude that $X$ is $\Zar_G$-dense in $G$.
\end{proof}

One may wonder if it is possible to strengthen  Theorem \ref{characterizing:Zariski:dense:sets}(iii)
  by requiring all $S_x$ to be equal to a {\em single\/} \round{n} set $S$. Since $\bigcup _{x\in F}(x+S_x)= F+S$ in this case, the modified item (iii)  then reads as
``$F+S\subseteq X$ for some \round{n} set $S$ of $G$''.  We will show in Theorem \ref{dense:in:irreducible} below that such a replacement is possible  precisely when $(G,\Zar_G) $ is irreducible. For the proof of  this  modified theorem,  we  need the following lemma: 

\begin{lemma}
\label{trimming:n-torsion:sets} Assume that $X$ is a countably infinite  subset of an abelian group $G$.
Then there exist  infinite disjoint sets $Y_0,Y_1\subseteq X$ such that $(a_0+Y_0)\cap (a_1+Y_1)$ is finite whenever $a_0,a_1\in G$.
\end{lemma}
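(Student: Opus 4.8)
The plan is to enumerate $X=\{x_k:k\in\N\}$ faithfully and split $X$ into two halves by putting the even-indexed points into $Y_0$ and the odd-indexed ones into $Y_1$ — but this naive split need not work, since a translate $a_0+Y_0$ could share infinitely many points with $a_1+Y_1$. Instead I would build $Y_0$ and $Y_1$ by a single simultaneous recursion, at each stage discarding finitely many ``dangerous'' points, in the spirit of the proof of Lemma \ref{set-theoretic:lemma}. The key observation is this: if $(a_0+Y_0)\cap(a_1+Y_1)$ is infinite for some $a_0,a_1\in G$, then there are infinitely many pairs $(y,y')\in Y_0\times Y_1$ with $y-y'=a_1-a_0$; so it suffices to ensure that for every fixed difference $c=a_1-a_0$, only finitely many such pairs occur. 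Since $Y_0\cup Y_1\subseteq X$ is countable, there are only countably many differences $c=y-y'$ with $y,y'\in X$ that we ever need to worry about, and we can enumerate them and handle them one at a time.

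Concretely, fix a faithful enumeration $X=\{x_k:k\in\N\}$, and fix an enumeration $\{c_j:j\in\N\}$ of the countable set $X-X=\{x-x':x,x'\in X\}$. I would construct increasing finite sets $A_0^{(k)}\subseteq A_0^{(k+1)}$ and $A_1^{(k)}\subseteq A_1^{(k+1)}$ with $A_0^{(k)}\cap A_1^{(k)}=\emptyset$, deciding at stage $k$ the fate of $x_k$: put $x_k$ into whichever of $A_0,A_1$ is currently allowed, where ``allowed'' means that adding $x_k$ to, say, $A_0$ does not create a \emph{second} pair $(x_k,y')$ with $y'\in A_1^{(k)}$ and $x_k-y'=c_j$ for some $j\le k$ already ``used''. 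More cleanly: maintain for each $j\le k$ a flag recording whether the difference $c_j$ has already been realized by some pair in $A_0\times A_1$; once it is realized, forbid any future point from realizing it again. Since at stage $k$ we only constrain finitely many differences $c_0,\dots,c_k$, and each previously chosen point forbids at most one placement of $x_k$ per difference, only finitely many of the two choices for $x_k$ are blocked — and since there are infinitely many stages and the points $x_k$ are distinct, both $Y_0=\bigcup_k A_0^{(k)}$ and $Y_1=\bigcup_k A_1^{(k)}$ come out infinite (one should double-check infinitude, e.g.\ by reserving every other stage exclusively for feeding the currently-smaller side, using that the single extra constraint ``$x_k\notin$ the finitely many forbidden positions'' can always be met by discarding $x_k$ and moving on). Then $Y_0$ and $Y_1$ are disjoint, and for every $c\in G$: if $c\notin X-X$ then $(a_0+Y_0)\cap(a_1+Y_1)$ with $a_1-a_0=c$ is empty, while if $c=c_j$ for some $j$, then by construction at most one pair $(y,y')\in Y_0\times Y_1$ has $y-y'=c_j$, so the intersection has at most one element; in all cases $(a_0+Y_0)\cap(a_1+Y_1)$ is finite.

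The main obstacle I expect is the bookkeeping that simultaneously (a) keeps each difference realized at most finitely often and (b) guarantees both $Y_0$ and $Y_1$ remain infinite: these two goals pull against each other, because being cautious about differences tempts one to discard too many points. The right way to reconcile them is to alternate priorities — on even stages prioritize growing $Y_0$, on odd stages prioritize growing $Y_1$, in each case simply skipping $x_k$ (adding it to neither side) if its preferred placement is currently blocked, and noting that ``blocked'' only ever refers to the finitely many differences $c_0,\dots,c_k$ and the finitely many points chosen so far, so it happens for at most finitely many indices in any infinite tail — hence each side receives infinitely many points. Once that balancing is set up correctly, the verification that the resulting $Y_0,Y_1$ have the stated property is immediate from the construction.
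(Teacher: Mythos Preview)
Your reduction is right: since $(a_0+Y_0)\cap(a_1+Y_1)$ is a translate of $Y_0\cap(c+Y_1)$ with $c=a_1-a_0$, and any such $c$ that matters lies in the countable set $X-X$, it suffices to ensure that for each fixed $c$ only finitely many pairs $(y,y')\in Y_0\times Y_1$ satisfy $y-y'=c$. The paper uses exactly this reduction.

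The gap is in your infinitude argument. You assert that ``blocked \dots\ happens for at most finitely many indices in any infinite tail'', but this is not justified and is in fact false for your construction. The blocking set at stage $k$ is finite, yes, but it \emph{grows} with $k$, and nothing prevents it from catching $x_k$ at every even stage from some point on. Concretely, take $G=\Z$, $X=\{0,1,2,\dots\}$ with $x_k=k$, and enumerate $X-X=\Z$ as $c_0=0$, $c_{2j-1}=j$, $c_{2j}=-j$. After the first two steps one has $A_0=\{0\}$, $A_1=\{1\}$, and the realized difference $0-1=-1$. At each odd stage $2k+1$, adding $x_{2k+1}=2k+1$ to $A_1$ would realize $0-(2k+1)=-(2k+1)$, which is not among $c_0,\dots,c_{2k+1}$, hence is never blocked under your ``second realization'' rule; but then at each even stage the already-realized difference $-1$ blocks the preferred placement (since $A_0$ would gain a point $y$ with $y-y'=-1$ for the largest $y'\in A_1$), and one can check that $A_1$ alone keeps growing while $A_0$ stalls. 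A symmetric run with the stricter ``block any tracked difference'' rule stalls $A_0$ at $\{0\}$ for the same reason. Either way one side stays finite. The ``alternate priorities and skip when blocked'' fix does not rescue this, because the problem is not the priority schedule but the fact that the adversary (the enumerations of $X$ and of $X-X$) can conspire so that the preferred side is blocked at every stage.

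The paper avoids this trap by decoupling the thinning from the two-coloring. It first enumerates the countable subgroup $H=\langle X\rangle$ as $\{h_n:n\in\N\}$, sets $F_n=\{h_0,\dots,h_n\}$, and greedily picks $x_n\in X\setminus\bigl(\{x_0,\dots,x_{n-1}\}+(F_n\cup(-F_n))\bigr)$; this is always possible because the avoided set is finite while $X$ is infinite. Only \emph{after} this thinning does it set $Y_0=\{x_{2n}\}$ and $Y_1=\{x_{2n+1}\}$. The point is that the construction guarantees $x_k-x_l\notin F_{\max(k,l)}\cup(-F_{\max(k,l)})$ whenever $k\ne l$, so for any fixed $h_m$ there are at most $m+1$ pairs $(k,l)$ with $x_k-x_l=h_m$, and the parity split then automatically gives two infinite sets with the desired finiteness property. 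The moral: do the avoidance \emph{before} deciding sides, not simultaneously; then both halves are infinite for free.
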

\begin{proof} 
The smallest subgroup $H$ of $G$ containing $X$ is countable, so we can fix an enumeration $H=\{h_n:n\in\N\}$ of $H$. For $n\in\N$, define $F_n=\{h_0,\dots,h_n\}$. Since $X$ is infinite,  using a straightforward recursion on $n\in\N$ we can choose $x_n\in X\setminus (\{x_0,\dots,x_{n-1}\}+(F_n\cup -F_n))$. We claim that  $Y_0=\{x_{2n}:n\in\N\}$ and $Y_1=\{x_{2n+1}:n\in\N\}$ are as required
by Lemma \ref{trimming:n-torsion:sets}. Clearly, $Y_0$ and $Y_1$ are infinite disjoint
subsets of $X$.  Suppose that $a_0,a_1\in G$. Note that $(a_0+Y_0)\cap (a_1+Y_1)=a_0+(Y_0\cap (a_1-a_0+Y_1))$, so we may assume without loss of generality that $a_0=0$. If $Y_0\cap (a_1+Y_1)=\emptyset$, the proof is finished. So assume that $Y_0\cap (a_1+Y_1)\not=\emptyset$. Then $a_1\ne 0$  as $Y_0$ and $Y_1$ are disjoint, and $a_1\in Y_0-Y_1\subseteq X-X\subseteq H$. Thus,
there exists $n\in\N$ such that $a_1=h_n$. We claim that $Y_0\cap (a_1+Y_1)\subseteq \{x_0,\dots,x_{n}\}$. Indeed, suppose that $x_k\in Y_0\cap (a_1+Y_1)$ for some $k>n$. Then there exists some $l\in \N$ such that $x_k=a_1+x_l=h_n+x_l$. Clearly, $k\not=l$ 
since $a_1\ne 0$. If $k<l$, then $n<l$ holds as well, and so 
$x_l=x_k-h_n\in \{x_0,\dots,x_{l-1}\}-F_{l-1}$,  which is  a contradiction. If $l<k$, then $x_k=h_n+x_l\in F_{k-1}+\{x_0,\dots,x_{k-1}\}$, which is  again a contradiction.
\end{proof}

\begin{theorem}\label{dense:in:irreducible}
Let $G$ be a non-trivial abelian group and $n=eo(G)$. Then the following conditions are equivalent:
\begin{itemize}
    \item[(a)] $(G,\Zar_G) $ is irreducible, 
    \item[(b)]  for every $\Zar_G$-dense set $X$, there exist an  \round{n}  $S$ of  $G$ and a finite set $F$ with  $F+G[n]=G$ and $F+S \subseteq X$, 
    \item[(c)] for every $\Zar_G$-dense set $X$, there exist  an infinite set $Z\subseteq G[n]$ and a subset $F$ of $G$
such that  $F+G[n]=G$ and $F+Z \subseteq X$. 
\end{itemize}
\end{theorem}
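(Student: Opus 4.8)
The plan is to prove the cyclic chain of implications (a)~$\to$~(b)~$\to$~(c)~$\to$~(a); the only substantial step is (c)~$\to$~(a), which I would obtain by contraposition with the help of the splitting Lemma~\ref{trimming:n-torsion:sets}.

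The implication (b)~$\to$~(c) is immediate: an \round{n} set is by definition a countably infinite subset of $G[n]$, so the set $S$ furnished by (b) already serves as the set $Z$ required in (c), with the very same $F$. For (a)~$\to$~(b) I would first observe that, since $\Zar_G$ is $T_1$, a finite $(G,\Zar_G)$ would be discrete, hence Hausdorff, hence --- being irreducible --- a single point by Fact~\ref{Fact:irreducible}(ii), contradicting the non-triviality of $G$; so (a) forces $G$ to be infinite. Corollary~\ref{when:G:is:connected:and/or:irreducible} then yields $G=G[n]$ with $n=eo(G)$, so $F+G[n]=G$ holds for $F=\{0\}$, and it only remains to produce an \round{n} set $S\subseteq X$ for an arbitrary $\Zar_G$-dense set $X$. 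This is exactly the content of the implication (i)~$\to$~(ii) of Theorem~\ref{characterizing:Zariski:dense:sets} applied with $a=0$, so (b) follows with $F=\{0\}$.

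For (c)~$\to$~(a) I would argue by contraposition. Assume $(G,\Zar_G)$ is not irreducible and build a $\Zar_G$-dense set $X$ for which (c) fails. If $G$ is finite the claim is trivial, since $X=G$ is $\Zar_G$-dense but has no infinite subset $Z$; so assume $G$ infinite. By Corollary~\ref{when:G:is:connected:and/or:irreducible} we then have $G[n]\neq G$, and by Lemma~\ref{lemma:4.2a}(i) the index $k=[G:G[n]]$ is finite, so $k\ge 2$; fix a transversal $g_1=0,g_2,\dots,g_k$ of $G[n]$ in $G$. Since $eo(G[n])=eo(G)=n$ by Lemma~\ref{lemma:4.2a}(ii), Corollary~\ref{description:of:when:there:are:n-round:sets} provides an \round{n} set $S$ in $G$, and applying Lemma~\ref{trimming:n-torsion:sets} to $S$ I get infinite disjoint $Y_0,Y_1\subseteq S$ with $(a_0+Y_0)\cap(a_1+Y_1)$ finite for all $a_0,a_1\in G$. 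I then put
\[
X \;=\; Y_0\ \cup\ \bigcup_{i=2}^{k}\,(g_i+Y_1).
\]
Being infinite subsets of the \round{n} set $S$, both $Y_0$ and $Y_1$ are \round{n} (Remark~\ref{Background}(iii)), hence $\CL_{\Zar_G}(Y_0)=\CL_{\Zar_G}(Y_1)=G[n]$ by Proposition~\ref{round:sets:as:dense:atoms}; consequently $\CL_{\Zar_G}(X)$ contains $G[n]\cup\bigcup_{i\ge 2}(g_i+G[n])=G$, so $X$ is $\Zar_G$-dense. Finally, suppose (c) held for this $X$: there are an infinite $Z\subseteq G[n]$ and $F\subseteq G$ with $F+G[n]=G$ and $F+Z\subseteq X$. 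Then $F$ meets every coset of $G[n]$, so we may pick $f_1\in F\cap G[n]$ and $f_2\in F\cap(g_2+G[n])$; since $Z\subseteq G[n]$ and the cosets are disjoint, $f_1+Z\subseteq X\cap G[n]=Y_0$ and $f_2+Z\subseteq X\cap(g_2+G[n])=g_2+Y_1$, whence
\[
Z \;\subseteq\; (-f_1+Y_0)\ \cap\ \bigl((g_2-f_2)+Y_1\bigr),
\]
which is finite by the choice of $Y_0,Y_1$ --- contradicting that $Z$ is infinite. Thus (c) fails for $X$, and the contraposition is complete.

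I expect the main difficulty to be the construction of $X$ in (c)~$\to$~(a): $X$ must be $\Zar_G$-dense (which forces its trace on every coset of $G[n]$ to be $\Zar_G$-dense there) yet ``combinatorially thin'' enough across cosets that no coset-transversal translate $F+Z$ of an infinite $Z\subseteq G[n]$ can be crammed into it; the key is to populate two of the cosets by $\Zar_G$-dense \round{n} sets whose pairwise translates always intersect finitely, which is precisely the output of Lemma~\ref{trimming:n-torsion:sets}. Some routine care is also needed with the degenerate cases --- $G$ finite, the fact that non-irreducibility of an infinite $G$ forces $G[n]$ to be a proper subgroup of finite index (so $k\ge 2$), and the possibility that the set $F$ appearing in (c) is infinite rather than finite (which does not affect the argument, since $F+G[n]=G$ still makes $F$ meet every coset).
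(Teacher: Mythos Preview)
Your proof is correct and follows essentially the same approach as the paper: the same cyclic chain (a)~$\to$~(b)~$\to$~(c)~$\to$~(a), with Lemma~\ref{trimming:n-torsion:sets} supplying the key counterexample in the contrapositive of (c)~$\to$~(a). The only cosmetic difference is in the construction of $X$: the paper picks a single $g\in G\setminus G[n]$ and sets $X=Y_0\cup(g+Y_1)\cup\bigl(G\setminus(G[n]\cup(g+G[n]))\bigr)$, dumping the remaining cosets in whole rather than populating every non-trivial coset with a translate of $Y_1$ as you do.
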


\begin{proof}  (a)~$\to$~(b) Let $X$ be a $\Zar_G$-dense subset of $G$. Since $G=G[n]$ is irreducible by Corollary \ref{when:G:is:connected:and/or:irreducible},  
$X$ contains an   \round{n} set $S$ by Proposition \ref{finding:n-round:sets}.  Now  (b) holds when $F=\{0\}$.

(b) $\to $ (c) follows obviously from the definition of an   \round{n} set.

(c) $\to $ (a) Clearly, $G$ is infinite, and so by Corollary \ref{when:G:is:connected:and/or:irreducible}, it suffices to prove that $G=G[n]$.
Assume, by contradiction, that $G[n]\not=G$. Thus, we can choose $g\in G\setminus G[n]$. By Lemma \ref{lemma:4.2a}(ii), $eo(G[n])=eo(G)=n$. Thus, 
$G$ contains an \round{n} set $S$ by Corollary \ref{description:of:when:there:are:n-round:sets}. Let $Y_0$ and $Y_1$ be subsets of $S$ satisfying 
Lemma \ref{trimming:n-torsion:sets} with $S$  substituted by $X$. As an infinite subset of an \round{n} set, each $Y_i$ is \round{n}. Thus, both $Y_0$ and $Y_1$ are $\Zar_G$-dense in $G[n]$ by Proposition  \ref{round:sets:as:dense:atoms}. Hence, $g+Y_1$ is $\Zar_G$-dense in $g+G[n]$ by Theorem \ref{translation:of:closures}(ii). It follows that the set 
\begin{equation}
\label{set:X}
X=Y_0\cup (g+Y_1)\cup (G\setminus (G[n]\cup (g+G[n])))
\end{equation}
is $\Zar_G$-dense in $G$, and so (c) allows us to fix  a finite set  $F\subseteq G$ and an infinite set $Z\subseteq G[n]$ satisfying $F+G[n]=G$ and $F+Z \subseteq X$. Since $Y_0\cup Y_1\subseteq S\subseteq G[n]$ and $g\not\in G[n]$, from (\ref{set:X}) we get
\begin{equation}
\label{intersections:of:X}
X\cap G[n]=Y_0
\ \ \mbox{ and }\ \ 
X\cap (g+G[n])=Y_1.
\end{equation}

Since $0\in G=F+G[n]$, there exists some $f_0\in F\cap G[n]$. Since $f_0+Z\subseteq F+Z\subseteq X$, we have  $f_0+Z=(f_0+Z)\cap G[n]\subseteq X\cap G[n]=Y_0$ by (\ref{intersections:of:X}).  Therefore, $Z\subseteq -f_0+Y_0$.

Similarly, since $g\in G=F+G[n]$, there exists some $f_1\in F\cap (g+G[n])$. Since $f_1+Z\subseteq F+Z\subseteq X$, we have  $f_1+Z=(f_1+Z)\cap (g+G[n])\subseteq X\cap (g+G[n])=g+Y_1$ by (\ref{intersections:of:X}). Therefore, $Z\subseteq g-f_1+Y_1$.

We have proved that $Z\subseteq (-f_0+Y_0)\cap (g-f_1+Y_1)$. Since the latter set is finite by our choice of $Y_0$ and $Y_1$, we conclude that $Z$ must be finite as well, which is a contradiction.
\end{proof}

For an unbounded abelian group $G$, our next theorem provides a very simple characterization of Zariski dense subsets of $G$.

\begin{theorem}
\label{Zariski:dense:sets:in:unbounded:groups}
Let $G$ be an unbounded abelian group.  For an infinite subset $X$ of $G$, the following conditions are equivalent:
\begin{itemize}
\item[(i)] $X$ is $\Zar_G$-dense in $G$,
\item[(ii)] $mX$ is infinite for every  integer $m\in\mathbb{N}\setminus\{0\}$,
\item[(iii)] $X$ contains an \round{0} set.
\end{itemize}
\end{theorem}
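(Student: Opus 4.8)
The plan is to prove the equivalences by exploiting the fact that for an unbounded group $G$ one has $eo(G)=0$, so that $G[n]=G[0]=G$ when $n=eo(G)$, and then feeding this into the machinery already developed. Concretely, I would first observe that $(G,\Zar_G)$ is irreducible: indeed, an unbounded abelian group satisfies $G=G[0]=G[eo(G)]$, so by Corollary \ref{when:G:is:connected:and/or:irreducible} the space $(G,\Zar_G)$ is irreducible. This single observation is what makes the characterization so clean, because Zariski density in an irreducible space is equivalent to containing \emph{any} nonempty $\Zar_G$-open set's worth of points, and more to the point it lets us apply Proposition \ref{finding:n-round:sets} with $n=0$ and $Y=X$.

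For the implication (i)~$\to$~(iii): assuming $X$ is $\Zar_G$-dense in $G$, we have $\CL_{\Zar_G}(X)=G=G[0]$. Since $G=G[0]$ is an irreducible subgroup of exponent $0$ (exponent $0$ just means unbounded, by the remark following Definition \ref{order:of:a:subset}), condition (ii) of Proposition \ref{finding:n-round:sets} is satisfied with $n=0$, hence $X$ contains an \round{0} set. For (iii)~$\to$~(ii): this is exactly the implication (i)~$\to$~(ii) of Proposition \ref{sets:that:contain:zero-round:sets}, which says that if $Y$ contains an \round{0} set then $mY$ is infinite for every $m\in\mathbb{N}\setminus\{0\}$. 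For (ii)~$\to$~(iii): again this is handled by Proposition \ref{sets:that:contain:zero-round:sets}, but one must pass through condition (iii) of that proposition, namely $Y\setminus(G[m]+F)\ne\emptyset$ for every $m\ge 1$ and finite $F\subseteq G$; the implication (ii)~$\to$~(iii)~$\to$~(i) of that proposition then gives the \round{0} set. So the cycle (ii)~$\leftrightarrow$~(iii) is essentially already proven for us, and only needs to be cited.

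To close the loop I still need (iii)~$\to$~(i) (or equivalently (ii)~$\to$~(i)). For this, suppose $X$ contains an \round{0} set $S$. By Proposition \ref{round:sets:as:dense:atoms} (with $n=0$, noting $G[0]=G$ has exponent $0$ since $G$ is unbounded), we get $\CL_{\Zar_G}(S)=G[0]=G$. Since $S\subseteq X$, monotonicity of closure gives $\CL_{\Zar_G}(X)=G$, i.e., $X$ is $\Zar_G$-dense in $G$. Assembling the pieces, the equivalences follow: (i)~$\to$~(iii) via Proposition \ref{finding:n-round:sets}, (iii)~$\to$~(i) via Proposition \ref{round:sets:as:dense:atoms}, and (ii)~$\leftrightarrow$~(iii) via Proposition \ref{sets:that:contain:zero-round:sets}.

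I do not anticipate a genuine obstacle here — the theorem is a corollary of results already in place, the point being mainly to package the $n=0$, $eo(G)=0$ specialization cleanly. The only mild care needed is bookkeeping around the conventions: that ``exponent $0$'' means ``unbounded'', that $G[0]=G$, and that $\mathbb{N}\setminus\{0\}$ is precisely the set of proper divisors of $0$ (so that Definition \ref{def:of:almost:n:torsion:sets} of an \round{0} set reads as ``$\{x\in S:dx=g\}$ is finite for every $d\ge 1$ and every $g\in G$''). Once these conventions are invoked, every implication is a one-line citation, and the proof is short.
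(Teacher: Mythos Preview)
Your proposal is correct and follows essentially the same route as the paper's proof: the paper likewise derives (i)$\to$(iii) from Proposition~\ref{finding:n-round:sets} with $n=0$, (iii)$\to$(i) from Proposition~\ref{round:sets:as:dense:atoms}, and (ii)$\leftrightarrow$(iii) directly from Proposition~\ref{sets:that:contain:zero-round:sets}. The only cosmetic difference is that the paper invokes condition~(iii) of Proposition~\ref{finding:n-round:sets} (namely $eo(G[0])=0$) rather than condition~(ii) (irreducibility and exponent~$0$), so it does not need your preliminary appeal to Corollary~\ref{when:G:is:connected:and/or:irreducible}; but since these conditions are equivalent within that proposition, the arguments are interchangeable.
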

\begin{proof}
First, note that $G[0]=G$.

(i)~$\to$~(iii)
By (i), we have $\CL_{\Zar_G}(X)= G=G[0]$. Since $G$ is unbounded, $eo(G[0])=eo(G)=0$ by Definition \ref{definition:of:essential:order}(ii). Applying the implication (iii)~$\to$~(i) of Proposition \ref{finding:n-round:sets}, we conclude that $X$ contains an \round{0} set. 

(iii)~$\to$~(i)
Let $S$ be an \round{0} subset of $X$. Now 
$G=G[0]=\CL_{\Zar_G}(S)\subseteq \CL_{\Zar_G}(X)\subseteq G$ by the implication (i)~$\to$~(ii) of  Proposition \ref{round:sets:as:dense:atoms}. Thus, $X$ is $\Zar_G$-dense in $G$.

The equivalence (ii)~$\leftrightarrow$~(iii) is proved in Proposition \ref{sets:that:contain:zero-round:sets}.
\end{proof}

\section{Description of the Zariski closure}
\label{Closure:section}

\begin{theorem}
\label{computing:the:closure}
Let $X$ be an infinite subset of an abelian group $G$, let $D$ be the finite set of  $\Zar_G$-isolated points of $X$, and let $X\setminus D=\bigcup_{i=1}^k X_i$ be the (unique) decomposition of $X\setminus D$ into irreducible components. 
Then there exist $a_1,\dots,a_k\in G$ and $S_1,\dots,S_k\in \TT(G)$ such that
\begin{itemize}
  \item[(i)] each $a_i+S_i$ is $\Zar_G$-dense in $X_i$, and
  \item[(ii)] 
$\displaystyle \CL_{\Zar_G}(X)=D\cup \bigcup_{i=1}^k (a_i +\CL_{\Zar_G}(S_i))=D\cup\bigcup_{i=1}^k (a_i+G[\n_{S_i}])$.
\end{itemize}
\end{theorem}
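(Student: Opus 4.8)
The plan is to assemble Theorem \ref{computing:the:closure} almost entirely from results already established, with the decomposition into irreducible components doing the organizational work. First I would recall that, by Fact \ref{remark:irreducible:components}(iii)--(iv), the set $D$ of $\Zar_G$-isolated points of $X$ is indeed finite, that $X\setminus D$ is infinite (assuming $X$ is infinite and not already all isolated points; the degenerate case is handled separately), and that the decomposition $X\setminus D=\bigcup_{i=1}^k X_i$ into irreducible components exists and is unique by Fact \ref{decomposition:into:irreducible:components}, with each $X_i$ infinite. Each $X_i$ is a nonempty irreducible subset of $(G,\Zar_G)$, so by the equivalence (i)~$\leftrightarrow$~(ii) of Theorem \ref{characterization:of:irreducible:sets} there exist $a_i\in G$ and $S_i\in\TT(G)$ such that $a_i+S_i$ is $\Zar_G$-dense in $X_i$. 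This gives item (i) immediately.

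For item (ii), the key point is that closure distributes over the finite union: $\CL_{\Zar_G}(X)=\CL_{\Zar_G}(D)\cup\bigcup_{i=1}^k\CL_{\Zar_G}(X_i)$, using that $X=D\cup\bigcup_i X_i$ and that the Zariski closure of a finite union is the union of the closures (true in any topological space). Since $(G,\Zar_G)$ is $T_1$, $D$ is closed, so $\CL_{\Zar_G}(D)=D$. For each $i$, since $a_i+S_i$ is $\Zar_G$-dense in $X_i$ we have $\CL_{\Zar_G}(X_i)=\CL_{\Zar_G}(a_i+S_i)$, and by Theorem \ref{translation:of:closures}(ii) this equals $a_i+\CL_{\Zar_G}(S_i)$. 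Finally, because $S_i\in\TT(G)$, it is in particular $\round{\n_{S_i}}$, so by the implication (i)~$\to$~(ii) of Proposition \ref{round:sets:as:dense:atoms} (or directly by Proposition \ref{general:properties:of:atoms}(ii) together with $S_i$ being a \Zc{\Zar_G}) we get $\CL_{\Zar_G}(S_i)=G[\n_{S_i}]$. Stringing these equalities together yields the displayed formula in (ii).

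I do not expect any serious obstacle here: the theorem is a packaging of Theorem \ref{characterization:of:irreducible:sets}, Theorem \ref{translation:of:closures}(ii), Proposition \ref{round:sets:as:dense:atoms}, and the Noetherian decomposition facts. The one point requiring a little care is the bookkeeping around $D$ and the edge cases: one should note explicitly that $X\setminus D\ne\emptyset$ when $X$ is infinite (since $D$ is finite), so that $k\ge 1$ and the decomposition is genuinely available; and one should observe that adding back the finitely many isolated points only changes the closure by those same points (as $D$ is closed), so they contribute the summand $D$ and nothing else. It is also worth remarking that this refines Fact \ref{decomposition:into:irreducible:components}: the irreducible components of $\CL_{\Zar_G}(X)$ are obtained from the sets $a_i+G[\n_{S_i}]$ (after discarding any that are contained in another, and adjoining the singletons $\{d\}$ for $d\in D$ not already covered), though the theorem as stated does not demand the right-hand side be irredundant.
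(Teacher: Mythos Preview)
Your proposal is correct and follows essentially the same route as the paper's own proof: invoke the Noetherian decomposition facts to handle $D$ and obtain the infinite irreducible components $X_i$, apply Theorem \ref{characterization:of:irreducible:sets} to each $X_i$ to produce the $a_i$ and $S_i$, and then compute $\CL_{\Zar_G}(X_i)=a_i+G[\n_{S_i}]$ via Theorem \ref{translation:of:closures}(ii) and Proposition \ref{round:sets:as:dense:atoms}. The only superfluous wrinkle is your mention of a ``degenerate case'' where $X$ might consist entirely of isolated points; since $D$ is finite and $X$ is infinite this cannot occur, as you yourself note later.
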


\begin{proof} Observe that $(X,\Zar_G\restriction_S)$ is a Noetherian space by Theorem 
\ref{Zariski.topology.is.Noetherian}(i) and   Fact \ref{Noetherian:facts}(1). Hence, $D$ is finite by Fact \ref{remark:irreducible:components}. According to the same  fact, each $X_i$ is infinite, so we can apply Theorem \ref{characterization:of:irreducible:sets} to find $a_i\in G$ and $S_i\in\TT(G)$ 
such that $a_i+S_i$ is $\Zar_G$-dense in $X_i$.  This yields (i). To prove (ii), note that 
\begin{equation}
\label{eq2}
\CL_{\Zar_G}(X_i)=\CL_{\Zar_G}(a_i+S_i)=a_i+\CL_{\Zar_G} (S_i)=a_i+G[\n_{S_i}]
\hbox{ for every } 
i\le k,
\end{equation}
 where the first equation follows from item (i),  the second equation  follows from Theorem \ref{translation:of:closures}(ii), and the
  third  equation follows from Proposition \ref{round:sets:as:dense:atoms}. Since $D$ is a finite  subset of a $T_1$-space, $D$ is closed in $(G,\Zar_G)$, and so
$$
\CL_{\Zar_G}(X)=\CL_{\Zar_G}\left(D\cup\bigcup_{i=1}^k X_i\right)=D\cup\bigcup_{i=1}^k \CL_{\Zar_G}(X_i),
$$
 which together with (\ref{eq2}) yields (ii).
\end{proof}

\begin{remark}\label{redundancy} 
\begin{itemize}
\item[(i)]
Let $X$, $G$ and $D$ be as in the assumption of Theorem \ref{computing:the:closure}. Define 
$$
I_X=\{(n,a)\in\mathbb{N}\times G:  X-a\mbox{ contains an \round{n} set}\}.
$$
We shall see now that the set $\mathscr{A}=\{a +G[n]:(n,a)\in I_X\}$ is finite, and 
\begin{equation}
\label{(*)}
\CL_{\Zar_G}(X)=D\cup \bigcup_{(n,a)\in I_X} (a +G[n])=D\cup \bigcup \mathscr{A}.
\end{equation}

Indeed, let $X\setminus D=\bigcup_{i=1}^k X_i$ be the decomposition of $X\setminus D$ into irreducible components. Let $a_1,\dots,a_k\in G$ and $S_1,\dots,S_k\in\TT(G)$
be as in the conclusion of  Theorem \ref{computing:the:closure}. Clearly, $(\n_{S_i},a_i)\in I_X$ 
for all $i$. This proves the inclusion $\CL_{\Zar_G}(X)\subseteq D\cup \bigcup_{(n,a)\in I_X} 
(a +G[n])$. To prove the reverse inclusion, suppose that $(n,a)\in I_X$. Then, $X-a$ contains some set  $S\in\TT_n(G)$, and so $a+G[n]=\CL_{\Zar_G}(a+S)\subseteq \CL_{\Zar_G}(S)$  by Proposition 
\ref{round:sets:as:dense:atoms}. Since the elementary algebraic set 
$a +G[n]\in \mathscr{A}$ contains the  dense irreducible set $a+S$,  it is irreducible as well, and the inclusion $a +G[n]\subseteq  \CL_{\Zar_G}(X)=D\cup\bigcup_{i=1}^k (a_i+G[\n_{S_i}])$ 
now yields  $a +G[n]\subseteq a_i+G[\n_{S_i}]$ for some $i$. This proves that each closed set $a +G[n]\in \mathscr{A}$ is contained in one of the finitely many closed sets $a_i+G[\n_{S_i}]$.  Since our space $(G,\Zar_G)$ is Noetherian, this yields that $\mathscr{A}$ is  finite. 

\item[(ii)]
The  union (\ref{(*)}) may be redundant; here is an easy example to this effect.  
Let $G=\Z(4)^{(\omega)}$, and let $S$ be the canonical base of $G$. Then $S$ is an  \round{4} set.  Let $X=S \cup 2 S$. Then $X$ is irreducible  (since it contains the dense
 irreducible set  $S$) and $\Zar_G$-dense, so $\CL_{\Zar_G}(X)=G=G[4]$ and the union from 
Theorem  
 \ref{computing:the:closure}(ii)  has a single member, namely 
 $\CL_{\Zar_G}(S)=\CL_{\Zar_G}(X)$. However, 
 $\mathscr{A}=\{G[2], G[4]\}$, so the term $G[2]$ in the union $G= G[2]\cup G[4]$ from (\ref{(*)}) is redundant. 

\item[(iii)] The union from  Theorem  \ref{computing:the:closure}(ii) is not redundant, since it gives the (unique)  decomposition
of the closed set $\CL_{\Zar_G}(X)$  into
a union of its irreducible components;
see Fact \ref{decomposition:into:irreducible:components}.
  For every $d\in D$, the singleton $\{d\}$ is an irreducible component of $\CL_{\Zar_G}(X)$, which means that  the set $D$ is disjoint from $\bigcup_{i=1}^k (a_i+G[\n_{S_i}])$.  
In particular, $D=\emptyset$ when $G$ is infinite, and $X$ is $\Zar_G$-dense in $G$.
\end{itemize}
\end{remark}

\begin{remark}
{\em If at least one of the integers  $\n_{S_i}$ appearing in Theorem \ref{computing:the:closure}(ii)
is equal to $0$, then $D=\emptyset$, $k=1$, and the union from Theorem \ref{computing:the:closure}(ii) contains only one member, namely, $G[0]=G$. 
}
Indeed, assume that  $\n_{S_i}=0$ for some $i\le k$. Then  $G[\n_{S_i}]=G[0]=G$, and so $G=a_i+G[\n_{S_i}]\subseteq \bigcup_{j=1}^k (a_j+G[\n_{S_j}])\subseteq G$.
Hence, $D=\emptyset$, and $k=i=1$ by Remark \ref{redundancy}(iii). 
\end{remark}

\begin{remark}
Let $X$ be a \Zc{\Zar_G} of an abelian group $G$. Then $S=X-x$ is an \round{\mathfrak{m}(X)} set for some (in fact, each)
 $x\in X$; see Theorem \ref{corollary:about:atoms:being:translates:of:round:sets}. Therefore,  $\CL_{\Zar_G}(X) = \CL_{\Zar_G}(x+S)=x+\CL_{\Zar_G}(S)=x+G[\mathfrak{m}(X)]$ by Theorem  \ref{translation:of:closures}(ii) and the implication (i)~$\to$~(ii) of Proposition \ref{round:sets:as:dense:atoms}. 
 Since the union from Theorem  \ref{computing:the:closure}(ii) is not redundant by Remark \ref{redundancy}(iii), we  conclude that the unique number  $\n_{S_i}$ appearing in Theorem 
 \ref{computing:the:closure}(ii) coincides with $\mathfrak{m}(X)$. This conclusion fails in  a more general case,  for example, when $X$ is an irreducible set. Indeed,  let us note that for the subset $X$ of the group $G$ from Remark \ref{redundancy}(ii) one has $\mathfrak{m}(X) = 2$, while $\CL_{\Zar_G}(X)=G[4]$.
\end{remark}

\begin{example} Here we describe the closure of an infinite subset $X$ of $G$ in the case when $X$ is a {\em subgroup\/}. 
\begin{itemize}
   \item[(a)] If $X$ is unbounded, then  $X$ contains an \round{0} set by Corollary \ref{groups:that:conatin:0-round:sets}. Hence, $X$ is $\Zar_G$-dense.
   \item[(b)] If $X$ is a bounded subgroup, then by Pr\" ufer's theorem $X$ is a direct sum of finite cyclic subgroups. 
   Hence, we can write $X=F \oplus X_1$, where $F$ is finite and  $X_1\cong \bigoplus_{i=1}^k \Z(n_i)^{(\alpha_i)}$ for 
   some infinite cardinals $\alpha_i$. In particular, $X_1$ contains a subgroup isomorphic to 
   $\Z(n)^{(\omega)}\cong \bigoplus_{i=1}^k \Z(n_i)^{(\omega)}$, where $n$ is the least common multiple of all $n_i$. By Proposition \ref{U-K:invar}(b), $X_1$ contains an  \round{n} set 
   $S$, and so $\CL_{\Zar_G}(X_1)=G[n]$ by Proposition \ref{round:sets:as:dense:atoms}. Being finite, $F$ is $\Zar_G$-closed,  and we conclude from Theorem \ref{translation:of:closures}(i) that  $\CL_{\Zar_G}(X)=\CL_{\Zar_G}(F)+ \CL_{\Zar_G}(X_1)=F+ G[n]$. 
   \item[(c)] It follows from (a) and (b) that a subgroup $X$ of $G$ is 
   $\Zar_G$-dense if and only if either $X$ is unbounded, or both $X$ and $G$ are bounded with $eo(X)=eo(G)=n$ and $nG=nX$.  
   \item[(d)] From (a) and (b), we deduce that the Zariski closure of a 
   subgroup is always a subgroup. This can also be deduced  directly from Theorem \ref{translation:of:closures}. 
Since $(G,\Zar_G)$ is a quasi-topological group (see the analysis following Corollary 
\ref{continuity:of:some:operations:in:the:Zariski:topology}), the same conclusion can be deduced from  
\cite[Proposition 1.4.13]{AT} as well.
\end{itemize}
\end{example}

In the rest of this section, we present the major corollaries of Theorem  \ref{computing:the:closure}. 
Our first corollary significantly strengthens \cite[Lemma 3.6]{DT}:

\begin{corollary}\label{dense:translates:of:torsion} Let $X$ be an infinite subset of an abelian group $G$. Then there exist $k\in\mathbb{N}$, 
$a_1,\dots,a_k\in G$ and $S_1,\dots,S_k\in\TT(G)$
such that $\bigcup_{i=1}^k (a_i+ S_i)$ is $\Zar_G$-dense in $X\setminus D$, where $D$ is the finite set of  $\Zar_G$-isolated points of $X$.
\end{corollary}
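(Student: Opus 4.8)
The plan is to read this off almost immediately from Theorem \ref{computing:the:closure}. First I would apply that theorem to the infinite set $X$. Since $(X,\Zar_G\restriction_X)$ is Noetherian by Theorem \ref{Zariski.topology.is.Noetherian}(i) and Fact \ref{Noetherian:facts}(1), Fact \ref{remark:irreducible:components} guarantees that the set $D$ of $\Zar_G$-isolated points of $X$ is finite, so $X\setminus D$ is infinite (in particular nonempty, whence $k\ge 1$), and Theorem \ref{computing:the:closure} supplies a decomposition $X\setminus D=\bigcup_{i=1}^k X_i$ into irreducible components together with $a_1,\dots,a_k\in G$ and $S_1,\dots,S_k\in\TT(G)$ such that each $a_i+S_i$ is $\Zar_G$-dense in $X_i$. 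By the paper's convention on density (cf.\ the proof of Theorem \ref{characterization:of:irreducible:sets}, where the dense almost-torsion set is chosen inside the given set), this means $a_i+S_i\subseteq X_i$ and $X_i\subseteq\CL_{\Zar_G}(a_i+S_i)$ for every $i$.

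Next I would simply set $Y=\bigcup_{i=1}^k(a_i+S_i)$ and verify that this $Y$ is as required. On the one hand $a_i+S_i\subseteq X_i$ for each $i$ gives $Y\subseteq\bigcup_{i=1}^k X_i=X\setminus D$. On the other hand, since the $\Zar_G$-closure of a finite union equals the union of the $\Zar_G$-closures,
\[
\CL_{\Zar_G}(Y)=\bigcup_{i=1}^k\CL_{\Zar_G}(a_i+S_i)\supseteq\bigcup_{i=1}^k X_i=X\setminus D .
\]
Intersecting with $X\setminus D$ shows that the closure of $Y$ in the subspace $X\setminus D$ is all of $X\setminus D$; that is, $Y$ is $\Zar_G$-dense in $X\setminus D$, which is precisely the assertion.

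There is no real obstacle: the corollary is a bookkeeping consequence of Theorem \ref{computing:the:closure}, the only ingredients being the finiteness of $D$ (so that $k\ge 1$ and the decomposition of $X\setminus D$ is available) and the elementary fact that topological closure commutes with finite unions. If one prefers, the argument can be phrased without explicitly invoking the decomposition, using instead the displayed formula in Theorem \ref{computing:the:closure}(ii) together with the identity $\CL_{\Zar_G}(a_i+S_i)=a_i+G[\n_{S_i}]$; but the version above is the most transparent.
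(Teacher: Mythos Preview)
Your proposal is correct and matches the paper's intended argument: the corollary is stated without proof in the paper precisely because it is an immediate consequence of Theorem~\ref{computing:the:closure}(i), read off exactly as you do. Your explicit verification that $a_i+S_i\subseteq X_i$ (by tracing back through the proof of Theorem~\ref{characterization:of:irreducible:sets}) is a welcome point of care, since the phrase ``$\Zar_G$-dense in $X_i$'' does presuppose this inclusion.
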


Our next proposition describes the case  in which  the union $\bigcup_{i=1}^k (a_i+ S_i)$ in Corollary \ref{dense:translates:of:torsion} 
is not only $\Zar_G$-dense in $X\setminus D$, but actually {\em coincides\/} with  $X\setminus D$.  

\begin{proposition}\label{characterization:dimension1}
Let $X$ be a countably infinite subset of an abelian group $G$. Then $\dim X =1$ if and only if  there exist a natural number $k$ 
and a \Zc{\Zar_G} $X_i$ for each $i\le k$, such that  $\bigcup_{i=1}^k X_i=X\setminus D$,  where $D$ is the finite set of  $\Zar_G$-isolated points of $S$.
\end{proposition}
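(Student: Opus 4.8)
The plan is to deduce the proposition from the decomposition of $X\setminus D$ into irreducible components (Fact \ref{decomposition:into:irreducible:components} together with Fact \ref{remark:irreducible:components}), the characterization of \Zc{\Zar_G}s as the infinite subspaces carrying the cofinite topology, equivalently as the irreducible $T_1$-subspaces of combinatorial dimension $1$ (Fact \ref{FactZcurve}), and the additivity of $\dim$ over finite unions (Fact \ref{FactDim}). Throughout, recall that $(G,\Zar_G)$ is a Noetherian $T_1$-space by Theorem \ref{Zariski.topology.is.Noetherian}(i), hence so is the subspace $(X,\Zar_G\restriction_X)$ by Fact \ref{Noetherian:facts}(1); in particular the set $D$ of $\Zar_G$-isolated points of $X$ is finite by Fact \ref{remark:irreducible:components}(iii), and since $X$ is countably infinite, $X\setminus D$ is a non-empty infinite subspace without isolated points.

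First I would prove the ``only if'' part. Assume $\dim X=1$, and let $X\setminus D=\bigcup_{i=1}^k X_i$ be the decomposition into irreducible components; since $X\setminus D\ne\emptyset$ we have $k\ge 1$, and by Fact \ref{remark:irreducible:components}(iv) each $X_i$ is infinite. The decisive step is to check that $\dim X_i=1$ for every $i$: on one hand $\dim X_i\ge 1$ because $X_i$ is an infinite Noetherian $T_1$-space (Fact \ref{FactDim}(a)); on the other hand, Fact \ref{FactDim}(c) gives $\max_{1\le i\le k}\dim X_i=\dim(X\setminus D)$, while $\dim(X\setminus D)\le\dim X=1$ by Fact \ref{FactDim}(b). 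Thus each $X_i$ is an irreducible $T_1$-subspace of the Noetherian space $(G,\Zar_G)$ of combinatorial dimension $1$, so by the implication (a)~$\to$~(b) of Fact \ref{FactZcurve} it is infinite and carries the cofinite topology; being contained in the countable set $X$ it is countably infinite, hence a \Zc{\Zar_G} in the sense of Definition \ref{definition:of:Zariski:atom}. The family $X_1,\dots,X_k$ is the one required.

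For the ``if'' part, suppose $X\setminus D=\bigcup_{i=1}^k X_i$ with each $X_i$ a \Zc{\Zar_G}. Since $X$ is infinite and $D$ is finite we have $k\ge 1$, and by Fact \ref{FactZcurve} each $X_i$ is an irreducible $T_1$-space of combinatorial dimension $1$, so $\dim X_i=1$. Writing $X=\bigl(\bigcup_{d\in D}\{d\}\bigr)\cup\bigcup_{i=1}^k X_i$ and applying Fact \ref{FactDim}(d) finitely many times — each singleton $\{d\}$ having dimension $0$ — we obtain $\dim X=\max\{0,1,\dots,1\}=1$, as desired.

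I do not anticipate a serious difficulty: the statement is essentially a repackaging of the irreducible-component analysis underlying Theorem \ref{computing:the:closure}. The only place that calls for a little care is the bookkeeping around $D$ — one must use that $D$ is finite and that $X\setminus D$ is still infinite, so that the component decomposition is non-trivial ($k\ge1$), each component is genuinely one-dimensional rather than a singleton, and the combinatorial dimension of $X$ is exactly $1$ and not $0$.
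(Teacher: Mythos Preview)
Your proof is correct and follows essentially the same route as the paper's: both directions hinge on the irreducible-component decomposition of $X\setminus D$ (Fact \ref{remark:irreducible:components}), the identification of \Zc{\Zar_G}s with irreducible one-dimensional subspaces (Fact \ref{FactZcurve}), and the behaviour of $\dim$ under subspaces and finite unions (Fact \ref{FactDim}). The only cosmetic difference is that, for the upper bound $\dim X_i\le 1$, the paper applies Fact \ref{FactDim}(b) directly to each $X_i\subseteq X$, whereas you pass through $\dim(X\setminus D)$ via Fact \ref{FactDim}(c); both are fine.
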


\begin{proof} To prove the ``if'' part,  assume that sets $X_i$ with the desired properties are given.  According to Definition \ref{definition:of:Zariski:atom} and Fact \ref{FactZcurve}, $\dim X_i=1$. Then $\dim \left(\bigcup_{i=1}^m X_i\right)=1$ by Fact \ref{FactDim}(d), and this yields $\dim X =1$, as $\dim D=0$ by Fact  \ref{FactDim}(a).

To prove the ``only if'' part,  assume that $\dim X =1$. Let  $D$ be the finite set of  $\Zar_G$-isolated points of $X$, and let $X\setminus D=\bigcup_{i=1}^k X_i$  be the decomposition of $X\setminus D$ into  infinite irreducible components; see Fact \ref{remark:irreducible:components}. Fix $i=1,2,\ldots, k$. By Fact   \ref{FactDim}(b), $\dim X_i\le \dim X=1$.  Since $X_i$ is infinite, from   Fact  \ref{FactDim}(a) we derive the reverse inequality $\dim X_i\ge 1$. It follows that $X_i$ satisfies the assumptions of Fact \ref{FactZcurve}(a), and so $X_i$ is a \Zc{\Zar_G} by Definition \ref{definition:of:Zariski:atom}.
\end{proof}

\begin{corollary}
\label{hereditarily:separable}
For an abelian group $G$, the space $(G,\Zar_G)$ is hereditarily separable; that is, every subset $X$ of $(G,\Zar_G)$ has a countable dense subset.
\end{corollary}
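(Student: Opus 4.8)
The plan is to reduce the problem to a single subset $X$ of $G$ and to produce a countable dense subset directly from the structural description already established. First I would note that since $(G,\Zar_G)$ is Noetherian (Theorem \ref{Zariski.topology.is.Noetherian}(i)) and a subspace of a Noetherian space is Noetherian (Fact \ref{Noetherian:facts}(1)), every subspace $X$ of $(G,\Zar_G)$ is itself Noetherian and $T_1$. If $X$ is finite there is nothing to prove, so assume $X$ is infinite. Let $D$ be the finite set of $\Zar_G$-isolated points of $X$ (finite by Fact \ref{remark:irreducible:components}(iii)), and let $X\setminus D=\bigcup_{i=1}^k X_i$ be the decomposition of $X\setminus D$ into irreducible components, as in Theorem \ref{computing:the:closure}.

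Next I would invoke Theorem \ref{computing:the:closure}(i): there exist $a_1,\dots,a_k\in G$ and $S_1,\dots,S_k\in\TT(G)$ such that each $a_i+S_i$ is $\Zar_G$-dense in $X_i$. Each $a_i+S_i$ is countable (indeed countably infinite, since members of $\TT(G)$ are countably infinite by Definition \ref{def:of:almost:n:torsion:sets}). A technical point to handle is that $a_i+S_i$ need not be a subset of $X_i$; it is only dense in $X_i$, being a subset of $G$. This is harmless: since $a_i+S_i$ is a \Zc{\Zar_G} by Theorem \ref{corollary:about:atoms:being:translates:of:round:sets}, hence carries the cofinite topology, one can instead select a countable dense subset of $X_i$ lying inside $X_i$ — for instance, using that $X_i$ is infinite and irreducible, any countably infinite subset $Y_i\subseteq X_i$ is dense in $X_i$ (a nonempty relatively open subset of $X_i$ omits only finitely many points of $X_i$ because $X_i$, being an infinite irreducible one-dimensional Noetherian $T_1$-space, carries the cofinite topology by Fact \ref{FactZcurve}; alternatively, $\CL_{\Zar_G}(Y_i)$ is a closed subset of the irreducible set $X_i$ whose intersection with the \Zc{\Zar_G} $a_i+S_i$ is infinite, hence contains $a_i+S_i$ and therefore equals $\CL_{\Zar_G}(X_i)\supseteq X_i$).

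Then I would set $Y=D\cup\bigcup_{i=1}^k Y_i$, a finite union of a finite set and $k$ countable sets, hence countable. To see $Y$ is dense in $X$: a point of $X$ either lies in some $X_i$ or is isolated, hence in $D$; thus $X=D\cup\bigcup_{i=1}^k X_i\subseteq \CL_{\Zar_G\restriction_X}(D)\cup\bigcup_{i=1}^k \CL_{\Zar_G\restriction_X}(Y_i)\subseteq \CL_{\Zar_G\restriction_X}(Y)$, using $D\subseteq Y$ and $Y_i$ dense in $X_i$. Hence $Y$ is a countable dense subset of $X$, completing the proof. The main obstacle is the bookkeeping in the previous paragraph — making sure one works with genuine subsets of $X_i$ rather than the translated round sets $a_i+S_i$, which live in $G$ and are only dense in $X_i$; once the right countably infinite subsets $Y_i\subseteq X_i$ are extracted, the assembly is routine.
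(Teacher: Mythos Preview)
Your overall strategy is exactly the paper's: invoke Theorem~\ref{computing:the:closure} and take $D\cup\bigcup_{i=1}^k(a_i+S_i)$ as the countable dense subset of $X$. The paper's proof is literally that one sentence.

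Your worry that $a_i+S_i$ might fail to lie inside $X_i$ is unfounded. Tracing the proof of Theorem~\ref{computing:the:closure} back to Theorem~\ref{characterization:of:irreducible:sets}, one sees that condition~(iii) there produces $a_i\in X_i$ and an almost $n$-torsion set $S_i\subseteq X_i-a_i$, so that $a_i+S_i\subseteq X_i$; the phrase ``$a_i+S_i$ is $\Zar_G$-dense in $X_i$'' is being used in the standard sense, which includes containment. So no workaround is needed.

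More importantly, the workaround you propose is wrong. You assert that \emph{any} countably infinite subset $Y_i\subseteq X_i$ is dense in $X_i$, justifying this by claiming $X_i$ is one-dimensional and hence carries the cofinite topology. But $X_i$ is merely an irreducible component of $X\setminus D$; nothing forces $\dim X_i=1$. For a concrete failure, let $G$ be unbounded with $G[p]$ infinite for some prime $p$ (e.g.\ $G=\Z\oplus\Z(p)^{(\omega)}$). Then $eo(G)=0$, so $(G,\Zar_G)$ is irreducible by Corollary~\ref{when:G:is:connected:and/or:irreducible}; taking $X=G$ gives $D=\emptyset$ and $X_1=G$. Any countably infinite subset of the proper closed set $G[p]$ is a countably infinite subset of $X_1$ that is \emph{not} dense in $X_1$. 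Your ``alternative'' argument also fails here, since there is no reason an arbitrary $Y_i$ should meet $a_i+S_i$ infinitely often. Simply delete the workaround paragraph and use $a_i+S_i$ directly.
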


\begin{proof} If $X$ is finite, then $X$ is its own countable dense subspace. Assume now that $X$ is infinite. Let $D$, $k$, $a_1,\dots,a_k$ and $S_1,\dots,S_k$ be as in Theorem \ref{computing:the:closure}. Then  $D\cup \bigcup_{i=1}^k (a_i+S_i)$ is a countable dense subset of $X$.
\end{proof}

\begin{corollary}
\label{Frechet-Urysohn}
For an abelian group $G$, the space $(G,\Zar_G)$ is Fr\'echet-Urysohn; that is, if $X$ is a subset of $G$ and $x\in \CL_{\Zar_G}(X)$, then there exists a sequence of points $\{x_j:j\in\mathbb{N}\}$ of $X$ converging to $x$.
\end{corollary}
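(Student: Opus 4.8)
**

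The plan is to derive the Fréchet-Urysohn property directly from the structural decomposition provided by Theorem \ref{computing:the:closure}, exactly as in the proof of Corollary \ref{hereditarily:separable}. The key point is that the countable dense subset produced there is not merely dense, but is dense in a particularly strong way: it is (up to a finite set) a finite union of translates of \round{n} sets, and translates of \round{n} sets are \Zc{\Zar_G}s, whose defining property is precisely that faithfully enumerated sequences converge to every point of their closure (Proposition \ref{general:properties:of:atoms}(iv)).

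First I would dispose of the trivial case: if $x$ is a $\Zar_G$-isolated point of $X$, or more simply if $x\in X$ and $\{x\}$ happens to be open in $X$, the constant sequence $x_j=x$ works; in fact if $x\in\CL_{\Zar_G}(X)$ and the set $X$ is finite then $x\in X$ (as $X$ is closed, being finite in a $T_1$ space) and again the constant sequence suffices. So I would assume $X$ is infinite. Let $D$ be the finite set of $\Zar_G$-isolated points of $X$, and let $X\setminus D=\bigcup_{i=1}^k X_i$ be the decomposition into irreducible components. Apply Theorem \ref{computing:the:closure} to obtain $a_1,\dots,a_k\in G$ and $S_1,\dots,S_k\in\TT(G)$ with $a_i+S_i$ being $\Zar_G$-dense in $X_i$ and
\[
\CL_{\Zar_G}(X)=D\cup\bigcup_{i=1}^k(a_i+\CL_{\Zar_G}(S_i)).
\]
Now fix $x\in\CL_{\Zar_G}(X)$. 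If $x\in D$, take the constant sequence at $x$, which is a sequence of points of $X$ converging to $x$. Otherwise $x\in a_i+\CL_{\Zar_G}(S_i)$ for some $i\le k$. Since $S_i$ is \round{n} for $n=\n_{S_i}$, it is a \Zc{\Zar_G} by the implication (i)~$\to$~(ii) of Proposition \ref{round:sets:as:dense:atoms}, and hence $a_i+S_i$ is a \Zc{\Zar_G} by Corollary \ref{continuity:of:some:operations:in:the:Zariski:topology}(ii); moreover $\CL_{\Zar_G}(a_i+S_i)=a_i+\CL_{\Zar_G}(S_i)$ by Theorem \ref{translation:of:closures}(ii), so $x\in\CL_{\Zar_G}(a_i+S_i)$. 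Enumerate $a_i+S_i$ faithfully as $\{x_j:j\in\mathbb{N}\}$; by Proposition \ref{general:properties:of:atoms}(iv) this sequence converges to $x$ in $\Zar_G$. Since $a_i+S_i\subseteq X_i\subseteq X$, this is a sequence of points of $X$ converging to $x$, completing the proof.

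I do not anticipate any real obstacle here, since all the heavy lifting is already done: the decomposition in Theorem \ref{computing:the:closure} and the convergence statement in Proposition \ref{general:properties:of:atoms}(iv) combine essentially mechanically. The only mild subtlety is handling the $\Zar_G$-isolated points $D$ correctly — one must observe that $D\subseteq X$ and that the decomposition of the closure separates $D$ from the rest (as noted in Remark \ref{redundancy}(iii)), so that every point of $\CL_{\Zar_G}(X)$ is either in $D\subseteq X$ or lies in the closure of one of the sets $a_i+S_i$. This is exactly parallel to the proof of Corollary \ref{hereditarily:separable}, and indeed the Fréchet-Urysohn property can be read off from the very same data used there; this is the content of items ($\alpha$) and ($\beta$) of the introduction.
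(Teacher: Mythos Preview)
Your proof is correct and follows essentially the same route as the paper's: reduce to the data of Theorem \ref{computing:the:closure}, observe that each $a_i+S_i$ is a \Zc{\Zar_G}, and apply Proposition \ref{general:properties:of:atoms}(iv) to a faithful enumeration. The only cosmetic difference is that the paper disposes of the trivial case more crisply by assuming $x\notin X$ at the outset (which forces $X$ infinite and $x\notin D$), and it cites Theorem \ref{corollary:about:atoms:being:translates:of:round:sets} directly rather than combining Proposition \ref{round:sets:as:dense:atoms} with Corollary \ref{continuity:of:some:operations:in:the:Zariski:topology}(ii) as you do.
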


\begin{proof}  If $x\in X$, by defining $x_j=x$ for all $j\in\mathbb{N}$, we  derive the required sequence. Therefore, from now on  we shall assume that $x\not \in X$; in particular 
$X$ is infinite. Let  $D$, $k$, $X_1,\dots, X_k$, $a_1,\dots,a_k$ and $S_1,\dots,S_k$ 
be as in Theorem \ref{computing:the:closure}. Since $x\in \CL_{\Zar_G}(X)\setminus X$, according to Theorem \ref{computing:the:closure}(ii), we conclude that  $x\not \in D$. Then $x\in \CL_{\Zar_G} (a_i+ S_i)$ for some $i\le k$.  By Theorem \ref{corollary:about:atoms:being:translates:of:round:sets}, $a_i+S_i$ is a \Zc{\Zar_G}. Let $\{y_j:j\in\mathbb{N}\}$ be any faithful enumeration of $a_i+S_i$. From Proposition  \ref{general:properties:of:atoms}(iv), we conclude that the sequence $\{y_j:j\in\mathbb{N}\}$ converges to $x$. Finally, note that $y_j\in a_i+S_i\subseteq X_i\subseteq X$ for every $j\in\mathbb{N}$.
\end{proof}

Item (b) of the next example demonstrates that Corollaries \ref{hereditarily:separable} and \ref{Frechet-Urysohn} are 
specific results about the space $(G,\Zar_G)$ for an abelian group $G$ that do not hold in general for  Noetherian $T_1$ spaces.

\begin{example}
\label{special:Noetherian:space}
\begin{itemize}
   \item[(a)] Let $X$ be a set, and let $\mathcal{T}_i$, $i=0,1,\ldots, n$, be Noetherian topologies on $X$.  Then   $\mathcal{T}=\sup_i \mathcal{T}_i$ is also a Noetherian topology on $X$.  Indeed, the family 
$
\mathscr{E}=\{\bigcap_{i=0}^n F_i:
X\setminus F_i\in\mathcal{T}_i \; \mbox{ for all }\; i\le n\}
$ 
satisfies the descending chain condition, $\mathscr{E}$ is closed under finite intersections, and $X\in\mathscr{E}$. Thus, by Fact \ref{building:Noetherian:spaces} we conclude that the family $\fin{\mathscr{E}}$ forms the family of closed sets of a unique topology $\mathcal{T}_{\mathscr{E}}$ on $X$ such that the space $(X, \mathcal{T}_{\mathscr{E}})$ is a Noetherian space. It remains only to note that $\mathcal{T}=\mathcal{T}_{\mathscr{E}}$.

    \item[(b)] We can use (a) to get an example of a Noetherian $T_1$-space that is neither separable nor  Fr\'echet-Urysohn. Indeed, let $\alpha>\omega_1$ be an ordinal, and let $X=\alpha$.  Let $\mathcal{T}_0=\{\{x\in X: \beta<x\}:\beta<\alpha\}\bigcup\{\emptyset,X\}$ be the upper topology of $X$, and  let $\mathcal{T}_1$ be the co-finite topology of $X$. Since both topologies are Noetherian, the topology $\mathcal{T}=\sup \{\mathcal{T}_0, \mathcal{T}_1\}$ is  Noetherian by (a). Since $\mathcal{T}_1$ is a $T_1$ topology, so is $\mathcal{T}$. Since $(X, \mathcal{T}_0)$ is not separable,  $(X, \mathcal{T})$ is not separable either. To show that  $(X, \mathcal{T})$ is not Fr\'echet-Urysohn, observe that $\omega_1\in X$ is in the $\mathcal{T}$-closure of the set $S=\{\gamma\in X: \gamma<\omega_1\}$, yet no sequence of points of $S$ converges to $\omega_1$.
\end{itemize}
\end{example}

Our last corollary of Theorem \ref{computing:the:closure} is used in the proof of Theorem \ref{Main:theorem}.

\begin{corollary}
\label{technique:for:realizing:Zariski:closure}
Let $X$, $G$, $D$, $k$ and $S_1,\dots,S_k\in\TT(G)$
be as in Theorem \ref{computing:the:closure}. If $\mathcal{T}$ is a Hausdorff group topology on $G$ such that $\CL_{\mathcal{T}}(S_i)=G[\n_{S_i}]$ 
for every  $i\le k$, then $\CL_{\mathcal{T}}(X)=\CL_{\Zar_G}(X)$.
\end{corollary}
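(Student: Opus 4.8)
The plan is to reduce everything to the explicit formula for the Zariski closure supplied by Theorem \ref{computing:the:closure}(ii), together with the elementary monotonicity and translation behaviour of topological closures. First I would recall that for \emph{any} Hausdorff group topology $\mathcal{T}$ on $G$ one has $\Zar_G\subseteq\Mar_G\subseteq\mathcal{T}$ (Fact \ref{three:topologies}), so that every $\Zar_G$-closed set is $\mathcal{T}$-closed and hence $\CL_{\mathcal{T}}(X)\subseteq\CL_{\Zar_G}(X)$ automatically. Thus only the reverse inclusion $\CL_{\Zar_G}(X)\subseteq\CL_{\mathcal{T}}(X)$ requires an argument.

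For that reverse inclusion I would unpack Theorem \ref{computing:the:closure}: along with $D$, $k$ and $S_1,\dots,S_k\in\TT(G)$ it also furnishes elements $a_1,\dots,a_k\in G$ (not named in the statement of the corollary, but part of the conclusion of Theorem \ref{computing:the:closure}) such that each $a_i+S_i$ is $\Zar_G$-dense in the irreducible component $X_i$ of $X\setminus D$, and such that $\CL_{\Zar_G}(X)=D\cup\bigcup_{i=1}^k(a_i+G[\n_{S_i}])$. From the density statement one extracts, in particular, the set-theoretic inclusion $a_i+S_i\subseteq X_i\subseteq X\setminus D\subseteq X$ for every $i\le k$. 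Since moreover $D\subseteq X$ (it is the set of $\Zar_G$-isolated points of $X$), it suffices to verify that $a_i+G[\n_{S_i}]\subseteq\CL_{\mathcal{T}}(X)$ for each $i$.

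This is exactly where the hypothesis on $\mathcal{T}$ is used. Because $\mathcal{T}$ is a group topology, translation by $a_i$ is a $\mathcal{T}$-homeomorphism of $G$ onto itself, so $\CL_{\mathcal{T}}(a_i+S_i)=a_i+\CL_{\mathcal{T}}(S_i)=a_i+G[\n_{S_i}]$, the last equality being the assumed identity $\CL_{\mathcal{T}}(S_i)=G[\n_{S_i}]$. Combining this with the inclusion $a_i+S_i\subseteq X$ and monotonicity of the $\mathcal{T}$-closure gives $a_i+G[\n_{S_i}]=\CL_{\mathcal{T}}(a_i+S_i)\subseteq\CL_{\mathcal{T}}(X)$. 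Taking the union over $i\le k$ and adjoining $D$ yields $\CL_{\Zar_G}(X)=D\cup\bigcup_{i=1}^k(a_i+G[\n_{S_i}])\subseteq\CL_{\mathcal{T}}(X)$, which together with the automatic inclusion $\CL_{\mathcal{T}}(X)\subseteq\CL_{\Zar_G}(X)$ gives the desired equality.

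I do not expect a genuine obstacle: this corollary is essentially a bookkeeping consequence of Theorem \ref{computing:the:closure}. The only point demanding a little care is keeping explicit track of the translating elements $a_i$ supplied by Theorem \ref{computing:the:closure}, and observing that $\Zar_G$-density of $a_i+S_i$ in $X_i$ carries with it the plain inclusion $a_i+S_i\subseteq X$ that lets one push the $\mathcal{T}$-closure of $a_i+S_i$ inside $\CL_{\mathcal{T}}(X)$.
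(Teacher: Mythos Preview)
Your proof is correct and follows essentially the same route as the paper's own argument: use $\Zar_G\subseteq\mathcal{T}$ for one inclusion, and for the other unpack Theorem~\ref{computing:the:closure}(ii), push $a_i+G[\n_{S_i}]=a_i+\CL_{\mathcal{T}}(S_i)=\CL_{\mathcal{T}}(a_i+S_i)$ inside $\CL_{\mathcal{T}}(X)$ via the inclusion $a_i+S_i\subseteq X$, and take the union together with $D$. If anything, you are slightly more explicit than the paper in justifying the inclusion $a_i+S_i\subseteq X_i\subseteq X$.
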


\begin{proof} By the assumption of our corollary, 
$a_i+G[\n_{S_i}]=a_i+\CL_{\mathcal{T}}(S_i)=\CL_{\mathcal{T}}(a_i+S_i)\subseteq \CL_{\mathcal{T}}(S_i)$  for every $i\le k$. Combining this with Theorem \ref{computing:the:closure}(ii) yields
$$
\CL_{\Zar_G}(X) = D\cup\bigcup_{i=1}^k (a_i+G[\n_{S_i}])\subseteq D\cup \bigcup_{i=1}^k \CL_{\mathcal{T}}(S_i)=\CL_{\mathcal{T}}(X).
$$
The reverse inclusion $\CL_{\mathcal{T}}(X)\subseteq \CL_{\Zar_G}(X)$ follows from $\Zar_G\subseteq\Mar_G\subseteq \mathcal{T}$.
\end{proof}

\section{A precompact metric group topology realizing the Zariski closure}
\label{realizing:closure:section}

The main purpose of this section is to prove Theorem \ref{Main:theorem}. The proof of Theorem C, provided in the end of this section, then follows easily from Theorem \ref{Main:theorem}.

We start with two lemmas from \cite{DS} that will be needed in  our proofs. 

\begin{lemma}\label{lemma:Forcing_paper}
{\rm (\cite[Lemma 4.10]{DS})}
Suppose that $\mathscr{E}$ is a countable family of subsets of an abelian group $G$, $g\in G$ and $g\not=0$. Then there exists a group homomorphism $h:G\to\T$ such that: 
\begin{itemize}
\item[(i)]
 $h(g)\neq 0$,
\item[(ii)]
 if $n\in\N$, $z\in\T[n]$, $l\in\N\setminus\{0\}$ and $E\in\mathscr{E}\cap\TT_n(G)$,  then the set  $\{x\in E: |h(x)-z|<1/l\}$ is infinite.
\end{itemize}
\end{lemma}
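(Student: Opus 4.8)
The plan is to build the homomorphism $h\colon G\to\T$ by a countable "back-and-forth" style construction that simultaneously handles the single separation requirement (i) and the countably many infiniteness requirements in (ii). First I would reduce (ii) to a countable list of individual "tasks": enumerate the countable family $\mathscr{E}$, and for each $E\in\mathscr{E}$ that happens to lie in some $\TT_n(G)$, enumerate all pairs $(z,l)$ with $z$ in the (countable) group $\T[n]\cap\{$rationals$\}$-type dense subset — actually $\T[n]$ itself is finite for $n>0$, so the genuinely infinite index set is over $l\in\N\setminus\{0\}$ and over those countably many $E$; when $n=0$ we have $\T[0]=\T$ and we must instead use a countable dense subset of $\T$. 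In all cases we obtain a countable set of tasks $\{\tau_j:j\in\N\}$, task $\tau_j$ demanding that a certain subset of a certain $E\in\mathscr{E}\cap\TT_n(G)$ of the form $\{x\in E:|h(x)-z|<1/l\}$ be infinite. The construction will produce $h$ as an increasing union of partial homomorphisms defined on finitely generated subgroups, or — more cleanly — use the divisibility (hence injectivity as an abelian-group object in the relevant sense) of $\T$ to extend partial assignments.

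The key mechanism is the following: given a finite partial specification (values of $h$ prescribed on finitely many elements $x_1,\dots,x_m$, consistently with being a homomorphism, i.e. determined by its behaviour on the subgroup they generate, using that $\T$ is divisible to always be able to extend), and given a task $\tau_j$ attached to $E\in\TT_n(G)$ and a target $z\in\T[n]$, I must find some new point $x\in E$, not yet in the domain, such that I can consistently set $h(x)$ close to $z$. Here is where the hypothesis that $E$ is \round{n} is essential. Since $E\subseteq G[n]$, automatically $nx=0$, so $h(x)\in\T[n]$ is forced — good, that is why the targets $z$ are restricted to $\T[n]$. The obstruction to freely choosing $h(x)$ is that $x$ may already be (a rational multiple of) an element of the current finitely generated subgroup $K$; but \roundness guarantees that for every proper divisor $d$ of $n$ and every $g\in G$, only finitely many $x\in E$ satisfy $dx=g$. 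Running this over the finitely many elements of $K$ and the finitely many relevant $d$, only finitely many points of $E$ are "constrained", so infinitely many $x\in E$ are "free" modulo $K$, meaning $\langle K,x\rangle/K$ is cyclic of order exactly $n/(\text{something})$ in a controlled way and $h$ can be extended to send $x$ into any prescribed coset of $\T[\,\cdot\,]$, in particular within $1/l$ of $z$ — and we can keep doing this to put infinitely many such points into the target set. Requirement (i), $h(g)\ne 0$, is handled at stage $0$: since $g\ne 0$ and $\T$ is an injective cogenerator of abelian groups, there is a homomorphism from $\langle g\rangle$ (hence, extended, from $G$) not killing $g$; we simply start the construction with this constraint in the domain.

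The anticipated main obstacle is the bookkeeping that makes "infinitely many" survive: each individual task must be revisited infinitely often (a standard dovetailing of $\N\times\N$), and at each visit we must add one more witness to that task's target set while respecting all previously committed values — so I must argue that the "free" points of $E$ remain infinite even after countably many points have been committed, which again is exactly the content of \roundness (at every finite stage only finitely many points of $E$ are constrained, and the committed set is finite at each stage). A secondary technical point is the divisibility/injectivity argument allowing extension of a partial homomorphism $K\to\T$ to $\langle K,x\rangle\to\T$ with $h(x)$ in a prescribed arc: when $x$ has some multiple $kx\in K$ with $k\mid n$, the admissible values of $h(x)$ form a coset of $\T[k]$, which is a finite subgroup, hence $1/k$-dense only when $k$ is large — so one must check that the relevant $k$ is always either $1$ (no constraint, $h(x)$ arbitrary in $\T[n]$) or, by \roundness, that the constrained points were already excluded; this is precisely why the definition of \round{n} quantifies over \emph{proper} divisors $d$ of $n$ and all $g\in G$. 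Once these two points are in place, taking $h=\bigcup_s h_s$ over the stages $s$ of the construction yields the desired homomorphism, and (i), (ii) hold by construction. I expect this to follow closely the proof of \cite[Lemma 4.10]{DS}, which is cited as the source.
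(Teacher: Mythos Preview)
The paper does not give its own proof of this lemma: it is stated with the attribution ``\cite[Lemma 4.10]{DS}'' and then used as a black box in the proof of Lemma~\ref{lemma:metrizable:case}. So there is nothing in the present paper to compare your argument against; your sketch is an attempted reconstruction of the proof from \cite{DS}.

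That said, your plan has a gap worth flagging. You write that, at a given stage, ``running this over the finitely many elements of $K$ and the finitely many relevant $d$, only finitely many points of $E$ are constrained''. But $K$ is the \emph{subgroup} generated by the finitely many committed points, and a finitely generated abelian group need not be finite. For $n>0$ this is easily repaired: since $E\subseteq G[n]$, any relation $dx\in K$ forces $dx\in K\cap G[n]$, and the torsion part of a finitely generated abelian group is finite, so $K\cap G[n]$ is finite and your count goes through. For $n=0$, however, the problem is genuine: take $G=\Z$, $g=1$, and $E=\N$ (which is \round{0}). After stage~$0$ your domain is $\langle g\rangle=\Z\supseteq E$, so there is no ``new'' $x\in E$ to add, and the value of $h$ on all of $E$ is already determined by the single choice $h(1)$. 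The lemma is still true here --- one must choose $h(1)$ irrational --- but this is not produced by your add-one-witness-at-a-time mechanism. The fix is either to argue more carefully (for a task with parameter $l$ you only need the minimal $m$ with $mx\in K$ to satisfy $m\ge 2l$, and \round{0}ness plus a suitable bookkeeping handles this), or to run a Baire-category argument in the compact group $\mathrm{Hom}(G,\T)$, showing that the homomorphisms failing any single task form a meager set. Either route is standard; the point is only that your sketch, as written, does not cover the case where $E$ sits inside the finitely generated subgroup already in hand.
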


\begin{lemma}{\rm (\cite[Lemma 3.17]{DS})}\label{lemma1:Forcing_paper}
Let $G$ and $H$ be Abelian groups such that $|G|\le r(H)$ and $|G|\le r_p(H)$ for each $p\in\Prm$. Suppose also that $G'$ a subgroup of $G$ such that
$r(G')<r(H)$ and $r_p(G')<r_p(H)$ for all $p\in\Prm$. If  $H$ is divisible,  then for every monomorphism $h:G'\to H$ there exists a monomorphism $\pi: G\to H$ such that $\pi\restriction_{G'}=h$.
\end{lemma}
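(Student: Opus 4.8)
The plan is to reduce to the case where $G'$ is divisible, hence a direct summand of the ambient group, by passing to divisible hulls, and then to build $\pi$ as the direct sum of two monomorphisms into complementary divisible direct summands of $H$, using the rank hypotheses to guarantee enough room in the second summand. Throughout I would freely use the standard facts that torsion-free rank and $p$-rank are monotone under passing to subgroups and additive over direct sums, that a divisible hull of an abelian group has the same ranks as the group itself, that divisible subgroups are direct summands, and that an abelian group embeds into a divisible group $D$ whenever its torsion-free rank is at most $r(D)$ and each of its $p$-ranks is at most $r_p(D)$ (embed it into its divisible hull, then into $D$ coordinatewise).

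First I would fix a divisible hull $\bar G$ of $G$ and, inside it, let $E$ be the divisible hull of $G'$, so that $G'\subseteq E\subseteq\bar G$ and $G\subseteq\bar G$. Since $H$ is divisible, hence injective, and $G'$ is an essential subgroup of $E$, the monomorphism $h$ extends to a monomorphism $\bar h\colon E\to H$: any homomorphic extension has kernel meeting $G'$ trivially, hence trivial by essentiality of $G'$ in $E$. Now $\bar h(E)$ is divisible, so it splits off: $H=\bar h(E)\oplus H_1$ with $H_1$ again divisible. Likewise, $E$ is divisible, so it is a direct summand of $E+G\subseteq\bar G$, say $E+G=E\oplus C$. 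It then suffices to produce a monomorphism $\sigma\colon C\to H_1$: for then $\bar h\oplus\sigma\colon E\oplus C\to\bar h(E)\oplus H_1$ is a monomorphism $E+G\to H$ extending $h$, and its restriction to $G\subseteq E+G$ is the required $\pi$.

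By the embedding criterion recalled above, the existence of $\sigma$ follows once I verify $r(C)\le r(H_1)$ and $r_p(C)\le r_p(H_1)$ for every prime $p$. For the left-hand sides: from $G\subseteq E+G\subseteq\bar G$ and $r(\bar G)=r(G)$ one gets $r(E+G)=r(G)$, and then $r(E+G)=r(E)+r(C)=r(G')+r(C)$ shows $r(C)=r(G)-r(G')$ in the finite-rank case and $r(C)\le r(G)$ in general; the same with $p$-ranks. For the right-hand sides: $H=\bar h(E)\oplus H_1$ gives $r(G')+r(H_1)=r(H)$, and since $r(G')<r(H)$ this forces $r(H_1)=r(H)$ when $r(H)$ is infinite and $r(H_1)=r(H)-r(G')$ when it is finite; again similarly for $p$-ranks, using $r_p(G')<r_p(H)$. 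Combining these with $r(G)\le|G|\le r(H)$ and $r_p(G)\le|G|\le r_p(H)$ yields the required inequalities in both the finite and the infinite case, completing the construction.

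The one genuinely delicate point is the rank arithmetic in the final step: one must be careful that infinite cardinal arithmetic does not behave like ordinary subtraction, and must pinpoint exactly where the strict inequalities $r(G')<r(H)$ and $r_p(G')<r_p(H)$ enter — namely, they are what keeps the complementary summand $H_1$ as large as $H$ itself in the infinite case, which is precisely what prevents the argument from collapsing when $r(G')$ or some $r_p(G')$ already attains the rank of $H$. Everything else is a routine assembly of standard facts about divisible abelian groups.
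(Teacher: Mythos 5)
The paper itself offers no proof of this lemma: it is imported verbatim from \cite[Lemma~3.17]{DS}, so there is no in-paper argument to compare yours against, and your proposal has to stand on its own. It does. Extending $h$ over the divisible hull $E$ of $G'$ taken inside a divisible hull $\bar G$ of $G$ (injectivity of $H$ gives an extension, essentiality of $G'$ in $E$ together with injectivity of $h$ makes it a monomorphism), splitting $H=\bar h(E)\oplus H_1$ and $E+G=E\oplus C$ because divisible subgroups are direct summands, and then embedding $C$ into $H_1$ by comparing torsion-free rank and $p$-ranks is exactly the standard route for this kind of extension statement, and the assembly $\pi=(\bar h\oplus\sigma)\restriction_G$ does restrict to $h$ on $G'$ and is injective. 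The rank bookkeeping is also sound: $r(C)\le r(E+G)=r(G)\le |G|\le r(H)$ and $r_p(C)\le r_p(G)\le |G|\le r_p(H)$, while the strict inequalities $r(G')<r(H)$ and $r_p(G')<r_p(H)$ are used exactly where you say, namely to force $r(H_1)=r(H)$ and $r_p(H_1)=r_p(H)$ when these cardinals are infinite; in the finite cases the hypothesis $|G|\le r(H)$ (resp.\ $|G|\le r_p(H)$) makes $G$ finite and ordinary subtraction applies, so no case is lost. The auxiliary facts you quote (a divisible hull has the same torsion-free and $p$-ranks as the group, ranks are additive on direct sums and monotone on subgroups, and a group embeds into a divisible group once its ranks are dominated by the corresponding ranks of the target) are standard and can be cited from Fuchs, so I see no gap.
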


\begin{lemma}
\label{lemma:metrizable:case}
Let $G$ be a countable abelian group and $\mathscr{S}$ be a countable subfamily of $\TT(G)$.
Then there exists a monomorphism  $h: G\to \T^\N$ satisfying the following property: if $S\in\mathscr{S}$,
$O$ is an open subset of $\T^\N$ and $O\cap \Tp[\n_S]^\N\not=\emptyset$, 
then the set $\{x\in S: h(x)\in O\}$ is infinite. In particular, $h(S)$ is dense in $\Tp[\n_S]^\N$ 
for every $S\in\mathscr{S}$.
\end{lemma}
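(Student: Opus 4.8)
The plan is to build the monomorphism $h:G\to\T^\N$ coordinatewise by a diagonal argument, using Lemma \ref{lemma:Forcing_paper} as the basic building block and Lemma \ref{lemma1:Forcing_paper} to ensure injectivity. Enumerate the countable group as $G=\{g_m:m\in\N\}$ with $g_0=0$, enumerate the countable family $\mathscr{S}=\{S_j:j\in\N\}$, and fix a countable base $\{O_l:l\in\N\}$ for the topology of $\T^\N$ (or, more conveniently, reduce to requirements of the form ``for each $S\in\mathscr S$, each basic open box $O$ of $\T^\N$ with $O\cap\Tp[\n_S]^\N\neq\emptyset$, and each $k\in\N$, there is a point $x\in S$ with $h(x)\in O$ and $x\notin\{g_0,\dots,g_k\}$''). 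I will interleave two kinds of tasks along $\N$: (a) \emph{separation tasks}, indexed by the nonzero elements $g_m$, which demand that the coordinate $h_r:G\to\T$ built at stage $r$ separates $g_m$ from $0$; and (b) \emph{density tasks}, indexed by triples $(S,O,k)$ as above. A bookkeeping function assigns to each $r\in\N$ one such task.

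At stage $r$, I apply Lemma \ref{lemma:Forcing_paper} to the countable family $\mathscr E=\mathscr S$ (note $\mathscr S\subseteq\TT(G)$ means each $S_j$ lies in exactly one $\TT_n(G)$, namely $n=\n_{S_j}$) and to a suitably chosen nonzero element $g$, obtaining a homomorphism $h_r:G\to\T$ with $h_r(g)\neq0$ and with the ``infinite fiber'' property (ii) of that lemma for \emph{all} $S\in\mathscr S$ simultaneously. If the stage-$r$ task is a separation task for $g_m\neq0$, I take $g=g_m$, which guarantees $h_r(g_m)\neq0$; since $r$ ranges over an infinite set of stages assigned to separation tasks and every nonzero element is eventually handled, the resulting map $h=(h_r)_{r\in\N}:G\to\T^\N$ will be injective — in fact this already suffices, so Lemma \ref{lemma1:Forcing_paper} may not even be needed and can be dropped from the argument; I will keep the simpler route. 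If the stage-$r$ task is a density task $(S,O,k)$, I still invoke Lemma \ref{lemma:Forcing_paper} with \emph{any} fixed nonzero $g$ (its separating property is irrelevant here), and then use property (ii): writing $O$ as a product of finitely many open arcs in the coordinates $r_1<\dots<r_t$ together with all of $\T$ in the remaining coordinates, and noting $O\cap\Tp[\n_S]^\N\neq\emptyset$ forces the arc in each coordinate $r_i$ to meet $\Tp[\n_S]$, I choose $z_i\in\T[\n_S]$ in the $i$-th arc and $l$ large enough that the $1/l$-ball around $z_i$ sits inside that arc; then for each $i$ the set $\{x\in S:|h_{r_i}(x)-z_i|<1/l\}$ is infinite by (ii). The subtlety is that I must hit all the coordinates $r_1,\dots,r_t$ \emph{at once} with a single $x\in S$.

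This intersection issue is the main obstacle, and it is handled by the standard device of building the coordinate maps in the right order: I arrange the bookkeeping so that when a density task $(S,O,k)$ is assigned at some stage, the finitely many coordinates $O$ constrains are among coordinates $<$ that stage — equivalently, I process a density task only after its relevant coordinate maps $h_{r_1},\dots,h_{r_t}$ have already been constructed, and I use a fresh later coordinate $h_r$ merely as a ``spare''. Then the relevant constraint ``$h(x)\in O$'' involves only the already-fixed maps $h_{r_1},\dots,h_{r_t}$, and it reads: $x\in\bigcap_{i=1}^t\{x'\in S:|h_{r_i}(x')-z_i|<1/l\}$. A priori a finite intersection of infinite subsets of $S$ need not be infinite, so to make this work I instead enumerate all density requirements with a single target coordinate: reformulate each basic open box so that, after passing to the finitely many coordinates it uses, I demand the single condition componentwise using the joint homomorphism $H_r:=(h_{r_1},\dots,h_{r_t}):G\to\T^t$ — but $\T^t\cong\T$ is not how Lemma \ref{lemma:Forcing_paper} is phrased. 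The clean fix, which I will adopt, is: at the stage where a density task $(S,O,k)$ is \emph{assigned}, I simultaneously \emph{define} all the coordinate maps $h_{r_1},\dots,h_{r_t}$ that $O$ will use, via one application of Lemma \ref{lemma:metrizable:case}'s analogue for a single coordinate? No — rather, I will apply Lemma \ref{lemma:Forcing_paper} to the group $G\times\Z(\n_S)^t$ or, most simply, replace $\T^\N$-coordinates by a single $\T$-coordinate per density task: assign to the density task $(S,O,k)$ a brand-new coordinate index $r$, apply Lemma \ref{lemma:Forcing_paper} to get $h_r:G\to\T$, pick $z\in\T[\n_S]$ and $l$ with the $1/l$-ball about $z$ inside the (single) arc of $O$ in \emph{that} coordinate, obtain an infinite set of $x\in S$ with $|h_r(x)-z|<1/l$, and among those infinitely many pick one outside $\{g_0,\dots,g_k\}$; the constraints of $O$ in the \emph{other} coordinates are met by arranging (again by bookkeeping) that those other coordinate maps were chosen at \emph{their} own earlier density stages with compatible targets — precisely, enumerate density requirements so each requirement pins down one new coordinate and only that coordinate, i.e. use basic open sets of the single-coordinate form $\{\xi\in\T^\N:\xi_r\in A\}$, which generate the topology as a subbase, hence suffice for density.

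To write this up: first, record that it suffices to meet the single-subbasic-coordinate density requirements ``for all $S\in\mathscr S$, all $r\in\N$, all rational open arcs $A\subseteq\T$ with $A\cap\Tp[\n_S]\neq\emptyset$, and all $k\in\N$: some $x\in S\setminus\{g_0,\dots,g_k\}$ has $h_r(x)\in A$'', because a point of $\T^\N$ lies in a given basic open box iff each of its finitely many subbasic constraints holds, and an infinite set meeting every subbasic requirement will, by a straightforward induction on the number of coordinates of a box using that each coordinate requirement is met by \emph{infinitely} many points, meet every basic box — here is where I do need the infinitude, and it is available since ``infinite'' appears in Lemma \ref{lemma:Forcing_paper}(ii); actually the cleanest statement I will prove is the ``in particular'' clause directly by showing each $h(S)$ is dense, and density in a product is equivalent to density in each of the (subbasically many) coordinate slices intersected with $\Tp[\n_S]^\N$, which reduces exactly to the single-coordinate statement. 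Second, enumerate these requirements in a list of length $\N$ so that requirement number $r$ constrains coordinate number $r$ (a bijection between requirements and coordinate indices), and for the $r$-th requirement apply Lemma \ref{lemma:Forcing_paper} with $\mathscr E=\mathscr S$ and with $g$ a nonzero element chosen so that over all stages every nonzero element of $G$ is used (possible since $G\setminus\{0\}$ is countable), getting $h_r:G\to\T$; the resulting $h=(h_r):G\to\T^\N$ is a homomorphism, it is injective because for each $0\neq g\in G$ some stage $r$ had $h_r(g)\neq0$, and it meets every density requirement by Lemma \ref{lemma:Forcing_paper}(ii). I expect the only genuinely delicate point to be the reduction from basic-box density to single-coordinate density and the accompanying bookkeeping that one fresh coordinate is available per requirement; everything else is a direct quotation of Lemma \ref{lemma:Forcing_paper}.
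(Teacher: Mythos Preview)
You correctly identify the ``intersection issue'' as the heart of the matter, but your proposed resolution---reducing to single-coordinate subbasic requirements---does not work. Density in a product is \emph{not} implied by coordinatewise density: the diagonal $\{(t,t):t\in\T\}$ of $\T^2$ projects onto a dense set in each factor yet is nowhere dense. Your ``straightforward induction on the number of coordinates'' breaks at the first step: knowing that infinitely many $x\in S$ satisfy $h_{r_1}(x)\in A_1$, and that infinitely many $x\in S$ satisfy $h_{r_2}(x)\in A_2$, gives no $x$ satisfying both, because the subbasic requirements are stated only for the fixed sets $S\in\mathscr S$, not for the infinite subset $\{x\in S:h_{r_1}(x)\in A_1\}$ to which you would like to apply the requirement at coordinate $r_2$.

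The paper's fix is exactly to make the requirements apply to these ``slices'' as well, by dynamically enlarging the family $\mathscr E$ fed into Lemma~\ref{lemma:Forcing_paper}. Starting from $\E_{-1}=\mathscr S$, at stage $k$ one applies Lemma~\ref{lemma:Forcing_paper} to the current family $\E_{k-1}$ to obtain $h_k$, and then sets $\E_k=\E_{k-1}\cup\{\,\{x\in E:h_k(x)\in U\}:E\in\E_{k-1},\ U\in\B,\ \text{this set is infinite}\,\}$. The crucial observation (Remark~\ref{Background}(iii)) is that an infinite subset of an \round{n} set is again \round{n}, so every member of each $\E_k$ still lies in $\TT(G)$ and Lemma~\ref{lemma:Forcing_paper}(ii) continues to apply. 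Given a basic box $U_0\times\cdots\times U_k\times\T^{\N\setminus\{0,\dots,k\}}$ meeting $\Tp[\n_S]^\N$, one then builds a nested chain $S\supseteq E_0\supseteq E_1\supseteq\cdots\supseteq E_k$ with $E_i=\{x\in E_{i-1}:h_i(x)\in U_i\}$; each $E_i$ is infinite because $E_{i-1}\in\E_{i-1}\cap\TT_{\n_S}(G)$, and $E_k$ is the desired infinite set of witnesses. This is the missing idea in your proposal.
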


\begin{proof} Let $\mathscr{S}=\{S_j:j\in\N\}$ be an enumeration of $\mathscr{S}$. For typographical reasons, let $n_j=\n_{S_j}$ 
for each $j\in \N$. Let $\{g_j:j\in\mathbb{N}\}$ be an enumeration of the countable set $G\setminus\{0\}$ and $\B$ be a  countable base of $\T$ with $\emptyset\not\in\B$. Define $\E_{-1}=\mathscr{S}$. By induction on $k\in\mathbb{N}$,  we construct a countable family $\E_k$ of subsets of $G$ and a homomorphism $h_k:G\to\T$  with the following properties:
\begin{itemize}
\item[(i$_k$)] $h_k(g_k)\not=0$,
\item[(ii$_k$)] $\E_{k-1}\subseteq \E_k$,
\item[(iii$_k$)] if $E\in\E_{k-1}\cap \TT(G)$,
$U\in\B$ and $U\cap \Tp[\n_E]\not=\emptyset$, 
 then the set $S_{E,U,k}=\{x\in E: h_k(x)\in U\}$ is infinite, 
\item[(iv$_k$)] if $E\in \E_{k-1}$, $U\in\B$ and the set $S_{E,U,k}$ is infinite, then $S_{E,U,k}\in\E_k$.
\end{itemize}
We apply Lemma \ref{lemma:Forcing_paper} to find a homomorphism $h_0: G\to\T$ satisfying (i$_0$) and (iii$_0$).  Define $\E_0=\E_{-1}\cup\{S_{E,U,0}: E\in\E_{-1}, U\in\B,   S_{E,U,0}$ is infinite$\}$.  Then (ii$_0$) and (iv$_0$) hold trivially. This completes the basis of induction. Suppose now that  $k\in\mathbb{N}\setminus\{0\}$ and for every $j<k$ we have already constructed a countable family $\E_j$ of subsets of $G$ and a homomorphism $h_j:G\to\T$ satisfying (i$_j$)--(iv$_j$). 
Let us define a countable family $\E_k$ of subsets of $G$ and a homomorphism $h_k:G\to\T$ satisfying 
(i$_k$)--(iv$_k$). We apply Lemma \ref{lemma:Forcing_paper} once again to find a homomorphism $h_k: G\to\T$ satisfying (i$_k$) and (iii$_k$). Define $\E_k=\E_{k-1}\cup\{S_{E,U,k}: E\in\E_{k-1}, U\in\B,  S_{E,U,k}$ is infinite$\}$ and note that (ii$_k$) and (iv$_k$) hold.

Define $h:G\to \T^\N$ by $h(x)=\{h_n(x)\}_{n\in\mathbb{N}}$ for $x\in G$.  Clearly, $h$ is a group homomorphism. If $g\in G\setminus \{0\}$, then $g=g_k$ for some $k\in\mathbb{N}$, and so  $h_k(g_k)\not=0$ by (i$_k$), which yields $h(g)=h(g_k)\not=0$. Hence $h$ is a monomorphism.

Assume that $j\in\mathbb{N}$, and $O$ is an open subset of $\T^\N$ such that $O\cap \Tp[n_j]^\N\not=\emptyset$. 
We shall prove that the set $\{x\in S_j: h(x)\in O\}$ is infinite. There exists $k\in\mathbb{N}$
and a sequence $U_0,\ldots,U_k$ of elements of $\B$ such that 
$U_0\times\cdots\times U_{k}\times \T^{\N\setminus \{0,\dots,k\}}\subseteq O$
  and $U_i\cap \Tp[n_j]\not=\emptyset$ for all $i\le k$.
From (iii$_0$) and  our definition  of $\E_{-1}$, it follows that $E_0=S_{S_j,U_0,0}$ 
 is infinite, and (iv$_0$) gives $E_0\in\E_0$.  As an infinite subset of the  \round{n_j} set $S_j$, the set 
  $E_0$ is also \round{n_j} by Remark \ref{Background}(iii), and so (iii$_1$) implies that $E_1=S_{E_0,U_1,1}$ is infinite. Thus,
   $E_1\in\E_1$ by (iv$_1$). Continuing this argument, we conclude that each set
$E_i=S_{E_{i-1},U_i,i}$ 
is    infinite for $i\le k$.  By construction, $E_{k}\subseteq E_{k-1}\subseteq \dots\subseteq E_0\subseteq S_j$. Finally, note that $E_{k}$ 
is an infinite subset of $S_j$ such that $h_i(x)\in  U_i$ whenever $x\in E_{k}$ and $i\le k$.
Therefore, $h(E_{k})\subseteq U_0\times\cdots\times U_{k}\times   \T^{\N\setminus \{0,\dots,k\}}\subseteq O$, which implies $E_{k}\subseteq \{x\in S_j: h(x)\in O\}$. 
Since the former set is infinite,  so is the latter.
\end{proof}

\begin{corollary}
A countably infinite Abelian group $G$  is isomorphic to a dense subgroup of  $\T^\N$ if and only if  $G$  is unbounded.
\end{corollary}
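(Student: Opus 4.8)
The plan is to establish the two implications separately, with most of the work already done upstream.

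For the ``only if'' direction I would argue by contraposition. Suppose $G$ is bounded, say $nG=\{0\}$ for some integer $n\geq 1$, and let $h\colon G\to\T^\N$ be any monomorphism. Then $nh(G)=\{0\}$, so $h(G)\subseteq\{x\in\T^\N:nx=0\}=\Tp[n]^\N$. Since $\Tp[n]$ is a finite subset of the infinite space $\T$, it is closed and proper; hence $\Tp[n]^\N$ is a closed proper subset of $\T^\N$ (an arbitrary product of closed sets is closed). Therefore the closure of $h(G)$ in $\T^\N$ is contained in $\Tp[n]^\N\subsetneq\T^\N$, so $h(G)$ is not dense. This shows that a countably infinite abelian group which embeds as a dense subgroup of $\T^\N$ must be unbounded.

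For the ``if'' direction, assume $G$ is unbounded. By Corollary \ref{groups:that:conatin:0-round:sets}, $G$ contains an \round{0} set $S$; in particular $S\in\TT_0(G)\subseteq\TT(G)$, so that $\n_S=0$. As $G$ is countable and $\mathscr{S}=\{S\}$ is a countable subfamily of $\TT(G)$, Lemma \ref{lemma:metrizable:case} provides a monomorphism $h\colon G\to\T^\N$ for which $h(S)$ is dense in $\Tp[\n_S]^\N=\Tp[0]^\N=\T^\N$, using $\Tp[0]=\T$. Since $h(S)\subseteq h(G)\subseteq\T^\N$, the subgroup $h(G)$ is dense in $\T^\N$, and because $h$ is injective, $G$ is isomorphic to the dense subgroup $h(G)$ of $\T^\N$.

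I do not expect any real obstacle: the substantive content of the ``if'' direction is packaged into Corollary \ref{groups:that:conatin:0-round:sets} and Lemma \ref{lemma:metrizable:case}, while the ``only if'' direction rests only on the elementary remark that $\Tp[n]^\N$ is a proper closed subgroup of $\T^\N$ for $n\geq 1$. The single point deserving a moment of attention is the bookkeeping identity $\Tp[0]=\T$, which is precisely what converts ``contains an \round{0} set'' into ``has a subset dense in the full product $\T^\N$''.
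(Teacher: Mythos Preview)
Your proof is correct and follows essentially the same approach as the paper. The ``if'' direction is identical, invoking Corollary~\ref{groups:that:conatin:0-round:sets} and Lemma~\ref{lemma:metrizable:case}; for the ``only if'' direction the paper observes instead that density of $j(G)$ forces $nj(G)$ to be dense in $n\T^\N=\T^\N$, which is the mirror image of your contrapositive via $\T[n]^\N$.
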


\begin{proof} Let $j:G \to \T^\N$ be a monomorphism such that $j(G)$ is a dense subgroup of  $\T^\N$. Then $nj(G)$ must be dense in $\T^\N = n\T^\N$, 
so $nG \ne \{0\}$ for every positive $n\in \N$. Therefore, $G$  is unbounded.
 
If $G$  is unbounded, then $G$ contains an \round{0} set $S$; see Corollary \ref{groups:that:conatin:0-round:sets}. By Lemma \ref{lemma:metrizable:case} applied to $\mathscr{S}=\{S\}$, there exists a monomorphism $j:G \to \T^\N$ such that $j(S)$ is dense in $\T^\N$. So $j(G)$  is dense in $\T^\N$ as well.  
\end{proof}

\begin{theorem}\label{Main:theorem}
Let $G$ be an abelian group with $|G|\leq \cont$, and let $\mathscr{X}$ be a countable family of subsets of $G$. Then there exists a precompact metric group topology
$\mathcal{T}$ on $G$ such that the $\mathcal{T}$-closure of each $X\in\mathscr{X}$ coincides with its $\Zar_G$-closure.
\end{theorem}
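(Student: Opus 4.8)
The plan is to reduce the theorem, via Corollary \ref{technique:for:realizing:Zariski:closure}, to constructing a single precompact metric group topology $\mathcal{T}$ on $G$ whose closure operator agrees with $\CL_{\Zar_G}$ on a suitable countable subfamily of $\TT(G)$, and then to obtain such a $\mathcal{T}$ by embedding $G$ into the compact metrizable group $\T^\N$. First I would dispose of the finite members of $\mathscr{X}$: for such $X$ one has $\CL_{\mathcal{T}}(X)=X=\CL_{\Zar_G}(X)$ for every Hausdorff group topology $\mathcal{T}$, since both topologies are $T_1$; so only the infinite $X\in\mathscr{X}$ require work. For each infinite $X\in\mathscr{X}$, I would apply Theorem \ref{computing:the:closure} to obtain the finite set $D_X$ of $\Zar_G$-isolated points of $X$, an integer $k_X$, elements $a^X_1,\dots,a^X_{k_X}\in G$, and sets $S^X_1,\dots,S^X_{k_X}\in\TT(G)$. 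Let $\mathscr{S}$ be the union, over the countably many infinite $X\in\mathscr{X}$, of the finite families $\{S^X_1,\dots,S^X_{k_X}\}$; then $\mathscr{S}$ is a countable subfamily of $\TT(G)$ consisting of countably infinite sets, so the subgroup $G_0$ of $G$ generated by $\bigcup\mathscr{S}$ is countable. By Remark \ref{Background}(iv), each $S\in\mathscr{S}$ belongs to $\TT(G_0)$ with the same index $\n_S$ as in $G$.

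Next I would run the countable construction inside $G_0$ and then extend it. Applying Lemma \ref{lemma:metrizable:case} to $G_0$ and $\mathscr{S}$ yields a monomorphism $h_0:G_0\to\T^\N$ such that $h_0(S)$ is dense in $\T[\n_S]^\N$ for every $S\in\mathscr{S}$. The crucial fact for passing from $G_0$ to $G$ is that $\T^\N$ is a divisible abelian group with $r(\T^\N)=\cont$ and $r_p(\T^\N)=\cont$ for every prime $p$: the diagonal copy of $\T$ forces $r(\T^\N)\ge\cont$ (and $r(\T^\N)\le|\T^\N|=\cont$), while $\T[p]^\N\cong\Z(p)^\N$ has cardinality $\cont$, hence dimension $\cont$ over $\Z/p\Z$. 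Since $|G|\le\cont$ and $G_0$ is countable, so that $r(G_0)\le\aleph_0<\cont$ and $r_p(G_0)\le\aleph_0<\cont$ for every $p$, Lemma \ref{lemma1:Forcing_paper} applied with $H=\T^\N$ and $G'=G_0$ produces a monomorphism $\pi:G\to\T^\N$ with $\pi\restriction_{G_0}=h_0$; note that this needs nothing beyond ZFC, as $\aleph_0<\cont$ always holds. Let $\mathcal{T}$ be the group topology on $G$ whose open sets are the preimages $\pi^{-1}(U)$ of open subsets $U$ of $\T^\N$. Since $\T^\N$ is compact and metrizable and $\pi$ is an injective homomorphism, $\mathcal{T}$ is a precompact metrizable Hausdorff group topology, and $\pi$ is a topological embedding of $(G,\mathcal{T})$ into $\T^\N$. (When $G$ itself is countable, one can skip this paragraph and apply Lemma \ref{lemma:metrizable:case} directly to $G$.)

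Finally I would compute $\CL_{\mathcal{T}}(S)$ for $S\in\mathscr{S}$. Fix such an $S$ and put $n=\n_S$. As $S\subseteq G_0$ we have $\pi(S)=h_0(S)$, which is dense in the closed subgroup $\T[n]^\N$ of $\T^\N$, so $\CL_{\T^\N}(\pi(S))=\T[n]^\N$. Injectivity of $\pi$ gives $\pi(G)\cap\T[n]^\N=\pi(G[n])$ (if $n\pi(g)=0$ then $ng=0$), whence $\CL_{\mathcal{T}}(S)=\pi^{-1}(\CL_{\T^\N}(\pi(S))\cap\pi(G))=\pi^{-1}(\pi(G[n]))=G[n]$. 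Therefore, for every infinite $X\in\mathscr{X}$ the topology $\mathcal{T}$ fulfils the hypothesis of Corollary \ref{technique:for:realizing:Zariski:closure} for $S^X_1,\dots,S^X_{k_X}$, and so $\CL_{\mathcal{T}}(X)=\CL_{\Zar_G}(X)$; together with the trivial finite case, this proves the theorem.

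I expect the one genuinely delicate point to be the bridge from $G_0$ — where Lemma \ref{lemma:metrizable:case} supplies all the real content — to the full group $G$: one must confirm that extending $h_0$ to $\pi$ cannot destroy the density conclusions (it cannot, as these depend only on $\pi\restriction_{G_0}=h_0$ and on sets $S$ lying inside $G_0$), and that $\T^\N$ has torsion-free rank $\cont$ and $p$-ranks $\cont$ so as to receive an arbitrary group of size at most $\cont$. This rank arithmetic, fed into the extension Lemma \ref{lemma1:Forcing_paper}, is exactly where the hypothesis $|G|\le\cont$ is used; everything else is routine.
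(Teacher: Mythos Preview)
Your proof is correct and follows essentially the same route as the paper: reduce to realizing $\CL_{\mathcal{T}}(S)=G[\n_S]$ for a countable family $\mathscr{S}\subseteq\TT(G)$ via Corollary \ref{technique:for:realizing:Zariski:closure}, apply Lemma \ref{lemma:metrizable:case} to the countable subgroup generated by $\bigcup\mathscr{S}$, and extend the resulting monomorphism to all of $G$ using Lemma \ref{lemma1:Forcing_paper} and the rank arithmetic of $\T^\N$. The only cosmetic differences are that you handle finite $X\in\mathscr{X}$ explicitly (the paper absorbs this into an empty $\mathscr{S}_X$) and spell out the rank computation for $\T^\N$ in more detail.
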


\begin{proof} Applying Corollary \ref{technique:for:realizing:Zariski:closure},  for every $X\in\mathscr{X}$ we can fix a finite family $\mathscr{S}_X\subseteq \TT(G)$ with the following property:
\begin{itemize}
\item[($\dagger_X$)] 
if $\mathcal{T}$ is a Hausdorff group topology on $G$ such that $\CL_{\mathcal{T}}(S)=G[\n_S]$  for each $S\in\mathscr{S}_X$, then  $\CL_{\mathcal{T}}(X)=\CL_{\Zar_G}(X)$.
\end{itemize}

Define $\mathscr{S}=\bigcup\{\mathscr{S}_X:X\in\mathscr{X}\}$. Let $G'$ be the countable subgroup of $G$ generated by  $\bigcup\mathscr{S}$. 
Since $\mathscr{S}\subseteq \TT(G)$ and $S\subseteq G'$ for each $S\in\mathscr{S}$, from Remark \ref{Background}(iv) we conclude that $\mathscr{S}\subseteq \TT(G')$. Therefore,
we can  apply Lemma \ref{lemma:metrizable:case} to find a monomorphism  $h:G'\to\T^\N$ such that  $h(S)$ is dense in $\Tp[\n_S]^\N$ 
for every $S\in\mathscr{S}$. Since $|\T^\N|=r(\T^\N)=\cont$, $r_p(\T^\N)=\cont$ for all $p\in\Prm$  (\cite{Fuchs}; see also  \cite[Lemma 4.1]{DS3}),  $|G|\le 2^\cont$, $|G'|=|\N|<\cont$, and $\T^\N$ is divisible, Lemma \ref{lemma1:Forcing_paper} allows us to find a monomorphism $\pi:G\to \T^\N$ extending $h$.
Denote by  $\Top$ the precompact metric group topology induced on $\pi(G)$ by the usual topology of $\T^\N$. 

Let $X\in\mathscr{X}$. Choose $S\in\mathscr{S}_X$ arbitrarily. Since  $\pi(G)[\n_S]=\pi(G)\cap \Tp[\n_S]^\N$ and $\pi(S)=h(S)$ is dense in $\Tp[\n_S]^\N$, it follows that $\pi(S)$ is $\mathcal{T}$-dense in $\pi(G)[\n_S]$. Identifying $G$ with $\pi(G)$, we conclude that $S$ is $\mathcal{T}$-dense in $G[\n_S]$. In particular, $\CL_{\mathcal{T}}(S)=\CL_{\mathcal{T}}(G[\n_S])$. Since $G[\n_S]$ is $\Zar_G$-closed in $G$ and $\Zar_G\subseteq \mathcal{T}$, it follows that  $\CL_{\mathcal{T}}(G[\n_S])=G[\n_S]$. We proved that 
$\CL_{\mathcal{T}}(S)=G[\n_S]$ for every $S\in \mathscr{S}_X$. Now ($\dagger_X$)  yields $\CL_{\mathcal{T}}(X)=\CL_{\Zar_G}(X)$.
\end{proof}

\begin{remark}
\label{closure:of:many:sets:cannot:be:realized}
Let $G$ be an infinite abelian group. 
\begin{itemize}
\item[(i)] {\em There exists a family $\mathscr{X}$ of subsets of $G$ such that $|\mathscr{X}|=\cont$, and for every 
Hausdorff group topology $\mathcal{T}$ on $G$, the $\mathcal{T}$-closure of  some $X\in\mathscr{X}$
differs from its $\Zar_G$-closure.\/} Indeed, let $H$ be a countably infinite subgroup of $G$ and $\mathscr{X}$ be the family 
of all subsets of $H$. Clearly, $|\mathscr{X}|=\cont$. Assume, by contradiction, that $\mathcal{T}$ is a Hausdorff group 
topology on $G$ such that $\CL_{\mathcal{T}}(X)=\CL_{\Zar_G}(X)$ for every $X\in\mathscr{X}$.  For $X\in\mathscr{X}$, we have 
$\CL_{\mathcal{T}\restriction_H}(X)=H\cap \CL_{\mathcal{T}}(X)=H\cap \CL_{\Zar_G}(X)=\CL_{\Zar_H}(X)$ by Lemma 
\ref{lemma:hereditary:Zariski}, and since $\mathscr{X}$ is the family of all subsets of $H$, we conclude
that $\mathcal{T}\restriction_H=\Zar_H$. Since $\mathcal{T}$ is a Hausdorff topology on $G$, the subspace topology 
$\mathcal{T}\restriction_H$ is Hausdorff as well. This contradicts Corollary \ref{Zariski:topology:never:group}.

\item[(ii)] Item (i) shows that the conclusion of Theorem \ref{Main:theorem} may fail for families  $\mathscr{X}$ of size 
 $\cont$. Under the Continuum Hypothesis, the conclusion of Theorem \ref{Main:theorem} would then fail for families  $\mathscr{X}$ of size $\omega_1$. Thus, in
 general, a single Hausdorff group topology on an abelian group $G$ cannot realize the Zariski closure of uncountably many
  subsets of $G$, that is, the assumption of countability for the family $\mathscr{X}$ in  Theorem \ref{Main:theorem} is necessary.
\end{itemize}
\end{remark}

\medskip
\noindent
{\bf Proof of Theorem C.\/}
The implication (i)~$\to$~(iii) is proved in Theorem \ref{Main:theorem}. The implication (iii)~$\to$~(ii) is trivial. Finally, the implication (ii)~$\to$~(i) holds because a precompact metric group has size at most $\cont$ (this follows from the well known fact that infinite compact metrizable group has size $\cont$ \cite[Corollary 5.2.7 b)]{AT}).
 \QED

\section{Proof of the equality $\Zar_G=\Mar_G={\mathfrak P}_G$ for an abelian group $G$}
\label{section:10}

For an abelian group $H$ we denote by $H^*$ the closed subgroup of $\T^H$ consisting of all  homomorphisms $\chi:H\to\T$; in particular, $H^*$ is compact in the subspace topology  inherited from $\T^H$. A subgroup $N$ of $H^*$ is called {\em point separating\/} if
for every $x\in H\setminus\{0\}$, there exists $\chi \in N$ such that $\chi(x) \ne 0$.

For every subgroup $N$ of $H^*$, let $e_{N,H}:H \to \T^N$ be the map defined by  $e_{N,H}(x)(\chi)=\chi(x)$ for $\chi\in N$ and each $x\in H$, and let $\mathscr{T}_{N,H}$ denote the coarsest topology on $H$ with respect to which $e_{N,H}$ becomes continuous. The straightforward proof of the following fact is left to the reader.

\begin{fact}
\label{easy:fact}
For an abelian group $H$ and a subgroup $N$ of $H^*$, the following conditions are equivalent:
\begin{itemize}
\item[(i)]
$N$ is point separating;
\item[(ii)]
$\mathscr{T}_{N,H}$ is a precompact Hausdorff group topology on $H$;
\item[(iii)] 
$e_{N,H}$ is a monomorphism;
\item[(iv)] 
the map $e_{N,H}:(H,\mathscr{T}_{N,H}) \hookrightarrow \T^N$ is a topological isomorphism between $(H,\mathscr{T}_{N,H})$ and the subgroup $e_{N,H}(H)$ of $\T^H$.
\end{itemize}
\end{fact}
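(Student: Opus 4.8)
The plan is to prove the cycle (i)~$\to$~(iii)~$\to$~(iv)~$\to$~(ii)~$\to$~(i); in fact (i)~$\leftrightarrow$~(iii) will be immediate from unravelling definitions, so the real content is the passage (iii)~$\to$~(iv) and the observation that $\mathscr{T}_{N,H}$ is always a group topology. First I would record two preliminary facts that do not use point-separation. Since every $\chi\in N$ is a homomorphism $H\to\T$, the map $e_{N,H}\colon H\to\T^N$ is a group homomorphism. Moreover, by its very definition $\mathscr{T}_{N,H}$ is the initial topology on $H$ induced by $e_{N,H}$ from the product topology of $\T^N$, so its open sets are precisely the sets $e_{N,H}^{-1}(U)$ with $U$ open in $\T^N$. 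By the universal property of the initial topology, a map into $(H,\mathscr{T}_{N,H})$ is continuous if and only if its composite with $e_{N,H}$ is continuous; applying this to the maps $H\times H\to H$ and $H\to H$ coming from the group operations (whose composites with $e_{N,H}$ agree with multiplication and inversion of $\T^N$ precomposed with $e_{N,H}\times e_{N,H}$, resp. $e_{N,H}$, hence are continuous) shows that $\mathscr{T}_{N,H}$ is a group topology on $H$, regardless of $N$.

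Next I would note the key ``openness onto the image'' property: for every open $U\subseteq\T^N$ one has $e_{N,H}\bigl(e_{N,H}^{-1}(U)\bigr)=U\cap e_{N,H}(H)$, so $e_{N,H}$ carries $\mathscr{T}_{N,H}$-open sets to open subsets of the subspace $e_{N,H}(H)\subseteq\T^N$. Thus $e_{N,H}\colon(H,\mathscr{T}_{N,H})\to e_{N,H}(H)$ is always a continuous, open, surjective homomorphism.

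Now for the implications. The equivalence (i)~$\leftrightarrow$~(iii) holds because $\ker e_{N,H}=\{x\in H:\chi(x)=0\text{ for all }\chi\in N\}$, so $N$ is point separating exactly when this kernel is trivial, i.e. exactly when $e_{N,H}$ is a monomorphism. For (iii)~$\to$~(iv): if $e_{N,H}$ is injective then, being also open onto its image and surjective onto it, it is a homeomorphism of $(H,\mathscr{T}_{N,H})$ onto $e_{N,H}(H)$, and being a group homomorphism it is a topological isomorphism onto the subgroup $e_{N,H}(H)$ of $\T^N$. For (iv)~$\to$~(ii): $(H,\mathscr{T}_{N,H})$ is then topologically isomorphic to a subgroup of the compact Hausdorff group $\T^N$, hence $\mathscr{T}_{N,H}$ is a precompact Hausdorff group topology. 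For (ii)~$\to$~(i): if $\mathscr{T}_{N,H}$ is Hausdorff and $x\in H\setminus\{0\}$, choose a $\mathscr{T}_{N,H}$-open set $e_{N,H}^{-1}(U)$ containing $x$ but not $0$; then $e_{N,H}(x)\in U$ while $e_{N,H}(0)=0\notin U$, so $\chi(x)\neq0$ for some $\chi\in N$.

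The whole argument is routine bookkeeping, and there is no substantive obstacle; the one place meriting a moment's care is the verification that $\mathscr{T}_{N,H}$ is a group topology at all, which I would handle, as above, purely formally via the universal property of the initial topology and the fact that $e_{N,H}$ is a homomorphism into a topological group.
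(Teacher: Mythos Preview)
Your proof is correct and complete. The paper itself leaves this fact to the reader (``The straightforward proof of the following fact is left to the reader''), so there is no proof in the paper to compare against; your cycle (i)$\leftrightarrow$(iii)$\to$(iv)$\to$(ii)$\to$(i), together with the preliminary verification that $\mathscr{T}_{N,H}$ is always a group topology and that $e_{N,H}$ is open onto its image, is exactly the routine argument the authors had in mind.
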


The relevant fact that the correspondence  $N \mapsto \mathscr{T}_{N,H}$ between point separating subgroups of $H^*$ and precompact Hausdorff group topologies on $H$ is a bijection was pointed out by Comfort and Ross \cite{CRoss}.

Let $H$ be a normal subgroup of a group $G$ and $\mathcal{T}'$ a Hausdorff group topology on $H$. In general, it is impossible 
to find a Hausdorff group topology $\mathcal{T}$ on $G$ that induces the original topology $\mathcal{T}'$ on $H$. There are severe obstacles to the extension of group topologies even when the subgroup $H$  is abelian \cite{DS_JGT, DS_Zariski_embeddings}. 
However, in the case when the subgroup $H$ is central (in particular, when the group $G$ itself is abelian), the extension  problem becomes trivial. 
One can simply take the family of all $\mathcal{T}'$-neighborhoods of $0$ as a base of the family of  $\mathcal{T}$-neighborhoods of $0$ of a group topology $\mathcal{T}$ on 
 $G$. This topology $\mathcal{T}$ is  Hausdorff and obviously induces the original topology $\mathcal{T}'$ on $H$. Note that there may exist other group topologies $\mathcal{T}''$ on $G$ with $\mathcal{T}''\restriction_H=\mathcal{T}'$. 

The extension of {\em precompact\/} Hausdorff group topologies from a subgroup of an abelian group to the whole group is a bit more delicate. 

\begin{theorem}
\label{extension:of:precompact:group:topologies}
Let $H$ be a subgroup of an abelian group  $G$ and $\mathcal{T}'$  a precompact Hausdorff group topology on $H$. Then there 
exists a precompact Hausdorff group topology $\mathcal{T}$ on $G$ that induces $\mathcal{T}'$ on $H$; that is, $\mathcal{T}\restriction_H=\mathcal{T}'$ holds. 
\end{theorem}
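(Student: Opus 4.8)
The plan is to translate the extension problem into the language of character groups, using Fact \ref{easy:fact} and the Comfort--Ross correspondence recalled just before the statement. By Fact \ref{easy:fact}, the precompact Hausdorff group topology $\mathcal{T}'$ on $H$ is of the form $\mathscr{T}_{N',H}$ for a uniquely determined point separating subgroup $N'$ of $H^*$. What I need to produce is a point separating subgroup $N$ of $G^*$ such that the restriction map $r:G^*\to H^*$, $\chi\mapsto\chi\restriction_H$, carries $N$ onto $N'$ in the right way — more precisely, so that $r(N)\supseteq N'$ and the topology $\mathscr{T}_{N,G}$ restricts to $\mathscr{T}_{N',H}$ on $H$. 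The cleanest route is to take $N=r^{-1}(N')=\{\chi\in G^*:\chi\restriction_H\in N'\}$, or a suitable point separating subgroup containing enough of it.

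First I would verify that $r:G^*\to H^*$ is surjective: every character $\psi:H\to\T$ extends to a character of $G$. This is the standard divisibility argument — $\T$ is a divisible (hence injective) abelian group, so any homomorphism from a subgroup $H$ of $G$ into $\T$ extends to $G$. Next, set $N_0=r^{-1}(N')$; this is a subgroup of $G^*$, and $r(N_0)=N'$ by surjectivity of $r$. The key claim is then that $\mathscr{T}_{N_0,G}\restriction_H=\mathscr{T}_{N',H}$: indeed the topology $\mathscr{T}_{N_0,G}$ is generated by the characters $\chi\in N_0$, and restricting these to $H$ gives exactly the characters in $r(N_0)=N'$, which generate $\mathscr{T}_{N',H}$; conversely each $\chi\restriction_H$ with $\chi\in N_0$ lies in $N'$, so no finer topology on $H$ is induced. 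One must be slightly careful to check both inclusions of the subspace topology, but this is routine since both are initial topologies with respect to families of characters and the restriction of an initial topology is the initial topology of the restricted maps.

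The one genuine obstacle is point separation for $G$: $N_0=r^{-1}(N')$ separates points of $H$ (because $N'$ does, and $\chi\restriction_H$ ranges over $N'$), but it need not separate points of $G\setminus H$. To fix this, I would enlarge $N_0$: for each $g\in G\setminus H$ with $g\notin H$, I want a character $\chi_g\in G^*$ with $\chi_g(g)\neq0$ whose restriction to $H$ is already accounted for — the simplest choice is to find $\chi_g$ with $\chi_g\restriction_H=0\in N'$ and $\chi_g(g)\neq0$. Such a $\chi_g$ exists as long as $g$ does not lie in the ``closure'' obstruction — concretely, we can first find any character of $G/H$ (again using injectivity of $\T$ and the fact that $g+H$ is a nonzero element of $G/H$ when $g\notin H$) that is nonzero on $g+H$, then compose with the quotient map $G\to G/H$; this yields $\chi_g\in G^*$ vanishing on $H$ with $\chi_g(g)\neq0$. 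Let $N$ be the subgroup of $G^*$ generated by $N_0$ together with all such $\chi_g$. Then $N\supseteq N_0$ so $r(N)\supseteq N'$, and since each added generator restricts into $N'$ (in fact to $0$), we still have $r(N)=N'$ and hence $\mathscr{T}_{N,G}\restriction_H=\mathscr{T}_{N',H}=\mathcal{T}'$; moreover $N$ now separates the points of $G$ (points of $H$ by $N_0$, pairs differing by an element outside $H$ by the $\chi_g$, and more generally any $x\neq0$: if $x\in H$ use $N'$, if $x\notin H$ use $\chi_x$). By Fact \ref{easy:fact}, $\mathcal{T}=\mathscr{T}_{N,G}$ is a precompact Hausdorff group topology on $G$, and $\mathcal{T}\restriction_H=\mathcal{T}'$, completing the proof.
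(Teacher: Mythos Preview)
Your approach is essentially the same as the paper's: take the preimage under the restriction map $r:G^*\to H^*$ of the separating subgroup $N'\le H^*$ determining $\mathcal{T}'$, check it is point separating, and verify that the induced precompact topology restricts correctly on $H$. The paper writes $A=\rho^{-1}(N)$ for your $N_0=r^{-1}(N')$, proves point separation by the same two-case argument (for $g\in H$ lift a character in $N'$; for $g\notin H$ pull back a character of $G/H$), and verifies $\mathscr{T}_{A,G}\restriction_H=\mathscr{T}_{N,H}$ via a commutative square of embeddings into $\T^A$ and $\T^N$, which is a more explicit version of your ``initial topology of the restrictions'' remark.

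One small redundancy worth noting: the ``enlargement'' you perform is unnecessary. Each character $\chi_g$ you construct for $g\notin H$ satisfies $\chi_g\restriction_H=0\in N'$, so $\chi_g\in r^{-1}(N')=N_0$ already. Hence $N_0$ itself is point separating and your $N$ equals $N_0$; the paper works directly with $A=N_0$ throughout. This does not affect correctness, only economy.
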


\begin{proof}  Let $\rho: G^*\to H^*$ be the restriction homomorphism defined by  $\rho(\chi)=\chi \restriction_H$ for every $\chi \in G^*$.  Since $\T$ is a divisible group, for every $\chi\in H^*$, there exists an extension $\widetilde{\chi}\in G^*$ of $\chi$. Therefore, $\rho$ is a surjection.  It  
follows from Peter-Weyl's theorem that the subgroup $N$ of $H^*$ consisting of all $\mathcal{T}'$-continuous characters is point separating and $\mathcal{T}'=\mathscr{T}_{N,H}$; see \cite{CRoss}.

We claim that $A=\rho^{-1}(N)$ is a point separating subgroup  of $G^*$.  Indeed, let $g\in G\setminus \{0\}$. We need to find $\varphi\in A$ such that $\varphi(g)\ne 0$. If $g\in H$, then $\chi(g)\not=0$ for some $\chi\in N$, as  $N$ is a point separating subgroup of $H^*$. Since $\rho$ is a surjection, there exists $\varphi \in G^*$ such that  $\rho(\varphi) = \chi$. In particular, $\varphi\in \rho^{-1}(N)=A$. Furthermore, $\varphi(g)=\varphi\restriction_H(g)=\chi(g)\not=0$. Suppose now that $g\in G\setminus H$. Let $\pi:G\to G/H$ be the canonical quotient homomorphism. There exists a character  $\psi:G/H\to \T$ such that $\psi(\pi(g))\not=0$. Clearly, $\varphi=\psi\circ \pi\in G^*$ and $\rho(\varphi)=\varphi\restriction_H=0\in N$, which yields $\varphi\in \rho^{-1}(N)=A$. Finally, note that $\varphi(g)\not=0$.

Since $A$ is a point separating subgroup of $G$, it follows from Fact \ref{easy:fact}  that  $\mathcal{T}= \mathscr{T}_{A,G}$ is a precompact Hausdorff group topology on $G$.
It remains only to show that $\mathcal{T}\restriction_H=\mathcal{T}'$.  In other words, we aim to show that the inclusion map 
$j:(H,\mathscr{T}_{N,H})\hookrightarrow (G,\mathscr{T}_{A,G})$ is a  topological group embedding. 
 
For a set $Y$ and an element $y\in Y$, let $p_{Y,y}:\T^Y\to \T$ be the canonical projection defined by $p_{Y,y}(f)=f(y)$ for every $y\in Y$.
Since $A=\rho^{-1}(N)$ and $\rho$ is a surjection, $\rho\restriction_A: A \to N$ is surjective as well. Therefore, the  the map $\iota: \T^N \to \T^A$ defined by $\iota(f)= f\circ\rho\restriction_A$ for $f\in \T^N$, is a monomorphism. Furthermore, the map $\iota$ is continuous,  as  $p_{A,\chi} \circ \iota = p_{N,\rho(\chi)}$ is continuous for every  $\chi\in A$.
By the compactness of $\T^N$, we conclude that $\iota: \T^N \hookrightarrow \T^A$ is a topological group embedding. 

By  Fact \ref{easy:fact}, both maps $e_{A,G}:(G,\mathscr{T}_{A,G}) \hookrightarrow \T^A$ and $e_{N,H}:(H,\mathscr{T}_{N,H}) \hookrightarrow \T^N$ are topological group embeddings. Since the diagram
$$   \xymatrix{
(G,\mathscr{T}_{A,G}) \ar@{^{(}->}[r]^{\;\;\;\;\;\;e_{A,G}}    &  \T^A \\
       (H,\mathscr{T}_{N,H})\ar@{^{(}->}[r]^{\;\;\;\;\;\;\;\; e_{N,H}}\ar@{^{(}->}[u]^{j} & \T^N \ar@{^{(}->}[u]^{\iota}
       }$$
is commutative, we conclude that $j$ is a topological group embedding as well.
\end{proof}

\begin{remark} 
\begin{itemize}
     \item[(i)] In connection with  Fact \ref{easy:fact}, one should mention  the following important  result of  Comfort and Ross {\rm \cite{CRoss}}:  {\em Given an abelian group $H$, a subgroup $N$ of $H^*$ is point separating if and only if $N$ is dense in $H^*$\/}. 
      \item[(ii)] 
With the help of   the equivalence in (i), one can offer an alternative argument showing that the subgroup $A$ of $G^*$  in the proof of Theorem  \ref{extension:of:precompact:group:topologies} is point separating. Indeed,  since $G^*$ is a compact group and $\rho: G^*\to H^*$ is a continuous surjective 
 homomorphism, $\rho$ is an open map. Since $N$ is dense in $H^*$ by item (i),  this implies that $A=\rho^{-1}(N)$ is dense in $G^*$. Applying (i) once again, we conclude that $A$ is a point separating subgroup of $G^*$.
\end{itemize}
 \end{remark}

\begin{corollary}\label{hereditary:precompact:Markov}
If $H$ is a subgroup of an abelian group $G$, then ${\mathfrak P}_G\restriction_H\subseteq {\mathfrak P}_H$.
\end{corollary}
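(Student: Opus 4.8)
The plan is to deduce Corollary \ref{hereditary:precompact:Markov} directly from Theorem \ref{extension:of:precompact:group:topologies}, using only the definition of the precompact Markov topology as an infimum of precompact Hausdorff group topologies. Recall that ${\mathfrak P}_H$ is the infimum, taken in the lattice of all topologies on $H$, of the family $\Pi_H$ of all precompact Hausdorff group topologies on $H$; likewise ${\mathfrak P}_G = \inf \Pi_G$. The only thing I need to observe is that every member of $\Pi_H$ arises as the restriction to $H$ of some member of $\Pi_G$.

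First I would fix a precompact Hausdorff group topology $\mathcal{T}' \in \Pi_H$. By Theorem \ref{extension:of:precompact:group:topologies} there is a precompact Hausdorff group topology $\mathcal{T} \in \Pi_G$ with $\mathcal{T}\restriction_H = \mathcal{T}'$. Since ${\mathfrak P}_G = \inf \Pi_G$, we have ${\mathfrak P}_G \subseteq \mathcal{T}$, and taking traces on $H$ preserves inclusions of topologies, so ${\mathfrak P}_G\restriction_H \subseteq \mathcal{T}\restriction_H = \mathcal{T}'$. As $\mathcal{T}'$ was an arbitrary element of $\Pi_H$, this shows that ${\mathfrak P}_G\restriction_H$ is contained in every precompact Hausdorff group topology on $H$, hence in their infimum: ${\mathfrak P}_G\restriction_H \subseteq \inf \Pi_H = {\mathfrak P}_H$.

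The only mildly delicate point is the interaction between the infimum operation and restriction to a subspace. Concretely, for a family $\{\mathcal{S}_i\}_{i}$ of topologies on $G$ one has $\left(\inf_i \mathcal{S}_i\right)\restriction_H \subseteq \inf_i \left(\mathcal{S}_i\restriction_H\right)$, because each $\mathcal{S}_i\restriction_H$ clearly contains $\left(\inf_i \mathcal{S}_i\right)\restriction_H$, so the latter is a lower bound for the family $\{\mathcal{S}_i\restriction_H\}$. Applying this with $\{\mathcal{S}_i\} = \Pi_G$ gives ${\mathfrak P}_G\restriction_H \subseteq \inf\{\mathcal{T}\restriction_H : \mathcal{T}\in\Pi_G\}$, and since by the extension theorem the family $\{\mathcal{T}\restriction_H : \mathcal{T}\in\Pi_G\}$ contains $\Pi_H$, its infimum is $\subseteq \inf \Pi_H = {\mathfrak P}_H$. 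I do not expect any real obstacle here — the entire content is Theorem \ref{extension:of:precompact:group:topologies}, and the rest is a one-line lattice-theoretic manipulation. (Note the inclusion can be strict in general, mirroring the situation for ${\mathfrak M}_G$ and ${\mathfrak Z}_G$ in the non-abelian case, so there is no claim of equality.)
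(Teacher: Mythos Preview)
Your proof is correct and follows essentially the same approach as the paper's: both arguments reduce the corollary to Theorem~\ref{extension:of:precompact:group:topologies} and the definition of ${\mathfrak P}_H$ as an infimum. The paper phrases the final step in terms of closed sets (if $X\subseteq H$ is not ${\mathfrak P}_H$-closed, extend a witnessing topology and conclude $X$ is not ${\mathfrak P}_G\restriction_H$-closed), whereas you argue directly with inclusions of topologies; these are equivalent one-line manipulations.
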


\begin{proof} Let $X$ be a subset of $H$ that is not ${\mathfrak P}_H$-closed. There exists a precompact Hausdorff group topology $\Top'$ on $H$ such that $X$ is not $\Top'$-closed. By Theorem \ref{extension:of:precompact:group:topologies}, there exists a Hausdorff group topology $\Top$ on $G$ such that $\Top\restriction_H=\Top'$. Then $X$ is not $\Top$-closed, and so $X$ cannot be ${\mathfrak P}_G$-closed either. Since $X\subseteq H$, we conclude that $X$ is not ${\mathfrak P}_G\restriction_H$-closed.
\end{proof}

\medskip
\noindent
{\bf Proof of Theorem A.\/}
Since $\Zar_G\subseteq \Mar_G\subseteq {\mathfrak P}_G$, it suffices to prove that ${\mathfrak P}_G\subseteq \Zar_G$. Let us note that this follows from Theorem \ref{Main:theorem} when $|G| \leq \cont$. Indeed, let $F$ be a subset of $G$ that is not $\Zar_G$-closed.
Applying Theorem \ref{Main:theorem} with $\mathscr{X}=\{F\}$, we can find a precompact metric group topology $\mathcal{T}$ on $G$ such that $\CL_{\mathcal T}(F)=\CL_{\Zar_G}(F)\ne F$; that is, $F$ is not $\mathcal{T}$-closed. Hence, $F$ cannot be ${\mathfrak P}_G$-closed either.

To prove that ${\mathfrak P}_G\subseteq \Zar_G$ in the general case, it suffices to pick an arbitrary subset $F$ of $G$ and check that $\CL_{\Zar_G}(F)\subseteq \CL_{{\mathfrak P}_G}(F)$. According to Corollary \ref{hereditarily:separable},  there exists a countable subset $X$ of $F$ such that   $F\subseteq \CL_{\Zar_G}(X)$. In particular,
$\CL_{\Zar_G}(F)\subseteq \CL_{\Zar_G}(X)$. Since $\CL_{{\mathfrak P}_G}(X)\subseteq \CL_{{\mathfrak P}_G}(F)$, it remains to check that $\CL_{\Zar_G}(X)\subseteq \CL_{{\mathfrak P}_G}(X)$.  

Fix an arbitrary $x\in \CL_{\Zar_G}(X)$, and let $H$ be the (countable) subgroup of $G$ generated by $X$ and $x$.   
By Lemma \ref{lemma:hereditary:Zariski},  
$x\in H\cap \CL_{\Zar_G}(X)=\CL_{\Zar_G\restriction_H}(X)=\CL_{\Zar_H}(X)$. 
By the initial part of the argument, $\Zar_H={\mathfrak P}_H$, as $H$ is countable. So 
$x\in \CL_{\Zar_H}(X)=\CL_{{\mathfrak P}_H}(X)$. Since $X\subseteq H$, Corollary \ref{hereditary:precompact:Markov} yields $\CL_{{\mathfrak P}_H}(X)\subseteq \CL_{{\mathfrak P}_G\restriction_H}(X)=H\cap \CL_{{\mathfrak P}_G}(X)$. Therefore, $x\in \CL_{{\mathfrak P}_G}(X)$. This proves that $\CL_{\Zar_G}(X)\subseteq \CL_{{\mathfrak P}_G}(X)$. \QED

\begin{corollary}\label{Perelman}
For an arbitrary subset $X$ of an abelian group $G$, the following conditions are equivalent:
 \begin{itemize}
  \item[(a)] $X$ is unconditionally closed,
  \item[(b)] $X$ is closed in every precompact Hausdorff group topology on $G$, 
  \item[(c)] $X$ is algebraic.
\end{itemize}
\end{corollary}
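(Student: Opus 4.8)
Looking at this, Corollary \ref{Perelman} is an immediate restatement of Theorem A together with the definitions. Let me sketch how the proof goes.

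\medskip

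The plan is to observe that Corollary \ref{Perelman} is simply the translation of Theorem A into the language of Markov's four families of subsets (Definition \ref{Markov's:definition}), once one unwinds what the three topologies $\Zar_G$, $\Mar_G$ and ${\mathfrak P}_G$ mean in terms of their closed sets. First I would recall that, by Theorem \ref{Zariski.topology.is.Noetherian}(ii), the $\Zar_G$-closed subsets of $G$ are precisely the members of ${\mathfrak A}_G$, i.e.\ the algebraic subsets of $G$ in Markov's sense; so ``$X$ is algebraic'' is by definition the same as ``$X$ is $\Zar_G$-closed''. Next, since $\Mar_G$ was introduced as the infimum of all Hausdorff group topologies on $G$, a set is $\Mar_G$-closed if and only if it is closed in every Hausdorff group topology on $G$, which is exactly Markov's notion of being unconditionally closed; thus (a) is equivalent to ``$X$ is $\Mar_G$-closed''. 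Similarly, since ${\mathfrak P}_G$ is the infimum of all precompact Hausdorff group topologies on $G$, a set is ${\mathfrak P}_G$-closed precisely when it is closed in every precompact Hausdorff group topology on $G$, which is condition (b).

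\medskip

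With these three identifications in hand, the equivalence of (a), (b) and (c) is nothing but the chain of equalities $\Zar_G = \Mar_G = {\mathfrak P}_G$ furnished by Theorem A: $X$ is algebraic $\iff$ $X$ is $\Zar_G$-closed $\iff$ $X$ is $\Mar_G$-closed $\iff$ $X$ is unconditionally closed, and likewise $X$ is $\Mar_G$-closed $\iff$ $X$ is ${\mathfrak P}_G$-closed $\iff$ $X$ is closed in every precompact Hausdorff group topology on $G$. One mild subtlety worth addressing explicitly: in Definition \ref{Markov's:definition} Markov works with arbitrary (not necessarily abelian) groups and with multiplicative words, whereas $\Zar_G$ for abelian $G$ was built from the additive elementary algebraic sets $a+G[n]$; Lemma \ref{lemma:elementary:algebraic:sets}(i) shows these two descriptions of $\EE_G$ agree for abelian $G$, so no gap arises. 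Since there is essentially no computation, I do not expect a real obstacle here; the only thing to be careful about is making sure the ``closed in every (precompact) Hausdorff group topology'' characterisations of $\Mar_G$ and ${\mathfrak P}_G$ are invoked correctly — these are immediate from the definitions of infimum of a family of topologies, since a set is closed in the infimum iff it is closed in each member of the family, but it is worth stating.

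\begin{proof}
By Theorem \ref{Zariski.topology.is.Noetherian}(ii), the family of $\Zar_G$-closed subsets of $G$ is exactly ${\mathfrak A}_G$, the family of algebraic subsets of $G$; hence (c) is equivalent to $X$ being $\Zar_G$-closed. Since $\Mar_G$ is the infimum of all Hausdorff group topologies on $G$, a subset of $G$ is $\Mar_G$-closed if and only if it is closed in every Hausdorff group topology on $G$, i.e.\ (a) is equivalent to $X$ being $\Mar_G$-closed. Similarly, as ${\mathfrak P}_G$ is the infimum of all precompact Hausdorff group topologies on $G$, a subset of $G$ is ${\mathfrak P}_G$-closed if and only if it is closed in every precompact Hausdorff group topology on $G$, so (b) is equivalent to $X$ being ${\mathfrak P}_G$-closed. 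By Theorem A, $\Zar_G=\Mar_G={\mathfrak P}_G$, so these three topologies have the same closed sets, and therefore (a), (b) and (c) are equivalent.
\end{proof}
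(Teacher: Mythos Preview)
Your proof is correct and matches the paper's approach exactly: the corollary is stated immediately after the proof of Theorem A with no separate argument, since it is nothing more than the translation of $\Zar_G=\Mar_G={\mathfrak P}_G$ into Markov's language of algebraic, unconditionally closed, and precompactly closed sets via the definitions (and Theorem \ref{Zariski.topology.is.Noetherian}(ii) for the algebraic case).
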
 The equivalence of (a) and (c) was attributed by Markov \cite{Mar} to Perel$'$man, though the proof never appeared in print. Recently, a proof 
of this equivalence was provided in \cite{DS_JGT}; see also \cite[Theorem 3.13]{TY}
for almost torsion-free abelian groups  and \cite[Proposition 4.6]{TY} for abelian groups of prime exponent. The group topologies involved in both results in \cite{TY} are {\em not\/} precompact, so these results do not include also the equivalence with (b) even in those two particular cases.  

\begin{remark} According to Lemma \ref{lemma:hereditary:Zariski} and  Corollary 
\ref{hereditary:precompact:Markov}, one might first study the spaces $(G,\Zar_G)$ and $(G,\mathfrak{P}_G)$ for {\em divisible  groups\/} $G$ and then use  them to obtain information on their subgroups. The advantage of having a divisible group $G$ is that, for every prime number $p$ and each integer $n\in\mathbb{N}$, the subgroup $G[p^n]$ of $G$ is always irreducible whenever it is infinite; see Example \ref{divisible:example:of:irreducible:subgroups}.
\end{remark}

\section{A partial solution to a problem of Markov} 
\label{section:11}

\noindent
{\bf Proof of Theorem D.\/}
(i)~$\to$~(iii)
 Assume that $X$ is potentially dense in $G$, and let $\mathcal{T}$ be a Hausdorff group topology on $G$ such that $X$ is $\mathcal{T}$-dense in $G$. Then $G=\CL_{\mathcal{T}}(X)\subseteq \CL_{\Zar_G}(X)\subseteq G$, which  yields $G=\CL_{\Zar_G}(X)$. 

(iii)~$\to$~(ii) This follows from Theorem \ref{Main:theorem}.

(ii)~$\to$~(i) This is trivial.
\QED

Combining Theorems D and \ref{characterizing:Zariski:dense:sets},  we obtain a partial solution 
to Markov's problem regarding  potentially dense sets for all abelian groups of size at most continuum:
  
\begin{corollary}\label{Solution:Markov:problem}
Let  $X$ be an infinite  subset of be an abelian group $G$ of size $\leq \cont$. Define $n=eo(G)$. Then the following conditions are equivalent:
\begin{itemize}
  \item[(i)] $X$ is potentially dense in $G$,
  \item[(ii)] $a+X$ contains an \round{n} set for every $a\in G$,
  \item[(iii)] there exist a finite set $F\subseteq G$ and a family $\{S_x: x\in F\}$ of  \round{n} sets such that $F+G[n]=G$ and $x+S_x\subseteq X$ for each $x\in F$.
\end{itemize}
\end{corollary}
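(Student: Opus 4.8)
The plan is to derive Corollary \ref{Solution:Markov:problem} by simply combining Theorem D with Theorem \ref{characterizing:Zariski:dense:sets}, since both are already available in the text. The three conditions (i), (ii), (iii) of the corollary are, respectively, condition (i) of Theorem D and conditions (ii), (iii) of Theorem \ref{characterizing:Zariski:dense:sets}, so the only thing that needs to be glued is the condition $\CL_{\Zar_G}(X)=G$ which appears as item (iii) of Theorem D and as item (i) of Theorem \ref{characterizing:Zariski:dense:sets} (Zariski density).

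First I would observe that, since $|G|\le\cont$, all three equivalences (i)$\leftrightarrow$(ii)$\leftrightarrow$(iii) of Theorem D apply to $X$; in particular $X$ is potentially dense in $G$ if and only if $\CL_{\Zar_G}(X)=G$, i.e.\ $X$ is $\Zar_G$-dense in $G$. Next, since $G$ is infinite and $n=eo(G)$, Theorem \ref{characterizing:Zariski:dense:sets} gives that $X$ is $\Zar_G$-dense in $G$ if and only if $a+X$ contains an \round{n} set for every $a\in G$, which is exactly condition (ii) of the corollary, and also if and only if there exist a finite $F\subseteq G$ and \round{n} sets $\{S_x:x\in F\}$ with $F+G[n]=G$ and $x+S_x\subseteq X$ for each $x\in F$, which is condition (iii) of the corollary. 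Chaining these equivalences yields (i)$\leftrightarrow$(ii)$\leftrightarrow$(iii) as claimed.

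There is essentially no obstacle here: the proof is a one-line deduction, and I would write it as follows.

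\medskip
\noindent
{\bf Proof of Corollary \ref{Solution:Markov:problem}.\/}
Since $|G|\le\cont$, Theorem D applies to $X$, so $X$ is potentially dense in $G$ if and only if $\CL_{\Zar_G}(X)=G$; that is, condition (i) is equivalent to $X$ being $\Zar_G$-dense in $G$. As $G$ is infinite and $n=eo(G)$, Theorem \ref{characterizing:Zariski:dense:sets} asserts that $X$ is $\Zar_G$-dense in $G$ if and only if (ii) holds, and also if and only if (iii) holds. Combining these equivalences gives (i)~$\leftrightarrow$~(ii)~$\leftrightarrow$~(iii).
\QED
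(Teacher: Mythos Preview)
Your proof is correct and is precisely the approach the paper takes: it simply records that the corollary follows by combining Theorem D with Theorem \ref{characterizing:Zariski:dense:sets}, using the common link $\CL_{\Zar_G}(X)=G$.
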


Recall that  every infinite subset of an abelian group $G$ is Zariski dense if and only if $G$ is either almost torsion-free or has a prime exponent  (see Fact \ref{necessity:almost_torsion-free}). 

\begin{corollary}\label{necessity:almost_torsion-free:2} For an abelian group $G$, consider the following conditions:
\begin{itemize}
  \item[(a)] every infinite subset of $G$ is potentially dense in $G$,
  \item[(b)] every proper unconditionally closed subset of $G$ is finite (that is, $\Mar_G$ coincides with the  cofinite topology of $G$),
  \item[(c)] every proper algebraic subset of $G$ is finite   (that is, $\Zar_G$ coincides with the cofinite topology of $G$).
\end{itemize}
Then (a)~$\to$~(b)~$\leftrightarrow$~(c). Moreover, all three conditions are equivalent whenever $|G|\leq \cont$.
\end{corollary}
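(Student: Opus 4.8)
Corollary \ref{necessity:almost_torsion-free:2} asserts (a)$\to$(b)$\leftrightarrow$(c), with all three equivalent when $|G|\le\cont$.

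The plan is to deduce everything from results already established. First, the equivalence (b)$\leftrightarrow$(c) is immediate from Theorem A: since $\Zar_G=\Mar_G$ for every abelian group $G$, the two topologies coincide, hence one equals the cofinite topology iff the other does. (Recall that the cofinite topology is the coarsest $T_1$ topology on an infinite set, and $\Mar_G$, $\Zar_G$ are both $T_1$; for finite $G$ all the topologies in sight are discrete and equal, so the statement is trivial there. One may also note (b)$\leftrightarrow$(c) can be seen purely via Fact \ref{necessity:almost_torsion-free}, but invoking Theorem A is cleanest.)

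Next, (a)$\to$(c). Suppose (a) holds, and let $A$ be a proper algebraic subset of $G$; I must show $A$ is finite. If $A$ were infinite, then by (a) the set $A$ would be potentially dense in $G$, i.e.\ $A$ is $\mathcal{T}$-dense for some Hausdorff group topology $\mathcal{T}$. But $A$ is $\Zar_G$-closed, hence $\Mar_G$-closed, hence $\mathcal{T}$-closed (using $\Zar_G\subseteq\Mar_G\subseteq\mathcal{T}$ from Fact \ref{three:topologies}); a $\mathcal{T}$-closed $\mathcal{T}$-dense set must equal $G$, contradicting that $A$ is proper. Hence $A$ is finite. (Here I should also dispatch the degenerate case: if $G$ is finite, then all proper subsets are finite and (c) holds vacuously; and condition (a) for finite $G$ is vacuous as well, so there is nothing to check — it is the infinite case that carries content.)

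Finally, the "moreover" clause: when $|G|\le\cont$, I must show (c)$\to$(a) to close the loop. Assume (c), so $\Zar_G$ is the cofinite topology, and let $X\subseteq G$ be infinite. Then $X$ is not contained in any proper algebraic set, so $\CL_{\Zar_G}(X)=G$, i.e.\ $X$ is Zariski dense. By Theorem D (whose hypothesis $|G|\le\cont$ is exactly what we have), condition (iii) $\CL_{\Zar_G}(X)=G$ implies condition (i), that $X$ is potentially dense in $G$. Since $X$ was an arbitrary infinite subset, (a) follows. The main subtlety — really the only place the cardinality bound is used — is this last implication (c)$\to$(a), which genuinely needs the construction underlying Theorem D (equivalently Theorem \ref{Main:theorem}); without $|G|\le\cont$ one cannot in general realize the Zariski closure by a Hausdorff group topology, so (a) may fail while (c) holds. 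Everything else is formal manipulation of the inclusions $\Zar_G\subseteq\Mar_G\subseteq\mathcal{T}$ and the definitions.
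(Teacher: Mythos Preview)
Your proof is correct and follows essentially the same approach as the paper's: the paper proves (a)$\to$(b) directly (an infinite unconditionally closed set is simultaneously closed and dense in some Hausdorff group topology, hence equals $G$), derives (b)$\leftrightarrow$(c) from Corollary~\ref{Perelman} (equivalently Theorem~A), and obtains the ``moreover'' clause from Theorem~D. Your route via (a)$\to$(c) is the same argument transported through the equivalence (b)$\leftrightarrow$(c).
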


\begin{proof} To show (a)~$\to$~(b) assume that $X$ is an infinite unconditionally closed subset of $G$. Then $X$ must be  simultaneously closed and dense in some Hausdorff group  topology on $G$. Thus, $X=G$.

The equivalence (b)~$\leftrightarrow$~(c) follows from Corollary \ref{Perelman}.

The  last assertion is an obvious consequence of Theorem D.
\end{proof}

\begin{remark} 
Since item (a) of Corollary \ref{necessity:almost_torsion-free:2} yields $|G|\leq 2^\cont$, it is not possible to invert the implication (a)~$\to$~(b) in Corollary \ref{necessity:almost_torsion-free:2} if $|G|>2^\cont$. In our forthcoming paper \cite{DS_Kronecker}, we invert it for groups of size at most $2^\cont$. 
\end{remark}

From Theorems D and  \ref{Zariski:dense:sets:in:unbounded:groups} we obtain the following:

\begin{corollary}
Let $G$ be an unbounded abelian group such that $|G|\le\cont$. Then an infinite subset $X$ of $G$ is potentially dense in $G$ if and only if $mX$ is infinite for every $m\in\mathbb{N}\setminus\{0\}$.
\end{corollary}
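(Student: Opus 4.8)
This corollary is an immediate consequence of chaining together Theorem D and Theorem \ref{Zariski:dense:sets:in:unbounded:groups}, so the proof will be short and require no new construction. The plan is to observe that both hypotheses of the corollary — namely $|G|\le\cont$ and $G$ unbounded — are exactly what is needed to invoke those two theorems, and then to read off the equivalence.

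\textbf{Key steps.} First, since $|G|\le\cont$, Theorem D applies to the subset $X$ and gives that $X$ is potentially dense in $G$ if and only if $\CL_{\Zar_G}(X)=G$, i.e.\ if and only if $X$ is $\Zar_G$-dense in $G$. Second, since $G$ is unbounded and $X$ is infinite, Theorem \ref{Zariski:dense:sets:in:unbounded:groups} applies and gives that $X$ is $\Zar_G$-dense in $G$ if and only if $mX$ is infinite for every $m\in\mathbb{N}\setminus\{0\}$ (this is the equivalence (i)~$\leftrightarrow$~(ii) there, the equivalence with the existence of an \round{0} subset of $X$ being available too but not needed here). Concatenating these two biconditionals yields exactly the claimed equivalence: $X$ is potentially dense in $G$ iff $mX$ is infinite for all $m\in\mathbb{N}\setminus\{0\}$.

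\textbf{Main obstacle.} There is essentially no obstacle: the entire content has already been established in Theorem D (via Theorem \ref{Main:theorem}) and in Theorem \ref{Zariski:dense:sets:in:unbounded:groups} (via Proposition \ref{finding:n-round:sets}, Proposition \ref{round:sets:as:dense:atoms} and Proposition \ref{sets:that:contain:zero-round:sets}). The only thing to be careful about is bookkeeping of the hypotheses — one must check that $X$ being infinite is compatible with both theorems (Theorem \ref{Zariski:dense:sets:in:unbounded:groups} explicitly assumes $X$ infinite, while Theorem D does not require it, so no conflict arises), and that the condition "$mX$ infinite for every $m\in\mathbb{N}\setminus\{0\}$" is transcribed verbatim from condition (ii) of Theorem \ref{Zariski:dense:sets:in:unbounded:groups}. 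Hence the write-up is a two-line deduction.
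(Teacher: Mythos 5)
Your proposal is correct and matches the paper exactly: the paper derives this corollary precisely by combining Theorem D (potential density equals Zariski density when $|G|\le\cont$) with Theorem \ref{Zariski:dense:sets:in:unbounded:groups} (Zariski density of an infinite subset of an unbounded group is equivalent to $mX$ being infinite for all $m\in\mathbb{N}\setminus\{0\}$). Your bookkeeping of the hypotheses is also accurate, so nothing further is needed.
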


The potentially dense subsets of almost torsion-free or divisible abelian groups of arbitrary size are described in \cite{DS_HMP}. 

\section{Open questions}
\label{open:questions}

Since our results provide a sufficiently clear picture in the abelian case, we include a list of questions concerning the possibilities to extend some of them in the non-abelian case. 

Theorem A leaves the following question open.

\begin{question}\label{question1}
Which of the equalities $\Zar_G = \Mar_G = {\mathfrak P}_G$  from Theorem A  remain true for nilpotent groups?
\end{question}
  
According to Bryant's  theorem, $\Zar_G$ is Noetherian for every  abelian group $G$. This fails to be true in general,  e.g.,  there exist infinite (necessarily non-abelian) groups $G$ with discrete $\Zar_G$; see Example \ref{discrete:Zariski}. Nevertheless, there is a huge gap between Noetherian and discrete topologies. In fact,  being Noetherian is a much stronger property than compactness: it is easy to see that  a space is  Noetherian if and only if all its subspaces are compact. This justifies the following question.  

\begin{question}\label{question2} Let $G$ be a group. If $\Zar_G$ is  compact,  must   $\Zar_G$   be necessarily Noetherian?
\end{question}

According to Corollary \ref{Zariski:topology:never:group}, the Zariski topology $\Zar_G$ of an infinite abelian group is never Hausdorff, while it is Noetherian by Bryant's  theorem. This motivates the following question.  

\begin{question}\label{question3}
Does there exist an infinite group $G$ such that its Zariski topology $\Zar_G$ is both
compact and Hausdorff?
\end{question}
 
 Let us note that it was necessary to relax ``Noetherian" to ``compact'' in the above question, since an infinite  Noetherian space is never Hausdorff; see
Fact \ref{Noetherian:facts}. In particular, a positive answer   to this question  would provide a negative answer to Question \ref{question2}. 
 
  Finally, let us recall two questions from \cite{DS_OPIT}. 
 
 \begin{question}\label{QuesA}
Let $G$ be a group of size at most $2^\cont$ and $\mathcal E$ be a countable family of subsets of $G$. Can one find a Hausdorff group topology ${\mathcal T}_{\mathcal E}$ on $G$ such that the ${\mathcal T}_{\mathcal E}$-closure of every $E\in{\mathcal E}$ coincides with its ${\mathfrak M}_G$-closure?
\end{question}

For an Abelian group $G$ with $|G| \leq \cont$ the answer is positive, and in fact the topology ${\mathcal T}_{\mathcal E}$ in this case can be chosen to be precompact by Theorem \ref{Main:theorem}.

The counterpart of Question \ref{QuesA} for ${\mathfrak Z}_G$    instead of ${\mathfrak M}_G$ has a consistent  negative answer; see the comment in \cite{DS_OPIT}. 

Let us consider now the counterpart of Question \ref{QuesA} for ${\mathfrak P}_G$ instead of ${\mathfrak M}_G$.

\begin{question}
\label{QuesB}
Let $G$ be a group of size at most $2^\cont$ having at least one precompact Hausdorff group topology, and let $\mathcal E$ be a countable family of subsets of $G$. Can one find a precompact Hausdorff group topology ${\mathcal T}_{\mathcal E}$ on $G$ such that the ${\mathcal T}_{\mathcal E}$-closure of every $E\in{\mathcal E}$ coincides with its ${\mathfrak P}_G$-closure?
\end{question}

Again, for an Abelian group $G$ with $|G| \leq \cont$, the answer is positive by Theorem \ref{Main:theorem}.

\bigskip
\noindent{\bf Historic remark.}
The principle results of this paper were announced by the first author in his keynote address during the 10th Prague Topological Symposium TOPOSYM 2006 (August 13--19, 2006, Prague, Czech Republic), and were also mentioned in  \cite[Section 5]{DS_OPIT}.

\bigskip
\noindent{\bf Acknowledgment.} 
We are grateful 
to Daniel Toller for his careful reading and helpful comments.

\end{document}